\def\invlim{\mathop{\vtop{\ialign{##\crcr$\hfill{\lim}\hfil$\crcr
\noalign{\kern1pt\nointerlineskip}\leftarrowfill\crcr\noalign
{\kern -3pt}}}}\limits}
\def\dirlim{\mathop{\vtop{\ialign{##\crcr$\hfill{\lim}\hfil$\crcr
\noalign{\kern1pt\nointerlineskip}\rightarrowfill\crcr\noalign
{\kern -3pt}}}}\limits}
\def\lomapr#1{\smash{\mathop{\relbar\joinrel\longrightarrow}\limits^{#1}}}
 \def\verylomapr#1{\smash{\mathop{\relbar\joinrel\relbar\joinrel\relbar\joinrel\longrightarrow}\limits^{#1}}}
\def\veryverylomapr#1{\smash{\mathop{\relbar\joinrel\relbar\joinrel\relbar
\joinrel\relbar\joinrel\relbar\joinrel\longrightarrow}\limits^{#1}}}
\def\phi{\varphi}
\def\epsilon{\varepsilon}
\newtheorem{theorem}{Theorem}[section]
 \newtheorem{lemma}[theorem]{Lemma}
 \newtheorem{proposition}[theorem]{Proposition}
 \newtheorem{corollary}[theorem]{Corollary}
\theoremstyle{definition}
\newtheorem{definition}[theorem]{Definition}
\theoremstyle{remark}
\newtheorem{remark}[theorem]{Remark}
\newtheorem{example}[theorem]{Example}
\newtheorem*{acknowledgments}{Acknowledgments}
 \newcommand{\Nis}{\operatorname{Nis}}
 \newcommand{\Frac}{\operatorname{Frac} }
\newcommand{\ovk}{\overline{K} }
\newcommand{\keet}{\operatorname{K\acute{e}t}}
\newcommand{\ord}{\operatorname{ord} }
\newcommand{\gp}{\operatorname{gp} }
 \newcommand{\cl}{\operatorname{cl} } 
 \newcommand{\dr}{\operatorname{dR} }
 \newcommand{\cosk}{\operatorname{cosk} }
 \newcommand{\skl}{\operatorname{sk} }
 \newcommand{\holim}{\operatorname{holim} }
 \newcommand{\hocolim}{\operatorname{hocolim} }
\newcommand{\cofiber}{\operatorname{cofiber} }
\newcommand{\fiber}{\operatorname{fiber} }
 \newcommand{\eet}{\operatorname{\acute{e}t} }
 \newcommand{\dlog}{\operatorname{dlog} }
 \newcommand{\Zar}{\operatorname{Zar} }
 \newcommand{\Spec}{\operatorname{Spec} }
 \newcommand{\cd}{\operatorname{cd} } 
 \newcommand{\Hom}{\operatorname{Hom} }
 \newcommand{\Gal}{\operatorname{Gal} }
 \newcommand{\tr}{ \operatorname{tr} }
 \newcommand{\id}{ \operatorname{Id} }
\newcommand{\synt}{ \operatorname{syn} }
 \newcommand{\res}{ \operatorname{res} }
 \newcommand{\Cone}{\operatorname{Cone} }
 \newcommand{\kker}{\operatorname{Ker} }
 \newcommand{\crr}{\operatorname{cr} }
 \newcommand{\gr}{\operatorname{gr} }
 \newcommand{\im}{\operatorname{Im} }
 \newcommand{\ve}{ \varepsilon  }
  \newcommand{\kr}{^{\scriptscriptstyle\bullet}}
 \newcommand{\sff}{{\mathcal{F}}}
 \newcommand{\sh}{{\mathcal{H}}}
 \newcommand{\scc}{{\mathcal{C}}}
 \newcommand{\sk}{{\mathcal{K}}}
 \newcommand{\sll}{{\mathcal{L}}}
 \newcommand{\sn}{{\mathcal N}}
 \newcommand{\so}{{\mathcal O}}
 \newcommand{\sj}{{\mathcal J}}
 \newcommand{\se}{{\mathcal{E}}}
 \newcommand{\sss}{{\mathcal{S}}}
\newcommand{\sd}{{\mathcal{D}}} 
\newcommand{\spp}{{\mathcal{P}}}
 \newcommand{\wt}{\widetilde}
\newcommand{\Q}{\mathbf{Q}}
\newcommand{\Z}{\mathbf{Z}}
\newcommand{\R}{{\mathrm R}}
\newcommand{\M}{{\mathrm M}}
\newcommand{\A}{{\mathbf{A}}}
\numberwithin{equation}{section}
\begin{document}
 \title[On syntomic regulators I: constructions]
 {On syntomic regulators I: constructions}
 \author{Wies{\l}awa Nizio{\l}}
 \address{CNRS, UMPA, \'Ecole Normale Sup\'erieure de Lyon, 46 all\'ee d'Italie, 69007 Lyon, France}
\email{wieslawa.niziol@ens-lyon.fr} \date{\today}
\thanks{This research was supported in part by  the NSF grant DMS0703696 and  the grant ANR-14-CE25.}
 \maketitle 
 \begin{abstract}
 We show that  classical Chern classes from higher ($p$-adic) $K$-theory to syntomic cohomology extend to logarithmic syntomic cohomology. These Chern classes are compatible -- in a suitable sense -- with addition, products, and $\lambda$-operations. They are also compatible with the canonical Gysin sequences and, via period maps, with logarithmic \'etale Chern classes. Moreover, they induce logarithmic crystalline Chern classes. This uses as a critical new ingredient the recent comparison of syntomic cohomology with  $p$-adic nearby cycles \cite{CN} and $p$-adic motivic cohomology \cite{EN}.
 \end{abstract}
 \tableofcontents
 \section{Introduction}
  The study of syntomic regulators,  a $p$-adic analogue of Deligne's regulators,   is important in $p$-adic Hodge Theory \cite{N4,N2} and in computations of special values of $p$-adic $L$-functions \cite{Sw,BDR}. In this paper we show that syntomic regulators have a well-behaved logarithmic version that is compatible, via the period map,  with logarithmic \'etale regulators. In the sequel to this paper we will use it to extend results from proper schemes to open schemes with normal crossing compactification.
 
 Let $p$ be a prime. Let $K$ be a complete discrete valuation field of mixed characteristic $(0,p)$ with perfect residue field; let $\so_K$ be its ring of integers. 
 Let $X$ be a semistable scheme over $\so_K$ and let  $D$ be the  canonical horizontal normal crossings divisor on $X$. We equip $X$ with the   log-structure associated to $D$ and the special fiber. Alternatively, $X$ could be a smooth scheme over $\so_K$ with the log-structure associated to a divisor $D$ as above.  Denote by $X_0$ the special fiber of $X$ and by $X_n$ -- the reduction mod $p^n$ of $X$.

 For $n\geq 1,i\geq 0$, let $$
 \sss^{\prime}_n(i)_X:=(U\to X) \mapsto \Cone(F^i\R\Gamma_{\crr}(U_n)\lomapr{p^i-\phi}\R\Gamma_{\crr}(U_n))[-1]
 $$
 be the syntomic cohomology complex of sheaves on the \'etale site of $X$. The crystalline cohomology used is absolute, i.e. over $\Z/p^n$.
  Let $N$ be a constant as in \cite[Theorem 1.1]{CN}.  Recall that, for $p\geq 3$,  if  $K$ contains enough roots of unity\footnote{See Theorem \ref{input1} for what this means.} then $N$ depends only on $p$; in general it depends on $p$ and $e$ -- the absolute ramification index of $K$.

   The main goal of this paper is to prove the following theorem. 
 \begin{theorem}
 \label{boston1} Fix $m\geq N$. Let $U=X\setminus D$. 
 There exists a functorial compatible family of  logarithmic syntomic Chern classes\footnote{They vary with $m$ in the obvious way.}
 $$
 \overline{c}^{\synt}_{ij}:\quad K_j(U, \Z/p^n)\to H^{2i-j}_{\synt}(X,\sss^{\prime}_n(i)),\quad j\geq 2,
 $$
 such that 
 \begin{enumerate}
 \item they 
  are compatible -- in a suitable sense -- with addition, products, and $\lambda$-operations.
  \item   they are twisted, i.e., in the case of trivial divisor $D$ the Chern class $ \overline{c}^{\synt}_{ij}$ is equal to the $p^{mi}$-multiple of the classical syntomic Chern class. 
 \item 
  they  are compatible with the canonical Gysin sequences, i.e., the Gysin sequences associated to $D$.
\item  they are compatible, via the period maps $\alpha_{*,*}$ of Fontaine-Messing, 
  with \'etale Chern classes, i.e., the following diagram commutes
   $$
   \xymatrix@C=50pt{
   K_j(U, \Z/p^n)\ar[d]^{\jmath^*} \ar[r]^-{\overline{c}^{\synt}_{ij}}  & H^{2i-j}_{\synt}(X,\sss^{\prime}_n(i)) \ar[d]^{\alpha_{2i-j,i} } \\
    K_j(U_K, \Z/p^n) \ar[r]^-{p^{(m+1)i}\overline{c}^{\eet}_{ij}}  & H^{2i-j}_{\eet}(U_K,\Z/p^n(i)),   }
   $$
   where $\jmath:U_K\hookrightarrow U$ is the natural open immersion.
   \end{enumerate} 
   Similarly, there exists a functorial compatible family of  logarithmic syntomic Chern classes
 $$
 c^{\synt}_{ij}:\quad K_j(U)\to H^{2i-j}_{\synt}(X,\sss^{\prime}_n(i)),\quad j\geq 0,
 $$
 with the above listed properties.
 \end{theorem}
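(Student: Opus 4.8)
\medskip

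\noindent\emph{Strategy.} The plan is to construct the logarithmic syntomic Chern classes by transporting the classical motivic Chern classes along the comparison maps of \cite{CN,EN}. The key point is that $U=X\setminus D$ is an ordinary (open) subscheme of $X$, so it carries ordinary algebraic $K$-theory and the standard Gillet--Soul\'e machinery applies to it with no logarithmic complications: there are functorial Chern classes
$$
c^{\mathrm{mot}}_{ij}\colon\ K_j(U,\Z/p^n)\longrightarrow H^{2i-j}_{\mathrm{mot}}(U,\Z/p^n(i)),\qquad j\ge 2,
$$
and their integral analogues $c^{\mathrm{mot}}_{ij}\colon K_j(U)\to H^{2i-j}_{\mathrm{mot}}(U,\Z(i))$, $j\ge 0$, built from the motivic spectral sequence together with the ring, $\lambda$- and localization structures on Bloch's higher Chow complexes. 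These satisfy the analogues of (1)--(4) with \emph{no} twist and with the cycle class map to \'etale cohomology of $U_K$ in place of the period map; this is standard, and the range $j\ge 2$ in the finite-coefficient case is inherited from the corresponding restriction in the Soul\'e-type construction with finite coefficients. One could alternatively try to verify directly that logarithmic syntomic cohomology is a twisted duality theory in Gillet's sense (projective bundle formula, homotopy properties, etc.), but homotopy invariance fails for log schemes, so the route through the motivic cohomology of $U$ is the efficient one.

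\medskip

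\noindent\emph{The comparison maps.} By \cite{EN} there is a natural, multiplicative map $\R\Gamma_{\mathrm{mot}}(U,\Z/p^n(i))\to \R\Gamma_{\synt}(X,\sss^{\prime}_n(i))$ --- and likewise with integral coefficients after $p$-completion --- which becomes an isomorphism in a range of degrees up to bounded torsion and is naturally defined only up to a denominator that is a power of $p$ controlled by the constant $N$. By \cite{CN} the target is identified, again in a range, with a truncation of the $p$-adic nearby cycles of $U_K$; it is this identification that furnishes, on the syntomic side, both the Gysin sequence attached to $D$ needed for (3) and the Fontaine--Messing period map $\alpha_{2i-j,i}$ to $H^{2i-j}_{\eet}(U_K,\Z/p^n(i))$. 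In the appropriate normalization the comparisons of \cite{CN,EN} are compatible with products, with the respective Gysin sequences and with the cycle class / period maps: in weight $i$ one has $\alpha\circ(\text{comparison})=p^{i}\cdot(\text{\'etale cycle class})$. We now \emph{define}, for $m\ge N$,
$$
\overline{c}^{\synt}_{ij}\ :=\ (\text{comparison})\circ \bl p^{mi}\, c^{\mathrm{mot}}_{ij}\br,\qquad
c^{\synt}_{ij}\ :=\ (\text{comparison})\circ(\text{reduction mod }p^n)\circ \bl p^{mi}\, c^{\mathrm{mot}}_{ij}\br,
$$
the twist by $p^{mi}$ being exactly what is needed to turn these into honestly defined integral maps and, as we check below, what forces (2) and (4).

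\medskip

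\noindent\emph{Checking the properties.} Additivity and the Whitney sum formula for $\overline{c}^{\synt}_{ij}$ hold because they hold for $c^{\mathrm{mot}}_{ij}$, because the comparison map is multiplicative, and because $p^{m(i+i')}=p^{mi}p^{mi'}$ makes the weight-graded twist compatible with cup products; compatibility with $\lambda$-operations follows the same way, the rescaling by $p^{mi}$ on the weight-$i$ piece being an admissible operation for the $\lambda$-structure --- which is precisely why the twist is taken to be $p^{mi}$. For (2): when $D=\varnothing$ the composite of the comparison map of \cite{EN} with $c^{\mathrm{mot}}_{ij}$ is, by the compatibility recorded there, the classical syntomic Chern class, so $\overline{c}^{\synt}_{ij}$ is then its $p^{mi}$-multiple. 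For (3): the motivic localization sequence for $U\hookrightarrow X$, with its residue along the components of $D$, matches --- via the comparisons of \cite{EN,CN} --- the syntomic Gysin sequence of $D$, and $c^{\mathrm{mot}}_{ij}$ is compatible with the former, hence $\overline{c}^{\synt}_{ij}$ with the latter. For (4): the \'etale Chern class $\overline{c}^{\eet}_{ij}\circ\jmath^{*}$ factors as $(\text{\'etale cycle class})\circ c^{\mathrm{mot}}_{ij}$, and therefore
$$
\alpha_{2i-j,i}\circ \overline{c}^{\synt}_{ij}
=\alpha_{2i-j,i}\circ(\text{comparison})\circ p^{mi}c^{\mathrm{mot}}_{ij}
=p^{mi}\cdot p^{i}\cdot(\text{\'etale cycle class})\circ c^{\mathrm{mot}}_{ij}
=p^{(m+1)i}\,\overline{c}^{\eet}_{ij}\circ\jmath^{*},
$$
which is the assertion of (4). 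The integral family $c^{\synt}_{ij}$, $j\ge 0$, is obtained by the same recipe from the integral motivic Chern classes and inherits the same properties.

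\medskip

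\noindent\emph{The main obstacle.} The deep $p$-adic Hodge-theoretic content is entirely contained in \cite{CN} and \cite{EN} and is taken as given. Granting it, the remaining work is organizational but not wholly routine: one must realize the comparison maps compatibly with \emph{all} of the relevant structures at once (ring, $\lambda$, Gysin, period) and coherently as $i$ and $n$ vary, and one must match with care the logarithmic structures on $X$, on the strata of $D$ and on the auxiliary schemes entering the Gysin construction, so that the syntomic Gysin sequence of property (3) genuinely corresponds to the motivic localization sequence for $U\hookrightarrow X$. I expect this Gysin compatibility, together with the coherence of the $p^{mi}$-twist across the $\lambda$-structure, to be the most delicate points.
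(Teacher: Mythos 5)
Your route is genuinely different from the paper's, and it has a real gap at its center. The paper does \emph{not} construct the classes by pushing motivic Chern classes of $U$ forward along the cycle class map of \cite{EN}. Instead it constructs classical (non-logarithmic) syntomic-\'etale Chern classes on $U$ directly (via the computation of the syntomic and syntomic-\'etale cohomology of $BGL_l$), and then \emph{lifts} the resulting map $j_*K_{U}\to \R j_*\prod_i\sk(2i,\wt{E}^{\prime}_n(i)_{U})$ through the morphism $E^{\prime}_n(i)_X(D)\to \R j_*E^{\prime}_n(i)_{U}$, which is only a $p^{Ni}$-quasi-isomorphism in the relevant range; that lifting step is what forces the twist $p^{mi}$, and the comparison with motivic cohomology from \cite{EN} is used only to establish the \emph{properties} of $\se^{\prime}_n(i)_{\Nis}$ (purity $\R i^!E^{\prime}_n(i)_X(D)\simeq 0$ up to controlled torsion, and agreement with the cohomology of the open part) that make the lift possible and unique up to $p^{Ni}$. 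In your setup the composite $\cl^{\synt}_i\circ c^{\M}_{ij}$ would be honestly defined with no denominators, so your stated reason for the twist is not right; you would be inserting $p^{mi}$ only to match the normalization of the statement.

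The gap is property (3). Compatibility of higher Chern classes with the Gysin/localization sequence is \emph{not} part of the standard Gillet--Soul\'e package (which gives functoriality for pullbacks, Whitney sum, products, $\lambda$-compatibility); it is a Grothendieck--Riemann--Roch-without-denominators statement about the interaction of $C_{\jcdot}$ with $i_*$ on $K$-theory and $i_!$ on cohomology, and it only holds up to explicit universal constants (factorials, the constants $M(d,i,j)$ controlling the two $\gamma$-filtrations, and further powers of $p$). This is precisely the content of Sections 4.3 and 6 of the paper: Adams--Riemann--Roch without denominators for $K$-theory mod $p^n$, the behaviour of $\gamma$-filtrations in localization sequences, the projection formula and Tor-independence for the log syntomic-\'etale complexes, and the deformation-to-the-normal-cone argument of Theorem 6.8, all with careful bookkeeping of denominators. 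Your proposal defers all of this to an unproved ``standard'' compatibility of motivic Chern classes with the localization sequence for $U\hookrightarrow X$ over a mixed-characteristic Dedekind base with finite coefficients, which is not in the literature in the form you need; and even granting it, you would still have to prove that the cycle class map of \cite{EN} intertwines the motivic residue maps with the syntomic-\'etale ones --- itself delicate (the paper's Lemma 6.1 builds the residue map on $E^{\prime}_n(r)$ by a nontrivial gluing following Tsuji exactly so that this compatibility holds). So the outline is coherent, and correctly isolates the hard points, but the hard points are the theorem.
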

 \begin{remark} From the above theorem we obtain   $p$-adic logarithmic Chern classes. More specifically, 
 set 
 \begin{align*}
  K_j(U, \Q_p) :=\Q\otimes \invlim_n K_j(U,\Z/p^n),\quad 
   H^{2i-j}_{\synt}(X,\sss^{\prime}_{\Q}(i)) := \Q\otimes\invlim_n  H^{2i-j}_{\synt}(X,\sss^{\prime}_n(i)).
\end{align*}
The limit of Chern classes $\overline{c}^{\synt}_{ij}$ divided by $p^{mi}$ yields  (untwisted) logarithmic Chern classes 
$$
 \overline{c}^{\synt}_{ij}:\quad K_j(U, \Q_p)\to H^{2i-j}_{\synt}(X,\sss^{\prime}_{\Q}(i)),\quad j\geq 2.
$$
\end{remark}

 The strategy for proving the above theorem is well-known.  In the simplest case,  logarithmic cohomology is equal to the cohomology of the open set, where the log-structure is trivial. The definition of the logarithmic Chern classes is then immediate. To treat compatibility with Gysin sequences, one proves  a version of Grothendieck-Riemann-Roch to extend  the classical universal Chern classes to the logarithmic ones that are compatible (by definition) with the Gysin diagram and then one  uses cohomological purity to show  uniqueness of such an extension. This works, for example, for logarithmic $\ell$-adic \'etale cohomology.

 In general, logarithmic cohomology does not have the above mentioned property. This is the case, for example, for crystalline cohomology. Then 
one constructs a well-behaved new cohomology that dominates the given cohomology and one constructs the universal logarithmic Chern classes into the new  cohomology having all the necessary properties. For crystalline cohomology over a field, the new cohomology is the logarithmic de Rham-Witt cohomology.

  Logarithmic syntomic cohomology $\sss^{\prime}_n(*)$ by itself does not behave well enough to employ the above strategy. In its classical form  it satisfies a version of the projective space theorem and gives  the correct even-degree cohomology groups of the classifying spaces. Hence it has well-behaved classical Chern classes.  But it is too weak to allow those to be extended to  logaritmic cohomology: it is not equal to  syntomic cohomology of the open set, where the log-structure is trivial, nor does it satisfy purity. 
  
   To obtain a well-behaved logarithmic cohomology dominating syntomic cohomology one replaces syntomic cohomology by (Nisnevich) syntomic-\'etale cohomology $\se^{\prime}_n(*)_{\Nis}$. The latter is defined by gluing  \'etale cohomology of the Tate twist $\Z/p^n(i)$ on the generic fiber  with the syntomic cohomology $\sss^{\prime}_n(i)$ of the formal special fiber, projecting the result  down to the Nisnevich site, and truncating it at $i$. We have a natural map $\se^{\prime}_n(i)_{\Nis}\to \sss^{\prime}_n(i)_{\Nis}$, where the target is defined by projecting syntomic cohomology $\sss^{\prime}_n(i)$ to the Nisnevich site and truncating it at $i$.  Hence it suffices to construct logarithmic universal Chern classes with values in  syntomic-\'etale cohomology. This cohomology  has the properties necessary for the above strategy to work. To get an idea why this could be the case, let us look at the simpler case when $X$ is semistable. Recall that, by $p$-adic comparison theorems \cite{CN}, the logarithmic syntomic cohomology $\sss^{\prime}_n(i)$  is approximated (via the Fontaine-Messing period map) by   the $p$-adic nearby cycles $i^*\tau_{\leq j}\R j_*\Z/p^n(i)$, $i: X_0\hookrightarrow X$, $j: U_K\hookrightarrow X$. It follows that the syntomic-\'etale cohomology $\se^{\prime}_{n}(i)_{\Nis}$ is approximated by
the  truncated \'etale cohomology projected onto the Nisnevich site: $\tau_{\leq i}\R (j\varepsilon)_*\Z/p^n(i)_{U_K}$, $\varepsilon: U_{K,\eet}\to U_{K,\Nis}$. But by the Beilinson-Lichtenbaum conjecture we have  a quasi-isomorphism
 $\tau_{\leq j}\R \varepsilon_*\Z/p^n(i)_{U_K}\simeq \Z/p^n(j)_{\M,U_K}$, where $\Z/p^n(i)_{\M}$ is the motivic cohomology. Hence the syntomic-\'etale cohomology $\se^{\prime}_n(i)_{\Nis}$ is approximated 
 by $\tau_{\leq i}\R j_*\Z/p^n(i)_{\M,U_K}=\R j_*\Z/p^n(i)_{\M,U_K}$. In particular it is equal to syntomic-\'etale cohomology of the open set, where the log-structure is trivial. 
Moreover, since motivic cohomology has purity so does  syntomic-\'etale cohomology.   We got all the properties we wanted. In the more difficult case of good reduction  the role of the motivic cohomology $\Z/p^n(i)_{\M,U_K}$ is played  by the motivic cohomology $\Z/p^n(i)_{\M,U}$; that this can be done follows  from the comparison between syntomic cohomology and $p$-adic motivic nearby cycles proved in \cite{EN}. 

  The approximation mentioned above is done up to  universal constants (that can be controlled). This results in the twisting of Chern classes in the above theorem. Keeping track of those constants is the most tedious part of the paper. For small Tate twists $i$ these constants can be taken to be trivial.
\begin{remark}
It seems like an overkill to use a very difficult theorem like the Beilinson-Lichtenbaum conjecture to prove properties of such a seemingly simple object as the truncated \'etale cohomology $\tau_{\leq i}\R j_*\Z/p^n(i)$. Yet, even to prove the projective space theorem  to define  classical Chern classes with values in this cohomology and to construct Gysin sequences one uses \cite[Theorem 4.1]{Sa} the computations of mod $p$ nearby cycles via symbols due to Bloch-Kato \cite{BK} which are closely related to the Bloch-Kato conjecture (and hence to the Beilinson-Lichtenbaum conjecture)\footnote{It is interesting that it is much easier to prove projective space theorem and to construct Gysin sequences for syntomic cohomology: it simply reduces to the same for filtered crystalline cohomology, where it is immediate.}.
\end{remark}
   \begin{remark}
 There is another strategy that we could have employed to construct logarithmic syntomic Chern classes. One starts with the classical Chern classes, proves the Grothendieck-Riemann-Roch theorem for them, and then uses it and Gysin seuqences to induce logarithmic Chern classes on the complement of the divisor. This has to be done one irreducible divisor at a time; in particular, the logarithmic Chern classes have to have all the properties necessary for the standard proof of the Grothendieck-Riemann-Roch theorem to work (which, basically means, that they have to be compatible with the action of $K_0$-groups in a suitable sense).
 
  In \cite{Sw}, Somekawa tried to make this strategy work. His arguments work for removing one irreducible divisor but fail on the inductive step: he was not able to show that so obtained logarithmic Chern classes have good properties. This seems highly nontrivial. This paper shows that this strategy actually works but with syntomic-\'etale cohomology in place of syntomic cohomology. Purity of syntomic-\'etale cohomology is the key property that allows the inductive step. But this property also implies that logarithmic cohomology is equal to   cohomology of the open set, where the log-structure is trivial. Hence we have chosen this (instead of Gysin sequences) as the starting point of  the construction of logarithmic syntomic Chern classes. Compatibility with Gysin sequences is then a theorem. 
 \end{remark}
\subsubsection{Structure of the paper}    In Section $2$ we review the definitions and basic properties of syntomic and syntomic-\'etale cohomologies and recall their relationship to $p$-adic nearby cycles and $p$-adic motivic cohomology. In Section $3$ we study cohomology of classifying spaces: we compute their (Nisnevich) syntomic cohomology and we show that their Nisnevich syntomic-\'etale  and   syntomic cohomologies agree  in even degrees. 
In Section $4$ we review the basic facts concerning higher $K$-theory of (simplicial) schemes and  operations on $K$-theory. In Sections $5$ we define and study properties of classical Nisnevich syntomic and syntomic-\'etale Chern classes -  by standard arguments this builds on the computations done  in Section $3$. Then we introduce logarithmic syntomic-\'etale Chern classes and discuss purity. In Section $6$ we study compatibility of these Chern classes with Gysin sequences. We start with proving the  properties of syntomic-\'etale cohomology that are needed for the proof of the Grothendieck-Riemann-Roch theorem. The proof of the theorem itself follows along standard lines. 
\begin{acknowledgments}I would like to thank Fr\'ed\'eric D\'eglise and Marc Levine for answering   questions concerning motivic cohomology.
Parts of this paper were written during my visits to the Institut Henri Poincar\'e in Paris, the Institut de Math\'ematiques de Jussieu,
Columbia University, and the IAS at Princeton. 
I would like to thank these institutions  for their support, hospitality,
 and for the  wonderful working conditions they have supplied. Much of this paper was written in Caf\'e Nizio{\l} in downtown Ko{\l}obrzeg, Poland. I would like to thank Jaros{\l}aw  Nizio{\l} for creating such a pleasant coffee house  to work at.
 \end{acknowledgments}
 \subsubsection{Notation and Conventions}Unless stated otherwise we work in the category of fine log-schemes. 

 \begin{definition}
\label{1saint}
Let $N\in {\mathbf N}$. For a morphism $f: M\to M^{\prime}$ of ${\mathbf Z}_p$-modules, we say that $f$ is 
{\it $p^N$-injective} (resp. {\it $p^N$-surjective}) if its kernel (resp. its cokernel) is annihilated by $p^N$ 
and we say that $f$ is {\it $p^N$-isomorphism} if it is $p^N$-injective and $p^N$-surjective. 
We define in the same way the notion of {\it $p^N$-exact sequence} or $p^N$-acyclic complex 
(complex whose cohomology groups are annihilated by $p^N$) as well as the notion of {\it $p^N$-quasi-isomorphism}
 (map in the derived category that induces a $p^N$-isomorphism on cohomology). 
\end{definition}
 
\section{Syntomic cohomology}Let $\so_K$ be a complete discrete valuation ring with fraction field
$K$  of characteristic 0 and with perfect
residue field $k$ of characteristic $p$. Let $\varpi$ be a uniformizer of $\so_K$. Let $\overline{\so}_K$ denote the integral closure of $\so_K$ in $\ovk$. Let
$W(k)$ be the ring of Witt vectors of $k$ with 
 fraction field $F$ (i.e, $W(k)=\so_F$); let $e$ be the ramification index of $K$ over $F$.  Set $G_K=\Gal(\overline {K}/K)$, and 
let $\sigma$ be the absolute
Frobenius on $W(\overline {k})$. 
For a $\so_K$-scheme $X$, let $X_0$ denote
the special fiber of $X$. We will denote by $\so_K$ and 
${\so_K}^{\times}$ the scheme $\Spec ({\so_K})$ with the trivial and the canonical
(i.e., associated to the closed point)
log-structure, respectively.

  In this section we will briefly review the definitions of syntomic and syntomic-\'etale cohomologies and their basic properties.
 For details we refer the reader to \cite[2]{Ts}, \cite[2]{EN}.
\subsection{Syntomic cohomology}
 For a log-scheme $X$ we denote by $X_{\synt}$ the small log-syntomic site of $X$. 
For a log-scheme $X$ log-syntomic over $\Spec(W(k))$, define 
$$
\so^{\crr}_n(X) =H^0_{\crr}(X_n,\so_{X_n}),\qquad 
\sj_n^{[r]}(X) =H^0_{\crr}(X_n,\sj^{[r]}_{X_n}),
$$
where $\so_{X_n}$ is the structure sheaf of the absolute log-crystalline site (i.e., over $W_n(k)$), 
$\sj_{X_n}=\kker(\so_{X_n/W_n(k)}\to \so_{X_n})$, and $\sj^{[r]}_{X_n}$ is its $r$'th divided power of $\sj_{X_n}$.
Set $\sj^{[r]}_{X_n}=\so_{X_n}$ if $r\leq 0$.
 There is a  canonical, compatible with Frobenius,  and 
functorial isomorphism 
$$
H^*(X_{\synt},\sj_n^{[r]})\simeq H^*_{\crr}(X_n,\sj^{[r]}_{X_n}).
$$
It is easy to see that $\phi(\sj_n^{[r]} )\subset p^r\so^{\crr}_n$ for $0\leq r\leq p-1$. This fails in general and we modify
$\sj_n^{[r]}$:  $$\sj_n^{<r>}:= \{x\in \sj_{n+s}^{[r]}\mid \phi(x)\in p^r\so^{\crr}_{n+s}\}/p^n ,$$
for some $s\geq r$.
This definition is independent of $s$. We can define
the divided  Frobenius $\phi_r="\phi/p^r": \sj_n^{<r>} \to \so^{\crr}_n$.
Set $$\sss_n(r):=\Cone(\sj_n^{<r>} \stackrel{1-\phi_r}{\longrightarrow}\so^{\crr}_n)[-1].$$

     We will write $\sss_n(r)$ for the syntomic sheaves on $X_{m,\synt}$, $m\geq n$,  as well as on $X_{\synt}$. We will also need the "`undivided"' version of syntomic complexes of sheaves:
$$\sss'_n(r):=\Cone(\sj_n^{[r]} \stackrel{p^r-\phi}{\longrightarrow}\so^{\crr}_n)[-1]$$
as well as their twists
$$\sss^{i}_n(r):=\Cone(\sj_n^{[r]} \verylomapr{p^{r+i}-p^i\phi}\so^{\crr}_n)[-1], \quad  i\geq 0.
$$
We note that the following sequence is exact for $r\geq 0$
\begin{equation}
\label{exact}
\begin{CD}
0@>>> \sss_n(r)@>>> \sj_n^{<r>} @>1-\phi_r >>\so^{\crr}_n @>>> 0.
\end{CD}
\end{equation}
So, actually,
$$\sss_n(r):=\kker(\sj_n^{<r>} \stackrel{1-\phi_r}{\longrightarrow}\so^{\crr}_n).$$

 The natural map $\omega: \sss^i_n(r)\to \sss_n(r)$ induced by the maps $p^{r+i}: \sj_n^{[r]}\to \sj_n^{<r>}$ and $\id : \so^{\crr}_n \to \so^{\crr}_n $ has kernel and cokernel  killed by $p^{r+i}$. 
 So does the map $\tau: \sss_n(r)\to \sss^i_n(r)$ induced by the maps $\id : \sj_n^{<r>}\to \sj_n^{[r]}$ and $p^{r+i} : \so^{\crr}_n \to \so^{\crr}_n $. We have $\tau\omega=\omega\tau=p^{r+i}$. 
 There are also natural maps $\omega^a:\sss^a_n(r)\to \sss_n^{a+1}(r)$ induced by $\id$ on $\sj_n^{[r]}$ and by multiplication by $p^{a+1}$ on $\so^{\crr}_n$. We set $\omega^{\prime}=\omega^0$. 
 Write $\omega_a: \sss^{a+1}_n(r)\to \sss^a_n(r)$ for the map induced by $\id$ on $\so_n^{\crr}$ and by multiplication by $p^{a+1}$ on $\sj_n^{[r]}$. We have $\omega^a\omega_a=\omega_a\omega^a=p^{a+1}$.

   We have versions of complexes $\sss_n(r)$ and $\sss^{i}_n(r)$  on the large syntomic sites \cite[4.3]{Ts1}. 
If it does not cause confusion, we will  write $\sss_n(r)$, $\sss^{i}_n(r)$ for all these complexes as well as  for $\R\ve_*\sss_n(r)$, $\R\ve_*\sss^{i}_n(r)$, respectively, 
where $\varepsilon: X_{n,\synt}\to X_{n,\eet}$ is the canonical projection to the \'etale site (or sometimes to the Nisnevich  site)

   For $X$ a fine and saturated log-smooth log-scheme over $\so_K$ and $0\leq r\leq p-2$, the natural map of complexes of sheaves on the \'etale site of $X_0$  
    $$
    \tau_{\leq r}\sss_n(r)\to \sss_n(r)
$$
is a quasi-isomorphism. 
For $X$ semistable over $\so_K$  and $r\geq 0$, the  natural map of complexes of sheaves on the \'etale site of $X_0$  
    $$
    \tau_{\leq r}\sss^{\prime}_n(r)\to \sss^{\prime}_n(r)
$$
is  a $p^{Nr}$-quasi-isomorphism for a universal constant $N$   \cite[Prop. 3.12]{CN}. 
\subsubsection{Syntomic cohomology and differential forms}
Let $X$ be a
syntomic  scheme over $W(k)$. Recall the
differential definition
\cite{K} of syntomic cohomology. Assume first
that we have an immersion $\iota:X\hookrightarrow Z$ over $W(k)$ such that
$Z$ is a smooth $W(k)$-scheme endowed with a compatible system of liftings of the Frobenius
$\{F_n:Z_n\to Z_n\}$. Let $D_n=D_{X_n}(Z_n)$ be the PD-envelope of $X_n$ in $Z_n$ (compatible with the canonical PD-structure on $pW_n(k)$) and
$J_{D_n}$ the ideal of $X_n$ in $D_n$. Set $J^{<r>}_{D_n}:=\{a\in J_{D_{n+s}}^{[r]}|\phi(a)\in p^r\so_{D_{n+s}}\}/p^n$ for some $s\geq r$.
For $0\leq r\leq p-1$, $J_{D_n}^{<r>}=J_{D_n}^{[r]}$.
 This definition is independent of $s$.
Consider the following
complexes of sheaves on $X_{\eet}$. 
\begin{align}
 \label{lifted}
S_n(r)_{X,Z}: & =\Cone(J_{D_n}^{<r-{\scriptscriptstyle\bullet}>}\otimes
\Omega\kr_{Z_n}\stackrel{ 1-\phi_r}{\longrightarrow} \so_{D_n}\otimes
\Omega\kr_{Z_n})[-1],\\
S^{i}_n(r)_{X,Z}: & =\Cone(J_{D_n}^{[r-{\scriptscriptstyle\bullet}]}\otimes
\Omega\kr_{Z_n}\verylomapr{p^{r+i} -p^i\phi}\so_{D_n}\otimes
\Omega\kr_{Z_n})[-1],\notag
\end{align}
where $\Omega\kr_{Z_n}:=\Omega\kr_{Z_n/W_n(k)}$ and $\phi_r $ is "`$\phi/p^r$"' (see \cite[2.1]{Ts} for details). The complexes
$S_n(r)_{X,Z}$, $S^i_n(r)_{X,Z}$ are, up to canonical quasi-isomorphisms, independent of
the choice of $\iota$ and $\{F_n\}$ (and we will omit the subscript $Z$ from the notation). Again, the natural maps $\omega: S^i_n(r)_X \to S_n(r)_X$ and 
$\tau: S_n(r)_X \to S^i_n(r)_X$ have kernels and cokernels annihilated by $p^{r+i}$.

  In general, immersions as above exist \'etale locally, and we define
$S_n(r)_X\in
{\mathbf D}^+(X_{\eet},{\mathbf Z}/p^n)$ by gluing the local complexes, and  $S_n(r)_{X_{\overline{\so}_K}}
\in {\mathbf D}^+((X_{\overline{\so}_K})_{\eet},{\mathbf Z}/p^n)$ as the
inductive limit of $S_n(r)_{X_{{\so_K}'}}$, where ${\so_K}'$ varies over the
integral closures of $\so_K$ in all finite extensions of $K$ in $\ovk$. Similarly, we define
 $S^i_n(r)_X$ and $S^i_n(r)_{X_{\overline{\so}_K}}$.

 Let now $X$ be a log-syntomic scheme over $W(k)$.
Using log-crystalline cohomology,  the above construction of syntomic complexes goes through almost
verbatim (see \cite[2.1]{Ts} for details) to yield
the logarithmic analogs $S_n(r)$ and  $S^{i}_n(r)$ on $X_{\eet}$. 
There are natural maps
$$
\varepsilon:  H^i(
X_{\eet},S_n(r))\to  H^i(
X_{\eet}^{\times},S_n(r)),\quad \varepsilon:  H^i(
X_{\eet},S^{i}_n(r))\to H^i(
X_{\eet}^{\times},S^{i}_n(r)),
$$
where, for clarity,  we wrote $X^{\times}$ for the log-scheme $X$ with its full log-structure. In this paper we are often interested in log-schemes coming from a regular syntomic scheme $X$ over $W(k)$ and a relative simple (i.e., with no self-intersections) normal crossing divisor $D$  on $X$. In such cases we will write $S_n(r)_X( D)$ and $S^{i}_n(r)_X(D)$ for the  syntomic complexes and use the Nisnevich topology instead of the \'etale one. We will write $H^*(X,S_n(*)_X( D))$ and $H^*(X,S^{i}_n(*)_X(D))$ for the corresponding cohomology groups. We will employ the same convention while talking about log-\'etale cohomology $H^*(X_K^{\times}, {\mathbf Z}/p^n(*))$: 
we will write $H^*(X_K(D_K), {\mathbf Z}/p^n(*))$ instead.
\subsubsection{Products}
  We need to discuss products. Assume that we are in the lifted situation (\ref{lifted}). Then we have a product structure
$$\cup: S^{i}_n(r)_{X,Z}\otimes S^{j}_n(r^{\prime})_{X,Z}\to S^{i+j}_n(r+r^{\prime})_{X,Z}, \quad r,r^{\prime},i,j\geq 0,
$$
defined by the following formulas
\begin{align*}
(x,y)\otimes (x^{\prime},y^{\prime}) & \mapsto (xx^{\prime}, (-1)^ap^{r+i}xy^{\prime}+yp^j\phi(x^{\prime}))\\
(x,y)\in S^{i}_n(r)_{X,Z}^a & =(J_{D_n}^{[r-a]}\otimes
\Omega^a_{Z_n})\oplus(\so_{D_n}\otimes
\Omega^{a-1}_{Z_n}),\\
(x^{\prime},y^{\prime})\in S^{j}_n(r^{\prime})_{X,Z}^b & = (J_{D_n}^{[r^{\prime}-b]}\otimes
\Omega^b_{Z_n})\oplus(\so_{D_n}\otimes
\Omega^{b-1}_{Z_n}).
\end{align*}
Globalizing, we obtain the product structure
$$\cup: S^{i}_n(r)_{X}\otimes ^{{\mathbb L}}S^{j}_n(r^{\prime})_{X}\to S^{i+j}_n(r+r^{\prime})_{X}, \quad r,r^{\prime},i,j\geq 0.
$$
This product is clearly compatible with the crystalline product.

   Similarly, we have the product structures
$$\cup: S_n(r)_{X,Z}\otimes S_n(r^{\prime})_{X,Z}\to S_n(r+r^{\prime})_{X,Z}, \quad r,r^{\prime}\geq 0,
$$
defined by the formulas
\begin{align*}
(x,y)\otimes (x^{\prime},y^{\prime}) & \mapsto (xx^{\prime}, (-1)^axy^{\prime}+y\phi_{r^{\prime}}(x^{\prime}))\\
(x,y)\in S_n(r)_{X,Z}^a & =(J_{D_n}^{<r-a>}\otimes
\Omega^a_{Z_n})\oplus(\so_{D_n}\otimes
\Omega^{a-1}_{Z_n}),\\
(x^{\prime},y^{\prime})\in S_n(r^{\prime})_{X,Z}^b & = (J_{D_n}^{<r^{\prime}-b>}\otimes
\Omega^b_{Z_n})\oplus(\so_{D_n}\otimes
\Omega^{b-1}_{Z_n}).
\end{align*}
Globalizing, we obtain the product structure
$$\cup: S_n(r)_{X}\otimes ^{{\mathbb L}}S_n(r^{\prime})_{X}\to S_n(r+r^{\prime})_{X}, \quad r,r^{\prime}\geq 0.
$$
This product is also clearly compatible with the crystalline product.
  
 The above product structures are compatible with the maps $\omega$ and the maps $\omega_0$.
On the other hand the maps $\tau$ are, in general, not compatible with products.
\subsubsection{Syntomic symbol maps}
   Let $X$ be a regular syntomic scheme over $W(k)$ with a divisor $D$ with relative simple normal crossings.
Recall that there are first Chern class maps defined by Kato and Tsuji \cite[2.2]{Ts1}
\begin{align*}
{c}_1^{\synt}:\quad & j_*\so^*_{X\setminus D}[-1]\to i_*j_*\so^*_{{(X\setminus D)}_{n+1}}[-1]\to
S_n(1)_X(D),\\
{c}_1^{\synt}:\quad & j_*\so^*_{X\setminus D}[-1]\to i_*j_*\so^*_{{(X\setminus D)}_{n}}[-1]\to
S^{\prime}_n(1)_X(D),
\end{align*}
that are compatible, i.e., the following diagram commutes
$$
\xymatrix{
j_*\so^*_{X\setminus D}[-1] \ar[d]^{p{c}_1^{\synt}} \ar[r]^{{c}_1^{\synt}}  & S^{\prime}_n(1)_X(D)\ar[dl]^{\omega}\\
S_n(1)_X(D) &\\
}
$$
Here $j:X\setminus D\hookrightarrow X$ is the natural immersion. 
In the lifted situation these classes are defined in the following way. Let $C_n$ be the complex
$$
(1+J_{D_n}\to M^{\gp}_{D_n})\simeq j_*\so^*_{(X\setminus D)_n}[-1],
$$
where, for a log-scheme $X$, $M_X$ denotes its log-structure. 
 The Chern class maps
\begin{equation}
 \label{symbol}
{c}_1^{\synt}:j_*\so^*_{{(X\setminus D)}_{n}}[-1]\to
S^{\prime}_n(1)_X(D), \quad
{c}_1^{\synt}: j_*\so^*_{{(X\setminus D)}_{n+1}}[-1]\to
S_n(1)_X(D),
\end{equation}
 are defined by the morphisms of complexes $$C_n\to S^{\prime}_n(1)_{X,Z},\quad
C_{n+1}\to S_n(1)_{X,Z}$$ given by the formulas
\begin{align*}
1+J_{D_n}\to (S^{\prime}_n(1)_{X,Z})^0 & =J_{D_n};\quad a\mapsto  \log a;\\
1+J_{D_{n+1}}\to (S_n(1)_{X,Z})^0 & =J_{D_n};\quad a\mapsto  \log a \mod p^n;
\end{align*}
and
\begin{align*}
 M^{\gp}_{D_n}\to (S^{\prime}_n(1)_{X,Z})^1 & =(\so_{D_n}\otimes \Omega^1_{Z_n})\oplus \so_{D_n};
\quad b\mapsto (d \log b,  \log (b^p\phi_{D_n}(b)^{-1}));\\
M^{\gp}_{D_{n+1}}\to (S_n(1)_{X,Z})^1 & =(\so_{D_n}\otimes \Omega^1_{Z_n})\oplus \so_{D_n};
\quad b\mapsto (d\log  b\mod p^n, p^{-1} \log(b^p\phi_{D_{n+1}}(b)^{-1})).\\
\end{align*}
\subsubsection{Syntomic cohomology and $p$-adic nearby cycles}
For log-schemes over $\so_K^{\times}$, in a stable range, syntomic cohomology tends to  compute (via the period morphism) $p$-adic nearby cycles . We will recall the relevant theorems.

 Let $X$ be a log-syntomic scheme over $W(k)$. Let $i:X_{0,\eet}\to X_{\eet}$ and $j:X_{\tr,K,\eet}\to X_{\eet}$ be the natural maps. 
Here $X_{\tr}$ is the open set of $X$ where the log-structure is trivial. For $0\leq r\leq p-2$, there is a natural homomorphism on the \'etale site of $X_n$ \cite{FM} (the Fontaine-Messing period map)
$$
\alpha_{r}: \sss_n(r) \rightarrow i^*\R j_*{\mathbf Z}/p^n(r)
$$  
from syntomic complexes to $p$-adic nearby cycles.
It factors through $\tau_{\leq r} i^*\R j_*{\mathbf Z}/p^n(r).$
One checks that $\alpha_r$ is compatible with products.
Similarly, for any $r\geq 0$, we get a natural map \cite{FM}
$$
\tilde{\alpha}_{r}: \sss_n(r) \rightarrow i^*\R j_*{\mathbf Z}/p^n(r)'.
$$
 Composing with the map 
$\omega: \sss'_n(r)\to \sss_n(r)$ we get a natural, compatible with products,  
 morphism
$$
\alpha_{r}: \sss^{\prime}_n(r) \rightarrow i^*\R j_*{\mathbf Z}/p^n(r)'.
$$
\begin{theorem}(\cite[Theorem 5.1]{Ts1})
\label{input0}
 For $i\leq r\leq p-2$ and  for  a fine and saturated log-scheme $X$ log-smooth over  $\so_K^{\times}$  the period map
\begin{equation}
\label{period2}\alpha_{r}:\quad  \sss_n(r)_{X} \stackrel{\sim}{\rightarrow} \tau_{\leq r}i^*\R j_*{\mathbf Z}/p^n(r)_{X_{\tr}}.
\end{equation}
is an isomorphism. 
\end{theorem}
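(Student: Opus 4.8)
The plan is to reduce Theorem \ref{input0} to a statement about the cohomology \emph{sheaves} of the two complexes, then to the case $n=1$ by dévissage, and finally to an explicit local comparison of sheaves. Since $r\le p-2$, the truncation $\tau_{\le r}\sss_n(r)\to\sss_n(r)$ is a quasi-isomorphism and, by construction, $\alpha_r$ factors through $\tau_{\le r}i^*\R j_*\Z/p^n(r)$; so both sides have cohomology concentrated in degrees $\le r$ and it suffices to prove that
$$\sh^q(\alpha_r):\quad \sh^q(\sss_n(r))\longrightarrow i^*R^qj_*\Z/p^n(r)$$
is an isomorphism of étale sheaves on $X_0$ for $0\le q\le r$. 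This is étale-local, so by the structure theory of log-smooth morphisms over $\so_K^\times$ I may assume that $X$ admits a global chart and an immersion $X\hookrightarrow Z$ into a smooth $W(k)$-scheme with a compatible system of Frobenius lifts, so that $\sss_n(r)$ is computed by the explicit complex $S_n(r)_{X,Z}$ of (\ref{lifted}).

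Next I would carry out the standard dévissage to $n=1$. The reduction-mod-$p$ short exact sequences for the families $\sj_\bullet^{<r>}$ and $\so_\bullet^{\crr}$ (exact because $X$ is log-smooth over $\so_K^\times$) are compatible with $1-\phi_r$, hence give a distinguished triangle $\sss_1(r)\to\sss_{n+1}(r)\to\sss_n(r)\to$; on the nearby-cycles side the Bockstein triangle attached to $0\to\Z/p(r)\to\Z/p^{n+1}(r)\to\Z/p^n(r)\to0$ does the same, and $\alpha_r$ is compatible with both. The five lemma and induction on $n$ then leave only the case $n=1$.

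The heart of the proof is this case. Here $\sj_1^{<r>}=\sj_1^{[r]}$ since $r\le p-1$, and one computes $\sh^q(S_1(r)_{X,Z})$ directly: modulo $p$ the operator $1-\phi_r$ on the two-term complex $J_{D_1}^{[r-\bullet]}\otimes\Omega\kr_{Z_1}\to\so_{D_1}\otimes\Omega\kr_{Z_1}$ can be analysed through the Cartier isomorphism for $Z_1$ and the divided Frobenius, giving a description of $\sh^q(\sss_1(r))$ by a finite decreasing filtration whose top graded piece is a logarithmic differential-form sheaf of $X_0$ (for its log structure), generated by $\dlog$ symbols, and whose remaining graded pieces are non-logarithmic quotients built from $\Omega^{q-1}_{X_0},\Omega^{q-2}_{X_0},\dots$. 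On the other side, $i^*R^qj_*\Z/p(r)$ carries the Bloch--Kato--Hyodo filtration \cite{BK}, whose graded pieces are exactly the same sheaves, the top one again generated by $\dlog$ symbols. One then matches the two filtered sheaves: the syntomic first Chern class $c_1^{\synt}$ of (\ref{symbol}) identifies symbols with their syntomic images, the multiplicativity of $\alpha_r$ propagates this to the entire symbolic graded piece, and a graded-piece-by-graded-piece comparison — using the compatibility of $\alpha_r$ with the residue and Cartier maps — shows that $\sh^q(\alpha_r)$ is an isomorphism for every $q\le r$.

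The main obstacle is concentrated in this last step: one must determine both filtrations precisely enough — including the non-logarithmic graded pieces and the exact location of the filtration jumps, which depend on $e$ and on the position of $q$ relative to $p/(p-1)$ — and then check that $\alpha_r$ is strictly compatible with them and an isomorphism on \emph{every} graded piece, not merely on the symbolic part. The hypothesis $r\le p-2$ is exactly what keeps divided powers, the Cartier-type arguments, and the Bloch--Kato symbol description simultaneously valid; outside this range the graded pieces acquire correction factors and $\alpha_r$ degrades to a $p^N$-quasi-isomorphism — the regime of \cite[Prop. 3.12]{CN} in which the rest of the paper operates.
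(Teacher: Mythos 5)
The paper does not prove this statement at all: it is quoted verbatim from Tsuji \cite[Theorem 5.1]{Ts1}, so there is no internal argument to compare yours against. That said, your outline is a faithful reconstruction of the strategy of the cited proof (going back to Kato, Kurihara and Bloch--Kato): reduce to the cohomology sheaves $\sh^q$ for $q\le r$, d\'evissage to $n=1$, then compare a filtration on $\sh^q(S_1(r)_{X,Z})$ extracted from the Cartier isomorphism with the Bloch--Kato--Hyodo filtration on $i^*R^qj_*{\mathbf Z}/p(r)$, anchoring the comparison on the symbolic top graded piece via $c_1^{\synt}$ and multiplicativity. Two caveats. First, essentially the entire difficulty of the theorem sits in the step you yourself flag as the main obstacle: the explicit determination of both filtrations (including the jumps governed by $e$ and by $ep/(p-1)$, not $p/(p-1)$) and the strict compatibility of $\alpha_r$ with them occupy the bulk of \cite{Ts}; your paragraph describes what must be proved there rather than proving it, and in particular the case $q<r$ requires its own analysis rather than following from the $q=r$ symbol argument. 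Second, the d\'evissage needs one more ingredient than the five lemma: truncation does not preserve distinguished triangles, and at the edge degree $q=r$ the long-exact-sequence argument requires the surjectivity of $\sh^r(\sss_{n+1}(r))\to\sh^r(\sss_n(r))$ (equivalently, of the corresponding map on nearby cycles), which in the standard treatment is itself extracted from the symbol description and is not automatic.
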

\begin{theorem}(\cite[Theorem 1.1]{CN})
\label{input1}
 For   $0\leq i\leq r$ and for a semistable scheme $X$ over $\so_K$,   consider the period map 
\begin{equation}
\label{maineq1}
\alpha_{r}:\quad  \sh^i(\sss^{\prime}_n(r)_{X}) \rightarrow i^*\R^ij_*{\mathbf Z}/p^n(r)'_{X_{\tr}}.
\end{equation}
If $K$ has enough roots of unity\footnote{See Section (2.1.1) of \cite{CN} for what it means for a field to contain enough roots of unity. The field $F$ contains enough roots of unity and for any $K$, the field $K(\zeta_{p^n})$, for $n\geq c(K)+3$, where $c(K)$ is  the conductor of $K$, contains enough roots of unity.}   then the kernel  and cokernel of this map are annihilated by $p^{Nr}$ for a universal constant $N$ depending only on $p$  (and $d$ if $p=2$) {\rm (not depending on  $X$, $K$, $n$ or $r$)}.
In general, the kernel  and cokernel of this map are annihilated by $p^{Nr}$ for an integer $N=N(p,e)$, which depends on $e$ and $p$ (also $d$ if $p=2$) but not on $X$ or $n$.
\end{theorem}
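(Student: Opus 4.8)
The statement is \'etale-local on $X$ and can be checked on stalks of sheaves on $X_{0,\eet}$, so the plan is to reduce to $X=\Spec R$ with $R$ a ``small'' semistable $\so_K$-algebra (\'etale over a standard chart $\so_K[T_0,\dots,T_d,T_{a+1}^{-1},\dots,T_d^{-1}]/(T_0\cdots T_a-\varpi)$), and then further to strictly henselian such rings. On those both sides become computable: by Kato's differential description $\R\Gamma(\sss^{\prime}_n(r)_X)$ is an explicit filtered-de-Rham Koszul complex attached to a Frobenius-equivariant $W(k)$-lift $R_F$ of $R$; and since the trivial locus $(\Spec R)_{\tr}$ over $K$ is a $K(\pi,1)$ for $p$-power coefficients, $i^*\R^i j_*\Z/p^n(r)'$ equals the continuous Galois cohomology $H^i_{\mathrm{cont}}(G_R,\Z/p^n(r)')$, with $G_R$ the fundamental group of $(\Spec R)_{\tr,K}$.

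Next I would build the Fontaine--Messing period map between these two complexes and prove it a $p^{Nr}$-quasi-isomorphism, factoring it through the relative crystalline period ring $\A_{\crr}(\widehat{\overline{R}})$ in three linked comparisons. (i) A \emph{Poincar\'e lemma}: the de Rham complex $\A_{\crr}(\overline{R})\otimes_{R_F}\Omega^{\bullet}_{R_F}$ resolves $\A_{\crr}(\widehat{\overline{R}})$ integrally (no constant), bridging the ``crystalline'' Koszul complex and the Galois-descent complex for $G_R$-cohomology. (ii) The cohomology of the cyclotomic--Kummer tower $R_\infty/R$, with $\Gamma_R\cong\Z_p^{d+1}$: compute it by a Koszul complex over $\A_{\crr}(\widehat{\overline{R_\infty}})$ and descend from $\widehat{\overline{R}}$ to $\widehat{\overline{R_\infty}}$ via Faltings' almost purity theorem, both steps costing only torsion bounded by a constant depending on $p$ (and $d$, and on $e$ in general) but \emph{not} on $r$. (iii) The \emph{fundamental exact sequence with explicit constants}: the sequence
\begin{equation*}
0\to\Z_p(r)\to \fil^r\A_{\crr}(\widehat{\overline{R}})\lomapr{1-\varphi/p^r}\A_{\crr}(\widehat{\overline{R}})\to 0
\end{equation*}
(and its $B^+_{dR}/F^r$ companion) is not exact, but its homology is killed by $p^{cr}$ for an explicit $c=c(p)$ (resp.\ $c(p,e)$); this is the origin of the exponent linear in $r$, coming from the denominators ``$r!$'' in the divided powers $t^{[r]}$ and from $\varphi(t)=pt$.

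Then I would assemble. Concatenating (i)--(iii) with the $K(\pi,1)$-identification produces a zig-zag of $p^{Nr}$-quasi-isomorphisms
\begin{equation*}
\R\Gamma(\sss^{\prime}_n(r)_R)\lomapr{p^{Nr}}\R\Gamma_{\mathrm{cont}}(G_R,\Z/p^n(r)')\lomapr{\sim}\big(i^*\R j_*\Z/p^n(r)'_{X_{\tr}}\big)_{\bar x},
\end{equation*}
with total constant a fixed multiple of $r$, the multiple depending only on $p$ (resp.\ $p$ and $e$) and, crucially, not on $R$, $n$, or $r$ --- as one checks by inspecting that each intermediate bound has this shape. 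This uniformity is exactly what lets the bound survive the colimit over \'etale neighbourhoods, so the map of complexes of sheaves $\sss^{\prime}_n(r)_X\to\tau_{\leq r}i^*\R j_*\Z/p^n(r)'_{X_{\tr}}$ is a $p^{Nr}$-quasi-isomorphism stalkwise, hence globally, which gives the asserted bound on $\sh^i(\sss^{\prime}_n(r)_X)\to i^*\R^i j_*\Z/p^n(r)'_{X_{\tr}}$. The sharper constant under ``enough roots of unity'' comes from replacing $R_\infty/R$ by a better tower along which the $\Gamma_R$-cohomology and almost-purity estimates in (ii) become $e$-independent.

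The main obstacle is (iii) together with its interaction with (ii): establishing the fundamental exact sequence over the \emph{relative} ring $\A_{\crr}(\widehat{\overline{R}})$ with an explicit, uniform, linear-in-$r$ bound on the failure of exactness, while keeping the $\Gamma_R$- and almost-\'etale descent losses $r$-independent. The delicate points are the behaviour of the ideals generated by $t=\log[\underline{\epsilon}]$ and $\xi$ under $\varphi$ and under divided powers; the $p=2$ anomaly forcing the dimension $d$ into the constant; and making the $K(\pi,1)$-property and the whole comparison robust for \emph{open} semistable $R$ (nontrivial log-structure along the horizontal divisor), not merely the proper good-reduction case.
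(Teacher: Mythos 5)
This statement is quoted, not proved, in the paper under review: it is \cite[Theorem 1.1]{CN}, and the only ``proof'' supplied here is the citation. So the right comparison is with the argument of Colmez--Nizio{\l}, and against that your outline is broadly in the right territory but mis-identifies the decisive mechanism.

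Your reduction to small strictly henselian $R$ and the $K(\pi,1)$ identification of the stalk of $i^*\R^i j_*\Z/p^n(r)'$ with continuous Galois cohomology of $G_R$ are both genuine ingredients of the CN local set-up, and so is the appeal to almost purity along a Kummer--cyclotomic tower and to a Poincar\'e-type lemma relating the filtered de Rham complex over a Frobenius lift with a crystalline period ring. Where your plan diverges from what CN actually do is step (iii): you propose to push the classical Kato--Tsuji route, i.e.\ factor through $H^*_{\mathrm{cont}}(G_R,\cdot)$ and then invoke the fundamental exact sequence over $\A_{\crr}(\widehat{\overline R})$ ``with explicit constants''. That sequence is only integrally exact for $r\le p-2$, and the whole point of \cite{CN} is that tracking denominators in that sequence does \emph{not} produce a bound linear in $r$; the older arguments along these lines are exactly why earlier comparison theorems were confined to $r\le p-2$ or carried constants that blow up with $r$. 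What CN do instead is replace both sides by explicit Koszul complexes of $(\phi,\Gamma)$-modules built from $\A_{\crr}$ of the finite-level Kummer tower $R_\infty$ (a relative Fontaine--Herr/Lazard complex), prove Lazard-type and $\psi$-operator lemmas with uniform torsion bounds, and compare the two Koszul complexes directly. The linear-in-$r$ constant, and its independence of $X,K,n$, come out of that $(\phi,\Gamma)$-Koszul analysis, not from a sharpened classical fundamental exact sequence. Your sketch thus accurately flags the hard point --- the interplay between the tower descent and the $1-\phi_r$ analysis, the $p=2$/$d$ anomaly, and the log-structure at the horizontal divisor --- but the specific route you propose through the classical fundamental sequence would stall precisely there; one needs the $(\phi,\Gamma)$-module reformulation to make the constants come out uniform and linear.

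Two smaller remarks. First, you should make explicit that the final passage from the stalkwise bound to the statement about sheaves $\sh^i(\sss'_n(r)_X)\to i^*\R^i j_*\Z/p^n(r)'$ requires the bound to be uniform over \emph{all} small \'etale neighbourhoods, not just one; CN indeed arrange this, but it is not automatic from ``each intermediate bound has this shape''. Second, your explanation of the role of ``enough roots of unity'' is in the right direction (normalising the tower so that $\Gamma_R$ and the descent estimates lose their $e$-dependence), and it matches the footnote in the statement, but in CN this is handled by base-changing to $K(\zeta_{p^m})$ for a controlled $m$ and then using a trace argument to come back, which is where the $e$-dependent constant enters in the general case.
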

\subsection{Syntomic-\'etale cohomology}
We will now recall the definition and basic properties of
syntomic-\'etale cohomology. More details can be found in \cite{FM},\cite{EN}. Let $X$ be a log-scheme, log-syntomic  over $\Spec(W(k))$. Let  $j^{\prime}: X_{\tr,K}\to X_{K}$ denote the natural open immersion.
\subsubsection{Syntomic-\'etale cohomology}
Denote by $\se_n(r)$ and $\se^{\prime}_n(r)$  the syntomic-\'etale complexes on $X_{\eet}$ \cite[2.2.2]{EN} associated to $\sss_n(r)$ and $\sss^{\prime}_n(r)$, respectively.  They are 
obtained by gluing the complexes of sheaves $\sss_n(r) $ and $\sss'_n(r) $ and the complexes of sheaves $j^{\prime}_*G{\mathbf Z}/p^n(r)^{\prime}$,  where $G$ denotes the Godement resolution of a sheaf (or a complex of sheaves),   by the maps $\tilde{\alpha}_{r}$ and $\alpha_r$. We have the distinguished triangles
\begin{equation}
\label{Niis}
j_{\eet!} \R j_*^{\prime}{\mathbf Z}/p^n(r)^{\prime}\to \se_n(r)\to i_*\sss_n(r),\quad j_{\eet !}  \R j_*^{\prime}{\mathbf Z}/p^n(r)^{\prime}\to \se^{\prime}_n(r)\to i_*\sss^{\prime}_n(r),
\end{equation}
as well as the natural maps
$$\tilde{\alpha}_r: \se_n(r)\to \R j_*{\mathbf Z}/p^n(r)^{\prime},\quad \alpha_r: \se_n(r)^{\prime}\to \R j_*{\mathbf Z}/p^n(r)^{\prime}
$$
compatible with the maps $\tilde{\alpha}_r$ and $\alpha_r$ from syntomic complexes. For $a\geq 0$, we have the truncated version of the above - the distinguished triangles
\begin{equation}
\label{Niis1}
j_{\eet!} \tau_{\leq a}\R j_*^{\prime}{\mathbf Z}/p^n(r)^{\prime}\to \tau_{\leq a}\se_n(r)\to i_*\tau_{\leq a}\sss_n(r),\quad j_{\eet !} \tau_{\leq a} \R j_*^{\prime}{\mathbf Z}/p^n(r)^{\prime}\to \tau_{\leq a}\se^{\prime}_n(r)\to i_*\tau_{\leq a}\sss^{\prime}_n(r).
\end{equation}
\subsubsection{Syntomic-\'etale cohomology and \'etale cohomology of the generic fiber}
For a log-scheme over $\so_K^{\times}$, in a stable range, syntomic-\'etale cohomology tends to compute \'etale cohomology of the generic fiber.
\begin{theorem}(\cite[Theorem 2.5]{EN})
\label{keylemma11}
Let $X$ be a log-scheme log-smooth   over ${\so_K}^{\times}$. 
 Then
\begin{enumerate}
\item we have a natural quasi-isomorphism
$$
\tilde{\alpha}_r: \tau_{\leq r}\se_n(r) \simeq \tau_{\leq r}\R j_*{\mathbf Z}/p^n(r), \quad 0\leq r\leq p-2.
$$
\item if $X$ is semistable, there is a constant $N$ as in Theorem \ref{input1} and a natural morphism $$
{\alpha}_r: \se^{\prime}_n(r)\to  
\R j_*{\mathbf Z}/p^n(r)^{\prime},\quad r\geq 0,
$$
such that the induced map on cohomology sheaves in degrees $\leq r$ has kernel and cokernel annihilated by $p^{Nr}$. 
\end{enumerate}
\end{theorem}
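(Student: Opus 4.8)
The plan is to deduce both statements from the already-established comparison theorems for the syntomic complexes supported on the special fibre --- Theorem~\ref{input0} for part (1) and Theorem~\ref{input1} for part (2) --- by a short diagram chase with the defining gluing triangles \eqref{Niis} and the open--closed localization triangle on $X_{\eet}$; one may assume $X$ is fs. First I would record the relevant triangles. Writing $j=j_{\eet}\circ j^{\prime}$, so that $\R j_*=\R j_{\eet *}\R j^{\prime}_*$, the localization triangle for the decomposition $X_0\hookrightarrow X\hookleftarrow X_K$ applied to $\R j^{\prime}_*\Z/p^n(r)^{\prime}$ reads
$$
j_{\eet!}\R j^{\prime}_*\Z/p^n(r)^{\prime}\to \R j_*\Z/p^n(r)^{\prime}\to i_*i^*\R j_*\Z/p^n(r)^{\prime}\to j_{\eet!}\R j^{\prime}_*\Z/p^n(r)^{\prime}[1].
$$
The already-constructed maps $\tilde{\alpha}_r$ out of $\se_n(r)$ and $\alpha_r$ out of $\se^{\prime}_n(r)$, being built from the gluing, fit into a morphism from the first, resp.\ second, triangle of \eqref{Niis} to this localization triangle; this morphism is the identity on the $j_{\eet!}$-terms and on the $i_*$-terms it is $i_*$ applied to the period maps $\tilde{\alpha}_r\colon\sss_n(r)\to i^*\R j_*\Z/p^n(r)^{\prime}$, resp.\ $\alpha_r\colon\sss^{\prime}_n(r)\to i^*\R j_*\Z/p^n(r)^{\prime}$, compatibility of the boundary maps being part of the construction of $\se_n(r)$ and $\se^{\prime}_n(r)$. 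From here everything is a chase in the associated long exact sequences of cohomology sheaves in which the $j_{\eet!}$-columns contribute identities, so only the behaviour of the $i_*$-column needs to be fed in.

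For part (1): since $0\le r\le p-2$ the canonical map $\Z/p^n(r)\to\Z/p^n(r)^{\prime}$ is an isomorphism, and $\sss_n(r)$ is concentrated in degrees $\le r$ by the truncation property recalled above; Theorem~\ref{input0} then says that $\tilde{\alpha}_r$ identifies $\sss_n(r)$ with $\tau_{\le r}i^*\R j_*\Z/p^n(r)$, so the $i_*$-column is an isomorphism on $\sh^q$ for every $q\le r$. Applying the five lemma to the two long exact sequences, I would conclude that $\tilde{\alpha}_r$ is an isomorphism on $\sh^q$ for all $q\le r$, i.e.\ $\tau_{\le r}\se_n(r)\simeq\tau_{\le r}\R j_*\Z/p^n(r)$.

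For part (2) the same mechanism applies with Theorem~\ref{input1} in place of Theorem~\ref{input0}, but only up to bounded $p$-power torsion. Theorem~\ref{input1} gives that the $i_*$-column has kernel and cokernel killed by $p^{Nr}$ on $\sh^q$ for $0\le q\le r$; the $p^{Nr}$-quasi-isomorphism $\tau_{\le r}\sss^{\prime}_n(r)\to\sss^{\prime}_n(r)$ of \cite[Prop.~3.12]{CN} additionally kills $\sh^{r+1}(i_*\sss^{\prime}_n(r))$, which is the boundary term needed at $q=r$. A $p^N$-coefficient version of the five lemma --- if the four outer maps of a ladder of exact sequences are $p^a$-injective or $p^a$-surjective as appropriate, the middle map has kernel and cokernel killed by $p^{ca}$ for a small universal $c$ --- then shows that $\sh^q(\alpha_r)$ has kernel and cokernel annihilated by $p^{cNr}$ for $q\le r$; replacing $N$ by $cN$ finishes the proof.

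Modulo the cited $p$-adic comparison theorems the argument is purely formal, so I do not expect a genuine obstacle at this stage: the depth sits in Theorems~\ref{input0} and~\ref{input1}. Within this proof the points that require real care are matching the two Tate twists $\Z/p^n(r)$ and $\Z/p^n(r)^{\prime}$ (harmless in part (1) precisely because $r\le p-2$), upgrading the degreewise bound of Theorem~\ref{input1} to a bound on the truncated map of complexes (this is where \cite[Prop.~3.12]{CN} is used, to control the cohomology of $\sss^{\prime}_n(r)$ above degree $r$), and tracking the universal constant through the $p^N$-five lemma so that the final $N$ remains independent of $X$, $n$ and $r$.
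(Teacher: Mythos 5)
Your argument is correct and is essentially the intended one: the paper itself gives no proof here (the statement is imported verbatim from \cite[Theorem 2.5]{EN}), and the deduction there is exactly your morphism from the gluing triangle (\ref{Niis}) to the localization triangle $j_{\eet!}\R j'_*{\mathbf Z}/p^n(r)'\to \R j_*{\mathbf Z}/p^n(r)'\to i_*i^*\R j_*{\mathbf Z}/p^n(r)'$, followed by the (integral, resp.\ $p^{N}$-) five lemma fed with Theorem \ref{input0}, resp.\ Theorem \ref{input1}. One small simplification: the appeal to \cite[Prop.~3.12]{CN} to control $\sh^{r+1}(i_*\sss'_n(r))$ is not needed, since in the five-lemma window at $q=r$ the fifth term is $\sh^{r+1}(j_{\eet!}\R j'_*{\mathbf Z}/p^n(r)')$, on which the comparison map is the identity; the cohomology of $\sss'_n(r)$ above degree $r$ never enters.
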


  The above theorem implies that  the logarithmic syntomic-\'etale cohomology is close to  the logarithmic syntomic-\'etale cohomology of the complement of the divisor at infinity. 
  \begin{corollary}(\cite[Cor. 2.6]{EN})
  \label{reduction-kolo}
  Let $X$ be a semistable scheme  over ${\so_K}$ with a divisor at infinity $D_{\infty}$. We treat it is as a log-scheme over $\so_K^{\times}$. Let $Y:=X\setminus D_{\infty}$ and let $ j_1: Y\hookrightarrow X$. 
 \begin{enumerate}
\item we have a natural quasi-isomorphism
$$
\tilde{\alpha}_r: \tau_{\leq r}\se_n(r)_X \stackrel{\sim}{\to} \tau_{\leq r}\R j_{1*}\se_n(r)_Y , \quad 0\leq r\leq p-2.
$$
\item there is a constant $N$ as in Theorem \ref{input1} and a natural morphism $$
{\alpha}_r: \se^{\prime}_n(r)_X\to \R j_{1*}\se^{\prime}_n(r)_Y  ,\quad r\geq 0,
$$
such that the induced map on cohomology sheaves in degrees $\leq r$ has kernel and cokernel annihilated by $p^{Nr}$. 
\end{enumerate}
  \end{corollary}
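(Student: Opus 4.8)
The plan is to reduce the statement to Theorem \ref{keylemma11} applied separately to $X$ and to $Y$, the comparison between the two being formal. Two geometric remarks come first. Since $D_{\infty}$ is a horizontal normal crossings divisor, $Y=X\setminus D_{\infty}$ is again semistable over $\so_K$, and its log structure over $\so_K^{\times}$ (the one attached to $Y_0$) is exactly the restriction along $j_1$ of the log structure on $X$ (the one attached to $D_{\infty}\cup X_0$), because passing to $Y$ kills the $D_{\infty}$-component; thus $j_1$ is a strict open immersion of log-schemes. By functoriality of the syntomic-\'etale construction under such immersions (the syntomic complexes $\sss_n(r)$, $\sss^{\prime}_n(r)$, the \'etale piece $\R j^{\prime}_*\Z/p^n(r)^{\prime}$, and the period maps all restrict correctly to opens) one obtains canonical identifications $j_1^*\se_n(r)_X\simeq\se_n(r)_Y$ and $j_1^*\se^{\prime}_n(r)_X\simeq\se^{\prime}_n(r)_Y$, and the maps $\tilde{\alpha}_r$, $\alpha_r$ of the corollary are then the adjunction units $\se_n(r)_X\to\R j_{1*}j_1^*\se_n(r)_X$ and $\se^{\prime}_n(r)_X\to\R j_{1*}j_1^*\se^{\prime}_n(r)_X$. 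Second, the locus where the log structure of $X$ is trivial is $X_{\tr}=X_K\setminus D_{\infty,K}=Y_K$, which agrees with $Y_{\tr}$; writing $j_X\colon Y_K\hookrightarrow X$ and $j_Y\colon Y_K\hookrightarrow Y$ for the two open immersions one has $j_X=j_1 j_Y$, hence $\R j_{X*}=\R j_{1*}\R j_{Y*}$.

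For part (1): Theorem \ref{keylemma11}(1) applied to $X$ gives $\tau_{\leq r}\se_n(r)_X\simeq\tau_{\leq r}\R j_{X*}\Z/p^n(r)$, and applied to $Y$ gives $\tau_{\leq r}\se_n(r)_Y\simeq\tau_{\leq r}\R j_{Y*}\Z/p^n(r)$. I would combine these with the elementary fact that $\R j_{1*}$ carries complexes concentrated in degrees $\geq r+1$ to complexes concentrated in degrees $\geq r+1$, so that $\tau_{\leq r}\R j_{1*}\tau_{\leq r}K\simeq\tau_{\leq r}\R j_{1*}K$ for every $K$ (apply $\tau_{\leq r}\R j_{1*}$ to the triangle $\tau_{\leq r}K\to K\to\tau_{\geq r+1}K\to$). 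Chaining,
\begin{align*}
\tau_{\leq r}\R j_{1*}\se_n(r)_Y&\simeq\tau_{\leq r}\R j_{1*}\tau_{\leq r}\se_n(r)_Y\simeq\tau_{\leq r}\R j_{1*}\tau_{\leq r}\R j_{Y*}\Z/p^n(r)\\
&\simeq\tau_{\leq r}\R j_{1*}\R j_{Y*}\Z/p^n(r)\simeq\tau_{\leq r}\R j_{X*}\Z/p^n(r)\simeq\tau_{\leq r}\se_n(r)_X,
\end{align*}
and one checks this composite is the map $\tilde{\alpha}_r$.

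For part (2): I would run exactly the same chain, with quasi-isomorphisms replaced by the $p^{Nr}$-quasi-isomorphisms on cohomology sheaves in degrees $\leq r$ provided by Theorem \ref{keylemma11}(2) (applied to the semistable $X$ and $Y$). The only extra ingredient is that applying $\R j_{1*}$ and truncating degrades the annihilation exponent only by a bounded factor: if $f$ has kernel and cokernel killed by $p^{Nr}$ on $\sh^i$ for $i\leq r$, then its cone $C$ has $\sh^i(C)$ an extension of two $p^{Nr}$-torsion sheaves for $i\leq r-1$, hence $p^{2Nr}$-torsion, and this persists for $\R j_{1*}f$ after truncation, the top degree $r$ being handled by a short separate argument. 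As the corollary permits any constant of the shape in Theorem \ref{input1}, the extra factor is harmless, and the resulting composite realizes $\alpha_r$ with the asserted property.

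The main obstacle I anticipate is the bookkeeping in part (2): propagating the exponent $p^{Nr}$ correctly through $\R j_{1*}$, the successive truncations, and the chain of comparisons, and verifying that truncating at exactly degree $r$ (rather than $r-1$) costs nothing. The remaining point — that the abstract isomorphisms built in part (1) are genuinely induced by $\tilde{\alpha}_r$ (resp. $\alpha_r$) — is routine but does rest on the naturality of the period maps of Theorem \ref{keylemma11} under the strict open immersion $j_1$.
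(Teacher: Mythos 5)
The paper does not actually prove this corollary --- it is imported verbatim from \cite[Cor.~2.6]{EN} --- so there is no internal proof to compare against; your reduction to Theorem \ref{keylemma11} applied to $X$ and to $Y$, via the factorization $j_X=j_1j_Y$ of the inclusion of the common trivial locus $X_{\tr}=Y_K=Y_{\tr}$, is exactly the natural (and surely the intended) argument. Part (1) is complete: the strictness of $j_1$, the identification $j_1^*\se_n(r)_X\simeq\se_n(r)_Y$, the left-exactness argument giving $\tau_{\leq r}\R j_{1*}\tau_{\leq r}K\simeq\tau_{\leq r}\R j_{1*}K$, and the chain of quasi-isomorphisms are all correct.

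In part (2) the one step you wave at --- ``this persists for $\R j_{1*}f$ after truncation'' --- is where the only real content lies, and as stated it is not quite right. If $C=\Cone(\alpha_{r,Y})$ has $\sh^i(C)$ killed by $p^{2Nr}$ for $i\leq r-1$, then $\sh^i(\R j_{1*}C)$ is filtered with graded pieces that are subquotients of the sheaves $R^aj_{1*}\sh^b(C)$, $a+b=i$; each piece is $p^{2Nr}$-torsion, but the extension of $k$ such pieces is only $p^{2Nrk}$-torsion. The number $k$ of pieces is bounded not by an absolute constant but by $\cd(j_{1*})+1$, i.e.\ by a function of $\dim X$; without that bound you would get $k\leq r+1$ and a quadratic-in-$r$ exponent, which is not of the required shape. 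So the correct conclusion of your argument is a constant $N'=cN$ with $c$ depending on $\dim X$. Your ``top degree $r$'' worry, by contrast, is unproblematic: in the square comparing $\alpha_{r,X}=(\R j_{1*}\alpha_{r,Y})\circ u$, the $p$-power-injectivity of $\sh^i(u)$ for $i\leq r$ follows from that of $\alpha_{r,X}$, and the $p$-power-surjectivity follows from the surjectivity of $\alpha_{r,X}$ together with control of $\ker\sh^i(\R j_{1*}\alpha_{r,Y})$, which only uses $\sh^{i-1}(\R j_{1*}C)$ with $i-1\leq r-1$; so degree $r$ needs nothing beyond degrees $\leq r-1$ of the cone. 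You should therefore (a) insert the cohomological-dimension bound explicitly, and (b) note that the resulting constant depends on $\dim X$, which is harmless for every use of the corollary in this paper (Lemma \ref{kolo1} already allows a factor depending only on $d$) but does not literally reproduce a dimension-independent ``constant as in Theorem \ref{input1}'' --- if the cited statement is to be taken at face value, a finer argument (e.g.\ working with the gluing triangles rather than pushing the cone through $\R j_{1*}$ wholesale) would be needed.
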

\subsubsection{Nisnevich syntomic-\'etale cohomology}
    We will pass now to the Nisnevich topos of $X$. Denote by $\varepsilon: X_{\eet}\to X_{\Nis}$ the natural projection. For  $r\geq 0$, by applying $\R \varepsilon_*$ to the \'etale period map above and using that $\R \varepsilon_*i^*=i^*\R \varepsilon_*$\footnote{This equality fails for the projection to Zariski topology and is the reason we use Nisnevich topology instead of Zariski.} (cf. \cite[2.2.b]{GD}),   we obtain     a natural map
$$
\tilde{\alpha}_{r}: \R \varepsilon_*\sss_n(r) \rightarrow i^*\R j_*\R \varepsilon_*{\mathbf Z}/p^n(r)'.
$$
 Composing with the map 
$\omega: \R \varepsilon_*\sss'_n(r)\to \R \varepsilon_*\sss_n(r)$ we get a natural, compatible with products,  
 morphism
$$
\alpha_{r}: \R \varepsilon_*\sss^{\prime}_n(r) \rightarrow i^*\R j_*\R \varepsilon_*{\mathbf Z}/p^n(r)'.
$$
Write, for simplicity,  $\sss_n(r)$ and $\sss^{\prime}_n(r)$  for the derived pushforwards of $\sss_n(r)$ and $\sss^{\prime}_n(r)$ from $X_{\eet}$ to $X_{\Nis}$. Same for $\se_n(r)$ and $\se^{\prime}_n(r)$.  
Notice that the latter are quasi-isomorphic to 
the complexes obtained by gluing the complexes of sheaves $\sss_n(r) $ and $\sss'_n(r) $ on $X_{1,\Nis}$ and the complexes of sheaves $\varepsilon_*j^{\prime}_*G{\mathbf Z}/p^n(r)^{\prime}$  on $X_{K,\Nis}$ by the maps $\tilde{\alpha}_{r}$ and $\alpha_r$. Hence we have the distinguished triangles
\begin{equation}
\label{Niis2}
j_{\Nis!} \R j_*^{\prime}\R \varepsilon_*{\mathbf Z}/p^n(r)^{\prime}\to \se_n(r)\to i_*\sss_n(r),\quad j_{\Nis !}  \R j_*^{\prime}\R \varepsilon_*{\mathbf Z}/p^n(r)^{\prime}\to \se^{\prime}_n(r)\to i_*\sss^{\prime}_n(r),
\end{equation}
as well as the natural maps
$$\tilde{\alpha}_r: \se_n(r)\to \R j_*\R \varepsilon_*{\mathbf Z}/p^n(r)^{\prime},\quad \alpha_r: \se_n(r)^{\prime}\to \R j_*\R \varepsilon_*{\mathbf Z}/p^n(r)^{\prime}
$$
compatible with the maps $\tilde{\alpha}_r$ and $\alpha_r$ from syntomic complexes. For $a\geq 0$, we have the truncated version of the above - the distinguished triangles
\begin{equation}
\label{Niis3}
j_{\Nis !} \tau_{\leq a}\R j_*^{\prime}\R \varepsilon_*{\mathbf Z}/p^n(r)^{\prime}\to \tau_{\leq a}\se_n(r)\to i_*\tau_{\leq a}\sss_n(r),\quad j_{\eet !} \tau_{\leq a} \R j_*^{\prime}\R \varepsilon_*{\mathbf Z}/p^n(r)^{\prime}\to \tau_{\leq a}\se^{\prime}_n(r)\to i_*\tau_{\leq a}\sss^{\prime}_n(r).
\end{equation}

Define the following complexes of sheaves on $X_{\Nis}$
\begin{align*}
\sss_n(r)_{\Nis}:=\tau_{\leq r}\sss_n(r), & \quad  \sss^{\prime}_n(r)_{\Nis}:=\tau_{\leq r}\sss^{\prime}_n(r);\\
\se_n(r)_{\Nis}:=\tau_{\leq r}\se_n(r), & \quad  \se^{\prime}_n(r)_{\Nis}:=\tau_{\leq r}\se^{\prime}_n(r).
\end{align*}

  We will need the following twisted version of the complexes $\se^{\prime}_n(r)$ and $\se^{\prime}_n(r)_{\Nis}$. Set the gluing map (on $X_{\eet}$)
$$
\alpha_{r}: \sss^{1}_n(r) \rightarrow i^*\R j_*{\mathbf Z}/p^n(r+1)^{\prime}(-1)
$$
to be equal to the composition
$$\sss^{1}_n(r)\stackrel{\omega_0}{\to} \sss^{\prime}_n(r)\stackrel{\alpha_r}{\to} i^*\R j_*{\mathbf Z}/p^n(r)^{\prime}\rightarrow i^*\R j_*{\mathbf Z}/p^n(r+1)^{\prime}(-1).
$$
And define the complexes $\se^{1}_n(r)$ by gluing $\sss^{1}_n(r)$ and $j_{\eet!} j_*^{\prime}G{\mathbf Z}/p^n(r+1)^{\prime}(-1)$ via $\alpha_r$. We have the distinguished triangle
$$j_{\eet!} \R j_*^{\prime}{\mathbf Z}/p^n(r+1)^{\prime}(-1)\to \se^1_n(r)\to i_*\sss^1_n(r).
$$
Write $\se^{1}_n(r)$ for the derived pushforward of $\se^{1}_n(r)$ to $X_{\Nis}$ and set $\se^{1}_n(r)_{\Nis}:=\tau_{\leq r}\se^{1}_n(r)$. 

\subsubsection{Syntomic-\'etale cohomology and differential forms}
We will need a differential definition of the syntomic-\'etale complexes. Let $X$ be a log-syntomic, finite and saturated (fs for short)  scheme over $W(k)$. Assume first that $X$ is affine and we have an immersion 
 $X\hookrightarrow Z$ over $W(k)$ such that $Z$ is a log-smooth $W(k)$-scheme endowed with a compatible system of liftings of the Frobenius $\{F_n: Z_n\to Z_n\}$. 
Choose sufficiently large algebraically closed fields $\Omega$ and $\Omega^{\prime}$ of characteristic zero and $p$, respectively. Let $C$ be the set of all isomorphism classes of fs monoids
 $P$ such that $P^*=\{1\}$. For each isomorphism class $c\in C$ choose a representative $P_c$ of $c$ and define the log-geometric point $\Omega_c$ to be $\Spec(\Omega_c)$ with 
 $M_{\Omega_c}=\Omega\oplus_{n\in {\mathbb N}, n\neq 0}1/nP_c$ and $\Omega^{\prime}_c$ to be $\Spec(\Omega^{\prime})$ with $M_{\Omega_c^{\prime}}=\Omega\oplus_{n\in {\mathbb N}, p\nmid n}1/nP_c$.
  Let $G$ denote the Godement resolution with respect to all log-geometric points whose sources are $\Omega_c$ or $\Omega_c^{\prime}$ for some $c\in C$. The set of points we have chosen is actually larger than what would suffice here but it will turn out useful later.
 
    The map $$\alpha^{\prime}_r: \sss_n(r)\to i_*i^*\R j_*{\mathbf Z}/p^n(r)^{\prime}\stackrel{\delta}{\to}j_{\eet !}\R j^{\prime}_*{\mathbf Z}/p^n(r)^{\prime}[1],$$ 
 where the map $\delta$ is obtained from the distinguished triangle
$$j_{\eet !}\R j^{\prime}_*{\mathbf Z}/p^n(r)^{\prime}\stackrel{\eta}{\to} \R j_*{\mathbf Z}/p^n(r)^{\prime}\to i_*i^*\R j_*{\mathbf Z}/p^n(r)^{\prime},
$$  
 can be induced by a map
 $\alpha^{\prime}_r: S_n(r)\to j_{\eet !}j^{\prime}_*G{\mathbf Z}/p^n(r)^{\prime}[1] $ that, in turn, can be obtained 
by shififying the following morphisms of presheaves on $X_{\eet}$ \cite[3.1]{Ts} (we write $\theta$ for the operation $i_*i^*j_*G$ and assume that $U=\Spec(A)\to X$ is a strict \'etale map)
\begin{align*}
\alpha_{r,U}^{\prime}:\quad \Gamma(U,S_n(r)_{X,Z})\to \Gamma(U^h,\theta_U & \overline{S}_n(r)_{U,Z})\stackrel{\sim}{\leftarrow}\Gamma(U^h,\theta_U\Lambda_{U^h})\stackrel{\sim}{\leftarrow}\Gamma(U,\theta\Lambda_U)\\
 & \stackrel{\sim}{\leftarrow}\Gamma(U,\Cone(\eta(G\Lambda_U)))\to  \Gamma(U,j_{\eet !}j^{\prime}_*G\Lambda_U[1]),
\end{align*}
where we put $\Lambda_Y={\mathbf Z}/p^n(r)^{\prime}_{Y_{K,\tr}}$ and $\eta(G\Lambda_U):j_{\eet !}j^{\prime}_*G\Lambda_U\hookrightarrow j_{\eet *}j^{\prime}_*G\Lambda_U$ is the natural  injection. Here $U^h$ denotes the henselization of $U$ with respect to the ideal $pU$ and $\overline{S}_n(r)_{U,Z}$ is an analog of the syntomic complex $S_n(r)_{U,Z}$ \cite[3.1]{Ts}. 
This complex is defined using, instead of $A$, $\overline{A^h}$ - the integral closure of $A^h$ in an algebraic closure of the fraction field $\Frac(A^h)$ that is \'etale over $A^h_{\tr,K}$, and instead of the immersion $X\hookrightarrow Z$, the immersion 
$\Spec(\overline{A^h})\hookrightarrow \Spec(\A_{\crr}(\overline{A^h}))$ with the natural Frobenius liftings and log-structures. Unlike in \cite[3.1]{Ts} we allow here trivial log-structure on the special fiber.
The complex $\overline{S}_n(r)_{U,Z}$ is a complex of locally free sheaves on $(U^h_{\tr,K})_{\eet}$. 
 It is a resolution of $\Lambda_{U^h_{\tr,K}}$.
 
   In the case of a general $X$ we choose a strict \'etale affine covering $X^{\prime}\to X$ and immersion $X^{\prime}\hookrightarrow Z^{\prime}$ with Frobenius liftings $F_{Z^{\prime}_n}:Z^{\prime}_n\to Z^{\prime}_n$.
    From this we construct a (\v{C}ech) hypercovering $X^{{\scriptscriptstyle\bullet}}\to X$ with immersion $X^{{\scriptscriptstyle\bullet}}\hookrightarrow Z^{{\scriptscriptstyle\bullet}}$ and Frobenius liftings $F_{Z^{{\scriptscriptstyle\bullet}}_n}:Z^{{\scriptscriptstyle\bullet}}_n\to Z^{{\scriptscriptstyle\bullet}}_n$. By applying the above construction to each level we obtain a morphism  $\alpha_{r}^{\prime}: S_n(r)_{X^{{\scriptscriptstyle\bullet}},Z^{{\scriptscriptstyle\bullet}}}\to j_{\eet !}j^{\prime}_*G\Lambda_{X^{{\scriptscriptstyle\bullet}}}[1]$. Our map $\alpha_r^{\prime}$ is now defined as $\R \varepsilon_*\alpha_r^{\prime}$, where $\varepsilon: X^{{\scriptscriptstyle\bullet}}_{\eet}\to X_{\eet}$ is the change of topoi map. It can be represented by the composition
$$\alpha_r^{\prime}:\quad  S_n(r)_X\to \varepsilon_*GS_n(r)_{X^{{\scriptscriptstyle\bullet}},Z^{{\scriptscriptstyle\bullet}}}\stackrel{\alpha_{r}^{\prime}}{\to}\varepsilon_*Gj_{\eet !}j^{\prime}_*G\Lambda_{X^{{\scriptscriptstyle\bullet}}}[1]\stackrel{\sim}{\leftarrow}j_{\eet !}j^{\prime}_*G\Lambda_{X}[1].
$$
 
 Consider the induced map 
$$\alpha^{\prime}_r: \quad S^{\prime}_n(r)_X\stackrel{\omega}{\to} S_n(r)_X\stackrel{\alpha_r^{\prime}}{\to} j_{\eet!}j^{\prime}_*G\Lambda_X[1].
$$ 
This map, a priori a zigzag of maps of complexes,  can be "straighten up", that is, we can find a complex $\wt{S}^{\prime}_n(r)_X$ and genuine maps of complexes $f:\wt{S}^{\prime}_n(r)_X\to S^{\prime}_n(r)_X$ and $\tilde{\alpha}_r:\wt{S}^{\prime}_n(r)_X\to j_{\eet!}j^{\prime}_*G\Lambda_X[1]$ that fit
    into the following commutative diagram
$$
\xymatrix{
\wt{S}^{\prime}_n(r)_X\ar[d]^{f}_{\wr}\ar[rd]^{\tilde{\alpha}_r} \\ 
S^{\prime}_n(r)_X\ar[r]^-{\alpha^{\prime}_r} &
 j_{\eet!}j^{\prime}_*G\Lambda_X[1]
}
$$
This is done by replacing each diagram of maps $A\stackrel{a}{\to} B\stackrel{\sim}{\leftarrow} C$ by the map $a^{\prime}: D\to C$ from  the following commutative diagram of complexes
\begin{equation}
\label{zigzag}
\xymatrix{  D=\Cone(a-b)[-1]\ar[r]^-{a^{\prime}}\ar[d]^{b^{\prime}}_{\sim} & C\ar[d]^b_{\sim}\\
A\ar[r]^a 
 & B
}
\end{equation}

    Set 
  $$E^{\prime}_n(r)_X:=\Cone(\wt{S}^{\prime}_n(r)_X\stackrel{\tilde{\alpha}_r}{\to}j_{\eet !}j^{\prime}_*G\Lambda_X[1])[-1],\quad E^{\prime}_n(r)_{\Nis,X}:=\tau_{\leq r}\varepsilon^{\prime}_*GE^{\prime}_n(r)_X,
  $$
  where $\varepsilon^{\prime}:X_{\eet}\to X_{\Nis}$ is the change of topology map. 
  These complexes represent $\se^{\prime}_n(r)_X$ and $\se^{\prime}_n(r)_{\Nis,X}$, respectively. They are functorial in $X$. The complexes 
  $E^{\prime}_n(r)_X$ and $E^{\prime}_n(r)_{\Nis,X}$ inherit functorial product structure that is compatible with the maps
  $$E^{\prime}_n(r)_X\to \wt{S}^{\prime}_n(r)_X\to S^{\prime}_n(r)_X.
  $$
To construct it note that all the maps in $\alpha_{r,U}$ are compatible with products and so are  the maps $\omega$ and $\delta$. It suffices thus, in the diagram \ref{zigzag}, to equip
the cone $D$ with product structure that is compatible with the projections on $A$ and $C$. But this is standard  and can be done as in \cite[3.1]{N0}.

  The above syntomic-\'etale complexes have  twisted versions. Consider first
\begin{align*}
E^{1}_n(r)_X: =\Cone(\wt{S}^{1}_n(r)_X\stackrel{\tilde{\omega}_{0}}{\to}\wt{S}^{\prime}_n(r)_X\stackrel{\tilde{\alpha}^{\prime}_r}{\to}j_{\eet !}j^{\prime}_*G{\mathbf Z}/p^n(r+1)^{\prime}(-1)[1])[-1],\quad E^{1}_n(r)_{\Nis,X}: =\tau_{\leq r}\varepsilon^{\prime}_*GE^{1}_n(r)_X,
\end{align*}
where $$
\tilde{\alpha}^{\prime}_r:\wt{S}^{\prime}_n(r)_X\stackrel{\tilde{\alpha}_r}{\to}j_{\eet !}j^{\prime}_*G{\mathbf Z}/p^n(r)^{\prime}[1]{\to}j_{\eet !}j^{\prime}_*G{\mathbf Z}/p^n(r+1)^{\prime}(-1)[1]
$$ 
and the complex $\wt{S}^{1}_n(r)_X$ is defined via the following commutative diagram
$$
\xymatrix{ \wt{S}^{1}_n(r)_X=\Cone(f-\omega_{0})[-1]\ar[d]^{\tilde{\omega}_{0}}\ar[r]^-{f^1}_-{\sim} & S^{1}_n(r)_X\ar[d]^{\omega_{0}}\\
\wt{S}^{\prime}_n(r)_X\ar[r]^-{f}_{\sim} 
& S^{\prime}_n(r)_X 
}
$$
We also the following twisted version
\begin{align*}
E^{2}_n(r)_X: =\Cone(\wt{S}^{2}_n(r)_X\stackrel{\tilde{\omega}_{1,0}}{\to}\wt{S}^{\prime}_n(r)_X\stackrel{\tilde{\alpha}^{\prime}_r}{\to}j_{\eet !}j^{\prime}_*G{\mathbf Z}/p^n(r+2)^{\prime}(-2)[1])[-1],
\end{align*}
where $$
\tilde{\alpha}^{\prime}_r:\wt{S}^{\prime}_n(r)_X\stackrel{\tilde{\alpha}_r}{\to}j_{\eet !}j^{\prime}_*G{\mathbf Z}/p^n(r)^{\prime}[1]{\to}j_{\eet !}j^{\prime}_*G{\mathbf Z}/p^n(r+2)^{\prime}(-2)[1],
$$ 
$\omega_{1,0}=\omega_1\omega_0$,
and the complex $\wt{S}^{2}_n(r)_X$ is defined via the following commutative diagram
$$
\xymatrix{ \wt{S}^{2}_n(r)_X=\Cone(f-\omega_{1,0})[-1]\ar[d]^{\tilde{\omega}_{1,0}}\ar[r]^-{f^1}_-{\sim} & S^{2}_n(r)_X\ar[d]^{\omega_{1,0}}\\
\wt{S}^{\prime}_n(r)_X\ar[r]^-{f}_-{\sim} 
& S^{\prime}_n(r)_X 
}
$$
Using again the construction of products on cones from \cite[3.1]{N0} and the fact that the maps 
$$
\omega:S^{\prime}_n(r)_X\to S_n(r)_X,\quad 
\omega_0:S^{1}_n(r_1)_X\to S^{\prime}_n(r)_X
$$
as well as the map $\alpha_r$ commute with products, we construct products
$$\cup: E^{\prime}_n(r_1)_X\otimes ^{{\mathbb L}}E^{1}_n(r_2)_X\to E^{1}_n(r_1+r_2)_X;\quad \cup: E^{\prime}_n(r_1)_{\Nis,X}\otimes ^{{\mathbb L}}E^{1}_n(r_2)_{\Nis,X}\to E^{1}_n(r_1+r_2)_{\Nis,X}
$$
that are compatible with the product $\cup: S^{\prime}_n(r_1)_X\otimes ^{{\mathbb L}}S^{1}_n(r_2)_X\to S^{1}_n(r_1+r_2)_X$.

 In an analogous way we define the complexes $E_n(r)_X$ and $E_n(r)_{\Nis,X}$ and the corresponding products. 
\subsubsection{Syntomic-\'etale symbol maps}
  If $X$ is fs then we  have Chern class maps
  \begin{equation}
 \label{symbol1}
{c}_1^{\synt}:j_*\so^*_{X\setminus D}[-1]\to
E^{\prime}_n(1)_{\Nis,X}(D), \quad
{c}_1^{\synt}: j_*\so^*_{X\setminus D}[-1]\to
E_n(1)_{\Nis,X}(D)
\end{equation}
that are compatible with the syntomic Chern class maps. 
To define these maps it suffices, by degree reason, to define maps
\begin{equation}
{c}_1^{\synt}:j_*\so^*_{X\setminus D}[-1]\to
E^{\prime}_n(1)_X(D), \quad
{c}_1^{\synt}: j_*\so^*_{X\setminus D}[-1]\to
E_n(1)_X(D).
\end{equation}
For that recall that the syntomic Chern classes are compatible with the \'etale Chern classes \cite[Prop. 3.2.4]{Ts}, i.e., that the following diagram commutes
$$
\xymatrix{j_*\so^*_{X\setminus D}[-1]\ar[r]^{c_1^{\synt}}\ar[dr]^{pc_1}&
S^{\prime}_n(1)_X(D)\ar[d]^{\alpha_1}\\
& i_*i^*\R j_{\eet *}\R j^{\prime}_*{\mathbf Z}/p^n(1)^{\prime}
}
$$
where the map $c_1$ is induced from the Chern class map 
$$c_1^{\eet}: j_*\so^*_{X\setminus D}[-1]\to
\R j_{\eet *}\R j^{\prime}_*{\mathbf Z}/p^n(1).
$$
It follows that the composition
$$j_*\so^*_{X\setminus D}[-1]\stackrel{c_1^{\synt}}{\to}
S^{\prime}_n(1)_X(D)\stackrel{\alpha_1}{\to}
i_*i^*\R j_{\eet *}\R j^{\prime}_*{\mathbf Z}/p^n(1)^{\prime}\to
j_{\eet !}\R j^{\prime}_*{\mathbf Z}/p^n(1)^{\prime}[1]
$$ is trivial, hence there exists a unique (for degree reason) map
$$ c_1^{\synt}: j_*\so^*_{X\setminus D}[-1]\to
E^{\prime}_n(1)_X(D)
$$ compatible with the syntomic Chern class map. The construction for the complex $E_n(1)_X(D)$ is analogous.

\subsection{Syntomic cohomology and motivic cohomology}
 Let $X$ be a smooth scheme  over $\so_K$. Let  ${\mathbf Z}(r)_{\M}$ denote  the complex of motivic sheaves
${\mathbf Z}(r)_{\M}:=X\mapsto z^r(X,2r-*) $ in the Nisnevich topology of $X$, where the complex $z^r(X,*)$ is the Bloch's cycles complex \cite{Bl}. Let ${\mathbf Z}/p^n(r)_{\M}:={\mathbf Z}(r)_{\M}\otimes\Z/p^n$.
 We have that
$H^j(X_{\Nis},{\mathbf Z}/p^n(i)_{\M}) =H^j\Gamma(X,\Z/p^n(r)_{\M})$ is the 
Bloch higher Chow group \cite[Prop. 3.6]{GD}.  

   Recall that the main theorem of \cite{EN} shows that  syntomic-\'etale complexes on smooth schemes over $\so_K$ approximate motivic complexes. 
\begin{theorem}(\cite[Theorem 3.10]{EN})
\label{keylemma10}
Let $X$ be a semistable scheme  over $\so_K$ with a smooth special fiber.  
Let $j^{\prime}: X_{\tr}\hookrightarrow X$ be the natural open immersion. Then
\begin{enumerate}
\item there is a natural cycle class map
$$
\cl^{\synt}_r: \R j^{\prime}_*{\mathbf Z}/p^n(r)_{\M}\to \se_n(r)_{\Nis},\quad 0\leq r\leq p-2.
$$
It is a quasi-isomorphism.
\item there is a   natural cycle class map 
$$\cl^{\synt}_r:\R j^{\prime}_*{\mathbf Z}/p^n(r)_{\M}\to \se^{\prime}_n(r)_{\Nis},\quad  r\geq 0.
$$
It is a  $p^{Nr}$-quasi-isomorphism for  a constant $N$ as in Theorem \ref{input1}.
\end{enumerate}
We have analogous statements in the \'etale topology. These cycle class maps are compatible (via the localization map and the period map) with the \'etale cycle class maps.
\end{theorem}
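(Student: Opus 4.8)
The statement to prove is Theorem~\ref{keylemma10}: the existence of cycle class maps $\cl^{\synt}_r$ from (derived-pushed-forward) $p$-adic motivic cohomology to Nisnevich syntomic-\'etale cohomology, and the assertion that they are (up to a universal $p^{Nr}$-factor in the undivided case) quasi-isomorphisms. The strategy is to reduce everything to \'etale statements via the three comparison inputs already recorded in the excerpt, and then to invoke Geisser--Levine / Geisser--Sato together with the Beilinson--Lichtenbaum conjecture (now a theorem, via Voevodsky--Rost). Concretely, I would proceed in four stages.

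\emph{Stage 1: construction of the cycle class map.} First I would recall that $p$-adic motivic cohomology of a smooth scheme over a field (or over $\so_K$, using the semistable reduction and the smooth special fiber hypothesis) carries, by Bloch's theory and Geisser--Levine \cite{Bl}, a functorial cycle class map to \'etale cohomology $\R\varepsilon_*\Z/p^n(r)$. Composing with $\R j'_*$ and using the distinguished triangle (\ref{Niis2}) defining $\se_n(r)$ (resp.\ $\se^{\prime}_n(r)$) by gluing $\sss_n(r)$ over the special fiber with $\R j'_*G\Z/p^n(r)'$ over the generic fiber, the map $\cl^{\synt}_r$ on the syntomic-\'etale side is forced: over the generic fiber it is the \'etale cycle class map to $\R j_*\R\varepsilon_*\Z/p^n(r)'$, while over the special fiber one uses the Fontaine--Messing period map $\tilde\alpha_r$ (resp.\ $\alpha_r$) of Theorem~\ref{input1} composed with the motivic-to-nearby-cycles comparison (this is exactly where the twisted complexes $\se^i_n(r)$ and the maps $\omega,\omega_0$ enter, and where the factor $p^{Nr}$ is introduced). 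The differential/Godement models $E^{\prime}_n(r)_{\Nis,X}$ constructed in \S2.2.4 are what make this gluing into an honest map of complexes; functoriality and (where needed) product-compatibility follow from the product-compatibility of $\alpha_r$, $\omega$, $\omega_0$ already asserted there.

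\emph{Stage 2: reduction to cohomology sheaves and truncation.} Since both sides are (truncated at $r$) complexes in the Nisnevich topology, it suffices to check that $\cl^{\synt}_r$ is a $p^{Nr}$-quasi-isomorphism on Nisnevich stalks, i.e.\ on henselizations; and since motivic cohomology $\Z/p^n(r)_{\M}$ is already concentrated in degrees $\le r$ (Nisnevich-locally, by Nesterenko--Suslin--Totaro and Geisser--Levine), the relevant comparison is with $\tau_{\le r}\se^{(\prime)}_n(r)$, which is exactly $\se^{(\prime)}_n(r)_{\Nis}$ by definition. Using the localization triangle relating $X$, $X_{\tr}$ and the divisor, and the fact that $\R j'_*$ commutes with $\R\varepsilon_*$ and truncation in the stable range, one reduces to the two separate assertions: (a) over the open locus $X_{\tr}$, the \'etale cycle map $\Z/p^n(r)_{\M}\to \tau_{\le r}\R\varepsilon_*\Z/p^n(r)$ is a quasi-isomorphism; (b) the gluing is ``seamless'' in the sense that no extra cohomology is created or lost in degrees $\le r$ beyond what is controlled by Theorems~\ref{input1} and~\ref{keylemma11}.

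\emph{Stage 3: the \'etale input.} Assertion (a) is precisely the Beilinson--Lichtenbaum conjecture for $\Z/p^n$-coefficients on smooth schemes, now a theorem as a consequence of the Bloch--Kato conjecture (Voevodsky, Rost, with Geisser--Levine linking motivic to \'etale cohomology and handling the bad-characteristic part via logarithmic de Rham--Witt). This gives the quasi-isomorphism $\tau_{\le r}\R\varepsilon_*\Z/p^n(r)_{X_{\tr}}\simeq \Z/p^n(r)_{\M,X_{\tr}}$ cited in the introduction; I would apply it level-wise on the \v{C}ech hypercovering $X^{\scriptscriptstyle\bullet}$ used to define the differential models. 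For the special-fiber contribution, the input is Theorem~\ref{input1}: $\sh^i(\sss^{\prime}_n(r))\to i^*\R^i j_*\Z/p^n(r)'$ is a $p^{Nr}$-isomorphism for $i\le r$ (and an honest isomorphism in the divided, small-twist case $i\le r\le p-2$ by Theorem~\ref{input0} / Theorem~\ref{keylemma11}(1)), so the glued complex $\se^{(\prime)}_n(r)$ agrees in the stable range with $\tau_{\le r}\R(j\varepsilon)_*\Z/p^n(r)'$ up to $p^{Nr}$.

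\emph{Stage 4: assembling and tracking constants; the main obstacle.} Combining Stages 2 and 3 via the five lemma applied to the morphism of distinguished triangles (\ref{Niis2})/(\ref{Niis3}) — comparing the triangle for $\R j'_*\Z/p^n(r)_{\M}$ against the triangle defining $\se^{(\prime)}_n(r)_{\Nis}$ — yields that $\cl^{\synt}_r$ is a quasi-isomorphism for $0\le r\le p-2$ (divided case, no constants) and a $p^{Nr}$-quasi-isomorphism in general (undivided case). The analogous \'etale statement follows by the same argument before applying $\R\varepsilon_*$, and compatibility with the \'etale cycle class maps is built into the construction (the generic-fiber component of $\cl^{\synt}_r$ is, by definition, the \'etale cycle class map, and $\alpha_r$ intertwines the special-fiber components). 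The \emph{hard part} is the bookkeeping of the constant $N$: one must check that the $p^{Nr}$-factors coming from Theorem~\ref{input1}, from the maps $\omega,\tau,\omega_0$ between twisted syntomic complexes (each contributing a $p^{r+i}$), and from the straightening of the zigzag (\ref{zigzag}) in the Godement model all aggregate into a single universal constant depending only on $p$ (and $e$, and $d$ when $p=2$), independently of $n$, $r$, and $X$ — and, in the stable range $r\le p-2$, that these constants can be taken trivial. This is the technically delicate, though conceptually routine, core of the argument, and it is where most of the work of \cite{EN} (whose Theorem~3.10 this is) actually lies.
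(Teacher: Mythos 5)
This theorem is cited from \cite[Theorem 3.10]{EN} and is not proved in the present paper, so there is no internal proof to compare against; your proposal can only be checked on its own terms. Your overall plan (glue a cycle class using the Beilinson--Lichtenbaum theorem on the generic fiber and the Fontaine--Messing period map on the special fiber, reduce to Nisnevich stalks, assemble by the five lemma applied to the defining triangle \eqref{Niis2}) is roughly the right shape, and you correctly identify the ingredients and the constant-tracking problem.

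However there is a genuine gap at the heart of Stages~1 and~3. The source of $\cl^{\synt}_r$ is $\R j^{\prime}_*\mathbf{Z}/p^n(r)_{\M}$ where $j^{\prime}\colon X_{\tr}\hookrightarrow X$ and $X_{\tr}=X\setminus D$ is a \emph{mixed-characteristic} $\so_K$-scheme, not the generic fiber $X_{\tr,K}$. Your reduction implicitly treats $X_{\tr}$ as if it were characteristic-zero, and therefore treats the comparison with $\se^{(\prime)}_n(r)$ as the composite BL-over-$K$ $\circ$ Fontaine--Messing. But the localization triangle in motivic cohomology over a DVR produces, in the restriction of $\R j^{\prime}_*\mathbf{Z}/p^n(r)_{\M}$ to $X_0$, a genuine characteristic-$p$ term $\mathbf{Z}/p^n(r-1)_{\M,X_{\tr,0}}[-2]\simeq W_n\Omega^{r-1}_{X_{\tr,0},\log}[-r-1]$ (Geisser--Levine) that does not factor through $j_*$ from $K$ at all, and the Fontaine--Messing map $\alpha_r\colon\sss^{(\prime)}_n(r)\to i^*\R j_{*}\mathbf{Z}/p^n(r)'$ only sees nearby cycles \emph{from the generic fiber}. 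Matching the log-de\,Rham--Witt part of $\so_K$-motivic cohomology against the corresponding piece of the syntomic complex (the Kato--Kurihara--type analysis of the $\phi$-filtered crystalline complex cut out against $\R j_*\mathbf{Z}/p^n(r)'$) is exactly the substantive content of \cite[Theorem~3.10]{EN}; it is not a formal consequence of Voevodsky--Rost Beilinson--Lichtenbaum plus the period map, and your proposal as written offers no argument for it. Relatedly, the BL input you invoke in Stage~3 must be Geisser's mixed-characteristic theorem over $\so_K$ (\cite[Theorem~1.2.2]{GD}), not the over-a-field version; naming Geisser--Levine and ``logarithmic de Rham--Witt'' is the right instinct, but the precise statement you need is not supplied, and without it Stage~4's five-lemma assembly has nothing to attach to on the special-fiber side.
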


\section{Cohomology of classifying spaces}
In this section we study cohomology of classifying spaces: we compute  their (Nisnevich) syntomic cohomology and we show that their Nisnevich syntomic-\'etale  and  syntomic cohomologies agree  in even degrees. 
\subsection{Classical computations}
\label{classical}
  We are interested in 
 cohomology of $BGL_a$ and the related classifying simplicial  schemes $B(GL_a\times GL_b)$ and $BGL(a,b)$. Recall that, roughly speaking, 
 if a ${\mathbf Z}\times{\mathbf Z}$-graded cohomology theory $X/S\mapsto H^*(X,*)$ satisfies homotopy property and projective space theorem (of bidegrees $(2i,i))$,  then we have the following isomorphisms
 \begin{align*}
 H^*(BGL_{a},*) & \simeq H^*(S,*) [x_1,\ldots,x_a],\\
 H^*(BGL(a,b),*) & \simeq H^*(BGL_{a}\times BGL_{b},*)
  \simeq  H^*(S,*)[x_1,\ldots,x_{a};y_1,\ldots,y_{b}], 
 \end{align*}
 where the classes
$x_i,y_i\in H^{2i}(BGL_{a},i)$ are the Chern classes of the universal locally free sheaf on $BGL_{a}
$ (defined via a projective space theorem).  This applies, for example,  to
\begin{enumerate}
\item 
 \'etale cohomology with $\ell$-adic coefficients;
 \item    (\'etale)  logarithmic de Rham-Witt cohomology $H^*(X,W_n\Omega^*_{X,\log}[-*])$, for a smooth scheme $X$ over $k$  \cite[III, Theorem 2.2.5]{Gr}:
    \begin{align*}
 H^*(BGL_{a},W_n\Omega^*_{BGL_{a}, \log}[-*]) & \simeq H^*(k,W_n\Omega^*_{k,\log}[-*]) [x_1,\ldots,x_a],\\
 H^*(BGL(a,b),W_n\Omega^*_{BGL(a,b), \log}[-*]) & \simeq H^*(BGL_{a}\times BGL_{b},W_n\Omega^*_{BGL_{a}\times BGL_{b}, \log}[-*])\\
  &  \simeq  H^*(k,W_n\Omega^*_{k,\log}[-*])[x_1,\ldots,x_{a};y_1,\ldots,y_{b}], 
 \end{align*}
where the classes
$x_i,y_i\in H^{2i}(BGL_{l},W_n\Omega^i_{BGL_{l}, \log}[-i])$ are the logarithmic de Rham-Witt Chern classes of the universal locally free sheaf on $BGL_{a}$.
We have $H^*(k,W_n\Omega^*_{k,\log}[-*])=H^*_{\eet}(k,{\mathbf Z}/p^n)$, which is trivial in degrees at least $2$. 
\item motivic cohomology ${\mathbf Z}/p(*)_{\M}$ over a field $F$ \cite[Lemma 7]{PO}:
  \begin{align*}
  H^*(BGL_{a},{\mathbf Z}/p(*)_{\M})& \simeq H^*(F,{\mathbf Z}/p(*)_{\M})[x_1,\ldots,x_a]\\
  H^*(BGL(a,b),{\mathbf Z}/p(*)_{\M}) & \simeq H^*(BGL_{a}\times BGL_{b},{\mathbf Z}/p(*)_{\M})\\
    & \simeq  H^*(F,{\mathbf Z}/p(*)_{\M})[x_1,\ldots,x_{a};y_1,\ldots,y_{b}], 
   \end{align*}
  where  $ x_i,y_i\in H^{2i}(BGL_{a},{\mathbf Z}/p(i)_{\M})$ are the motivic  Chern classes of the universal locally free sheaf on $BGL_{a}$.
\item   
   Nisnevich logarithmic de Rham-Witt cohomology $H^*_{\Nis}(X,W_n\Omega^*_{X,\log}[-*])$, for smooth schemes $X$ over $k$:
    \begin{align}
    \label{vanish0}
 H^*_{\Nis}(BGL_{a},W_n\Omega^*_{BGL_{a}, \log}[-*]) & \simeq H^*_{\Nis}(k,W_n\Omega^*_{k,\log}[-*]) [x_1,\ldots,x_a],\\
 H^*_{\Nis}(BGL(a,b),W_n\Omega^*_{BGL(a,b), \log}[-*]) & \simeq H^*_{\Nis}(BGL_{a}\times BGL_{b},W_n\Omega^*_{BGL_{a}\times BGL_{b}, \log}[-*])\notag\\
  &  \simeq  H^*_{\Nis}(k,W_n\Omega^*_{k,\log}[-*])[x_1,\ldots,x_{a};y_1,\ldots,y_{b}], \notag
 \end{align}
where 
$x_i,y_i\in H^{2i}_{\Nis}(BGL_{l},W_n\Omega^i_{BGL_{l}, \log}[-i])$ are the Nisnevich logarithmic de Rham-Witt Chern classes of the universal locally free sheaf on $BGL_{a}$. This follows from the quasi-isomorphism ${\mathbf Z}/p(i)_{\M}\simeq W_n\Omega^i_{X, \log}[-i]$. We have $H^*_{\Nis}(k,W_n\Omega^*_{k,\log}[-*])={\mathbf Z}/p^n$.
\end{enumerate}

  Alternatively, one can proceed as in \cite[II]{Gr}, and use projective space theorem plus weak purity to obtain the same formulas. 
  This method of computations applies to de Rham (hence crystalline) and to Hodge cohomologies.
We get  (we write $BGL_{a,n}$ for $BGL_l/W_n(k)$ if it is not confusing)  
$$H^*_{\crr}(BGL_{a,n})\simeq H^*_{\dr}(BGL_{a,n})\simeq W_n(k)[x_1,\ldots,x_a],$$
 where the classes
$x_i\in H^{2i}_{\dr}(BGL_{a,n})$ are the de Rham Chern classes of the universal locally free sheaf on $BGL_{a,n}
$ (defined via a projective space theorem). Similarly, for  Hodge cohomology we have 
$$
H^*_{Hdg}(BGL_{a,n})\simeq W_n(k)[x_1,\ldots,x_a].
$$
In particular
$$
H^{2i}_{\dr}(BGL_{a,n})=H^i(BGL_{a,n},\Omega^{
i}_{BGL_{a,n}})=\oplus_IW_n(k)x_I,\quad I\subset \{1,\ldots, a\}, |I|=i.
$$
Similarly, we get
\begin{align*}
H^*_{\crr}(BGL(a,b)_n) & \simeq H^*_{\dr}(BGL(a,b)_n)\simeq H^*_{\dr}((BGL_{a}\times BGL_{b})_n)\\
 & \simeq W_n(k)[x_1,\ldots,x_{a};y_1,\ldots,y_{b}].
\end{align*}
\subsection{Syntomic computations}
\subsubsection{Syntomic cohomology}
 \begin{lemma}
\label{syntcomp}Let $B$ be one of the classifying simplicial schemes $BGL_l$, $B(GL_a\times GL_b)$, or $BGL(a,b)$. Then
$$
H^{i}(B,S_n(j))=\begin{cases}
M & \text { for } i=2j, \\
H_{\dr}^{i-1}(B) & \text{ for } i=2j-2m-1,m\geq 0,\\
0 & \text{ for } i\geq 2j+2,
\end{cases}
$$
where $M$ is equal to $\oplus_I{\mathbf Z}/p^nx_I$ for $BGL_l$ and to  $\oplus_{I,J}{\mathbf Z}/p^nx_Iy_J$ in the other two cases. Here
$I\subset \{1,\ldots, l\},|I|=j$; $I\subset \{1,\ldots, a\},|I|=c, J\subset \{1,\ldots, b\},|J|=d, c+d=j$, respectively. 

 Moreover we have the following long exact sequence
$$
0\to   H^{2j}(B,S_n(j))\to H^{2j}(B_n,\Omega^{\geq
j}_{B_n})\stackrel{1-\phi_j}{\longrightarrow}H^{2j}_{\dr}(B_n)\\
 \to  H^{2j+1}(B,S_n(j))\to 0.
$$
\end{lemma}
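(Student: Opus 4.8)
The plan is to reduce the computation for syntomic cohomology of classifying spaces to the already-known computation of de Rham (equivalently, crystalline and Hodge) cohomology recalled in Section \ref{classical}, via the defining distinguished triangle for the syntomic complex. Recall that $S_n(j)_{B}$ fits, by its construction, into a distinguished triangle whose terms are built from the Hodge-filtered de Rham-Witt/crystalline complex: concretely, using the differential description \eqref{lifted}, the complex $S_n(j)_{B}$ is the homotopy fiber of
\begin{equation*}
1-\phi_j:\quad J^{<j-{\scriptscriptstyle\bullet}>}\otimes\Omega\kr_{B_n}\longrightarrow \so\otimes\Omega\kr_{B_n},
\end{equation*}
so that there is a long exact sequence relating $H^i(B,S_n(j))$, the Hodge cohomology $H^i(B_n,\Omega^{\geq j}_{B_n})=\bigoplus_{|I|\geq j}W_n(k)x_I$ (here I am using that for a classifying space the de Rham complex decomposes, each $x_I$ sitting in degree $2|I|$, cohomological degree $=2\cdot(\text{filtration degree})$), and the de Rham cohomology $H^i_{\dr}(B_n)=\bigoplus_I W_n(k)x_I$ with $x_I$ in degree $2|I|$.

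First I would observe that because everything is concentrated in even cohomological degrees (both $H^*_{Hdg}$ and $H^*_{\dr}$ of $B$ vanish in odd degrees and are free $W_n(k)$-modules on the monomials $x_I$, resp. $x_Iy_J$), the long exact sequence breaks into short pieces. Precisely, for each even degree $2m$ one gets
\begin{equation*}
0\to H^{2m-1}(B,S_n(j))\to H^{2m}(B_n,\Omega^{\geq j}_{B_n})\xrightarrow{1-\phi_j} H^{2m}_{\dr}(B_n)\to H^{2m}(B,S_n(j))\to 0,
\end{equation*}
and $H^{\text{odd}}(B,S_n(j))$ in even-shifted positions, $H^{\text{even}}$ in odd-shifted positions, all other terms vanish. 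Now the key input is understanding the map $1-\phi_j$ on the explicit basis. On the monomial $x_I$ with $|I|=m$, the divided Frobenius $\phi_j=\phi/p^j$ acts: $\phi$ multiplies $x_I$ by $p^{m}$ (each Chern class $x_i$ is an eigenvector of Frobenius with eigenvalue $p$, by compatibility of $\phi$ with the projective space theorem / the fact that $\phi$ acts as $p$ on $H^2(\mathbf P^\infty)$), so $\phi_j$ acts as $p^{m-j}$. Hence on $H^{2m}$: if $m>j$, the source $H^{2m}(B_n,\Omega^{\geq j}_{B_n})$ equals all of $H^{2m}_{\dr}$ and $1-\phi_j=1-p^{m-j}$ is an isomorphism on $W_n(k)$ (since $p^{m-j}$ is topologically nilpotent on $W_n(k)$), giving vanishing of $H^{2m}(B,S_n(j))$ and $H^{2m-1}(B,S_n(j))$ for $m>j$; if $m=j$, the source is $H^{2j}(B_n,\Omega^{\geq j})=H^{2j}(B_n,\Omega^j_{B_n})$ which is exactly the top Hodge piece, the map is $1-p^0=0$ into $H^{2j}_{\dr}$, and one reads off $H^{2j}(B,S_n(j))=H^{2j}_{\dr}(B_n)/(1-\phi_j)$ together with the kernel; if $m<j$, the source $H^{2m}(B_n,\Omega^{\geq j}_{B_n})=0$ because $\Omega^{\geq j}$ has nothing in degrees $<j$ and $H^{2m}_{\dr}$ is generated in filtration degree $m<j$, and one gets $H^{2m}(B,S_n(j))\cong H^{2m}_{\dr}(B_n)$ and $H^{2m+1}(B,S_n(j))=0$ — this is the $i=2j-2m'-1$ clause after reindexing.

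For the odd clause, after shifting indices I would match $H^{i-1}(B,S_n(j))$ with the cone of $1-\phi_j$ in de Rham cohomology one degree down, exactly as in the stated exact sequence; the cases $i=2j-2m-1$ with $m\geq 0$ correspond to $m<j$ above and produce $H^{i-1}_{\dr}(B)$, consistent with the assertion since $H^{i-1}_{\dr}(B_n)=H^{i-1}_{\dr}(B)$ (writing $B$ for $B_n$ by the stated convention). The case $B=BGL(a,b)$ or $B(GL_a\times GL_b)$ is identical, using the product formula $H^*_{\dr}(B)\cong W_n(k)[x_\bullet;y_\bullet]$, so the monomials are $x_Iy_J$ and all the Frobenius-eigenvalue bookkeeping goes through with $|I|+|J|$ in place of $|I|$. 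Finally the displayed long exact sequence at the end of the Lemma is simply the degree $2j$ (and $2j+1$) portion of the long exact sequence associated to the defining triangle, specialized using the vanishing $H^{2j+1}(B_n,\Omega^{\geq j}_{B_n})=0$ (odd Hodge cohomology vanishes) so that the four-term sequence is exact on both ends.

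The main obstacle I expect is not the homological algebra but pinning down precisely the action of the divided Frobenius $\phi_j$ on the Hodge/de Rham classes $x_I$ — i.e., verifying that $\phi$ is divisible by $p^{|I|}$ on the submodule generated by $x_I$ and acts as the unit-times-$p^{|I|}$ there, which is what makes $1-\phi_j$ invertible for $|I|>j$ and zero on the top piece for $|I|=j$. This requires knowing that the Chern class $x_i$ (built from the projective space theorem) is a crystalline Frobenius eigenvector with eigenvalue $p$, which in turn follows from the computation for $\mathbf P^\infty$ and multiplicativity of the construction, but it should be spelled out carefully since the whole degree pattern in the Lemma hinges on it. A secondary, purely bookkeeping, subtlety is making sure the truncation/divided-power subtleties in the definition of $\sj_n^{<j>}$ versus $\sj_n^{[j]}$ do not affect the statement in the relevant degrees — but on a smooth (formally smooth) classifying space this is harmless because one can work with the lifted complex \eqref{lifted} and the Hodge filtration is split.
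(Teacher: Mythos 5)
Your overall strategy is the one the paper uses: write $S_n(j)$ as the homotopy fiber of $1-\phi_j:\Omega^{\geq j}_{B_n}\to\Omega^{{\scriptscriptstyle\bullet}}_{B_n}$, use the vanishing of odd-degree Hodge and de Rham cohomology of $B$ to split the long exact sequence into four-term pieces, and feed in $\phi(x_I)=p^{|I|}x_I$. But two steps as written would fail. First, your four-term sequence is off by one: since $S_n(j)=\Cone(1-\phi_j)[-1]$, the correct piece is
$$0\to H^{2m}(B,S_n(j))\to H^{2m}(B_n,\Omega^{\geq j}_{B_n})\stackrel{1-\phi_j}{\longrightarrow}H^{2m}_{\dr}(B_n)\to H^{2m+1}(B,S_n(j))\to 0,$$
with the kernel in degree $2m$ and the cokernel in degree $2m+1$, not in degrees $2m-1$ and $2m$. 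Your indexing inverts the parity of every conclusion: it places the groups $H^{2m}_{\dr}(B)$ in \emph{even} degrees of $H^*(B,S_n(j))$ (the lemma places them in odd degrees $i=2j-2m-1$), and for $m>j$ it yields $H^i(B,S_n(j))=0$ for all $i\geq 2j+1$, whereas $H^{2j+1}(B,S_n(j))$ is the cokernel of $1-\phi_j$ on $H^{2j}$ and need not vanish — which is precisely why the lemma claims vanishing only for $i\geq 2j+2$ and records the degree-$(2j,2j+1)$ part as a separate four-term sequence. Your final sentence quotes that sequence with the correct indexing, contradicting your own earlier display.

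Second, and more substantially, the map $1-\phi_j$ on the top piece $m=j$ is not zero. The Frobenius $\phi$ is $\sigma$-semilinear, so on $H^{2j}(B_n,\Omega^{\geq j}_{B_n})=\oplus_{|I|=j}W_n(k)x_I$ the divided Frobenius acts by $\phi_j(\lambda x_I)=\sigma(\lambda)x_I$, and $1-\phi_j=1-\sigma$. Its kernel is $\oplus_I W_n(k)^{\sigma=1}x_I=\oplus_I\Z/p^n\, x_I$, which is exactly the asserted value of $M$; this semilinear invariants computation (the paper does it after a devissage to $n=1$) is the whole content of the case $i=2j$. Your claim that the map is $1-p^0=0$ would instead give $H^{2j}(B,S_n(j))=\oplus_I W_n(k)x_I$, and combined with the indexing error it identifies $H^{2j}$ with a cokernel rather than a kernel. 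The remaining ingredients of your argument (invertibility of $1-p^{m-j}\sigma$ for $m>j$ by topological nilpotence, vanishing of $H^{2m}(B_n,\Omega^{\geq j}_{B_n})$ for $m<j$) are fine once the semilinearity and the degree bookkeeping are corrected.
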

\begin{proof}
First we will prove the lemma for $BGL_l$. We have 
$$S_n(j)=\Cone(\Omega^{\geq
j}_{BGL_{l,n}}\stackrel{1-\phi_j}{\longrightarrow}\Omega^{{\scriptscriptstyle\bullet}}_{BGL_{l,n}})[-1]
$$
This yields 
 the  long exact sequence
$$
\cdots \to  H^{i}(BGL_l,S_n(j))\to H^{i}(BGL_{l,n},\Omega^{\geq
j}_{BGL_{l,n}})\stackrel{1-\phi_j}{\longrightarrow}H^{i}_{\dr}(BGL_{l,n})\\
\to  H^{i+1}(BGL_l,S_n(j))\to \cdots.
$$
Using the vanishing of the de Rham cohomology in odd degrees we get the following long exact sequence ($i\geq 0$)
$$
0\to   H^{2i}(B,S_n(j))\to H^{2i}(B_n,\Omega^{\geq
j}_{B_n})\stackrel{1-\phi_j}{\longrightarrow}H^{2i}_{\dr}(B_n)\\
 \to  H^{2i+1}(B,S_n(j))\to 0.
$$
Hence the long exact sequence in the lemma. 

By considering the maps
$$
H^{2j}(B,S_n(j))\to H^{2j}(B_n,\Omega^{\geq
j}_{B_n})^{\phi_j=1},\quad H^{i-1}_{\dr}(B_n)
 \to  H^{i}(B,S_n(j))
$$
and by devissage we can reduce the  computations to $n=1$. The long exact sequence in our lemma  implies that
$$H^{2j}(BGL_l,S_1(j))\simeq 
H^{2j}(BGL_{l,1},\Omega^{\geq
j}_{BGL_{l,1}})^{\phi_j=1}=(\oplus_I W_1(k)x_I)^{\phi_j=1},
$$
where $I\subset \{1,\ldots, l\},|I|=j$. 
But $\phi(x_i)=p^ix_i$; hence $H^{2j}(BGL_l,S_1(j))\simeq \oplus_I{\mathbf Z}/px_I$, as wanted.

  For $2i\geq 2j+2$ we claim that the map $$1-\phi_j: H^{2i}(BGL_{l,1},\Omega^{\geq
j}_{BGL_{l,1}})\to H^{2i}_{\dr}(BGL_{l,1})$$ is an isomorphism. 
By the computations of de Rham and Hodge cohomologies above it suffices to show that the map
$$1-\phi_j: H^{i}(BGL_{l,1},\Omega^{
i}_{BGL_{l,1}})\to H^{i}(BGL_{l,1},\Omega^{
i}_{BGL_{l,1}})$$ is an isomorphism. 
 But $\phi_j=p^{i-j}\phi_i$ and $1-\phi_j=1$, as wanted. It follows that
$ H^{i}(BGL_l,S_1(j))=0$ for $i\geq 2j+2$.

  For $i\leq 2j-2$ and $i$  even we have $H^{i}(BGL_l,S_1(j))\hookrightarrow  H^{i}(BGL_{l,1},\Omega^{\geq
j}_{BGL_{l,1}})=0$.  This implies that, for $i\leq 2j$ and $i$ odd, $H^{i}(BGL_l,S_1(j))=H^{i-1}_{\dr}(BGL_{l,1})$, as wanted.

 In view of the above computations of crystalline cohomology, the argument for $B(GL_a\times GL_b)$ and  $BGL(a,b)$ is practically identical .
\end{proof}

  Similarly, we have the following lemma.
\begin{lemma}
\label{syntcomp1}Let $B$ be as above. Then
$$
H^{i}(B,S^{\prime}_n(j))=\begin{cases}
M^{\prime} & \text { for } i=2j,  \\
H_{\dr}^{i-1}(B_n) & \text{ for } i=2j-2m-1,m\geq 0,\\
H_{\dr}^{i-1}(B_j) & \text{ for } i=2j+2m+1,m\geq 1,
\\H_{\dr}^{i}(B_{n-j}) & \text{ for } i=2j+2m,m\geq 1,\\
0 & \text{ otherwise,}
\end{cases}
$$
where $M^{\prime}$ is equal to $(\oplus_IW_n(k)x_I)^{\phi=p^j}$ for $BGL_l$ and to $(\oplus_{I,J}W_n(k)x_Iy_J)^{\phi=p^j}$ in the other two cases (with the index sets $I,J$ as in the previous lemma).

 Moreover we have  the following long exact sequence
$$
0\to   H^{2j}(B,S^{\prime}_n(j))\to H^{2j}(B_n,\Omega^{\geq
j}_{B_n})\stackrel{p^j-\phi}{\longrightarrow}H^{2j}_{\dr}(B_n)\\
 \to  H^{2j+1}(B,S^{\prime}_n(j))\to 0.
$$
\end{lemma}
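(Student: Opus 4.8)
The plan is to mimic the proof of Lemma \ref{syntcomp}, replacing the divided Frobenius $\phi_j$ throughout by the operator $p^j-\phi$. First I would write down the defining cone
$$
S^{\prime}_n(j)=\Cone\bl\Omega^{\geq j}_{B_n}\verylomapr{p^j-\phi}\Omega\kr_{B_n}\br[-1]
$$
on the simplicial scheme $B$ (using the differential description of syntomic cohomology from Section 2.1.1 and the crystalline computations of Section \ref{classical}), and extract the long exact sequence relating $H^i(B,S^{\prime}_n(j))$, $H^i(B_n,\Omega^{\geq j}_{B_n})$ and $H^i_{\dr}(B_n)$. Since de Rham cohomology of $B$ vanishes in odd degrees and is a free $W_n(k)$-module concentrated in Hodge degree $i$ in degree $2i$, this long exact sequence breaks into the four-term sequences
$$
0\to H^{2i}(B,S^{\prime}_n(j))\to H^{2i}(B_n,\Omega^{\geq j}_{B_n})\verylomapr{p^j-\phi}H^{2i}_{\dr}(B_n)\to H^{2i+1}(B,S^{\prime}_n(j))\to 0,
$$
which already gives the displayed long exact sequence of the lemma and reduces everything to understanding the map $p^j-\phi$ on Hodge pieces.

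Next I would analyze $p^j-\phi$ degree by degree on $H^{2i}(B_n,\Omega^{\geq j}_{B_n})$, which by the Hodge computation is $\bigoplus_{a\geq \max(i,j)}H^{i}(B_n,\Omega^{a}_{B_n})$-type data — more precisely it is the truncation contributing $H^i(B_n,\Omega^i_{B_n})=\bigoplus_I W_n(k)x_I$ when $i\geq j$ and is $0$ when $i<j$. The key computation is that on the Chern classes $\phi$ acts as $\phi(x_a)=p^a x_a$, so on the weight-$i$ piece (monomials $x_I$ with $|I|=i$) one has $\phi=p^i$ and hence $p^j-\phi=p^j-p^i$ acting on a free $W_n(k)$-module; this is injective with cokernel $H^i(B_n,\Omega^i_{B_n})/(p^j-p^i)$ when $i\neq j$, killed by $p^{\min(i,j)}$ essentially, giving $H^{\min(i,j)}$-torsion that one identifies with $\Omega$-cohomology over $W_{|j-i|}$ or $W_{\text{something}}$; and when $i=j$ one gets $p^j-\phi=0$, so the subobject is the whole $(\bigoplus_I W_n(k)x_I)^{\phi=p^j}$ and the quotient contributes $H^{2j}_{\dr}(B_n)$ to $H^{2j+1}$. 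For the odd degrees $i\le 2j-1$ one uses $H^{2i}(B_n,\Omega^{\geq j}_{B_n})=0$ for $i<j$ to get $H^{i}(B,S^{\prime}_n(j))\simeq H^{i-1}_{\dr}(B_n)$; for $i\geq 2j+1$ odd one reads off $H^{2j+2m+1}=H^{2j}_{\dr}$-type contributions coming from the cokernel of $p^j-\phi$ on the weight-$(j+m)$ piece, which after dividing out $p^{j}$ (the exact $p$-power from $p^j-p^{j+m}=p^j(1-p^m)$ with $1-p^m$ a unit) leaves $\Omega$-cohomology over $W_j$ — i.e. $H^{i-1}_{\dr}(B_j)$; similarly the even part in degrees $>2j$ sees $\ker(p^j-\phi)=\ker(p^j(1-p^m))=$ the $p^j$-torsion, which is $H^{i}_{\dr}(B_{n-j})$ after the shift by $p^{n-j}$. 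I would then assemble these pieces into the stated case list, and finally note that — exactly as in the proof of Lemma \ref{syntcomp} — the computations for $B(GL_a\times GL_b)$ and $BGL(a,b)$ are formally identical once the crystalline cohomology of these spaces has been identified with a polynomial ring on the $x_I,y_J$, so only the index sets change.

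The main obstacle I expect is bookkeeping of the exact $p$-powers: for $i\neq j$ the operator $p^j-\phi$ on the weight-$i$ piece is multiplication by $p^j-p^i=p^{\min(i,j)}(p^{|i-j|}\mp 1)$ with the parenthetical factor a unit in $W_n(k)$, and one must carefully track how kernel and cokernel of this map on a free $W_n(k)$-module of given rank translate, via the four-term exact sequence and the identification $H^a(B_n,\Omega^a_{B_n})=\bigoplus W_n(k)x_I$, into de Rham cohomology groups over the truncated Witt rings $W_j$ (for the cokernel, contributing to $H^{2j+2m+1}$) and $W_{n-j}$ (for the kernel, contributing to $H^{2j+2m}$). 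The dévissage to $n=1$ used in Lemma \ref{syntcomp} does \emph{not} suffice here because the answer genuinely involves these intermediate truncations, so one must run the argument at level $n$ directly, keeping the $p$-adic filtration in view. Everything else — the vanishing of $H^i_{\dr}$ in odd degrees, freeness over $W_n(k)$, the formula $\phi(x_a)=p^ax_a$, and the reduction to the $BGL_l$ case — is routine and already present in the surrounding material.
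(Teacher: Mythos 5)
Your proposal follows the paper's proof essentially step for step: the same long exact sequence from the cone presentation, the same splitting into four-term sequences via the odd-degree vanishing of $H^*_{\dr}(B_n)$, the same factorization $p^j-\phi=p^j(1-\phi_j)$ with $1-\phi_j$ invertible on the weight-$i$ piece for $i>j$ (so that kernel and cokernel reduce to those of multiplication by $p^j$ on a free $W_n(k)$-module), and the same observation that the other two classifying spaces are handled identically once their crystalline cohomology is known. Your remark that the dévissage to $n=1$ used in Lemma \ref{syntcomp} is unavailable here is also consistent with the paper, which runs the argument directly at level $n$.
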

\begin{proof}We present the proof for $BGL_l$. The proofs for the other schemes are basically the same. We
have   the  long exact sequence
$$
\cdots \to  H^{i}(BGL_l,S^{\prime}_n(j))\to H^{i}(BGL_{l,n},\Omega^{\geq
j}_{BGL_{l,n}})\stackrel{p^j-\phi}{\longrightarrow}H^{i}_{\dr}(BGL_{l,n})\\
\to  H^{i+1}(BGL_l,S^{\prime}_n(j))\to \cdots.
$$
Using the vanishing of the de Rham cohomology in odd degrees we get the following  long exact sequence ($i\geq 0$)
$$
0\to   H^{2i}(B,S^{\prime}_n(j))\to H^{2i}(B_n,\Omega^{\geq
j}_{B_n})\stackrel{p^j-\phi}{\longrightarrow}H^{2i}_{\dr}(B_n)\\
 \to  H^{2i+1}(B,S^{\prime}_n(j))\to 0.
$$
Hence the long exact sequence in the lemma. 
It implies that
$$H^{2j}(BGL_l,S^{\prime}_n(j))\simeq H^{2j}(BGL_{l,n},\Omega^{\geq
j}_{BGL_{l,n}})^{\phi=p^j}=(\oplus_I W_n(k)x_I)^{\phi=p^j},
$$
where $I\subset \{1,\ldots, l\},|I|=j$.

  For $2i\geq 2j+2$ we claim that the following sequence is exact
\begin{align*}
0\to H^{2i}(BGL_{l,n-j}, & \Omega^{\geq
j}_{BGL_{l,n-j}}) \to H^{2i}(BGL_{l,n},\Omega^{\geq
j}_{BGL_{l,n}})\stackrel{p^j-\phi}{\longrightarrow} H^{2i}_{\dr}(BGL_{l,n})\\
& \to H^{2i}_{\dr}(BGL_{l,j})\to 0
\end{align*} 
Indeed, by the computations of de Rham and Hodge cohomologies above it suffices to show that the following sequence is exact
\begin{align*}
0\to  H^{i}(BGL_{l,n-j}, & \Omega^{
i}_{BGL_{l,n-j}})\to H^{i}(BGL_{l,n},\Omega^{
i}_{BGL_{l,n}})\stackrel{p^j-\phi}{\longrightarrow} H^{i}(BGL_{l,n},\Omega^{
i}_{BGL_{l,n}})\\
& \to H^{i}(BGL_{l,j},\Omega^{
i}_{BGL_{l,j}})\to 0
\end{align*} But $p^j-\phi=p^j(1-\phi_j)$ and we have shown in the  previous proof that $1-\phi_j$ is an isomorphism.

  For $i\leq 2j-2$ and $i$  even we have $H^{i}(BGL_l,S^{\prime}_n(j))\hookrightarrow  H^{i}(BGL_{l,n},\Omega^{\geq
j}_{BGL_{l,n}})=0$.  This implies that, for $i\leq 2j$ and $i$ odd, $H^{i}(BGL_l,S^{\prime}_n(j))=H^{i-1}_{\dr}(BGL_{l,n})$, as wanted.
\end{proof} 
\begin{remark}
\label{product11}
Note that the map
$$\tau: H^{2j}(BGL_l,S_n(j))\hookrightarrow H^{2j}(BGL_l,S^{\prime}_n(j)):\quad \oplus_I{\mathbf Z}/p^nx_I\hookrightarrow  (\oplus_I W_n(k)x_I)^{\phi=p^j},
$$
is the canonical injection and that it is compatible with products (since both products are compatible with the de Rham product). An analogous statement is true for the maps
\begin{align*}
\tau: H^{2j}(BGL_a\times BGL_b,S_n(j)) & \hookrightarrow H^{2j}(BGL_a\times BGL_b,S^{\prime}_n(j)),\\
\tau: H^{2j}(BGL(a,b),S_n(j))& \hookrightarrow H^{2j}(BGL(a,b),S^{\prime}_n(j))
\end{align*}
that are both equal to the canonical injection
$$\oplus_{I,J}{\mathbf Z}/p^nx_Iy_J\hookrightarrow (\oplus_{I,J} W_n(k)x_Iy_J)^{\phi=p^j}.
$$
\end{remark}
\subsubsection{Truncated syntomic cohomology}
We  will now compute the truncated syntomic cohomology of $BGL_l$
and the related simplicial schemes $B(GL_a\times GL_b)$ and $BGL(a,b)$.
\begin{proposition}
\label{truncated}Let $B$ denote the simplicial scheme $BGL_l/W(k)$, $B(GL_a\times GL_b)/W(k)$, or $BGL(a,b)/W(k)$. Then 
\begin{enumerate}
 \item the natural morphism $S_n(j)_{\Nis}\to S_n(j)$ induces an isomorphism
$$H^{2j}(B,S_n(j)_{\Nis})\stackrel{\sim}{\to}H^{2j}(B,S_n(j)),\quad j\geq 0.
$$
\item 
the morphism $$\tau: H^{2j}(B,S_n(j)_{\Nis})\to H^{2j}(B,S^{\prime}_n(j)_{\Nis}),\quad j\geq 0.
$$
is compatible with products.
\end{enumerate}
\end{proposition}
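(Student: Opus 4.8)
The strategy is to reduce everything to the syntomic cohomology computations of Lemmas \ref{syntcomp} and \ref{syntcomp1} together with the analogous (easy) de Rham computations for the truncated complexes. For part (1), recall that $S_n(j)_{\Nis}=\tau_{\leq j}S_n(j)$, so the natural map $S_n(j)_{\Nis}\to S_n(j)$ is an isomorphism on cohomology sheaves in degrees $\leq j$ and the cone is concentrated in degrees $>j$. Hence the induced map $H^{i}(B,S_n(j)_{\Nis})\to H^{i}(B,S_n(j))$ fits into a long exact sequence whose third term is the hypercohomology of $\tau_{>j}S_n(j)$. For a simplicial scheme this hypercohomology is computed by a spectral sequence with $E_1^{p,q}=H^q(B_p,\tau_{>j}S_n(j))$; since each $B_p$ is a smooth affine... actually a product of Grassmannian-type schemes, the relevant point is the degree bookkeeping. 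First I would check that $H^{i}(B,S_n(j)_{\Nis})\to H^{i}(B,S_n(j))$ is an isomorphism for $i\leq j$ and surjective for $i=j+1$, which is formal from the truncation triangle $\tau_{\leq j}S_n(j)\to S_n(j)\to \tau_{>j}S_n(j)\to$. The issue is that $2j$ is in general much larger than $j$, so the naive truncation estimate is not enough.

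\textbf{The main point.} The key is therefore to show that $H^{2j}(B,S_n(j))$ is already ``supported in low degrees'' on each simplicial level. Concretely, by Lemma \ref{syntcomp} the group $H^{2j}(B,S_n(j))\cong M$ is the degree-$2j$ piece of a polynomial ring on generators in bidegree $(2,1)$, and this class is represented in the simplicial direction by products of the universal first Chern class living in $H^{2}(B_p,S_n(1))$ for various $p$ — more precisely, using the standard identification of $H^{2j}$ of the classifying space with an inverse-limit over the cosimplicial Čech complex, the relevant cohomology classes on each $B_p$ already lie in $H^{\leq j}(B_p, S_n(j))$ after passing through the projective-space-bundle decomposition (each Chern class $x_i$ is pulled back from $H^{2}(\mathbf P^\infty, S_n(1))=H^{\leq 1}$-supported classes via iterated projective bundles). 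Thus I would run the hypercohomology spectral sequence $E_1^{p,q}=H^q(B_p,S_n(j))\Rightarrow H^{p+q}(B,S_n(j))$, observe via Lemma \ref{syntcomp} that $E_1^{p,q}=0$ for $q\geq 2j+2$ and that the only contribution to total degree $2j$ comes from terms with $q\leq j$ plus the ``diagonal'' Čech contributions from $H^{\leq j}$; the same spectral sequence computes $H^{2j}(B,\tau_{\leq j}S_n(j))$ and the comparison map is an isomorphism on every contributing $E_1$-term. This is the step I expect to be the main obstacle: one must verify carefully that no class of simplicial degree $p$ and sheaf-cohomology degree $q$ with $q>j$ and $p+q=2j$ survives, which requires knowing the multiplicative structure of the Čech-to-cohomology spectral sequence for $BGL_l$ and reducing to the case $B=\mathbf P^\infty$ by the projective space theorem, exactly as in \cite[II]{Gr}.

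\textbf{Part (2).} Given part (1), compatibility of $\tau\colon H^{2j}(B,S_n(j)_{\Nis})\to H^{2j}(B,S^{\prime}_n(j)_{\Nis})$ with products follows formally. The map $\tau$ on the non-truncated complexes is compatible with products modulo the failure noted after the product formulas (the maps $\tau$ are in general not compatible with products); but in bidegree $(2j,j)$ — i.e. precisely the ``correct weight'' case — the obstruction vanishes. Concretely I would argue: by part (1) and its evident analogue $H^{2j}(B,S^{\prime}_n(j)_{\Nis})\stackrel{\sim}{\to}H^{2j}(B,S^{\prime}_n(j))$ (same spectral-sequence argument using Lemma \ref{syntcomp1}, since the extra cohomology of $S^{\prime}_n(j)$ also lives in degrees $\geq 2j+1$, hence $\geq j+1$ for $j\geq 1$), the square relating $\tau$ on the truncated and non-truncated complexes commutes, so it suffices to check that $\tau\colon H^{2j}(B,S_n(j))\to H^{2j}(B,S^{\prime}_n(j))$ is multiplicative. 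This is exactly Remark \ref{product11}: the map is the canonical inclusion $\oplus_I \mathbf Z/p^n x_I\hookrightarrow(\oplus_I W_n(k)x_I)^{\phi=p^j}$ (resp. with double indices), which is manifestly compatible with products because both the source and target products are induced from the de Rham product on $\oplus_I W_n(k)x_I$. Hence the composite $H^{2j}(B,S_n(j)_{\Nis})\otimes H^{2j'}(B,S_n(j')_{\Nis})\to H^{2j+2j'}(B,S^{\prime}_n(j+j')_{\Nis})$ through either corner of the square agrees, giving the claim.
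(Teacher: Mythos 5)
Your reduction of part (2) to Remark \ref{product11} via the injection into de Rham cohomology is essentially the paper's argument and is fine \emph{granted} part (1). But your proof of part (1) has a genuine gap at exactly the step you flag as the main obstacle, and the proposed resolution does not work. First, Lemma \ref{syntcomp} computes $H^i(B,S_n(j))$ for the \emph{total} simplicial scheme $B$; it says nothing about the levelwise groups $E_1^{p,q}=H^q(B_p,S_n(j))$ in your spectral sequence, so the vanishing "$E_1^{p,q}=0$ for $q\geq 2j+2$" is not available, and the individual levels $B_p=GL_l^{\times p}$ are affine group schemes to which no projective space theorem or reduction to $\mathbf P^\infty$ applies levelwise. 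Second, and more fundamentally, your argument never engages with what the cone of $S_n(j)_{\Nis}=\tau_{\leq j}\R\varepsilon_*S_n(j)\to \R\varepsilon_*S_n(j)$ actually is: since $(1-\phi_j,-d)$ is surjective étale-locally but \emph{not} Nisnevich-locally, the difference between the two sides is concentrated in the interaction between the Nisnevich and étale topologies, not in a degree count. No amount of bookkeeping in the Čech/hypercohomology spectral sequence for $\R\varepsilon_*S_n(j)$ will isolate this.

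The actual proof requires identifying the truncated Nisnevich complex explicitly. Using a Frobenius lift, $S_n(j)_{\Nis}$ is represented by $\so_{X_n}\to\Omega^1_{X_n}\to\cdots\to\Omega^{j-2}_{X_n}\to N_{X_n}(j)_{\Nis}$ with a short exact sequence $0\to Z\Omega^{j-1}_{X_n}\to N_{X_n}(j)_{\Nis}\to\nu^j_{X,n}\to 0$ and an identification $\nu^j_{X,n}\simeq W_n\Omega^j_{X_1,\log}$. One then needs three inputs that are absent from your proposal: (i) the vanishing $H^k(B_n,Z\Omega^j_{B_n})=H^k(B_n,B\Omega^j_{B_n})=0$ for $k>j$, proved by induction on $j$ via the inverse Cartier isomorphism together with the Hodge cohomology computation $H^a(B_n,\Omega^b_{B_n})=0$ for $a\neq b$; (ii) the comparison $H^j_{\Nis}(B_1,\Omega^j_{B_1,\log})\stackrel{\sim}{\to}H^j_{\eet}(B_1,\Omega^j_{B_1,\log})$, which rests on the motivic/logarithmic de Rham--Witt computation of Section \ref{classical} and is the precise point where Nisnevich and étale cohomology are reconciled; and (iii) a d\'evissage in $n$, which itself requires the exactness of $0\to S_m(j)_{\Nis}\to S_n(j)_{\Nis}\to S_{n-m}(j)_{\Nis}\to 0$ on the Nisnevich site (using that $W_n\Omega^j_{\log}$ is Nisnevich-locally generated by symbols). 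Without these your argument cannot be completed as written.
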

\begin{proof}
\begin{lemma}
\label{devissageNis}
For a smooth scheme $X$ over $W(k)$ and $n\geq m\geq 0$, the following sequence is exact
$$0\to S_m(j)_{\Nis}\to  S_n(j)_{\Nis}\to  S_{n-m}(j)_{\Nis}\to 0
$$
\end{lemma}
\begin{proof} We can argue locally so 
 assume  that there is a Frobenius lift on $X_n$, $n\geq 0$. Assume first that $j\geq 1$. Recall \cite[3.5]{K} that then the syntomic complex $S_n(j)$ can be represented by the complex
\begin{align}
\label{form}
 0\to \so_{X_n}\stackrel{-d}{\to}  \Omega^1_{X_n}\stackrel{-d}{\to}\ldots \stackrel{-d}{\to}\Omega^{j-2}_{X_n}\lomapr{(0,-d)}
 \Omega^{j}_{X_n}\oplus \Omega^{j-1}_{X_n}\verylomapr{(1-\phi_j,-d)}\Omega^{j}_{X_n},
\end{align}
where $\so_{X_n}$ is in degree $1$.
On the \'etale site the last map is surjective  and $\tau_{\leq j}S_n(j)_{\eet}\simeq S_n(j)_{\eet}$ \cite[3.6]{K}. However this is not true on the Nisnevich site. The truncated complex $S_n(j)_{\Nis}$ can be represented by the complex 
\begin{align*}
 0\to \so_{X_n}\stackrel{-d}{\to}  \Omega^1_{X_n}\stackrel{-d}{\to}\ldots\stackrel{-d}{\to} \Omega^{j-2}_{X_n}\lomapr{-d}N_{X_n}(j)_{\Nis},
\end{align*}
where $N_{X_n}(j)_{*}$ is  the kernel of the map $(1-\phi_j,-d):\Omega^{j}_{X_n}\oplus \Omega^{j-1}_{X_n}\to\Omega^{j}_{X_n}$ taken in topology $*$ and $\so_{X_n}$ is in degree $1$.

    Let 
$$\nu^j_{X,n}=\ker(1-\phi_j:\Omega^{j}_{X_n}\to \Omega^{j}_{X_n}/d\Omega^{j-1}_{X_n}), \quad j\geq 0.$$ We easily find that we have the short exact sequence
 \begin{align}
 \label{above}
 0\to Z\Omega^{j-1}_{X_n}\to N_{X_n}(j)_{\Nis}\to \nu^j_{X,n}\to 0,
 \end{align}
  where $Z\Omega^{j-1}_{X_n}=\ker(d: \Omega^{j-1}_{X_n}\to \Omega^{j}_{X_n})$. We note that $Z\Omega^{j-1}_{X_n}\simeq Z\Omega^{j-1}_{X}\otimes{\mathbf Z}/p^n$.  It follows that we have the short exact sequence
  \begin{equation}
  \label{eye}
  0\to \widetilde{S}_n(j)_{\Nis}\to S_n(j)_{\Nis}\to \nu^j_{X,n}\to 0,
  \end{equation}
  where
  $
  \widetilde{S}_n(j)_{\Nis}$ is the following complex
  $$
   0\to \so_{X_n}\stackrel{-d}{\to}  \Omega^1_{X_n}\stackrel{-d}{\to}\ldots\stackrel{-d}{\to} \Omega^{j-2}_{X_n}\lomapr{-d}Z\Omega^{j-1}_{X_n},
    $$
    We have the short exact sequence
    $$
    0\to \wt{S}_m(j)_{\Nis}\to  \wt{S}_n(j)_{\Nis}\to  \wt{S}_{n-m}(j)_{\Nis}\to 0
    $$
    
   It follows that it suffices to show that the natural sequence
   $$
   0\to \nu^j_{X,m}\to \nu^j_{X,n}\to\nu^j_{X,n-m}\to 0
        $$
   is exact. We will show below that we have a natural isomorphism $ \nu^j_{X,n,\eet}\stackrel{\sim}{\to} W_n\Omega^j_{X_1,\log}$ 
   hence a natural isomorphism $\nu^j_{X,n}\stackrel{\sim}{\to} W_n\Omega^j_{X_1,\log}$.  Since $W_n\Omega^j_{X_1,\log}$ is   locally in the Nisnevich topology generated by symbols $\dlog\{[a_1],\ldots,[a_j]\}$, for local sections  $a_1,\ldots, a_j\in \so^{\times}_{X_1}$ \cite[p.505]{Il},  this will give us the exactness we want. 
   
     To construct the map $f: \nu^j_{X,n,\eet}{\to} W_n\Omega^j_{X_1,\log}$, consider the following commutative diagram
     $$\xymatrix{
     0\ar[r] & \nu^j_{X,n,\eet}\ar[r] & \Omega^j_{X_n}\ar[d] \ar[r]^{1-\phi_j} & \Omega^j_{X_n}/d\Omega^{j-1}_{X_n}\ar[d] \ar[r] & 0\\
     0\ar[r] &  W_n\Omega^j_{X_1,\log}\ar[r] & W_n\Omega^j_{X_1}/dVW_{n-1}\Omega^{j-1}_{X_1}\ar[r]^{1-F_j} & W_n\Omega^j_{X_1}/dW_n\Omega^{j-1}\ar[r] & 0
          }
     $$
     The vertical maps are induced by the global Frobenius lift. The top sequence is exact by definition, the bottom one by Lemma 4.3 from \cite{BEK}. We obtained the map $f$. To show that it is an isomorphism we can reduce by devissage to $n=1$. But then our claim is clear. 
     
      For $j=0$, we we have 
      $$
      S_n(0)_{\eet}: \so_{X_n}\lomapr{1-\phi}\so_{X_n}.
      $$
      Hence $\tau_{\leq 0}S_n(0)_{\eet}\simeq S_n(0)_{\eet}$ and $$
      S_n(0)_{\Nis}=\ker(1-\phi:\so_{X_n}\to \so_{X_n})=\nu^0_{X,n},      $$
      which we have shown satisfies devissage. 
\end{proof}  

 We claim now that to show that the map $H^{2j}(B_n, S_n(j)_{\Nis}) \to H^{2j}(B_n, S_n(j)) $, $j\geq 0$, is an isomorphism we may assume $n=1$. Indeed, by Lemma \ref{devissageNis}, it suffices to show that
  $H^{2j+1}(B_n,S_n(j)_{\Nis})=0$ and that the map $H^{2j-1}(B_n,S_n(j))\to H^{2j-1}(B_m,S_m(j))$, $n\geq m$,  is surjective. For the first, by devissage, reduce to $n=1$. Then we have a global Frobenius. Since $H^a(B_n,\Omega^b_{B_n})$, $a\neq b$, we reduce to showing that $H^{j+1}(B_n,N_{B_n}(j)_{\Nis})=0$ or, by the exact sequence (\ref{above}), to $H^{j+1}(B_{n,\Nis},\nu^j_{B,n})=0$ and $H^{j+1}(B_{n},Z\Omega^{j-1}_{B_n})=0$. But this follows from (\ref{vanish0}) and from (\ref{vanish1}) below.  For the second claim, by Lemma \ref{syntcomp}, we need to show that the map $H^{2j-2}_{\dr}(B_n)\to H^{2j-2}_{\dr}(B_m)$, $n\geq m$, is surjective. But this is clear.

    Take then $n=1$.  We want to show that the map $H^{2j}(B_1, S_1(j)_{\Nis}) \to H^{2j}(B_1, S_1(j)) $, $j\geq 0$, is an isomorphism. Consider   the natural map $ S_n(j)_{\Nis}\to  W_n\Omega^j_{B_1,\log }[-j]$ and its \'etale analog.  Since the map $H^j(B_{1,\Nis},\Omega^j_{B_1,\log})\to H^j(B_{1,\eet},\Omega^j_{B_1,\log})$ is an isomorphism (see Section \ref{classical}),
 it suffices to show that so are the maps
 \begin{equation}
 \label{isom1}
 H^{2j}(B_1, S_n(j)_{\Nis}) \to H^j(B_1,\Omega^j_{B_1,\log} ),\quad H^{2j}(B_1, S_n(j)_{\Nis}) \to H^j(B_1,\Omega^j_{B_1,\log} ).
    \end{equation}
 We will argue in the Nisnevich case, the \'etale case being analogous. 
  We have a global Frobenius. Hence,  by 
   the short exact sequence (\ref{eye}), 
  it suffices to show that  $H^{2j}(B_n,\wt{S}_n(j)_{\Nis})=H^{2j+1}(B_n,\wt{S}_n(j)_{\Nis})=0$.

  Since $H^i(B_n,\Omega^k_{B_n})=0$ unless $i=k$, we have  $H^{2j+1}(B_n,\wt{S}_n(j)_{\Nis})=0$.  The vanishing of $H^{2j}(B_n,\wt{S}_n(j)_{\Nis})$ will follow when we show that $H^j(B_n,Z\Omega^{j-1}_{B_n})=0$. We will, in fact, show more. Namely that
  \begin{equation}
  \label{vanish1}
  H^k(B_n,Z\Omega^{j}_{B_n})=H^k(B_n,B\Omega^{j}_{B_n})=0,\quad k > j, 
  \end{equation}
  where $B\Omega^{j}_{B_n}=\im(d: \Omega^{j-1}_{B_n}\to \Omega^{j}_{B_n})$. 
  We will argue by induction on $j$ starting with $j=-1$, which is the trivial case. Assume that the statement is true for $j$. To prove it for $j+1$ recall that we have the  isomorphism
  $$C^{-1}: W_n\Omega^*_{B_1}\stackrel{\sim}{\to} \sh^*(W_n\Omega^{{\scriptscriptstyle\bullet}}_{B_1}).
  $$
  Here  $C^{-1}$ is the inverse Cartier isomorphism. 
  It yields the following short exact sequence of abelian sheaves
  $$0\to B\Omega^k_{B_n}\to Z\Omega^k_{B_n}\to \Omega^{k}_{B_1}\to 0
  $$
  Applying cohomology we get the exact sequence
  $$H^k(B_n,B\Omega^{j+1}_{B_n})\to H^k(B_n,Z\Omega^{j+1}_{B_n})\to H^k(B_n,\Omega^{j+1}_{B_1}),\quad k>j+1.
  $$
  The last group is trivial, so it remains to show that so is $H^k(B_n,B\Omega^{j+1}_{B_n})$. For that we use the short exact sequence
  $$0\to Z\Omega^{j}_{B_n}\to \Omega^{j}_{B_n}\stackrel{d}{\to}B\Omega^{j+1}_{B_n}\to 0
  $$
  and the induced long exact sequence
  $$H^k(B_n,\Omega^{j}_{B_n})\stackrel{d}{\to} H^k(B_n,B\Omega^{j+1}_{B_n})\to H^{k+1}(B_n,Z\Omega^{j}_{B_n})\to H^{k+1}(B_n,\Omega^{j}_{B_n}).
  $$
  Since the first and the last group are trivial we get that $H^k(B_n,B\Omega^{j+1}_{B_n})\stackrel{\sim}{\to} H^{k+1}(B_n,Z\Omega^{j}_{B_n})$. But, by induction, the last group is trivial and we are done.

      For the second statement of the proposition, consider the following commutative diagram
$$
\xymatrix{
      H^{2j}(B,S_n(j)_{\Nis})\ar[d]^{\wr}\ar[r]^{\tau} &  H^{2j}(B,S^{\prime}_n(j)_{\Nis})\ar[d]\ar@{^{(}->}[r] & H^{2j}_{\dr}(B_n)\\
         H^{2j}(B,S_n(j))\ar[r]^{\tau} & H^{2j}(B,S^{\prime}_n(j)) \ar@{^{(}->}[ru]  
}
$$      
The top and the bottom injections follow from the fact that $H^{2j-1}_{\dr}(B_n)=0$; they commute with products. It suffices now to evoke Remark \ref{product11}.
\end{proof}

    Let $B$ denote the simplicial scheme $BGL_l/W(k).$ Let $x_i\in H^{2i}(B,S_n(j)_{\Nis})$, $1\leq i\leq l$,  be the Chern classes of the universal locally free sheaf on $B_n$ obtained from the de Rham classes via the isomorphism in Proposition \ref{truncated}. 
    \begin{corollary}
\label{truncatedcor} The natural morphism
$$
H^{*}(W(k),S_n(*)_{\Nis})[x_1,\ldots,x_l]\stackrel{\sim}{\to}
H^{*}(B,S_n(*)_{\Nis})
$$
is an isomorphism. 
\end{corollary}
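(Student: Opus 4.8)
The plan is to compute $H^{*}(B,S_n(*)_{\Nis})$ in each bidegree and to check that the comparison map matches it with $H^{*}(W(k),S_n(*)_{\Nis})[x_1,\dots,x_l]$. That the map is a homomorphism of bigraded rings is essentially formal: the complexes $S_n(*)_{\Nis}$ carry a product (the truncation of the crystalline product), the natural maps $S_n(*)_{\Nis}\to S_n(*)$ are ring maps, the classes $x_i$ are by construction the images of the de Rham Chern classes under the isomorphism of Proposition \ref{truncated}(1), $H^{*}_{\dr}(B_n)\simeq W_n(k)[x_1,\dots,x_l]$ by Section \ref{classical}, and the relevant product compatibilities are recorded in Proposition \ref{truncated}(2) and Remark \ref{product11}. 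So it suffices to prove bijectivity bidegree by bidegree.

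First I would record the coefficients. Since $\Spec W(k)$ is a point with trivial log structure, $\sj_n^{[r]}$ vanishes there for $r\geq1$, so $S_n(j)_{\Nis}$ is quasi-isomorphic to $W_n(k)[-1]$ for $j\geq1$ and to the constant sheaf $\Z/p^n$ for $j=0$; hence the only nonvanishing groups are $H^{0}(W(k),S_n(0)_{\Nis})=\Z/p^n$ and $H^{1}(W(k),S_n(j)_{\Nis})=W_n(k)$ for $j\geq1$. Keeping track of the cohomological and weight gradings, the bidegree-$(i,j)$ piece of $H^{*}(W(k),S_n(*)_{\Nis})[x_1,\dots,x_l]$ is then the $\Z/p^n$-module $M$ of Lemma \ref{syntcomp} when $i=2j$, is $H^{i-1}_{\dr}(B_n)$ when $i$ is odd with $1\leq i\leq2j-1$, and vanishes otherwise --- exactly the shape of $H^{*}(B,S_n(*))$ in Lemma \ref{syntcomp}.

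It remains to identify $H^{i}(B,S_n(j)_{\Nis})$ with this bigraded module. For $i=2j$ this is Proposition \ref{truncated}(1), linear independence of the monomials passing from $M=H^{2j}(B,S_n(j))$ to $H^{2j}(B,S_n(j)_{\Nis})$ along that isomorphism. For the other bidegrees, and for $j\geq1$, I would work level-wise on the simplicial scheme $B$ (each level $GL_l^{\times s}$ being smooth affine over $W(k)$ and carrying a lift of Frobenius), represent $S_n(j)_{\Nis}$ by the complex concentrated in degrees $[1,j]$ from the proof of Lemma \ref{devissageNis}, and feed the short exact sequences (\ref{above}) and (\ref{eye}) --- which write $S_n(j)_{\Nis}$ in terms of $\widetilde{S}_n(j)_{\Nis}$, $Z\Omega^{\bullet}_{B_n}$ and $\nu^{j}_{B,n}\simeq W_n\Omega^{j}_{B_1,\log}$ (all taken level-wise) --- into the hypercohomology spectral sequence. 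The inputs are $H^{p}(B_n,\Omega^{q}_{B_n})=0$ for $p\neq q$, the vanishing $H^{p}(B_n,Z\Omega^{q}_{B_n})=H^{p}(B_n,B\Omega^{q}_{B_n})=0$ for $p>q$ of (\ref{vanish1}), the vanishing $H^{p}(B,W_n\Omega^{q}_{B_1,\log})=0$ for $p\neq q$ that follows from (\ref{vanish0}), and the inverse Cartier isomorphism (used as in the proof of Proposition \ref{truncated}) to pin down the de Rham cohomology sheaves. Together these force the spectral sequence to degenerate and give $H^{i}(B,S_n(j)_{\Nis})$ exactly the module described above; for $j=0$ one is reduced to $H^{*}_{\Nis}(B,\Z/p^n)$, which is the easy case. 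Comparing with the ring map then gives the corollary.

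The step I expect to be the main obstacle is controlling the high-degree behaviour. The scheme $B$ has unbounded Nisnevich cohomological dimension, so a priori $H^{i}(B,S_n(j)_{\Nis})$ could be nonzero for arbitrarily large $i$; it is precisely the $BGL$-specific vanishing statements (\ref{vanish0}) and (\ref{vanish1}) --- not merely their single-scheme analogues --- that confine everything to bidegrees $(i,j)$ with $i\leq2j$ and keep the two gradings aligned through the d\'evissage. Tracking these gradings carefully through the short exact sequences is the bulk of the work.
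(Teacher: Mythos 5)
Your route is genuinely different from the paper's, and considerably longer. The paper disposes of the corollary in two lines: it invokes a general criterion (\cite[Example 2.7]{EN}) that reduces the polynomial-ring statement to (a) the diagonal identification of $H^{2j}(B,S_n(j)_{\Nis})$ with Hodge-filtered de Rham cohomology already contained in Proposition \ref{truncated}, and (b) the assertion that $H^{2j}_{\dr}(B_n)\to H^{2j+1}(B,S_n(j+b)_{\Nis})$ is an isomorphism for every $b\geq 1$, which drops out of the vanishing $H^{2j}(B_n,\Omega^{\geq j+b}_{B_n})=H^{2j+1}(B_n,\Omega^{\geq j+b}_{B_n})=0$. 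You instead recompute every bidegree of $H^{*}(B,S_n(*)_{\Nis})$ from the explicit complexes of Lemma \ref{devissageNis} together with (\ref{vanish0}) and (\ref{vanish1}); this is self-contained and makes the answer visible in each bidegree, at the cost of redoing by hand what the criterion packages. Your bookkeeping of the coefficients over $W(k)$ and of the bigraded pieces of the polynomial algebra is correct. (One small imprecision: the truncated group $H^{i}(B,S_n(j)_{\Nis})$ and the untruncated $H^{i}(B,S_n(j))$ of Lemma \ref{syntcomp} can differ at $i=2j+1$ for a general perfect residue field, since the latter is the cokernel of $1-\phi_j$ on $H^{2j}_{\dr}(B_n)$; so the direct truncated computation you propose really is needed and cannot be replaced by quoting that lemma.)

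The one step you should not leave at ``comparing with the ring map then gives the corollary'' is the identification of the map itself in the odd bidegrees. Knowing that both sides are abstractly $H^{i-1}_{\dr}(B_n)$ does not yet show that the natural morphism is bijective there: you must check that cup product with the pullback of the generator of $H^{1}(W(k),S_n(b)_{\Nis})\simeq W_n(k)$ realizes the surjection $H^{i-1}_{\dr}(B_n)\twoheadrightarrow H^{i}(B,S_n(j)_{\Nis})$ coming from the connecting map of the cone presentation of $S_n(j)$. This compatibility is routine --- the product on $S_n(*)$ is compatible with the crystalline product and with the maps of the defining triangle --- but it is precisely condition (b) above, i.e., the point the paper's proof actually verifies. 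Once you make it explicit, your argument closes.
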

\begin{proof}Since we have shown in Proposition \ref{truncated} that the map $H^{2j}(B,S_n(j)_{\Nis})\to H^{2j}(B_n,\Omega_{B_n}^{\geq j})$ is an isomorphism, it suffices, by \cite[Example 2.7]{EN},  to show that, for $b\geq 1$,  so is the map 
$H^{2j}(B_n,\Omega_{B_n}\kr)\to H^{2j+1}(B,S_n(j+b)_{\Nis})$. 
 But this follows from the fact that $H^{2j}(B_n,\Omega^{\geq b+j}_{B_n})=H^{2j+1}(B_n,\Omega^{\geq b+j}_{B_n})=0$.
 \end{proof}
Analogous statements are true for $B(GL_a\times GL_b)/W(k)$ and $BGL(a,b)/W(k)$.
\subsubsection{Truncated syntomic-\'etale cohomology}

We will briefly recall a  computation of nearby cycles due to Bloch-Kato \cite{BK}. 
We work in Nisnevich topology. For a smooth scheme $X$ over $\so_K$, $j\geq 0$, set $L^j=i^*\R ^jj_*{\mathbf Z}/p(j)_{\M}$. For $m\geq 1$, let $U^mL^j$ be the subsheaf of $L^j$ generated locally be local sections of the form $\{x_1,\ldots,x_j\}$ such that $x_1-1\in\pi^mi^*\so_X$, for a uniformizer $\pi$ of $K$, and $x_i\in i^*j_*\so^*_{X_K}$, $2\leq i\leq j$. We set $U^0L^j:=L^j$.  
\begin{theorem}(Bloch-Kato \cite{BK})
\label{BK-Nis}
 \begin{enumerate}
 \item For a smooth scheme $X$ over $W(k)$, we have 
 $$
 \gr^m(L^j)\simeq \begin{cases}
 \Omega^j_{X_k,\log}\oplus\Omega^{j-1}_{X_k,\log} & \mbox{if } m=0,\\
  \Omega^{j-1}_{X_{k}} & \mbox{if } m=1.
  \end{cases}
  $$ Moreover, $
U^mL^j=0$, for $ m>1$.
\item  For a smooth scheme $X$ over ${\so_K}=W(k)(\zeta_p)$, we have
 $$
 \gr^m(L^j)\simeq 
 \begin{cases}
 \Omega^j_{X_{k^{\prime}},\log}\oplus\Omega^{j-1}_{X_{k^{\prime}},\log} & \mbox{if } m=0,\\
 \Omega^{j-1}_{X_{k^{\prime}}} & \mbox{if } 1\leq m<p,\\
 \Omega^{j-1}_{X_{k^{\prime}}}/(1-C)Z\Omega^{j-1}_{X_{k^{\prime}}}\oplus \Omega^{j-2}_{X_{k^{\prime}}}/(1-C)Z\Omega^{j-2}_{X_{k^{\prime}}}& \mbox{if }m=p.
  \end{cases}
 $$
 Moreover, 
for $m> p$, $U^mL^j=0$.
\end{enumerate}
\end{theorem}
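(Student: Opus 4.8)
The statement is a theorem of Bloch--Kato \cite{BK}, so the plan is to reduce $L^j$ to the classical mod-$p$ sheaf of $p$-adic vanishing cycles and then to recall the structure of their computation. For the reduction: on the characteristic-zero scheme $X_K$ the Beilinson--Lichtenbaum isomorphism (equivalently, the norm residue theorem of Voevodsky--Rost) gives $ {\mathbf Z}/p(j)_{\M}\simeq\tau_{\le j}\R\varepsilon_*{\mathbf Z}/p(j)$, and, using that $\R\varepsilon_*$ commutes with $i^*$ in the Nisnevich topology, one identifies $L^j=i^*\R^jj_*{\mathbf Z}/p(j)_{\M}$ with $i^*\R^j(j\varepsilon)_*{\mathbf Z}/p(j)$, the object studied in \cite{BK}. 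It therefore suffices to compute the latter.

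The foundational input is the symbol generation theorem of \cite{BK}: Nisnevich-locally on $X_k$ the sheaf $i^*\R^j(j\varepsilon)_*{\mathbf Z}/p(j)$ is generated by symbols $\{x_1,\dots,x_j\}$ with the $x_\ell$ local sections of $i^*j_*\so^*_{X_K}$ (this builds on Merkurjev--Suslin together with Gabber's rigidity). Granting this, the filtration $U^{\bullet}L^j$ of the statement is well defined, and one writes down residue maps out of the graded pieces: in slot $m=0$, the $\dlog$ map $\{u_1,\dots,u_j\}\mapsto\dlog\bar u_1\wedge\cdots\wedge\dlog\bar u_j$ together with $\{\pi,u_2,\dots,u_j\}\mapsto\dlog\bar u_2\wedge\cdots\wedge\dlog\bar u_j$ into $\Omega^j_{X_k,\log}\oplus\Omega^{j-1}_{X_k,\log}$; in slot $m\ge 1$, the map $\{1+\pi^m\tilde a,\tilde u_2,\dots,\tilde u_j\}\mapsto\bar a\,\dlog\bar u_2\wedge\cdots\wedge\dlog\bar u_j$ into $\Omega^{j-1}_{X_k}$, with an additional $\Omega^{j-2}_{X_k}$-valued component coming from the symbols $\{1+\pi^m\tilde a,\pi,\tilde u_3,\dots,\tilde u_j\}$.

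Next I would check that these maps factor through $\gr^mL^j$ --- this reduces to the Steinberg relation $\{x,1-x\}=0$ and to the observation that $\{1+\pi^m\tilde a,\dots\}$ modulo $U^{m+1}$ depends only on $\bar a$ and on the residues of the remaining entries --- and that they are isomorphisms onto the stated targets; surjectivity is immediate from the symbol description. The numerology that organizes the answer is the bound $e'=v_K(p)\cdot p/(p-1)$. One has $1+\pi^m\so_X\subseteq(\so_X^*)^p$, hence $U^mL^j=0$, as soon as $m>e'$, because $\{b^p,\dots\}=p\{b,\dots\}=0$ mod $p$. For $0<m<e'$ the slot is interior, $\gr^mL^j\simeq\Omega^{j-1}_{X_k}$, and the $\pi$-symbols contribute no extra term. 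When $\zeta_p\notin K$ and $X$ lies over $W(k)$ one has $e'=p/(p-1)\in(1,2)$, so $\gr^1L^j\simeq\Omega^{j-1}_{X_k}$ is interior and $U^mL^j=0$ for $m>1$; this gives part (1). When $\so_K=W(k)(\zeta_p)$ one has $e'=p\in{\mathbf Z}$, and the boundary slot $m=p$ is exceptional: the congruence $(1+\pi b)^p\equiv 1+\pi^pb^p+p\pi b$ modulo higher-order terms, combined with $v_K(p\pi)=p=e'$, forces on the target the extra relation identifying a closed form $\omega$ with $C\omega$, which is exactly the passage to $\Omega^{\jcdot}_{X_k}/(1-C)Z\Omega^{\jcdot}_{X_k}$; in this slot the $\pi$-symbols are no longer redundant, which accounts for the nonzero $\Omega^{j-2}$-summand. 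This gives part (2).

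The hard part is this last step, and within it the injectivity of the residue maps on the graded pieces together with the exact identification of the relation in the boundary slot $m=e'$. In \cite{BK} these are handled by producing explicit splittings built from the convergent logarithm and exponential on $1+\pi^m\so_X$ and by a length count against the Milnor $K$-theory of the complete local rings of $X$; since the result is quoted, I would not reproduce these verifications here.
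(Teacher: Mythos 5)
Your proposal is correct and, at bottom, organized the same way as the paper's proof: the graded pieces are quoted from Bloch--Kato and the only thing actually argued is the vanishing of $U^mL^j$ for $m>e'$. Two differences are worth recording. First, you make explicit the identification of the motivic object $L^j=i^*\R^jj_*{\mathbf Z}/p(j)_{\M}$ with the \'etale vanishing cycles $i^*\R^j(j\varepsilon)_*{\mathbf Z}/p(j)$ studied in \cite{BK}, via Beilinson--Lichtenbaum and the commutation of $\R\varepsilon_*$ with $i^*$; the paper leaves this translation entirely implicit when it cites \cite[1.4.1, 6.7]{BK}, so your version is the more careful one. Second, for the vanishing you argue directly on the scheme: since $U^mL^j$ is by definition generated by symbols with first entry in $1+\pi^mi^*\so_X$, and since for $m>e'$ such a unit is Nisnevich-locally (i.e.\ in a henselian local ring of the special fiber, by a Newton--Hensel argument with $s=m-e>m/p$) a $p$-th power, the generating symbols die mod $p$. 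The paper instead localizes at the generic point of the special fiber using the Gersten resolution for \'etale cohomology, converts to $K^M_j(\widetilde K)/p$ via the norm residue isomorphism, and quotes \cite[Lemma 5.1]{BK} --- which is exactly the same ``units close to $1$ are $p$-th powers'' statement, packaged for the discretely valued field $\widetilde K$. Your route avoids the Gersten input at the cost of working with henselian local rings directly; both are sound. One small caveat: your numerology $e'=p/(p-1)\in(1,2)$ for part (1) is only valid for $p$ odd (for $p=2$ one has $e'=2$ and the slot $m=2$ becomes the exceptional boundary slot), but the paper's statement has the same blind spot, so this is not a defect of your argument relative to the paper.
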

\begin{proof}
Outside of vanishing of $U^mL^j$ both claims are proved in \cite[1.4.1,6.7,p.135]{BK}.  For the vanishing of $U^mL^j$ we can argue locally and then, by the Gersten conjecture for \'etale cohomology, we can pass to the fraction field $\widetilde{K}$ of the generic point of the special fiber. It suffices now to show that $U^mH^j_{\eet}(\widetilde{K},{\mathbf Z}/p(j))=0$ for $m>e^{\prime}$, $e^{\prime}=ep/(p-1)$. But cohomological symbol gives an isomorphism $K^M_j(\widetilde{K})/p\stackrel{\sim}{\to} H^j_{\eet}(\widetilde{K},{\mathbf Z}/p(j))$. Hence it suffices to evoke Lemma 5.1 from \cite{BK}.
\end{proof}

\begin{proposition}
\label{truncatedetale}
For $B$ being $BGL_l/W(k)$, 
$B(GL_a\times GL_b)/W(k)$, or $BGL(a,b)/W(k)$, we have the natural isomorphisms
$$H^{2j}(B,E_n(j)_{\Nis})\stackrel{\sim}{\to}H^{2j}(B,S_n(j)_{\Nis}),\quad H^{2j}(B,E^{\prime}_n(j)_{\Nis})\stackrel{\sim}{\to}H^{2j}(B,S^{\prime}_n(j)_{\Nis}).
$$
Moreover, the morphism
$$\tau: H^{2i}(BGL_l,E_n(i)_{\Nis})\to H^{2i}(BGL_l,E^{\prime}_n(i)_{\Nis})
$$
is compatible with products. 
\end{proposition}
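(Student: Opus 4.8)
The plan is to play the natural maps $\se_n(j)_{\Nis}\to i_*\sss_n(j)_{\Nis}$ and $\se^{\prime}_n(j)_{\Nis}\to i_*\sss^{\prime}_n(j)_{\Nis}$ off the distinguished triangles (\ref{Niis3}) (take $a=r=j$): both exhibit these maps as having the \emph{same} fibre
$$
C\ :=\ j_{\Nis !}\,\tau_{\leq j}\R j_*^{\prime}\R\varepsilon_*{\mathbf Z}/p^n(j)^{\prime}.
$$
So it suffices to show $H^{2j}(B,C)=H^{2j+1}(B,C)=0$; the two natural isomorphisms follow at once, and — since $\se_n(j)_{\Nis}\to i_*\sss_n(j)_{\Nis}$ and $\se^{\prime}_n(j)_{\Nis}\to i_*\sss^{\prime}_n(j)_{\Nis}$ carry the syntomic-\'etale first Chern class (\ref{symbol1}) to the syntomic one (\ref{symbol}), both being built from the de Rham Chern classes (cf.\ the paragraph before Corollary \ref{truncatedcor}) — under the computations of Lemma \ref{syntcomp} and Proposition \ref{truncated} they are the expected identifications on the Chern classes.

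First I would simplify $C$. As $B$ carries the trivial log-structure, $B_{\tr}=B$ and $j^{\prime}=\mathrm{id}$, so $C=j_{\Nis !}\,\tau_{\leq j}\R\varepsilon_*{\mathbf Z}/p^n(j)^{\prime}$ with $j\colon B_F\hookrightarrow B$ the inclusion of the generic fibre $B_F$ ($F=\Frac W(k)$). Since $B_F$ has characteristic $0$, the twist is harmless: ${\mathbf Z}/p^n(j)^{\prime}\cong{\mathbf Z}/p^n(j)$, and the Beilinson--Lichtenbaum theorem over $F$ identifies $\tau_{\leq j}\R\varepsilon_*{\mathbf Z}/p^n(j)\simeq{\mathbf Z}/p^n(j)_{\M}$ on $(B_F)_{\Nis}$. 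Hence $C\simeq j_{\Nis !}{\mathbf Z}/p^n(j)_{\M,B_F}$, and applying $\R\Gamma(B,-)$ to the localization triangle $j_{\Nis !}j^{*}\to\mathrm{id}\to i_*i^{*}$ of ${\mathbf Z}/p^n(j)_{\M,B}$ yields
$$
\R\Gamma(B,C)\ \simeq\ \fiber\bigl(\R\Gamma_{\M}(B,{\mathbf Z}/p^n(j))\longrightarrow \R\Gamma(B_k,i^{*}{\mathbf Z}/p^n(j)_{\M,B})\bigr).
$$

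The heart of the matter is the vanishing of this fibre. The projective bundle formula for motivic cohomology writes $\R\Gamma_{\M}(B,{\mathbf Z}/p^n(j))$ as the appropriate degree/weight piece of $\R\Gamma_{\M}(W(k),{\mathbf Z}/p^n(*))[x_1,\dots,x_l]$ (resp.\ with a second family $y_1,\dots,y_b$); I would establish the parallel decomposition of $\R\Gamma(B_k,i^{*}{\mathbf Z}/p^n(j)_{\M,B})$ with ``base'' $\R\Gamma(\Spec k,i^{*}{\mathbf Z}/p^n(*)_{\M,W(k)})$, and note that, $\Spec W(k)$ being henselian local, the natural map $\R\Gamma_{\M}(W(k),{\mathbf Z}/p^n(*))\to\R\Gamma(\Spec k,i^{*}{\mathbf Z}/p^n(*)_{\M,W(k)})$ is an isomorphism (both compute the stalk of ${\mathbf Z}/p^n(*)_{\M,W(k)}$ at the closed point). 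Then the displayed comparison map is an isomorphism and $\R\Gamma(B,C)=0$ in all degrees. If one wishes to argue only in degrees $2j,2j+1$, one can instead feed the Gysin/localization triangle over the DVR $W(k)$ the vanishing $H^m_{\M}(F,{\mathbf Z}/p^n(w))=0$ for $m>w$ ($F$ a field), $H^m_{\M}(k,{\mathbf Z}/p^n(w))=0$ for $w\geq 1$ ($k$ perfect, via ${\mathbf Z}/p^n(w)_{\M}\simeq W_n\Omega^w_{k,\log}[-w]$), hence $H^m_{\M}(W(k),{\mathbf Z}/p^n(w))=0$ for $m>w$; these confine both $\R\Gamma_{\M}(B,{\mathbf Z}/p^n(j))$ and $\R\Gamma(B_k,i^{*}{\mathbf Z}/p^n(j)_{\M,B})$ to degrees $\leq 2j$, with the comparison an isomorphism in degree $2j$ (it is the free module on the Chern classes on both sides) and surjective in degree $2j-1$, and the long exact sequence of the fibre kills $H^{2j}(B,C)$ and $H^{2j+1}(B,C)$. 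I expect this step to be the main obstacle: understanding $i^{*}$ of the motivic complex along the special fibre — equivalently, the projective bundle decomposition of $C$ — is where the real work sits, and $p=2$ needs the customary extra care. (For $\se_n$ with $0\leq j\leq p-2$ the isomorphism drops out of Theorem \ref{keylemma10}(1) plus the projective bundle formula, a useful check.)

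Finally, the product compatibility of $\tau\colon H^{2i}(BGL_l,\se_n(i)_{\Nis})\to H^{2i}(BGL_l,\se^{\prime}_n(i)_{\Nis})$ is formal: under the ring isomorphisms just proved it is transported to $\tau\colon H^{2i}(BGL_l,\sss_n(i)_{\Nis})\to H^{2i}(BGL_l,\sss^{\prime}_n(i)_{\Nis})$, which is compatible with products by Proposition \ref{truncated}(2) (itself resting on Remark \ref{product11}); hence so is the syntomic-\'etale $\tau$.
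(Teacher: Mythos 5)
Your opening reduction is exactly the paper's: both exhibit $\se_n(j)_{\Nis}\to i_*\sss_n(j)_{\Nis}$ and $\se^{\prime}_n(j)_{\Nis}\to i_*\sss^{\prime}_n(j)_{\Nis}$ as having common fibre $C=j_{\Nis!}T_n(j)$ with $T_n(j)=\tau_{\leq j}\R\varepsilon_*{\mathbf Z}/p^n(j)$, and both reduce to $H^{2j}(B,C)=H^{2j+1}(B,C)=0$; the identification $T_n(j)\simeq{\mathbf Z}/p^n(j)_{\M,B_F}$ via Beilinson--Lichtenbaum is also the paper's first move. Where you diverge is in how you propose to kill these two groups: you rewrite $\R\Gamma(B,C)$ as the fibre of $\R\Gamma_{\M}(B,{\mathbf Z}/p^n(j))\to\R\Gamma(B_k,i^*{\mathbf Z}/p^n(j)_{\M,B})$ using the triangle $j_!j^*\to\mathrm{id}\to i_*i^*$ for the Dedekind-base motivic complex, whereas the paper uses $j_!T_n(j)\to j_*T_n(j)\to i_*i^*j_*T_n(j)$ and analyses the nearby-cycles term directly. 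Your observation that the base case is controlled by henselianness of $W(k)$ (so that $\R\Gamma_{\M}(W(k),-)$ and the stalk at the closed point agree) is correct and is a clean way to see the weight-$\leq j-1$ contributions.

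The genuine gap is the ``parallel decomposition'' of $\R\Gamma(B_k,i^*{\mathbf Z}/p^n(j)_{\M,B})$, which you assert and correctly flag as ``where the real work sits'' but do not supply. It does not follow formally from the projective space theorem for ${\mathbf Z}/p^n(*)_{\M}$ over $W(k)$: transporting the decomposition through $i^*$ requires a base-change statement $i^*\R\pi_*\simeq\R\pi_{k*}i^*$ against the proper maps building up $B$, and proper base change is not automatic in the Nisnevich topology. Unwinding your triangle via Geisser's localization sequence $i_*{\mathbf Z}/p^n(j-1)_{\M,B_k}[-2]\to{\mathbf Z}/p^n(j)_{\M,B}\to j_*{\mathbf Z}/p^n(j)_{\M,B_F}$ shows that computing $i^*{\mathbf Z}/p^n(j)_{\M,B}$ is equivalent to computing the motivic nearby cycles $i^*j_*{\mathbf Z}/p^n(j)_{\M,B_F}$ on $B$ --- and that computation is the entire content of the paper's proof: it uses the Bloch--Kato filtration on $i^*\sh^bj_*{\mathbf Z}/p(j)_{\M}$ (Theorem \ref{BK-Nis}), a norm argument to descend from $W(k)(\zeta_p)$, the symbol map $\sigma_j$ to $\Omega^j_{B_{k'},\log}$ and its compatibility with Chern classes, and the vanishing $H^s(B_k,Z\Omega^t_{B_k})=0$ for $s>t$ established in Proposition \ref{truncated}. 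None of this is absorbed by the henselian-base remark. Note also that your stronger claim ``$\R\Gamma(B,C)=0$ in all degrees'' would yield $H^*(B,\se_n(*)_{\Nis})\simeq H^*(B,\sss_n(*)_{\Nis})$ in all degrees, which the paper explicitly leaves as only ``likely'' in the remark following the proposition --- another sign that the asserted decomposition is carrying the full weight of the argument. The fallback restriction to degrees $2j,2j+1$ does not help, since confining $\R\Gamma(B_k,i^*{\mathbf Z}/p^n(j)_{\M,B})$ to degrees $\leq 2j$ already presupposes the same unproved decomposition.
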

\begin{proof}We will start with the first claimed isomorphism. 
Recall that we have  the distinguished triangle in Nisnevich topology (\ref{Niis3})
\begin{equation*}
j_!\tau_{\leq j}\R \varepsilon_*{\mathbf Z}/p^n(j)^{\prime}  \to E_n(j)_{\Nis}\to S_n(j)_{\Nis},
\end{equation*}
where $\varepsilon$ is the projection from the \'etale  to the Nisnevich site. Set $T_n(j):= \tau_{\leq j}\R \varepsilon_*{\mathbf Z}/p^n(j)$, $j\geq 0$. 
It suffices to show that $$
H^{2j}(B,j_!T_n(j))=H^{2j+1}(B,j_!T_n(j))=0,\quad  n\geq 1.
$$ 
We will use for that the distinguished triangle 
\begin{equation}
\label{Nisn}
j_!T_n(j)  \to j_*T_n(j) \to i_*i^*j_*T_n(j).
\end{equation}
It suffices to show that
\begin{align}
\label{added}
H^{2j-1}(B,j_*T_n(j)) &\twoheadrightarrow  H^{2j-1}( B,i_*i^*j_*T_n(j)),\quad 
H^{2j}(B,j_*T_n(j)) \stackrel{\sim}{ \to} H^{2j}( B,i_*i^*j_*T_n(j)),\\
H^{2j+1}(B,j_*T_n(j)) & =0.\notag
\end{align}

   We will  argue in the case of $B=BGL_l/W(k)$ -- the other cases being analogous. 
We will start with a computation of  the cohomology of the complexes $T_n(j)$; this will prove the third equality in (\ref{added}). In the category of smooth schemes over $K$, 
 by the Beilinson-Lichtenbaum Conjecture, $T_n(j)\simeq {\mathbf Z}/p^n(j)_{\M}$. In particular, 
the complexes $T_n(j)$ satisfy projective space theorem \cite[4.1]{Kah} and hence we can define the universal classes
$$x_j\in H^{2j}(BGL_l/K,T_n(j))
$$
as the Chern classes of the universal vector bundle over $BGL_l/K$. We start here with the first Chern class induced by the map 
${\mathbb G}_m[-1]\to \tau_{\leq 1}(\R \varepsilon_*{\mathbb G}_m[-1])\to \tau_{\leq 1}\R \varepsilon_*{\mathbf Z}/p^n(1)$ that arises from the \'etale map
 ${\mathbb G}_m[-1]\to {\mathbf Z}/p^n(r)$ of Kummer theory.

  These classes  are clearly finer than  the \'etale universal classes. In fact we have the following computation.
\begin{lemma}
\label{tretale}
We have the isomorphisms 
\begin{align*}
H^{*}_{\eet}(BGL_l/K,{\mathbf Z}/p^n(*))  & \simeq H^{*}_{\eet}(K,{\mathbf Z}/p^n(*)) [x_1,\ldots, x_l],\\
H^{*}(BGL_l/K,T_n(*)) & \simeq H^{*}(K,T_n(*)) [x_1,\ldots, x_l].
\end{align*}
They imply the following isomorphisms
\begin{align*}H^{2i+k}(BGL_l/K,T_n(i))&  =0;\quad k\geq 1;\\
H^{2i}(BGL_l/K,T_n(i)) & \simeq {\mathbf Z}/p^n(i)[x_I],\quad |I|=i, i\leq l;\\
H^{2i}_{\eet}(BGL_l/K,{\mathbf Z}/p^n(i)) & \simeq
 {\mathbf Z}/p^n(i)[x_I]\oplus {\mathbf Z}/p^n(i)[x_J],\quad |I|=i,  |J|=i-1, i\leq l,\\
 H^{2i-1}(BGL_l/K,T_n(i))  & \stackrel{\sim}{\to} H^{2i-1}_{\eet}(BGL_l/K,{\mathbf Z}/p^n(i))\simeq H^{1}_{\eet}(K,{\mathbf Z}/p^n(1)) [x_1,\ldots, x_l].
\end{align*}
Moreover the natural morphism  
$$
H^{2i}(BGL_l/K,T_n(i))\to H^{2i}_{\eet}(BGL_l/K,{\mathbf Z}/p^n(i)), \quad i\leq l,
$$
is  the obvious injection.
\end{lemma}
\begin{proof} The standard computation yields the isomorphism
$$H^{*}_{\eet}(BGL_l/K,{\mathbf Z}/p^n(*)) \simeq H^{*}_{\eet}(K,{\mathbf Z}/p^n(*)) [x_1,\ldots, x_l].
$$
For degree $(2i,i)$ only the groups $H^{0}_{\eet}(K,{\mathbf Z}/p^n) \simeq {\mathbf Z}/p^n$ and 
$
H^{2}_{\eet}(K,{\mathbf Z}/p^n(1))\simeq  {\mathbf Z}/p^n
$ matter. For degree $(2i-1,i)$ only the group $H^{1}_{\eet}(K,{\mathbf Z}/p^n(1))$ matters. On the other hand, by \cite[Lemma 7]{PO}, we  have
$$H^{*}(BGL_l/K,T_n(*))\simeq H^{*}(K,T_n(*)) [x_1,\ldots, x_l].
$$
Since $H^{a}(K,T_n(b))=0,$ $a >b,$ the groups $H^{2i+k}(BGL_l/K,T_n(i))$, $k\geq 1$, are clearly trivial. Also, for degree $(2i,i)$ (reps. $(2i-1,i)$ ) only the group $H^{0}(K,{\mathbf Z}/p^n(0)_{\M})\simeq H^{0}_{\eet}(K,{\mathbf Z}/p^n) \simeq {\mathbf Z}/p^n$ 
($H^{1}(K,{\mathbf Z}/p^n(1)_{\M})\simeq H^{1}_{\eet}(K,{\mathbf Z}/p^n(1))$) matters. Our lemma follows.
\end{proof}

 To prove the remaining claims in  (\ref{added}), we note that, since both $j_*T_n(j)$ and $i_*i^*j_*T_n(j)$ satisfy devissage, we can assume that  $n=1$.  For the second map, consider the following commutative diagram
$$
\begin{CD}H^{2j}(B_{{\so_K}^{\prime}},j_*{\mathbf Z}/p^n(j)_{\M}) @>>> H^{2j}(B_{{\so_K}^{\prime}}, i_*i^*j_*{\mathbf Z}/p^n(j)_{\M})\\
@VV N_1 V @VV N_2 V\\
H^{2j}(B,j_*{\mathbf Z}/p^n(j)_{\M}) @>>> H^{2j}(B, i_*i^*j_*{\mathbf Z}/p^n(j)_{\M})
\end{CD}
$$
Here ${\so_K}^{\prime}$ is the ring of integers in $K^{\prime}=K_0(\xi_p)$ and $N_1,N_2$ denote the  maps induced by the norm map (of complexes of \'etale sheaves)
$$\pi_*{\mathbf Z}/p^n(j)_{\M}\to {\mathbf Z}/p^n(j)_{\M},
$$
where $\pi: B_{K^{\prime}}\to B_{F}$ is the natural projection. 
In the case of the map $N_2$ this uses the isomorphisms
$$
\pi_*i_*i^*j_*T_n(j) \simeq \tau_{\leq j}i_*\pi_*i^*\R \varepsilon_*\R j_*T_n(j) \simeq \tau_{\leq j}i_*\R \varepsilon_*\pi_*i^*\R j_*T_n(j) 
  \simeq \tau_{\leq j}i_*i^*\R \varepsilon_*\R j_*\pi_*T_n(j). 
$$
The second isomorphism follows from the isomorphism $\R \varepsilon_*i^*=i^*\R \varepsilon_*$ \cite[2.2.b]{GD} and the third one from the same isomorphism and the proper base change theorem in \'etale cohomology.

 Since $N_*\pi^*=(p-1)$, to show that the second map in \ref{added}  is an isomorphism it suffices to show that so is the map
$$H^{2j}(B_{{\so_K}^{\prime}},j_*{\mathbf Z}/p(j)_{\M}) \to  H^{2j}(B_{{\so_K}^{\prime}}, i_*i^*j_*{\mathbf Z}/p(j)_{\M})
$$
Consider the following composition of maps
$$
\tau^{\prime}: H^{2j}(B_{{\so_K}^{\prime}},j_*{\mathbf Z}/p(j)_{\M}) \to  H^{2j}(B_{{\so_K}^{\prime}}, i_*i^*j_*{\mathbf Z}/p(j)_{\M})
\stackrel{\sigma_j}{\to}
H^{j}(B_{k^{\prime}},\Omega^j_{B_{k^{\prime}},\log}).
$$
The map $\sigma_j$ is induced by the composition 
$$\sigma_j:i^*j_*{\mathbf Z}/p(j)_{\M}\to i^*\sh^jj_*{\mathbf Z}/p(j)_{\M}[-j]\simeq i^*j_*\sk_j^M/p[-j]\to \Omega^j_{B_{k^{\prime}},\log}[-j]\oplus \Omega^{j-1}_{B_{k^{\prime}},\log}[-j].
$$
Here $\sk_j^M/p$ is the sheaf of Milnor K-theory groups mod-$p$ and the isomorphism follows from \cite[Theorem 7.6.]{KZ}. The last morphism  \cite[6.6]{BK} maps symbols
$
\{\tilde{x}_1,\ldots,\tilde{x}_j\}$,
for local sections $x_i$ of $\so^*_{B/k^{\prime}}$ and $\tilde{x}_i$ lifting of $x_i$ to $i^*\so_{B_{{\so_K}^{\prime}}}^*$, to $(\dlog(x_1)\wedge \cdots\wedge \dlog(x_j),0)$ and maps symbols 
$\{\tilde{x}_1,\ldots,\tilde{x}_{j-1},\pi_{K^{\prime}}\}$ to $(0,\dlog(x_1)\wedge \cdots\wedge \dlog(x_{j-1}))$.
First, we claim that  the map
$$\sigma_j:H^{2j}(B_{{\so_K}^{\prime}}, i_*i^*j_*{\mathbf Z}/p(j)_{\M})\to H^{j}(B_{k^{\prime}},\Omega^j_{B_{k^{\prime}},\log})
$$ is an isomorphism. This will follow from the spectral sequence
$$H^a(B_{{\so_K}^{\prime}},i_*i^*\sh^bj_*{\mathbf Z}/p(j)_{\M})\Rightarrow H^{a+b}(B_{{\so_K}^{\prime}},i_*i^*j_*{\mathbf Z}/p(j)_{\M})
$$
if we show that 
$$
H^a(B_{{\so_K}^{\prime}},i_*i^*\sh^bj_*{\mathbf Z}/p(j)_{\M})=
\begin{cases}
0 & \mbox{for } a+b=2j-1,2j, b< j,\\
 H^{j}(B_{k^{\prime}},\Omega^j_{B_{k^{\prime}},\log})& \mbox{for }a=b=j
\end{cases}
$$

 Let us start with  $b=j$. By a result of Bloch-Kato \cite{BK}, the nearby 
cycles $i_*i^*\sh^jj_*{\mathbf Z}/p(j)_{\M}$ have a descending filtration whose graded pieces are listed in Theorem \ref{BK-Nis}.  
It suffices thus to show that (we set
$B^{\prime}=B_{k^{\prime}}$ and $\Omega_{B^{\prime}}=\Omega_{B^{\prime}/k^{\prime}}$)
$$
 H^{i}(B^{\prime},\Omega^{j-1}_{B^{\prime}}/(1-C)Z\Omega^{j-1}_{B^{\prime}})= H^i(\Omega^{j-2}_{B^{\prime}}/(1-C)Z\Omega^{j-2}_{B^{\prime}})=0,\quad i=j,j+1,
$$
(note that $H^{j}(B^{\prime},\Omega^{j-1}_{B^{\prime}})=0$ and $H^{s}(B^{\prime},\Omega^{t}_{B^{\prime},\log})=0 $, for $s\neq t$). But this follows from the exact sequence
\begin{equation}
\label{cartier1}
0\to \Omega^k_{B^{\prime},\log}\to Z\Omega^k_{B^{\prime}}\lomapr{1-C}\Omega^k_{B^{\prime}}
\end{equation}
and the computations from Proposition \ref{truncated} that showed that $ H^{s}(B^{\prime},Z\Omega^{t}_{B^{\prime}})=0$, for $s>t$.

  For $b< j$, $a+b=2j$, use the isomorphism $\xi^{j-b}_p: i_*i^*\sh^bj_*{\mathbf Z}/p(b)\stackrel{\sim}{\longrightarrow}i_*i^*\sh^bj_*{\mathbf Z}/p(j)$ and 
 once more Theorem \ref{BK-Nis}. It suffices thus to show that the groups 
$$
H^{2j-b}(B^{\prime},\Omega^{b-1}_{B^{\prime},\log}\oplus \Omega^{b}_{B^{\prime},\log}), \quad 
H^{2j-b}(B^{\prime},\Omega^{b-1}_{B^{\prime}}),\quad H^{2j-b}(B^{\prime},\Omega^{b-1}_{B^{\prime}}/(1-C)Z\Omega^{b-1}_{B^{\prime}}),\quad H^{2j-b}(\Omega^{b-2}_{B^{\prime}}/(1-C)Z\Omega^{b-2}_{B^{\prime}})
$$
are trivial. But this follows as above from the exact sequence (\ref{cartier1})
and the computations in Proposition \ref{truncated}.

 The argument for $b< j$, $a+b=2j-1$, is analogous.
 
  It suffices now to show that the composition $\tau^{\prime}: H^{2j}(B_{{\so_K}^{\prime}},j_*{\mathbf Z}/p(j)_{\M}) \to H^{j}(B^{\prime},\Omega^{j}_{B^{\prime},\log})$ is an isomorphism as well. But both groups are isomorphic to 
  ${\mathbf Z}/p[x_I], |I|=j$. Hence it suffices to show that the map $\tau^{\prime}$ is compatible with Chern classes of vector bundles. Since both 
  cohomologies have projective space theorem, the map $\tau^{\prime}$ is functorial and compatible with products,  it suffices to show compatibility with the first Chern class maps and this is clear from the definition of the map $\sigma_j$.
  
  For the first map  of \ref{added} we note that injectivity follows from injectivity of the map
  $$H^{i}_{\eet}(B,\R  j_*{\mathbf Z}/p(j))\to H^{i}_{\eet}(B,i_*i^*\R j_*{\mathbf Z}/p(j)), \quad i\geq 0. 
  $$
  This is because we have Lemma \ref{tretale}. But, in fact, the above map is an isomorphism: since the sheaves $i_*i^*\R j_*{\mathbf Z}/p(j)$ satisfy projective space theorem and weak purity this follows from the method of Illusie \cite{Gr} of computing cohomology of classifying spaces (note that $H^{*}_{\eet}(W(k),\R j_*{\mathbf Z}/p(j))\simeq H^{*}_{\eet}(W(k),i_*i^*\R j_*{\mathbf Z}/p(j)) $).

  It remains to prove that the first map in (\ref{added}) is surjective. For that consider  the localization sequence in motivic cohomology
$$i_*{\mathbf Z}/p(j-1)_{\M}[-2]\to {\mathbf Z}/p(j)_{\M}\to j_*{\mathbf Z}/p(j)_{\M}
$$
Applying to it the exact functor $i_*i^*$ we obtain the following commutative diagram with short exact sequences as columns
$$
\xymatrix{
H^1(W(k),{\mathbf Z}/p(1)_{\M})[x_I] \ar[d]^{\beta_1} \ar[r]^{\gamma} &  \ker(\kappa_2) \ar[d]^{\beta_2}\ar[r]^{\sim} & H^{j-1}(B_k,\Omega^{j-1}_{B_k})\ar[d]\\
H^{2j-1}(B,j_*{\mathbf Z}/p(j)_{\M}) \ar[r] \ar[d]^{\kappa_1}& H^{2j-1}(B,i_*i^*j_*{\mathbf Z}/p(j)_{\M})\ar[r]^{\sim} \ar[d]^{\kappa_2} & H^{j-1}(B,i_*i^*\sh^jj_*{\mathbf Z}/p(j)_{\M})\ar[dd]^{\sigma_j}\\
H^{2j-2}(B_k,{\mathbf Z}/p(j-1)_{\M}) \ar[d]^{\wr}\ar@{=}[r] & H^{2j-2}(B_k,{\mathbf Z}/p(j-1)_{\M})\ar[d]^{\wr}\\
H^{j-1}(B_k,\Omega^{j-1}_{B_k,\log}) \ar@{=}[r] &   H^{j-1}(B_k,\Omega^{j-1}_{B_k,\log})\ar[r]^{\sim} & H^{j-1}(\Omega^{j-1}_{B_k,\log}\oplus\Omega^j_{B_k,\log}).
}
$$
The maps $\kappa_1,\kappa_2$ are the boundary maps induced by the above short exact sequence. In the top left term the index $I$ satisfies  $|I|=j-1$.
 For the rightmost column, we note that we have the short exact sequence
 $$0\to \Omega^{j-1}_{B_k}\to i^*\sh^jj_*{\mathbf Z}/p(j)_{\M}\to  \Omega^{j-1}_{B_k,\log}\oplus\Omega^{j}_{B_k,\log} \to 0
 $$
 of  Bloch-Kato (quoted in  Theorem \ref{BK-Nis}). Here the map $\Omega^{j-1}_{B_k}\to i^*\sh^jj_*{\mathbf Z}/p(j)_{\M}\simeq i^*j_*\sk^M_j/p$ is defined by sending 
 $x\dlog(y_1)\wedge\cdots\wedge\dlog(y_{j-1})$ to the symbol $\{1+\tilde{x}p,\tilde{y}_1,\cdots, \tilde{y}_{j-1}\}$. 
 This yields 
the  short exact sequences
 $$
 0\to  H^{j-1}(B_k,\Omega^{j-1}_{B_k})\to  H^{j-1}(B,i_*i^*\sh^jj_*{\mathbf Z}/p(j)_{\M}) \to H^{j-1}(B_k,\Omega^{j-1}_{B_k,\log}\oplus\Omega^{j}_{B_k,\log})\to  0
 $$
The fact that the maps $\kappa_1 $  and $\kappa_2$ are surjective,  $H^1(W(k),{\mathbf Z}/p(1)_{\M})[x_I] \simeq \ker(\kappa_1)$, 
   and the obvious definitions of the maps $\beta_1$ and $\gamma$ follow from the fact that 
\begin{align*}
H^{*}(B,j_*{\mathbf Z}/p(j)_{\M}) & \simeq  H^{*}(F,{\mathbf Z}/p(*)_{\M}) [x_1,\ldots, x_l];\\
H^{j-1}(B_k,\Omega^{j-1}_{B_k,\log}) & \simeq   {\mathbf Z}/p[x_I],\quad |I|=j-1,
\end{align*}
and $\kappa_1$ is the natural morphism compatible with the short exact sequence
\begin{align}
\label{loc1}
0 \to H^1(W(k),{\mathbf Z}/p(1)_{\M})\to H^1(F,{\mathbf Z}/p(1)_{\M})\stackrel{\kappa_1}{\to} H^0(k,{\mathbf Z}/p(0)_{\M})\to 0.
\end{align}
Note that $H^0(k,{\mathbf Z}/p(0)_{\M})\simeq {\mathbf Z}/p $.

 Consider the natural  map 
  \begin{align*}
  H^{2j-1}(B,i_*i^*j_*{\mathbf Z}/p(j)_{\M}) {\to} H^{j-1}(B,i_*i^*\sh^jj_*{\mathbf Z}/p(j)_{\M}).
  \end{align*}
  We claim that it is an isomorphism.  This will follow from the fact that
 $H^{2j-1}(B,\Lambda(j))=H^{2j}(B,\Lambda(j))=0$, for $\Lambda(j):=\tau_{\leq {j-1}}i_*i^*j_*{\mathbf Z}/p(j) _{\M}$.  To see that this is true consider 
  the spectral sequence
$$H^a(B,\sh^b\Lambda(j))\Rightarrow H^{a+b}(B,\Lambda(j)).
$$
It suffices to prove that 
$$
H^a(B,\sh^b\Lambda(j))=
0,\quad  \mbox{for } a+b=2j-1, a+b=2j, b< j.
$$
We pass to ${\so_K}^{\prime}$. The norm map $N: H^a(B_{{\so_K}^{\prime}},\sh^b\Lambda(j))\to H^a(B,\sh^b\Lambda(j))$ 
  yields that the pullback map $\pi^*: H^a(B,\sh^b\Lambda(j))\to H^a(B_{{\so_K}^{\prime}},\sh^b\Lambda(j))$ is injective. 
  Hence it suffices to show that $H^a(B_{{\so_K}^{\prime}},\sh^b\Lambda(j))=0$ for $ b< j$ and $a+b=2j-1, 2j$. But this we have done above. 
  It follows that the map $\ker(\kappa_2)\to H^{j-1}(B_k.\Omega^{j-1}_{B_k})$ is an isomorphism as well. 

  It suffices now to show that the map
 $$
 \gamma: \quad H^1(W(k),{\mathbf Z}/p(1)_{\M})[x_I] \to \ker(\kappa_2)
 $$
 is an isomorphism, or, because it is injective 
 that $H^1(W(k),{\mathbf Z}/p(1)_{\M})[x_I] \simeq H^{j-1}(B_k,\Omega^{j-1}_{B_k})$, $|I|=j-1$, at least abstractly.  But
  $$H^{j-1}(B_k,\Omega^{j-1}_{B_k})\simeq k[x_I],\quad |I|=j-1; \quad H^1(W(k),{\mathbf Z}/p(1)_{\M})\simeq k.
  $$
The second isomorphism follows from the isomorphisms
$$H^1(W(k),{\mathbf Z}/p^n(1)_{\M})\simeq K^M_1(W(k))/p^n
\simeq W(k)^*/p^n\simeq W_n(k).
$$  
We have proved the first claim of the proposition.

 To show that we have the isomorphism $H^{2j}(B,E^{\prime}_n(j)_{\Nis})\stackrel{\sim}{\to}H^{2j}(B,S^{\prime}_n(j)_{\Nis})$ we use
 the distinguished triangle in Nisnevich topology (Lemma \ref{Niis3})
\begin{equation*}
j_!\tau_{\leq j}\R \varepsilon_*{\mathbf Z}/p^n(j)^{\prime}  \to E^{\prime}_n(j)_{\Nis}\to S^{\prime}_n(j)_{\Nis}.
\end{equation*}
It suffices to show that $$
H^{2j}(B,j_!T_n(j))=H^{2j+1}(B,j_!T_n(j))=0,\quad  n\geq 1.
$$ 
what we have just done. 

  The statement about products follows easily from the first part of the proposition and Proposition \ref{truncated}.
  \end{proof}
  \begin{remark}
  It is likely that, in fact, we have a natural isomorphism 
  $$H^{*}(B,E_n(*)_{\Nis})\stackrel{\sim}{\to}H^{*}(B,S_n(*)_{\Nis}).  $$
  \end{remark}
  \begin{corollary}
  \label{syntet}
  The period map
  $$
  \alpha_j: H^{2j}(BGL_l,E^{\prime}_n(j)_{\Nis})\to  H^{2j}_{\eet}(BGL_{l,K},\Z/p^n(j)^{\prime})
  $$
  maps  the universal class $x^{\synt}_j$ to $p^jx^{\eet}_j$. For $j\leq p-2$ and $E$-cohomology, we have an analogous statement with no twist necessary.
  \end{corollary}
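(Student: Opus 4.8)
The plan is to reduce the assertion to the case $j=1$ — where it is essentially the normalization of the first syntomic-\'etale Chern class built into the symbol maps of Section 2 — and then to let multiplicativity of the period map together with the splitting principle carry the factor into all higher degrees.

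First I would pin down the shape of $\alpha_j$ over $B:=BGL_l/W(k)$. Here the log-structure is trivial and there is no divisor at infinity, so $\se^{\prime}_n(j)$ restricts on all of $B_K$ to $\R j^{\prime}_*\Z/p^n(j)^{\prime}=\Z/p^n(j)^{\prime}_{B_K}$, and $\alpha_j$ is simply the composite of the truncation $\se^{\prime}_n(j)_{\Nis}=\tau_{\leq j}\se^{\prime}_n(j)\to\se^{\prime}_n(j)$ with the adjunction map $\se^{\prime}_n(j)\to\R j_*\bigl(\se^{\prime}_n(j)|_{B_K}\bigr)$. In particular $\alpha_j$ is functorial in $B$ and, via the products $\cup\colon\se^{\prime}_n(r_1)\otimes^{\mathbb L}\se^{\prime}_n(r_2)\to\se^{\prime}_n(r_1+r_2)$ and the product on $\Z/p^n(*)^{\prime}$, compatible with cup products; the same holds for $\se_n(j)$ and $\Z/p^n(j)$ when $j\leq p-2$. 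I would also record, via Lemma \ref{syntcomp1} and Proposition \ref{truncatedetale}, that the source $H^{2j}(BGL_l,\se^{\prime}_n(j)_{\Nis})$ is identified with $H^{2j}(BGL_l,\sss^{\prime}_n(j)_{\Nis})$, so that the universal class $x^{\synt}_j$ may be regarded simultaneously on both sides.

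Next, by Proposition \ref{truncatedetale} together with Corollary \ref{truncatedcor} (and the method of Section 3 for products of copies of $B\mathbb{G}_m$), the cohomology $H^{*}(-,\se^{\prime}_n(*)_{\Nis})$ of the classifying spaces in play satisfies the projective space theorem; thus $x^{\synt}_j$ is the $j$-th Chern class of the universal bundle, built from the first Chern class $c^{\synt}_1$ of line bundles given by the syntomic-\'etale symbol map \ref{symbol1}. I would then restrict along the diagonal torus $B(GL_1^{\times l})\to BGL_l$: this map is $\alpha_j$-equivariant by functoriality, and on $H^{2j}$ it is injective both for $\se^{\prime}_n(j)_{\Nis}$-cohomology (again by the method of Section 3) and for \'etale cohomology, carrying $x^{\synt}_j$ to the $j$-th elementary symmetric function $e_j(t^{\synt}_1,\dots,t^{\synt}_l)$ in the first Chern classes $t_k$ of the tautological line bundles, and similarly $x^{\eet}_j$ to $e_j(t^{\eet}_1,\dots,t^{\eet}_l)$. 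Since $e_j$ is homogeneous of weight $j$, multiplicativity of $\alpha$ shows that once one knows $\alpha_1(t^{\synt}_k)=p\,t^{\eet}_k$ one gets $\alpha_j(x^{\synt}_j)=e_j(p\,t^{\eet}_1,\dots,p\,t^{\eet}_l)=p^{\,j}e_j(t^{\eet}_1,\dots,t^{\eet}_l)=p^{\,j}x^{\eet}_j$.

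It therefore remains to treat $j=1$, and this is where the work lies. The symbol map $c^{\synt}_1\colon\so^*[-1]\to\se^{\prime}_n(1)_{\Nis}$ of \ref{symbol1} was constructed as the (unique, for degree reasons) lift compatible with the syntomic symbol map \ref{symbol} into $\sss^{\prime}_n(1)$, which in turn satisfies $\alpha_1\circ c^{\synt}_1=p\,c^{\eet}_1$ by the commutative triangle of Section 2, where $c^{\eet}_1$ is the Kummer-theory first \'etale Chern class. I would chase this, the cone description $E^{\prime}_n(1)_X=\Cone(\wt{S}^{\prime}_n(1)_X\to j_{\eet!}j^{\prime}_*G\Lambda_X[1])[-1]$, and the identification $\se^{\prime}_n(1)|_{B_K}\simeq\Z/p^n(1)^{\prime}_{B_K}$ to conclude that $\alpha_1(t^{\synt}_k)=p\,t^{\eet}_k$, hence $\alpha_j(x^{\synt}_j)=p^{\,j}x^{\eet}_j$. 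For $\se_n(j)$ with $j\leq p-2$ the argument is identical, except that the first Chern class into $\se_n(1)$ satisfies $\alpha_1\circ c^{\synt}_1=c^{\eet}_1$ with no factor of $p$ — because $\sss_n$ is built with the divided Frobenius $\phi_1=\phi/p$ — so each $t^{\synt}_k\mapsto t^{\eet}_k$ and $x^{\synt}_j\mapsto x^{\eet}_j$. The main obstacle is exactly keeping this power of $p$ straight through the several identifications: one must be certain that the factor in $\alpha_1(t^{\synt}_k)$ is precisely $p$ (neither $1$ nor $p^2$), after which the general statement is forced.
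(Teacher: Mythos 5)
Your proposal is correct, but it routes around the step the paper outsources. The paper's own proof is short: it identifies $H^{2j}(BGL_l,E^{\prime}_n(j)_{\Nis})$ with $H^{2j}(BGL_l,S^{\prime}_n(j))$ (Propositions \ref{truncated} and \ref{truncatedetale}), observes that $\alpha_j=\tilde{\alpha}_j\circ\omega$ with $\omega$ acting as multiplication by $p^j$ on the universal classes (since $\omega\tau=p^j$ and $\tau$ is the canonical injection of Remark \ref{product11}), and then cites the proof of Theorem 4.10 of \cite{N10} for the untwisted identity $\tilde{\alpha}_j(x_j^{\synt})=x_j^{\eet}$. You instead reprove that last identity from scratch by the splitting principle: reduce to $j=1$ via restriction to the diagonal torus, injectivity of the restriction on the \'etale side, homogeneity of $e_j$, and multiplicativity of the period map, and then settle $j=1$ by the symbol compatibility $\alpha_1\circ c_1^{\synt}=p\,c_1^{\eet}$ recorded in Section 2 (equivalently $\tilde{\alpha}_1\circ c_1^{\synt}=c_1^{\eet}$ for the divided-Frobenius complexes, whence the absence of a twist for $E_n$ when $j\leq p-2$). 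Both arguments are sound; yours makes the statement self-contained within this paper at the cost of needing the Section 3 computations extended to $B{\mathbb G}_m^{\times l}$ and the Whitney sum formula in $S^{\prime}$-cohomology (both routine via the embedding into de Rham cohomology, but you should say so explicitly), while the paper's version isolates the entire twist $p^j$ in the single map $\omega:\sss^{\prime}_n(j)\to\sss_n(j)$, which is cleaner for the constant-tracking that dominates the rest of the paper. One small caution: only injectivity of the torus restriction on \'etale cohomology is actually needed, and the universal class $x_1^{\synt}\in H^2(BGL_l,S^{\prime}_n(1))$ must be identified with the symbol-map class of the tautological bundle (which holds because both map to the de Rham class $x_1$ under the embedding $H^{2}(BGL_l,S^{\prime}_n(1))\hookrightarrow H^{2}_{\dr}(BGL_{l,n})$); with those two points made precise your argument is complete.
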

  \begin{proof}
  We have the following commutative diagram
  $$
  \xymatrix{
  H^{2j}(BGL_l,E^{\prime}_n(j)_{\Nis})\ar[d]_{\wr}\ar[r]^-{\alpha_j} &  H^{2j}_{\eet}(BGL_{l,F},\Z/p^n(j)^{\prime})\ar[d]^{i^*}_{\wr}\\
H^{2j}(BGL_l,S^{\prime}_n(j))\ar[r]^-{\alpha_j}\ar[d] &  H^{2j}_{\eet}(BGL_{l},i_*i^*\Z/p^n(j)^{\prime})\\
 H^{2j}(BGL_l,S_n(j))\ar[ru]_-{p^j\tilde{\alpha}_j}
  }
  $$
  The top left arrow is an isomorphism by Propositions \ref{truncated} and \ref{truncatedetale}, the right arrow by the proof of Lemma \ref{tretale}. It suffices now to show that the bottom period map $\alpha_j$ is compatible with the universal classes as stated. But this was shown in the proof of Theorem 4.10 in \cite{N10}, i.e., we have
$$
\alpha_j(x^{\synt}_j)=p^j\tilde{\alpha}_j(x^{\synt}_j)=p^jx^{\eet}_j,
$$
as wanted.

  \end{proof}
  \section{K-theory}In this section we review or prove basic facts concerning $K$-theory of simplicial schemes and log-$K$-theory.
\subsection{K-theory of simplicial schemes}We start with $K$-theory of simplicial schemes. 
\subsubsection{Definition of $K$-theory}
For a scheme $X$, 
let $K(X)$ be the Thomason-Throbaugh spectrum of nonconnective K-theory as defined in Thomason-Throbaugh \cite[6.4]{T2}. 
For a natural number $n\geq 2$, let $K/n(X)$ denote the corresponding spectrum mod $n$ \cite[9.3]{T2}. 
After the usual rigidification \cite[5.1.2]{GS1}, which we assume from now on, these spectra are strictly contravariant in $X$ and covariant on the categories of noetherian schemes and proper maps of finite Tor-dimension and of quasi-compact schemes and perfect projective morphisms \cite[3.16.5, 3.16.5]{T2}.

 Following Thomason \cite[5.6]{T}, we define  functors from 
simplicial schemes to the category of spectra
$$
X\mapsto K(X):=\holim_iK(X_{i}),\quad
X\mapsto K/{n}(X):=\holim_iK/{n}(X_{i}), 
$$
and set $$
K_i(X):=\pi_i(K(X)),\quad
K_i(X,{\mathbf Z}/n):=\pi_i(K/{n}(X)).
$$  
We have 
$$K_j(X)=\pi_j(\holim_i K(X_i)_0), \quad j\geq 0; \quad 
K_j(X,{\mathbf Z}/n)=\pi_j(\holim_i K(X_i)_0,{\mathbf Z}/n), \quad
j\geq 2.
$$
Here, for any  spectrum $F$, $F_0$ denotes its 
0'th space. 
  
  We will also need $K$-theory with compact support. Let $X$ be a scheme and $i:D\hookrightarrow X$ a simple normal crossing divisor. Let $D_1, D_2, \ldots, D_n$ be the irreducible components of $D$. Let $\wt{D}_{\kr}=\cosk^D_0(D^{(1)})$ be the coskeleton of $D^{(1)}=\coprod_{i=1}^{i=n}D_i$ over $D$, i.e., the \v{C}ech-nerve of the map $D^{(1)}\to D$. We have
$$
\cosk^D_0(D^{(1)})_n=D^{(1)}\times_D\ldots\times_D D^{(1)},\quad\quad (n+1)-\text{times},
$$
with the natural boundary and degeneracy maps. We have the following $K$-theory with compact support spectra
\begin{align*}
K^c(X,D) :=\fiber (K(X)\stackrel{i^*}{\to}K(\wt{D}_{\kr})), \quad 
K^c/n(X,D):=\fiber (K/n(X)\stackrel{i^*}{\to}K/n(\wt{D}_{\kr})),
\end{align*}
and the following  $K$-theory groups with compact support
\begin{align*}
K_i^c(X,D):=\pi_i(K^c(X,D)),\quad
K_i^c(X,D,{\mathbf Z}/n):=\pi_i(K^c/{n}(X,D)).
\end{align*}
Clearly
\begin{align*}
K^c_j(X,D) & =\pi_j(\fiber( K(X)_0\stackrel{i^*}{\to}K(\wt{D}_{\kr})_0)), \quad j\geq 0; \quad\\ 
K^c_j(X,D,{\mathbf Z}/n) & =\pi_j(\fiber( K(X)_0\stackrel{i^*}{\to}K(\wt{D}_{\kr})_0),{\mathbf Z}/n), \quad
j\geq 2.
\end{align*}

     For a scheme $Y$ with an ample family of line bundles \cite[2.1.1]{T2} (for example, for $Y$ regular or quasi-projective), there exists  a 
strictly natural homotopy
equivalence $K(Y)\simeq K_Q(Y)$, where $K_Q(Y)$ is the Quillen K-theory
spectrum of $Y$ suitably rigidified \cite[5.1.2]{GS1}, \cite[3.10]{T2}. Hence, strictly
 naturally,
$$K(Y)_0\simeq K_Q(Y)_0\simeq \Omega BQP(Y),
$$
where $\Omega BQP(Y)$ is the loop space of the classifying space of Quillen Q-construction applied to the category of finitely generated locally free sheaves on $Y$. 
Recall that for $Y=\Spec(A)$, the strictly natural map $$K_0(A)\times BGL(A)^+\stackrel{\sim}{\to}\Omega BQP(Y)
$$ is a homotopy equivalence.
\subsubsection{Operations on $K$-theory}
 Let now $X$ be a 
simplicial scheme that is degenerate above certain  degree. Using the above
 homotopy equivalences of spaces, the classical definition of 
$\lambda$-operations on K-theory can be extended to  $K_j(X)$, $j\geq 0$, 
and to $K_j(X,{\mathbf Z}/n)$, $j\geq 2$, 
\cite[3.2]{GS}, \cite[B.2.6,B.2.8]{HW}. Let us recall how this is done. 

 We work with the site $C$ of noetherian $B$-schemes, for $B$ a field or a local ring, equipped with the Zariski topology. We equip the category of (globally pointed) presheaves of simplicial sets on $C$ 
 with Jardine's model structure \cite{J1}.
    Recall that a map of presheaves of simplicial sets $E\to F$ is called a weak equivalence if it induces an isomorphism
     $\tilde{\pi}_*(E)\to\tilde{\pi}_*(E) $ on the 
     sheaves of homotopy groups. For a presheaf of simplicial sets $F$ we will denote by $F^{f}$ a fibrant replacement of $F$. That is,
we have a map $F\to F^f$ to a fibrant presheaf of simplicial sets $F^f$ that is a weak equivalence.
 Note that $F\to F^f$  can be chosen to be functorial.
    
    For a presheaf of simplicial sets $X$, we define the cohomology of $X$ with values in $F$ 
$$H^{-m}(X,F):=[S^mX,F]=\pi(S^mX,F^f), \quad m\geq 0.
$$
Here $S^mX$ denotes the m'th suspension of the  presheaf $X$. The bracket $[,]$ denotes maps in the homotopy category of presheaves of simplicial sets
and $\pi(,)$ stands for the set of pointed homotopy classes of maps. Similarly, we define  the mod $p^n$ cohomology of $X$ with values in $F$ 
$$H^{-m}(X,F; {\mathbf Z}/p^n):=[P^mX,F]=\pi(P^mX,F^f), \quad m\geq 2.
$$
Here $P^mX=P^m\wedge X$, where $P^m$ denotes the constant presheaf of $m$-dimensional mod $p^n$ Moore spaces.

 We have several important spectral sequences.  By filtering $X$ by its skeletons one constructs  spectral sequences of Bousfield-Kan type \cite[B.1.7]{HW}.
\begin{align}
\label{spectr}
E^{st}_1 & =H^{-t}(X_s,F) \Rightarrow
H^{s-t}(X,F), \quad t-s\geq 1,\\
E^{st}_1 & =H^{-t}(X_s,F;{\mathbf Z}/m) \Rightarrow
H^{s-t}(X,F;{\mathbf Z}/m), \quad t-s\geq 3.\notag
\end{align}
By the same method we get the other hypercohomology spectral sequences, namely, the weight spectral sequences \cite[5.13,5.48]{T}.
\begin{align}
\label{spectr1}
E^{st}_2 & =H^s(n\mapsto \pi_t(F(X_n))) \Rightarrow
H^{s-t}(X,F), \quad t-s\geq 1,\\
E^{st}_2 & =H^s(n\mapsto \pi_t(F(X_n),{\mathbf Z}/m)) \Rightarrow
H^{s-t}(X,F;{\mathbf Z}/m),  \quad t-s\geq 3.\notag
\end{align}
Finally we have the Brown spectral sequences induced by taking the Postnikov tower of $F$ \cite[Prop. 2]{GS}
\begin{align}
\label{spectr2}
E^{st}_2 & =H^{s}(X,\tilde{\pi}_t(F)) \Rightarrow
H^{s-t}(X,F), \quad t-s\geq 1,\\
E^{st}_2 & =H^{s}(X,\tilde{\pi}_t(F,{\mathbf Z}/n)) \Rightarrow
H^{s-t}(X,F;{\mathbf Z}/n), \quad t-s\geq 3.\notag
\end{align}
In these spectral sequences the r'th differential is $d_r: E^{s,t}_r\to E_r^{s+r,t+r-1}$. 
The spectral sequences converge strongly for $X$ degenerate above certain simplicial degree. 

   Denote by $K$ the presheaf ${\mathbf Z}\times {\mathbf Z}_{\infty}BGL$, where $BGL(U)=\injlim_nBGL_n(U)$. Then, for a simplicial scheme $X$,  we have \cite[3.2.3]{GS}
\begin{align*}
K_m(X)=H^{-m}(X,K),\quad m\geq 0;\qquad K_m(X,{\mathbf Z}/p^n)=H^{-m}(X,K;{\mathbf Z}/p^n), \quad m\geq 2,
\end{align*}
where we wrote $X$ for the scheme $X$ as well as for the presheaf of simplicial sets on $C$ represented by $X$. 
This presentation of K-theory groups as generalized cohomology of presheaves of the K-theory spaces allowed Gillet-Soul\'e \cite[4.2]{GS} to mimic the classical definition and to define maps $$
\lambda^k:\quad K_m(X)\to K_m(X), \quad k\geq 1,m\geq 0,$$
that turn $K_m(X)$ into an $H^0(S^0,{\mathbf Z})$-$\lambda$-algebra for any presheaf $X$ 
that is $K$-coherent \cite[3.1]{GS}, i.e., for which
$$\injlim_NH^{-m}(X,K^N)\stackrel{\sim}{\to}H^{-m}(X,K),
$$
where $K^N={\mathbf Z}\times {\mathbf Z}_{\infty}BGL_N$. Note that $H^0(S^0,{\mathbf Z})$ is a $\lambda$-ring. 
Similarly, for any $K$-coherent presheaf $X$, we can define  maps $$\lambda^k:\quad K_m(X,{\mathbf Z}/p^n)\to K_m(X,{\mathbf Z}/p^n), \quad k\geq 1,m\geq 2,
$$
that make $K_m(X,{\mathbf Z}/p^n)$, $m\geq 2$, into an $H^0(S^0,{\mathbf Z})$-$\lambda$-algebra.
\subsubsection{Examples}
Presheaves whose components are representable by  schemes  of the site $C$ (except for one copy of $*$ in each degree) we will call constructed from schemes. If they are degenerate above a finite simplicial degree and are built from regular schemes they are $K$-coherent \cite[Lemma 2.1]{Jeu}. Loosely, we will call such presheaves
(pointed regular)  finite simplicial schemes. 
\begin{example}
A simplicial scheme gives rise to a presheaf built from schemes. An $m$-truncated simplicial scheme $X$ gives rise to a finite simplicial scheme $\skl_mX$ that is degenerate above degree $m$. If $X$ is a regular schemes and $i:D\hookrightarrow X$ a divisor  that has normal crossings then the simplicial scheme $\wt{D}_{{\scriptscriptstyle\bullet}}=\cosk_0^D(D^{(1)})$ is not degenerate above any simplicial degree if $D$ has more than one irreducible component. To remedy this note that
$$\wt{D}_{n}=\coprod_{I\in\{1,\ldots,m\}^n}\bigcap_{i\in I} D_i,\quad D=\cup_{i=1}^{i=m}D_i.
$$
And  consider the simplicial scheme $$
\wt{D}^{\prime}_{{\scriptscriptstyle\bullet}}:\quad n \mapsto \coprod_{I\in \Delta(m)_n}\bigcap_{i\in I}D_i
$$
with the natural face and degeneracy maps. Here $\Delta(m)$ is the simplicial set
$$\Delta(m)_k=\{(i_0,\ldots,i_k)|1\leq i_0\leq {\scriptscriptstyle\bullet}\leq i_k\leq m\}
$$
The simplicial scheme $\wt{D}^{\prime}_{{\scriptscriptstyle\bullet}}$ is degenerate above degree $m-1$ and the natural inclusion of simplicial schemes $$
\wt{D}^{\prime}_{{\scriptscriptstyle\bullet}}\to \wt{D}_{{\scriptscriptstyle\bullet}}$$
 is a weak equivalence \cite[Lemma 7.1]{HW}.

     In particular, this can be applied to $K$-theory with compact support. Set $$C(X,D):=\cofiber(\wt{D}_{{\scriptscriptstyle\bullet}}\stackrel{i_*}{\to}X).$$
Define
$$K^c_m(X,D)=H^{-m}(C(X,D),K),\quad m\geq 0;\qquad K^c_m(X,D,{\mathbf Z}/p^n)=H^{-m}(C(X,D),K;{\mathbf Z}/p^n), \quad m\geq 2.
$$
Replacing $C(X,D)$ with the weakly equivalent $C^{\prime}(X,D)=
\cofiber(\wt{D}^{\prime}_{{\scriptscriptstyle\bullet}}\stackrel{i_*}{\to}X)$
that is degenerate above degree $m$ we get $\lambda$-operations on $K$-theory in the case all the irreducible components  of $D$ are regular. 
\end{example}
\begin{example}
\label{relative}
For any map $f: Y\to X$ of noetherian finite dimensional schemes the cone $C(X,Y)$ is degenerate above finite simplicial degree. 
The homotopy cofibre sequence 
$$Y\stackrel{f}{\to}X\to C(X,Y)
$$
yields the long exact sequence
$$\to H^{-m}(C(X,Y),K;{\mathbf Z}/p^n)\stackrel{}{\to}H^{-m}(X,K;{\mathbf Z}/p^n)\stackrel{f^*}{\to}H^{-m}(Y,K;{\mathbf Z}/p^n)\to H^{-m+1}(C(X,Y),K;{\mathbf Z}/p^n)\to
$$
Here $m\geq 3$. 
In the case $X$ and $Y$ are regular, this sequence is compatible with $\lambda$-operations. 
\end{example}
\subsubsection{$\gamma$-filtrations}
  The construction of the Loday product $$
{\mathbf Z}_{\infty}BGL_N(U)\wedge {\mathbf Z}_{\infty}BGL_N(U) \to {\mathbf Z}_{\infty}BGL_N(U) 
  $$
  is functorial in $U$. It induces products 
$$
  [S^mX,K]\times [S^nX,K]\to [S^{n+m}X,K],\quad 
  [P^mX,K]\times [P^nX,K]\to [P^{n+m}X,K]
$$
  for all $K$-coherent presheaves $X$. 

 For any $K$-coherent presheaf  $X$, we have the following $\gamma$-filtrations compatible with products:
\begin{align*}
F^k_{\gamma}K_0(X) & =
\begin{cases}
K_0(X)  & \text{if $k\leq 0$},\\
\langle 
\gamma_{i_1}(x_1) \cdots\gamma_{i_n}(x_n)|\ve(x_1)=
\ldots =\ve(x_n)=0,i_1+\cdots +i_n\geq k \rangle & \text{if $k> 0$},
\end{cases}\\
F^k_{\gamma}K_q(X,  {\mathbf Z}/p^n)  &  =\langle 
\gamma_{i_1}(x_1)\cup \cdots\cup\gamma_{i_n}(x_n)|
 x_i\in K_{q_i}(X,{\mathbf Z}/p^n), q_i\geq 2, \\
& \qquad \qquad i_1+\cdots +i_n\geq k\rangle,\\
\sff^k_{\gamma}K_q(X,  {\mathbf Z}/p^n)   & =\langle 
 a \gamma_{i_1}(x_1)\cup \cdots\cup\gamma_{i_n}(x_n)|
a\in F^{i_0}_{\gamma}K_0(X) ,x_i\in K_{q_i}(X,{\mathbf Z}/p^n), q_i\geq 2, \\
& \qquad \qquad i_0+ i_1+\cdots +i_n\geq k\rangle,
\end{align*}
 where $p^n>2$ and $\ve$ is the augmentation  
$\ve: K_0(X)\to H^0(X,{\mathbf Z})$ obtained by projecting ${\mathbf Z}\times {\mathbf Z}_{\infty}BGL$ to ${\mathbf Z}$. 
\begin{lemma}
\label{length}
If $X$ is a regular finite simplicial scheme over a field  such that $X\simeq \skl_mX$ then $$
F^{j+d+m+1}_{\gamma}K_j(X,{\mathbf Z}/p^n)=0, \quad j\geq 2,
$$
where $d$ is the maximum of dimensions of the schemes appearing in $X$.
\end{lemma}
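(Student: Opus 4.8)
The plan is to bound the nilpotence length of the $\gamma$-filtration by combining two ingredients: a vanishing of $\gamma$-operations in high filtration degree on each scheme $X_i$ (the "unstable" bound for a single regular scheme), and the degeneracy $X\simeq\skl_mX$ together with the Bousfield–Kan spectral sequence (\ref{spectr}), which lets us trade filtration degree on $X$ for filtration degree on the $X_i$ plus the simplicial degree $s\le m$. First I would recall the classical fact (Kratzer, or Soulé) that for a quasi-compact scheme $Y$ of Krull dimension $\le d$ with an ample family of line bundles, one has $F^{j+d+1}_\gamma K_j(Y,\Z/p^n)=0$ for $j\ge 1$; more precisely the $\gamma$-filtration on $\widetilde{K}_0$ of such a $Y$ vanishes above degree $d$, and a class in $K_j$, $j\ge 1$, raises $\gamma$-weight by at least $1$ per factor, while a product of more than $d$ positive-weight $K_0$-classes dies — so a monomial $\gamma_{i_1}(x_1)\cdots\gamma_{i_n}(x_n)$ with $\sum i_\ell\ge j+d+1$ and each $x_\ell$ of positive topological degree must vanish after accounting for the at most $d+1$ slots available. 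This is exactly the statement applied with $X$ replaced by a single $X_i$, using $\dim X_i\le d$.

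Next I would feed this into the skeletal spectral sequence. Since $X\simeq\skl_mX$, the spectral sequence (\ref{spectr}) with $F=K$ and mod $p^n$ coefficients, $E_1^{st}=K_{t-s}(X_s,\Z/p^n)\Rightarrow K_{t-s}(X,\Z/p^n)$, is concentrated in the columns $0\le s\le m$ and converges strongly. The multiplicative structure on this spectral sequence is compatible with the Loday products and hence with the $\gamma$-operations: $F^k_\gamma K_j(X,\Z/p^n)$ is built, via the filtration by the $E_\infty$-terms, out of pieces $F^{k-s}_\gamma K_{j}$-type contributions coming from the $s$-th column, i.e. from $K_{j+s}(X_s,\Z/p^n)$ in the appropriate spot. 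Concretely, a class in $F^k_\gamma K_j(X,\Z/p^n)$ that survives to the associated graded in filtration $s$ maps to a class of $\gamma$-weight $\ge k-s$ in $K_{\text{(degree)}}(X_s,\Z/p^n)$, where the relevant $K$-degree on $X_s$ is $j+s$ (because the column $s$ contributes to $K_{t-s}(X)$ with $t-s=j$, so $t=j+s$). By the single-scheme bound, that graded piece is zero once $k-s\ge (j+s)+d+1$, and since $s$ ranges only over $0,\dots,m$, it suffices to have $k\ge j+d+1+2s$ for all $s\le m$, which is implied by $k\ge j+d+2m+1$. Hence $F^{j+d+m+1}_\gamma$ — wait: I would double-check the arithmetic here; the honest bound coming out this way is $k\ge j+d+2m+1$ with the naive estimate, and to reach the sharper $j+d+m+1$ stated one must note that the factor $x_\ell$ detected in column $s$ already accounts for $s$ units of the suspension, so the relevant $\gamma$-weight needed on $X_s$ is $k$, not $k-s$ in a way that gives back an $s$; carefully tracking which suspensions carry $\gamma$-weight yields $k\ge (j+s)+d+1$ directly with one index shift, i.e. $k=j+d+m+1$ kills everything with $s\le m$.

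The main obstacle is precisely this bookkeeping: making rigorous the assertion that the $\gamma$-filtration on $K_*(X,\Z/p^n)$ is compatible with the skeletal (or equivalently the weight) filtration in the strong sense that a monomial $\gamma_{i_1}(x_1)\cup\cdots\cup\gamma_{i_n}(x_n)$ of total weight $\ge k$ lands, in filtration step $s$ of the spectral sequence, in the image of $\gamma$-weight-$\ge(k-s)$ classes on $X_s$. One clean way to do this, following \cite[B.2]{HW} and \cite[3.2]{GS}, is to work directly with the $K$-coherence hypothesis: replace $K$ by $K^N={\mathbf Z}\times{\mathbf Z}_\infty BGL_N$, observe that $\gamma_i$ on a class factoring through $BGL_N$ has a geometric model as a map of spaces, and that the skeletal filtration is induced by the maps $\skl_{s}X\to\skl_{s+1}X$; a class in $F^k_\gamma$ restricted to $\skl_sX$ and then to the cofibre $X_s$ (the $s$-cells) is a sum of products of restrictions, and a product with more than the allowed number of positive-weight factors vanishes on the regular scheme $X_s$ by the dimension bound. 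I would also need to handle the mixed filtration $\sff^k_\gamma$ involving $F^{i_0}_\gamma K_0(X)$ factors, but since $F^{d+m+1}_\gamma K_0(X)=0$ by the same spectral-sequence argument with $j=0$ (here the constraint $t-s\ge1$ is not an issue for the $E_1$-page computation of the $K_0$-filtration because $\widetilde K_0$-filtration vanishing is a column-wise statement), the $K_0$-factors only help, and the stated bound for $j\ge2$ follows. Finally I would remark that the hypothesis $j\ge 2$ is exactly what is needed for mod $p^n$ $K$-theory of simplicial schemes to be defined and for the spectral sequence (\ref{spectr}) to converge in the stated range $t-s\ge 3$.
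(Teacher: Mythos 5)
Your proposal has a genuine gap at exactly the point you flag as ``the main obstacle,'' and the paper's proof is designed to avoid that obstacle rather than to overcome it. The claim that a monomial $\gamma_{i_1}(x_1)\cup\cdots\cup\gamma_{i_n}(x_n)$ of total weight $\ge k$ lands, in skeletal filtration step $s$, in the image of $\gamma$-weight $\ge k-s$ classes on $X_s$ is not established by anything you write, and it is genuinely problematic: the $E_1$-column $s$ of (\ref{spectr}) is $K_{j+s}(X_s,\Z/p^n)$ with a degree shift, the product on $X$ decomposes across columns via shuffle/Alexander--Whitney maps mixing all partitions $s_1+\cdots+s_n=s$, and $\gamma_i$ is not additive, so it does not pass to the associated graded of the skeletal filtration in any straightforward way. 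Your own honest arithmetic with this scheme yields only $F^{j+d+2m+1}_{\gamma}=0$, and the paragraph attempting to recover $j+d+m+1$ (``carefully tracking which suspensions carry $\gamma$-weight'') does not identify an actual mechanism. The appeal to $K^N$ and geometric models of $\gamma_i$ in your final paragraph restates the problem without resolving the non-additivity/multi-column issue.

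The paper's route is different and cleaner: it never puts the $\gamma$-filtration into the skeletal spectral sequence for $K$-theory. Instead it shows (i) $F^{q}_{\gamma}K_j(X,\Z/p^n)\subset F^{q-j}K_j(X,\Z/p^n)$, where the right-hand side is the filtration from the Brown spectral sequence (\ref{spectr2}) attached to the Postnikov tower of $K$; this reduces, by $S$-duality and multiplicativity of that spectral sequence, to the single nullhomotopy statement that $\gamma^q_N:\Z_\infty BGL_N\to K\to K\langle q-1\rangle$ is trivial, which is \cite[5.1]{GS}. Then (ii) $F^{b}K_j(X,\Z/p^n)=0$ for $b>\cd X$, and $\cd X\le d+m$ is proved by the skeletal spectral sequence applied to \emph{abelian sheaf cohomology} $H^{-t}(X_s,\sff)$, where everything is additive and no $\gamma$-bookkeeping arises. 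Combining, $F^{j+d+m+1}_{\gamma}\subset F^{d+m+1}=0$. If you want to keep your column-by-column philosophy, you would essentially have to reprove the single-scheme bound you quote (which itself goes through the topological filtration, not through counting ``slots'' in $\widetilde K_0$) and then still face the compatibility problem; I recommend instead comparing $F_\gamma$ to the Postnikov filtration globally as the paper does.
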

\begin{proof} 
First, we claim that  $F^{q}_{\gamma}K_j(X,{\mathbf Z}/p^n)\subset F^{q-j}K_j(X,{\mathbf Z}/p^n),$
where the last filtration is obtained from the Brown spectral sequence (\ref{spectr2}). Since the Brown spectral sequence is multiplicative,
it suffices to show that for any $N\geq 1$ we have 
$$\gamma^{q}(H^{-j}(X,K^N;{\mathbf Z}/p^n))\subset F^{q-j}H^{-j}(X,K;{\mathbf Z}/p^n),\quad j\geq 2.
$$
Take $x\in H^{-j}(X,K^N;{\mathbf Z}/p^n)$ and consider the following diagram
$$
\xymatrix{
P^jX\ar[d]\ar[r]^{x} & K^N\ar[r]^{\gamma^q_N}& K\ar[r]\ar[d] & K<q-1>\ar[d]\\
S^jX\ar[rr]^{\gamma^q(x)}& & K/p^n\ar[r] & K/p^n<q-1>,
}
$$
where $K/p^n$ is the $0$'th  space of the spectrum $K/p^n$ and, for a presheaf of simplicial sets $F$, we wrote
$$\{F<n>\}:= \ldots\to  F<m+1> \to F<m>\to  F<m-1> \to \ldots
$$
for  a Postnikov tower of $F$ that we chose to be functorial. The above diagram commutes (in the homotopy category): the right square commutes by functoriality; the left 
square commutes by S-duality \cite[A.2]{Kah}
$$[P^jX,K]\simeq [S^jX,K/p^n].
$$

  Now recall that we have
  \begin{align*}
  F^{q-j}H^{-j}(X,K;{\mathbf Z}/p^n)=\ker(H^{-j}(X,K/p^n)\to H^{-j}(X,K/p^n<q-1>)).
  \end{align*}
Hence it suffices to show that the bottom map in the above diagram is nullhomotopic. Since we have an injection 
$$[S^jX,K/p^n]\hookrightarrow [P^jX,K/p^n]
$$
it suffices to prove that the composition 
$${\mathbf Z}_{\infty}BGL_N \stackrel{\gamma^q_N}{\longrightarrow} {\mathbf Z}\times {\mathbf Z}_{\infty}BGL_N
\to  K \to K<q-1>
$$
is nullhomotopic. But this was done in \cite[5.1]{GS}.

  It remains now to show that 
\begin{equation*}
F^{b}K_j(X,{\mathbf Z}/p^n)=0,\quad b > d+m.
\end{equation*} 
First we claim that, for any $b>\cd X$, $$
F^{b}K_j(X,{\mathbf Z}/p^n)=0.
$$ 
Indeed, we have
$$F^{b}H^{-j}(X,K;{\mathbf Z}/p^n)=\ker(H^{-j}(X,K/p^n)\to H^{-j}(X,K/p^n<b+j-1>)).
$$
and the following map of Brown spectral sequences
$$
\xymatrix{H^{-j}(X,K/p^n)\ar[r] & H^{-j}(X,K/p^n<b+j-1>)\\
H^s(X,\tilde{\pi}_t(K/p^n))\ar@{=>}[u]\ar[r] & H^s(X,\tilde{\pi}_t(K/p^n<b+j-1>))\ar@{=>}[u]
}
$$
It follows that 
 $F^{b}K_j(X,{\mathbf Z}/p^n)=0,\quad b >\cd X,$ where $\cd X$ is the cohomological dimension of $X$.
Next we  show 
 that $\cd X\leq d+m$. For that, for any sheaf $\sff$  of abelian groups over $X$, it suffices to look at   the  spectral sequence (\ref{spectr})
 $$E_1^{st}=H^{-t}(X_s,\sff) \Rightarrow H^{s-t}(X,F)
 $$
 and to remember that $X\simeq \skl_mX$ and $\cd X_s\leq d$. 
\end{proof}
We will also consider another  $\gamma$-filtration: 
$\widetilde{F}^i_{\gamma}=\langle\gamma^k(x)|k\geq i\rangle$, where $\langle\ldots\rangle$ denotes the subgroup generated by 
the given elements. These filtrations are related. For a scheme $X$,  by \cite[3.4]{So} and Lemma \ref{length},
we have 
\begin{align}
\label{gmma}
  M(d,i,2j)F^i_{\gamma} K_j(X,{\mathbf Z}/p^n)
\subset\widetilde{F}^i_{\gamma} K_j(X,{\mathbf Z}/p^n)
\subset F^i_{\gamma} K_j(X,{\mathbf Z}/p^n),\quad j\geq 2,
\end{align}
where $d$ is the dimension of $X$ and the integers
$ M(k,m,n)$ are defined by the following procedure \cite[3.4]{So}.
Let $l$ be a positive integer, and let $w_l$ be the greatest common divisor of
 the set of integers $k^{N}(k^l-1)$, as $k$ runs over the positive integers
 and $N$ is large enough with respect to $l$. Let
$M(k)$ be the product of the $w_l$'s for $2l<k$.
 Set $M(k,m,n)=\prod_{2m\leq 2l\leq n+2k+1}M(2l)$. An odd prime $p$ divides $M(d,i,j)$ if and only if 
$p<(j+2d+3)/2$, and divides $M(l)$ if and only if 
$p<(l/2)+1$.

  More generally we have the following lemma.
\begin{lemma}
\label{length1}
If $X$ is a regular finite simplicial scheme over a field  such that $X\simeq \skl_mX$ then 
$$ M(d+m,i,2j)F^i_{\gamma} K_j(X,{\mathbf Z}/p^n)
\subset\widetilde{F}^i_{\gamma} K_j(X,{\mathbf Z}/p^n)
\subset F^i_{\gamma} K_j(X,{\mathbf Z}/p^n),\quad j\geq 2,$$
where $d$ is the maximum of dimensions of the components of $X$.
\end{lemma}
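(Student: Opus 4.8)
The plan is to reproduce the argument of Soul\'e \cite[3.4]{So} verbatim, the only input that changes being that the nilpotence bound for the $\gamma$-filtration of a $d$-dimensional scheme is replaced by the bound of Lemma \ref{length}. The inclusion $\widetilde{F}^i_{\gamma}K_j(X,{\mathbf Z}/p^n)\subset F^i_{\gamma}K_j(X,{\mathbf Z}/p^n)$ is immediate from the definitions, since every generator $\gamma^k(x)$, $k\geq i$, of the left-hand side is a generator of the right-hand side (the case $n=1$). So the content is the first inclusion: for each generator $\gamma_{i_1}(x_1)\cup\cdots\cup\gamma_{i_n}(x_n)$ of $F^i_{\gamma}K_j(X,{\mathbf Z}/p^n)$ one must show that $M(d+m,i,2j)$ times it lies in the subgroup generated by the single-element classes $\gamma^k(y)$ with $k\geq i$.

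To prove this I would invoke the purely formal lemma on augmented $\lambda$-rings underlying \cite[3.4]{So}: in an augmented $\lambda$-ring $R$ whose $\gamma$-filtration satisfies $\widetilde{F}^{L+1}_{\gamma}R=0$, one rewrites a product $\gamma_a(x)\gamma_b(y)$ of elements of the augmentation ideal, modulo $\widetilde{F}^{a+b+1}_{\gamma}R$, as a single $\gamma_{a+b}(y')$ plus terms of strictly higher $\gamma$-weight, the passage from $\gamma$-operations to Adams operations and back introducing on the associated graded only denominators dividing the integers $w_l$ of the statement; iterating downward along $\widetilde{F}^{\bullet}_{\gamma}R$ --- a process that terminates because $\widetilde{F}^{L+1}_{\gamma}R=0$ --- clears all rational coefficients at the cost of a product of finitely many $M(2l)$. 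Here $X$ is a regular finite simplicial scheme with $X\simeq\skl_m X$, hence is $K$-coherent, so $K_*(X,{\mathbf Z}/p^n)$ is an augmented $\lambda$-ring by \cite[4.2]{GS}; and by Lemma \ref{length} we have $F^{j+(d+m)+1}_{\gamma}K_j(X,{\mathbf Z}/p^n)=0$, whence also $\widetilde{F}^{j+(d+m)+1}_{\gamma}K_j(X,{\mathbf Z}/p^n)=0$. Thus the relevant nilpotence length in degree $j$ is $L=j+d+m$: the bound $j+d$ used in \cite{So} for a $d$-dimensional scheme (which is the case $m=0$ of Lemma \ref{length}) is replaced by $j+d+m$. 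Running Soul\'e's procedure with this $L$, i.e. substituting $d+m$ for $d$ throughout, produces exactly the constant $M(d+m,i,2j)=\prod_{2i\leq 2l\leq 2j+2(d+m)+1}M(2l)$ of the statement.

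The step I expect to be the main obstacle is the bookkeeping: checking that Soul\'e's count of which factors $M(2l)$ arise, and with which multiplicity, is genuinely unaffected by replacing $d$ by $d+m$ --- concretely, that expanding a generator of total $\gamma$-weight $\geq i$ of classes in degrees $q_s\geq 2$ with $\sum_s q_s=j$, using that $\gamma$-weight $>j+d+m$ forces vanishing, needs precisely the factors $M(2l)$ with $2i\leq 2l\leq 2j+2(d+m)+1$, i.e. the integer $M(d+m,i,2j)$. No new idea is required: once Lemma \ref{length} is substituted for the dimension bound, the computation is line-for-line that of \cite[3.4]{So}, so in the write-up it suffices to indicate this substitution and refer to \cite[3.4]{So} for the combinatorial details.
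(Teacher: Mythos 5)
Your proposal is correct and follows essentially the same route as the paper: the paper's proof likewise consists of observing that, once Lemma \ref{length} supplies the vanishing $F^{j+d+m+1}_{\gamma}K_j(X,{\mathbf Z}/p^n)=0$, Soul\'e's argument in \cite[3.4]{So} goes through verbatim with $d+m$ substituted for the cohomological dimension, and it refers the reader to \cite{So} for the combinatorial details. Your additional gloss on the mechanism (converting products of $\gamma$-operations into single ones modulo higher weight, at the cost of the factors $M(2l)$) is accurate and consistent with the constant $M(d+m,i,2j)$ as defined in the paper.
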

\begin{proof}
Since, having Lemma \ref{length}, the argument in \cite[3.4]{So} goes through almost verbatim (with $d+m$ in place of the cohomological  dimension), we refer
 the interested reader for details to \cite{So}.
\end{proof}

  For a bounded below complex of presheaves of abelian groups $\sff$ on the site $C$ equipped with Zariski toplogy and a simplicial presheaf $X$ on $C$, we set
$$
H^n(X,\sff):=H^{-n}(X,\sk(\sff)),\quad n\geq 0,
$$
where $\sk(\sff):=\sk(\tau_{\leq 0}\sff)$ is the Dold-Puppe functor. If $\sff$ is built from injectif sheaves then $\sk(\sff)$ is a fibrant presheaf of simplicial sets \cite[1.2.2]{GS} and, for  $X$ constructed from schemes, we have  $H^n(X,\sff)=H^{-n}\sk(\sff)(X)$. In particular, 
if $X$ is an object of the site $C$ and $\sff$ is a bounded below complex of sheaves of abelian groups on $C$, then
$$H^n(X,\sff)=H^n(X_{\Zar},\sff).
$$
\subsection{Log-K-theory}
In this section we collect basic facts about log-$K$-theory. 
\subsubsection{Definition of log-$K$-theory}
For a scheme $X$, 
let $G(X)$ be the Thomason-Throbaugh spectrum of  $G$-theory as defined in Thomason-Throbaugh \cite[3.3]{T2}. 
For a natural number $n\geq 2$, let $G/n(X)$ denote the corresponding spectrum mod $n$ \cite[9.3]{T2}. After the usual rigidification \cite[5.1.3]{GS1}, which we assume from now on, these spectra are strictly covariant for noetherian schemes and proper maps as well as for quasi-compact schemes and pseudo-coherent projective morphisms \cite[3.16.1, 3.16.3]{T2}.

   Following Thomason \cite[5.15]{T}, we define  functors from 
finite proper simplicial schemes to the category of prespectra
$$
X\mapsto {\mathbb G}(X):=\hocolim_iG(X_{i}),\quad
X\mapsto {\mathbb G}/{n}(X):=\hocolim_iG/{n}(X_{i}), 
$$
and set $$
G_i(X):=\pi_i({\mathbb G}(X)),\quad
G_i(X,{\mathbf Z}/n):=\pi_i({\mathbb G}/{n}(X)).
$$  
Similarly, we have $K$-theory prespectra and groups.

  Let $X$ be a scheme and $i:D\hookrightarrow X$ a simple normal crossing divisor. Define the following prespectra
\begin{align*}
G(X( D)) & :=\cofiber ({\mathbb G}(\wt{D}_{\kr})\stackrel{i_*}{\to }G(X)), \quad 
G/n(X( D)):=\cofiber ({\mathbb G}/n(\wt{D}_{\kr})\stackrel{i_*}{\to} G/n(X));\\
K(X( D)) & :=\cofiber ({\mathbb K}(\wt{D}_{\kr})\stackrel{i_*}{\to} K(X)), \quad 
K/n(X( D)):=\cofiber ({\mathbb K}/n(\wt{D}_{\kr})\stackrel{i_*}{\to} K/n(X))
\end{align*}
Taking homotopy groups we get the  log-$G$-theory and log-$K$-theory groups
\begin{align*}
G_i(X( D))& :=\pi_i(G(X( D))),\quad
G_i(X( D),{\mathbf Z}/n):=\pi_i(G/{n}(X( D)));\\
K_i(X( D))& :=\pi_i(K(X( D))),\quad
K_i(X( D),{\mathbf Z}/n):=\pi_i(K/{n}(X( D))).
\end{align*}
We have 
\begin{align*}
G_j(X(D)) & =\pi_j(\cofiber(\hocolim_i G(\wt{D}_i)_0\stackrel{i_*}{\to}
G(X)_0)), \quad j\geq 0; \\ 
G_j(X(D),{\mathbf Z}/n) & =\pi_j(\cofiber(\hocolim_i G(\wt{D}_i)_0\stackrel{i_*}{\to}
G(X)_0),{\mathbf Z}/n), \quad
j\geq 2.
\end{align*}
 Recall that, for a noetherian scheme $Y$, there exists  a 
homotopy
equivalence $G(Y)\simeq G_Q(Y)$, where $G_Q(Y)$ is the Quillen $G$-theory
spectrum of $Y$ (see \cite[3.13]{T2}). Hence 
$$G(Y)_0\simeq G_Q(Y)_0\simeq \Omega BQM(Y),
$$
where $\Omega BQM(Y)$ is the loop space of the classifying space of Quillen Q-construction applied to the category of coherent sheaves on $Y$. 
\subsubsection{Localization sequences}
   We have the following localization sequence.
\begin{lemma}
Let $Y$ be a closed subscheme of $X$ such that the scheme
$D_Y=Y\cap D$ is a simple normal crossing divisor on $Y$. Then there is a homotopy cofibre sequence
$$G(Y( D_Y))\to G(X( D))\to G(U( D_U)),
$$
where $U=X\setminus Y$.
In particular, we have the long exact sequence of homotopy groups
$$\to G_j(Y(D_Y))\to G_j(X( D))\to G_j(U( D_U)) \to G_{j-1}(Y(D_Y))\to
$$
Similarly for $G$-theory with mod-$n$ coefficients.
\end{lemma}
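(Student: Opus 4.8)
The plan is to deduce the lemma from the localization theorem for $G$-theory (Quillen, in the form of Thomason--Trobaugh \cite{T2}), applied one simplicial degree at a time, together with the fact that homotopy colimits of prespectra preserve homotopy cofibre sequences. The input is this: for a noetherian scheme $T$, a closed subscheme $Z\hookrightarrow T$ with open complement $V=T\setminus Z$, localization and d\'evissage for $G$-theory give a homotopy cofibre sequence $G(Z)\to G(T)\to G(V)$, natural with respect to proper morphisms of such triples; smashing with the mod-$n$ Moore prespectrum yields the analogous sequence mod $n$. Taking $T=X$, $Z=Y$, $V=U$ gives the cofibre sequence $G(Y)\to G(X)\to G(U)$.

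Next I would pass to the simplicial schemes. Since $\wt{D}_{\kr}=\cosk_0^D(D^{(1)})$ with $D^{(1)}=\coprod_iD_i$, and fibre products commute with fibre products, base change along $Y\hookrightarrow X$ identifies $\wt{D}_{\kr}\times_XY$ with $\wt{D_Y}_{\kr}$ and base change along $U\hookrightarrow X$ identifies $\wt{D}_{\kr}\times_XU$ with $\wt{D_U}_{\kr}$, compatibly with all face and degeneracy maps. In particular, for every $m$ the scheme $\wt{D_Y}_m$ is a closed subscheme of $\wt{D}_m$ whose open complement is $\wt{D_U}_m$; moreover the structure maps of $\wt{D}_{\kr}$, being compositions of closed immersions of multiple intersections of the $D_i$ (and isomorphisms for the degeneracies), are proper, so that the prespectra ${\mathbb G}(-)=\hocolim_iG(-_i)$ are formed using proper pushforward along those structure maps and the localization sequence of the first paragraph applies at each level $m$, naturally in the simplicial variable. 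Applying $\hocolim$ over $\Delta^{\mathrm{op}}$, which preserves homotopy cofibre sequences of prespectra, yields a homotopy cofibre sequence ${\mathbb G}(\wt{D_Y}_{\kr})\to{\mathbb G}(\wt{D}_{\kr})\to{\mathbb G}(\wt{D_U}_{\kr})$.

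Finally I would assemble the pieces. The pushforward maps $i_*$ give a commutative diagram
$$
\begin{CD}
{\mathbb G}(\wt{D_Y}_{\kr}) @>>> {\mathbb G}(\wt{D}_{\kr}) @>>> {\mathbb G}(\wt{D_U}_{\kr})\\
@Vi_*VV @Vi_*VV @Vi_*VV\\
G(Y) @>>> G(X) @>>> G(U)
\end{CD}
$$
in which the two rows are homotopy cofibre sequences and the three vertical cofibres are, by definition, $G(Y(D_Y))$, $G(X(D))$, and $G(U(D_U))$. Since the cofibre of a map of homotopy cofibre sequences of spectra is again a homotopy cofibre sequence (the $3\times 3$-lemma for spectra), it follows that $G(Y(D_Y))\to G(X(D))\to G(U(D_U))$ is a homotopy cofibre sequence; passing to homotopy groups gives the asserted long exact localization sequence, and smashing the whole diagram with the mod-$n$ Moore prespectrum gives the mod-$n$ statement.

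The one step that is not purely formal is the geometric bookkeeping of the second paragraph: one must check that the simplicial scheme attached to $D_Y$ (resp.\ $D_U$) really is the base change $\wt{D}_{\kr}\times_XY$ (resp.\ $\wt{D}_{\kr}\times_XU$), sitting levelwise as a closed subscheme of $\wt{D}_{\kr}$ with open complement $\wt{D}_{\kr}\times_XU$ compatibly with all simplicial operators, and that $G$-theory of simplicial schemes has been set up with the proper-pushforward functoriality that makes the localization sequence natural in this variable (the relevant structure maps of the \v{C}ech nerve are closed immersions, hence proper, which is exactly what is needed). Granting this, the rest --- exactness of $\hocolim$ on cofibre sequences and the $3\times 3$-lemma --- is standard.
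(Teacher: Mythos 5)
Your argument is essentially the paper's own proof: localization for $G$-theory applied levelwise to the \v{C}ech nerve (using that $Y\cap\wt{D}_r$ is closed in $\wt{D}_r$ with open complement $U\cap\wt{D}_r$), the fact that homotopy colimits preserve homotopy cofibre sequences, and then the observation that taking vertical homotopy cofibres of a map of cofibre sequences again yields a cofibre sequence. Your extra care about identifying $\wt{D_Y}_{\kr}$ with the base change $\wt{D}_{\kr}\times_XY$ and about the proper-pushforward functoriality of the levelwise localization sequences makes explicit what the paper leaves implicit, but the route is the same.
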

\begin{proof}
For any $r\geq 0$, $\wt{D}_{r}$ is the disjoint union of the schemes $D_{\sigma}$, where $\sigma$ runs over all maps $\sigma:\{1,\ldots, r-1\}\to
\{1,\ldots, n\}$ and $D_{\sigma}=D_{Im(\sigma)}$. For any $\sigma$ as above we have a 
homotopy cofibre sequence
$$
G(Y\cap D_{\sigma})\to G(D_{\sigma})\to G(U\cap D_{\sigma})
$$
Since homotopy colimits preserve homotopy cofibre sequences we get the following commutative diagram of homotopy cofibre sequences
$$
\begin{CD}
\hocolim_r G(Y\cap \wt{D}_{r})@>>> \hocolim_r G(\wt{D}_{r})@>>> \hocolim_r  G(U\cap \wt{D}_{r})\\
@VVV @VVV @VVV\\
G(Y) @>>> G(X) @>>> G(U)
\end{CD}
$$
Our lemma follows since taking homotopy cofibre preserves homotopy cofibre sequences. 
\end{proof}
The following lemma shows that log-$G$-theory equals $G$-theory of the open complement.
\begin{lemma}
\begin{enumerate}
\item The natural map $s: \wt{D}_{\kr}\to D$ induces  homotopy equivalences
$$
s_*:\quad {\mathbb G}(\wt{D}_{\kr})\stackrel{\sim}{\to}G(D),\quad s_*:\quad {\mathbb G}/n(\wt{D}_{\kr})\stackrel{\sim}{\to}G/n(D).
$$ 
 \item 
The natural maps
\begin{equation}
G(X( D))\stackrel{\sim}{\to}G(X\setminus D),\quad G/n(X( D))\stackrel{\sim}{\to}G/n(X\setminus D)
\end{equation}
are homotopy equivalences.
\item 
If all the irreducible components of $D$ are regular then the natural maps
$$
K(X( D))\stackrel{\sim}{\to}G(X( D)),\quad
K/n(X( D))\stackrel{\sim}{\to}G/n(X( D))
$$
are homotopy equivalences
\end{enumerate}
\end{lemma}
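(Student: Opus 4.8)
The plan is to deduce all three statements from the localization theorem for $G$-theory, treating the parts in order. For part (1), since $G$-theory is insensitive to nilpotents we may assume all schemes reduced and write $D=D_1\cup\dots\cup D_n$ with the $D_i$ its reduced irreducible components, so that $\wt{D}_{\kr}$ is the \v{C}ech nerve of the closed cover $\coprod_i D_i\to D$; its face maps are finite, so ${\mathbb G}(\wt{D}_{\kr})=\hocolim_p G(\wt{D}_p)$ is formed with proper pushforwards and $s_*$ is the map induced by the augmentation. The crucial input is the Mayer--Vietoris square in $G$-theory for a two-term closed cover $D=A\cup B$: by localization the cofibres of both horizontal maps in $$\begin{CD} G(D) @>>> G(A)\\ @VVV @VVV\\ G(B) @>>> G(A\cap B) \end{CD}$$ are canonically identified with $G\bigl((A\cup B)\setminus B\bigr)=G\bigl(A\setminus(A\cap B)\bigr)$ (these two open subschemes coincide for reduced $D$), so the square is homotopy cartesian. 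I would then induct on $n$: for $n=1$ the nerve is constant; for $n>1$, set $D'=D_1\cup\dots\cup D_{n-1}$ and $Z=D'\cap D_n$ (covered by the at most $n-1$ schemes $D_i\cap D_n$); a standard manipulation with \v{C}ech nerves — or, equivalently, the realization spectral sequence $E^1_{p,q}=G_q(\wt{D}_p)\Rightarrow\pi_{p+q}{\mathbb G}(\wt{D}_{\kr})$, made finite by passing to the nondegenerate model $\wt{D}^{\prime}_{\kr}$ of the Examples via \cite[Lemma 7.1]{HW} and combined with the inductive hypothesis — produces a homotopy cartesian square $$\begin{CD} {\mathbb G}(\wt{D}_{\kr}) @>>> G(D_n)\\ @VVV @VVV\\ {\mathbb G}(\wt{D'}_{\kr}) @>>> {\mathbb G}(\wt{Z}_{\kr}) \end{CD}$$ which maps, via $s_*$, to the Mayer--Vietoris square for $D=D'\cup D_n$. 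Since $s_*$ is an equivalence on the three corners other than the pullback corner (namely ${\mathbb G}(\wt{D'}_{\kr})\stackrel{\sim}{\to}G(D')$ and ${\mathbb G}(\wt{Z}_{\kr})\stackrel{\sim}{\to}G(Z)$ by induction, and $G(D_n)=G(D_n)$), it is an equivalence on ${\mathbb G}(\wt{D}_{\kr})$; the argument is identical with ${\mathbf Z}/n$-coefficients.

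For part (2), by definition $G(X(D))=\cofiber\bigl({\mathbb G}(\wt{D}_{\kr})\lomapr{i_*}G(X)\bigr)$, which by part (1) is $\cofiber\bigl(G(D)\lomapr{i_*}G(X)\bigr)$. The localization theorem gives a homotopy fibration $G(D)\lomapr{i_*}G(X)\to G(X\setminus D)$, which is also a homotopy cofibration, so the natural map $G(X(D))\stackrel{\sim}{\to}G(X\setminus D)$ is an equivalence; the same holds with ${\mathbf Z}/n$-coefficients.

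For part (3), the Cartan functor from vector bundles into coherent sheaves induces a map of homotopy cofibre sequences $$\begin{CD} {\mathbb K}(\wt{D}_{\kr}) @>i_*>> K(X) @>>> K(X(D))\\ @VVV @VVV @VVV\\ {\mathbb G}(\wt{D}_{\kr}) @>i_*>> G(X) @>>> G(X(D)) \end{CD}$$ Since $X$ is regular, $K(X)\to G(X)$ is an equivalence. Since $D$ has simple normal crossings and all its irreducible components are regular, every scheme-theoretic intersection $D_{i_0}\cap\dots\cap D_{i_k}$ is regular, so each level $\wt{D}_p$ of the \v{C}ech nerve is a finite disjoint union of regular schemes; hence $K(\wt{D}_p)\to G(\wt{D}_p)$ is an equivalence for every $p$, and since $\hocolim$ preserves equivalences, ${\mathbb K}(\wt{D}_{\kr})\to{\mathbb G}(\wt{D}_{\kr})$ is an equivalence. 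The two outer vertical maps being equivalences, so is the remaining one $K(X(D))\stackrel{\sim}{\to}G(X(D))$; and likewise with ${\mathbf Z}/n$-coefficients.

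The main obstacle is part (1): one must know that $G$-theory sends the \v{C}ech nerve of a closed cover to a resolution of the $G$-theory of the base — a form of closed (cdh) descent for $G$-theory. Once this is granted, parts (2) and (3) are formal consequences of the localization theorem, the five lemma for homotopy cofibre sequences, and the comparison $K\simeq G$ for regular schemes.
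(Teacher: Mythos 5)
Your parts (2) and (3) are correct and coincide with the paper's arguments: (2) combines part (1) with the localization cofibre sequence $G(D)\to G(X)\to G(X\setminus D)$, and (3) is the five lemma applied to the Cartan map between the cofibre sequences defining $K(X(D))$ and $G(X(D))$, using that every level $\wt{D}_p$ is a disjoint union of regular schemes.

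The gap is in part (1). First, a structural point: the squares you draw do not exist as drawn, because $G$-theory has proper pushforwards and flat pullbacks but no restriction maps to closed subschemes; there is no natural map $G(D)\to G(A)$, nor ${\mathbb G}(\wt{D}_{\kr})\to G(D_n)$. The two-term Mayer--Vietoris survives with the arrows reversed (the square of pushforwards $G(A\cap B)\to G(A)$, $G(B)\to G(A\cup B)$ is homotopy cocartesian, both horizontal cofibres being $G\bigl(B\setminus (A\cap B)\bigr)$ by localization), but your inductive step does not: the assertion that ${\mathbb G}(\wt{D}_{\kr})$ is the homotopy pushout of ${\mathbb G}(\wt{D'}_{\kr})\leftarrow{\mathbb G}(\wt{Z}_{\kr})\to G(D_n)$ is essentially the content of the lemma and is not a ``standard manipulation.'' The concrete obstruction is that the simplices of $\wt{D}_{\kr}$ whose index tuple contains $n$ do not form a simplicial subobject (a face map may delete the last occurrence of $n$), so the nerve does not split compatibly with the simplicial structure in the way your square requires; and the realization spectral sequence only restates the claim at the level of $E_2$. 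The missing idea, which is how the paper argues, is to apply the localization cofibre sequence \emph{levelwise} to the nerve with respect to the closed subscheme $Y=D_n$ of $D$ and its open complement: both $\wt{D}_p\mapsto \wt{D}_p\cap Y$ (closed, pushforward) and $\wt{D}_p\mapsto \wt{D}_p\setminus Y$ (open, restriction) are simplicial, so one gets a cofibre sequence $\hocolim_p G(\wt{D}_p\cap Y)\to {\mathbb G}(\wt{D}_{\kr})\to \hocolim_p G(\wt{D}_p\setminus Y)$. The first term is the nerve of a closed cover of $Y$ having $Y$ itself as a member, hence admits an extra degeneracy and computes $G(Y)$; the last term is the nerve of the divisor $D\setminus Y$ with at most $n-1$ components, handled by induction; comparing with the localization sequence $G(Y)\to G(D)\to G(D\setminus Y)$ closes the induction. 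Without this (or an honest proof of your pushout claim), part (1) -- and with it the whole lemma -- is not established.
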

\begin{proof}
For the first property we will argue by induction on the number $m$ of irreducible components of $D$; the case of $m=1$ being clear. Write $D=\cup_{i=1}^{i=m}D_i, Y=D_1,  U=X\setminus Y$, where each $D_i$ is an irreducible component of $D$. We have  the following diagram of maps of homotopy cofibre sequences
$$
\begin{CD}
\hocolim_r G(Y\cap \wt{D}_{r})@>>> \hocolim_r G(\wt{D}_{r})@>>> \hocolim_r  G(U\cap \wt{D}_{r})\\
@AA\wr A @AA\wr A @AA\wr A\\
\hocolim_r G(Y) @>>> {\mathbb G}(\wt{D}_{{\scriptscriptstyle\bullet}}) @>>> {\mathbb G}(\wt{D}_{U,{\scriptscriptstyle\bullet}})\\
@V s_*V \wr V @Vs_* VV @Vs_* V\wr V\\
G(Y)@>>> G(D)@>>> G(D_U)
\end{CD}
$$
The top sequence is obtained as in the proof of the previous lemma. 
The right lower map is a homotopy equivalence by the inductive assumption. The left lower map is induced by an augmentation of a constant simplicial scheme, hence a homotopy equivalence. 

  For the second property consider the following commutative diagram of prespectra
$$
\xymatrix{
{\mathbb G}(\wt{D}_{\kr})\ar[r]^{i_*}\ar[d]^{s_*}_{\wr} & G(X)\ar[r]\ar@{=}[d] & G(X(D))\ar@{->}^{\wr}[d]\\
G(D)\ar[r]^{i_*} & G(X)\ar[r]& G(X\setminus D),
}
$$
where the bottom row is a homotopy cofiber sequence. Our property follows immediately. The third property follows
from the fact that for a regular schemes $Y$ the natural morphism $K(Y)\to G(Y)$ is a homotopy equivalence.
\end{proof}
We get the following localization sequence for log-$G$-theory.
\begin{corollary}
Let $Y$ be one of the irreducible components of the divisor $D$. We have the following homotopy cofiber sequence
$$
G(Y(D_Y))\stackrel{i_*}{\to}G(X(D^{\prime}))\to G(X(D)),
$$
where $D^{\prime}$ is the divisor $D$ minus the component $Y$ and $D_Y=D^{\prime}\cap Y$.
\end{corollary}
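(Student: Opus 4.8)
The plan is to obtain this directly from the localization sequence for log-$G$-theory (the first lemma of this subsection) together with the identification $G(X(D))\simeq G(X\setminus D)$ (part (2) of the comparison lemma above), applied with a smaller divisor. Relabelling, we may assume $Y=D_1$ and $D^{\prime}=D_2\cup\cdots\cup D_n$.

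First I would record the routine hypotheses checks: $D^{\prime}$, being the union of a subset of the irreducible components of the simple normal crossing divisor $D$, is itself a simple normal crossing divisor on $X$, and $D_Y=Y\cap D^{\prime}$ is a simple normal crossing divisor on $Y$ — locally $D$ is cut out by part of a regular system of parameters $x_1\cdots x_n=0$ with $Y=\{x_1=0\}$, so $Y$ is regular and $D_Y=\{x_1=0\}\cap\{x_2\cdots x_n=0\}$ has normal crossings on $Y$. Hence the localization lemma applies with the closed subscheme $Y\hookrightarrow X$ and the divisor $D^{\prime}$ in place of $D$, yielding a homotopy cofiber sequence
$$
G(Y(D_Y))\stackrel{i_*}{\to}G(X(D^{\prime}))\to G(U(D_U)),\qquad U=X\setminus Y,\quad D_U=D^{\prime}\cap U .
$$

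It then remains to identify the third term with $G(X(D))$ compatibly with the map out of $G(X(D^{\prime}))$. By part (2) of the comparison lemma, $G(U(D_U))\simeq G(U\setminus D_U)$ and $G(X(D))\simeq G(X\setminus D)$; since $U\setminus D_U=(X\setminus Y)\setminus D^{\prime}=X\setminus(Y\cup D^{\prime})=X\setminus D$, these spectra are canonically the same, and this is the equivalence $G(U(D_U))\simeq G(X(D))$ I would use. Substituting it into the cofiber sequence (and likewise in the mod-$n$ case, which is identical) gives the corollary. The one step to be careful about — and essentially the only non-formal point — is checking that under these identifications the map $G(X(D^{\prime}))\to G(U(D_U))$ coming from the localization sequence becomes the natural "forget the component $Y$" map $G(X(D^{\prime}))\to G(X(D))$ of the statement. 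This is a diagram chase with the comparison lemma: the equivalences $G(X(-))\simeq G(X\setminus(-))$ are natural with respect to shrinking the divisor (by the commutative square used to prove part (2)), and after transporting along them the localization map becomes the restriction $G(X\setminus D^{\prime})\to G(X\setminus D)$ along the open immersion $X\setminus D\hookrightarrow X\setminus D^{\prime}$, which is precisely what the forgetful map $G(X(D^{\prime}))\to G(X(D))$ becomes as well. All the real content is already packaged in Quillen localization (inside the localization lemma) and in the comparison lemma, so the expected friction is only this bookkeeping with the identifications.
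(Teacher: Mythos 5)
Your proof is correct and is essentially the paper's argument: both reduce the statement to the two preceding lemmas, the paper by replacing all three terms with $G$ of the open complements and invoking classical localization for $Y\setminus D_Y\hookrightarrow X\setminus D^{\prime}$, you by applying the log-localization lemma to $(X,D^{\prime},Y)$ and then identifying only the third term $G(U(D_U))\simeq G(X\setminus D)\simeq G(X(D))$. Your extra care about matching the localization map with the forgetful map $G(X(D^{\prime}))\to G(X(D))$ under these identifications is exactly the commutativity the paper's diagram asserts implicitly.
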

\begin{proof}
This follows immediately from the above lemma since we have the following map of homotopy cofiber sequences
$$
\xymatrix{G(Y(D_Y))\ar[d]^{\wr}\ar[r]^{i_*} & G(X(D^{\prime}))\ar[r]\ar[d]^{\wr} & G(X(D))\ar[d]^{\wr}\\
G(Y\setminus D_Y)\ar[r] & G(X\setminus D^{\prime})\ar[r] & G(X\setminus D).
}
$$
\end{proof}
\subsubsection{Pairings, projection formula, base change}
Recall \cite[5.1.3]{GS1} that the classical pairing of spectra \cite[3.15.3]{T2} 
$$
G(X)\wedge K(X)\to G(X)
$$
after the rigidification of the  $G$-theory  and $K$-theory  spectra that we have imposed makes the projection formula strict. That is,  for any closed immersion $f: Y\to X$
the pushforward map 
$f_*: G(Y)\to G(X)$ is a map of module spectra over the ring spectrum 
$K(X)$ in the strict sense, i.e., 
the following projection formula diagram of spectra is strictly commutative (not just commutative up to a homotopy)
$$
\xymatrix{
&   K(Y)\wedge G(Y)
\ar[r]^-{\wedge} & G(Y)\ar[dd]^{f_*}\\
K(X)\wedge G(Y)\ar[ur]^{f^*\wedge 1 }\ar[dr]^{1\wedge f_*}  & &\\
& K(X)\wedge G(X)\ar[r]^-{\wedge} & G(X)
}
$$
This allows us to state the following lemma.
\begin{lemma}
We have pairings of prespectra
$$
G(X( D)) \wedge K(X)\to G(X( D)),\quad 
G(X( D))\wedge K(X( D))\to G(X( D))
$$
that are compatible with the pairings
$$G(X\setminus D)\wedge K(X)\to G(X\setminus D),\quad G(X\setminus D)\wedge K(X\setminus D)\to G(X\setminus D).
$$
\end{lemma}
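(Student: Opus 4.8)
The plan is to construct both pairings intrinsically on the mapping cones defining $G(X(D))$ and $K(X(D))$, using the rigidified strict projection formula recalled above. Recall $G(X(D))=\cofiber({\mathbb G}(\wt{D}_{\kr})\stackrel{i_*}{\to}G(X))$ and $K(X(D))=\cofiber({\mathbb K}(\wt{D}_{\kr})\stackrel{i_*}{\to}K(X))$, with ${\mathbb G}(\wt{D}_{\kr})=\hocolim_r G(\wt{D}_r)$, ${\mathbb K}(\wt{D}_{\kr})=\hocolim_r K(\wt{D}_r)$, and the augmentations assembled from the pushforwards $i_{r*}$ along the $X$-morphisms $\wt{D}_r\to X$; here all $\wt{D}_r$ are regular, so $K(\wt{D}_r)\simeq G(\wt{D}_r)$, the Gysin pullbacks $i_r^*\colon G(X)\to G(\wt{D}_r)$ exist, and the strict projection formula applies on the nose.

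For the first pairing I would argue that, by the strict projection formula, each pushforward $i_{r*}\colon G(\wt{D}_r)\to G(X)$ --- as well as the face and degeneracy maps of $\wt{D}_{\kr}$, which are $X$-morphisms --- is a strict map of $K(X)$-module spectra (with $G(\wt{D}_r)$ a $K(X)$-module via $i_r^*\colon K(X)\to K(\wt{D}_r)$ and its $K(\wt{D}_r)$-module structure). Hence $r\mapsto G(\wt{D}_r)$ is a simplicial object of $K(X)$-modules, its realization ${\mathbb G}(\wt{D}_{\kr})$ is a $K(X)$-module prespectrum, $i_*\colon{\mathbb G}(\wt{D}_{\kr})\to G(X)$ is $K(X)$-linear, and so the cofiber $G(X(D))$ inherits a $K(X)$-module structure: this is the first pairing. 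For the second, I would first note that the strict projection formula with $K=G$ on the regular schemes $\wt{D}_r$, $X$ shows that $i_*\colon{\mathbb K}(\wt{D}_{\kr})\to K(X)$ realizes ${\mathbb K}(\wt{D}_{\kr})$ as a strict ``ideal'' in the ring prespectrum $K(X)$ (since $i_{r*}(x)\cdot y=i_{r*}(x\cdot i_r^*y)$), so that $K(X(D))$ is a ring prespectrum; and that the same projection formula makes the $K(X)$-action on $G(X(D))$ kill ${\mathbb K}(\wt{D}_{\kr})$, because the composite $G(X(D))\wedge{\mathbb K}(\wt{D}_{\kr})\to G(X(D))\wedge K(X)\to G(X(D))$ factors through the image of $i_*\colon{\mathbb G}(\wt{D}_{\kr})\to G(X)$, which vanishes in the cofiber $G(X(D))$. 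Since $G(X(D))\wedge(-)$ preserves cofiber sequences, the first pairing then descends along $K(X)\to K(X(D))$ to the second one. Alternatively, one may simply transport the evident module structure $G(X\setminus D)\wedge K(X\setminus D)\to G(X\setminus D)$ of the regular scheme $X\setminus D$ along the equivalences $G(X(D))\simeq G(X\setminus D)$, $K(X(D))\simeq K(X\setminus D)$ of the preceding lemma.

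Compatibility with the pairings on $X\setminus D$ should then be automatic: the preceding lemma identifies the two defining cofiber sequences with those computing $G(X\setminus D)$ and $K(X\setminus D)$ compatibly with all pushforwards and pullbacks, so the equivalences $G(X(D))\simeq G(X\setminus D)$, $K(X(D))\simeq K(X\setminus D)$ are equivalences of module/ring prespectra and carry the constructed pairings to $G(X\setminus D)\wedge K(X)\to G(X\setminus D)$ and $G(X\setminus D)\wedge K(X\setminus D)\to G(X\setminus D)$. The hard part will be the coherence bookkeeping in the second pairing: one must carry the ``ideal'' and ``annihilation'' assertions through the homotopy colimits defining ${\mathbb K}(\wt{D}_{\kr})$, ${\mathbb G}(\wt{D}_{\kr})$ and through the mapping cones while keeping everything strict --- that is, deduce from the rigidified (strict) projection formula that these simplicial prespectra and their realizations really do assemble into strict modules, algebras, and strict maps, so that the cone descends to a genuine module pairing rather than merely a map in the homotopy category. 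This is exactly where strictness of the rigidification, rather than commutativity up to homotopy, is needed.
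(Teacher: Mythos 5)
Your construction of the first pairing is essentially the paper's: the paper also pulls $K(X)$ back along the projections $\wt{D}_n\to X$, pairs levelwise using the rigidified (strict) projection formula so that the augmentation $i_*:{\mathbb G}(\wt{D}_{\kr})\to G(X)$ is strictly $K(X)$-linear, and then passes to the cofiber. Where you diverge is the second pairing, which the paper's proof in fact never constructs explicitly (it only writes down $G(X(D))\wedge K(X)\to G(X(D))$ and notes compatibility with $G(X\setminus D)\wedge K(X)\to G(X\setminus D)$). Both of your proposed routes for it are viable, with the caveats you already identify: the descent argument requires a coherently chosen nullhomotopy of $G(X(D))\wedge{\mathbb K}(\wt{D}_{\kr})\to G(X(D))$ (merely ``factoring through something that dies in the cofiber'' is not yet a nullhomotopy), while transporting the module structure along the equivalences $G(X(D))\simeq G(X\setminus D)$, $K(X(D))\simeq K(X\setminus D)$ of the preceding lemma produces the pairing only in the homotopy category rather than as a strict map of prespectra --- which is all the paper actually uses downstream (the later projection formula, Lemma \ref{Kproj}, is itself proved by passing to $X\setminus D$ and holds only up to homotopy). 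One small correction: for the first pairing you need only the pullbacks $i_r^*:K(X)\to K(\wt{D}_r)$, which exist by contravariance of $K$-theory; no Gysin pullback on $G$-theory is required.
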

\begin{proof}
First, using the above strict projection formula, define the pairing ${\mathbb G}(\wt{D}_{\kr})\wedge K(X)\to {\mathbb G}(\wt{D}_{\kr})$ 
by the formula 
\begin{align*}
{\mathbb G}(\wt{D}_{\kr})\wedge K(X) & =
(\hocolim_nG(\wt{D}_n))\wedge K(X)\to
\hocolim_n(G(\wt{D}_n)\wedge K(X))\\
& \stackrel{\wedge}{\to}
\hocolim_nG(\wt{D}_n)={\mathbb G}(\wt{D}_{\kr}),
\end{align*}
where the first map is induced by the natural projection $\wt{D}_n\to X$.

 Similarly, set the pairing $G(X( D)) \wedge K(X)\to G(X( D))$ equal to 
the map
\begin{align*}
G(X( D)) \wedge K(X) & =\cofiber({\mathbb G}(\wt{D}_{\kr})\stackrel{i_*}{\to} K(X))\wedge K(X)
\to \cofiber({\mathbb G}(\wt{D}_{\kr})\wedge K(X)
\stackrel{i_*}{\to} K(X)\wedge K(X))\\ &
\stackrel{i_*\wedge}{\to}\cofiber({\mathbb G}(\wt{D}_{\kr})\stackrel{i_*}{\to} K(X))=
G(X( D)).
\end{align*}
The compatibility with the pairing $G(X\setminus D)\wedge K(X)\to G(X\setminus D)$ follows easily from the projection formula.
\end{proof}
Let $f: Y\hookrightarrow X$ be a closed immersion of schemes with normal simple crossing divisors
$D_X$, $D_Y$, such that $D_Y=f^{-1}D_X$. Assume that all the irreducible components of $D_X$, $D_Y$ are regular. Then we have the following generalization of the projection formula.
\begin{lemma}
\label{Kproj}
The following diagram commutes up to canonically chosen homotopy.
$$
\xymatrix{
&   G(Y( D_Y))\wedge K(Y)
\ar[r]^-{\wedge} & G(Y( D_Y))\ar[dd]^{f_*}\\
G(X( D_X))\wedge K(Y)\ar[ur]^{f^*\wedge 1 }\ar[dr]^{1\wedge f_*}  & &\\
& G(X( D_X))\wedge K(X)\ar[r]^-{\wedge} & G(X( D_X))
}
$$
\end{lemma}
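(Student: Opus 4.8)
The plan is to deduce the lemma from the strict projection formula of Thomason--Trobaugh displayed above, applied simultaneously to the closed immersion $f\colon Y\to X$ and to the map it induces on the \v{C}ech simplicial schemes attached to the divisors, and then to pass to cofibers. Throughout one works with the rigidified prespectra, so that the relevant compatibilities hold strictly wherever possible.

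The first step is to observe that, because $D_Y=f^{-1}D_X$, the closed immersion $f$ induces a map of simplicial schemes $\wt{D}_{Y,{\scriptscriptstyle\bullet}}\to \wt{D}_{X,{\scriptscriptstyle\bullet}}$ over $f$ which in each simplicial degree is a coproduct of closed immersions between multi-intersections of the respective divisors' irreducible components; in particular the pushforward $f_*\colon G(Y(D_Y))\to G(X(D_X))$ appearing in the statement is, by definition, the cofiber of $f_*$ on $\mathbb{G}(\wt{D}_{Y,{\scriptscriptstyle\bullet}})$ and on $G(Y)$. Since $X$, $Y$ and all the strata occurring in $\wt{D}_{X,{\scriptscriptstyle\bullet}}$ and $\wt{D}_{Y,{\scriptscriptstyle\bullet}}$ are regular, the natural maps $K\to G$ are homotopy equivalences on all of them, the $K$-theory pushforwards $f_*$ and pullbacks $f^*$ in sight are defined, and the strict projection formula of \cite[3.15.3]{T2}, \cite[5.1.3]{GS1} applies levelwise to each closed immersion $\wt{D}_{Y,n}\hookrightarrow \wt{D}_{X,n}$.

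The second step is to apply $\hocolim_n$ to this levelwise strict projection formula. Using that $\hocolim_n$ commutes with $(-)\wedge K(Y)$, with $(-)\wedge K(X)$, and with the module structure maps, one obtains a strictly commutative projection-formula square with $\mathbb{G}(\wt{D}_{Y,{\scriptscriptstyle\bullet}})$ in the role of $G(Y)$, $\mathbb{G}(\wt{D}_{X,{\scriptscriptstyle\bullet}})$ in the role of $G(X)$, and the map $\wt{D}_{Y,{\scriptscriptstyle\bullet}}\to \wt{D}_{X,{\scriptscriptstyle\bullet}}$ in the role of $f$. Together with the strict projection formula for $f\colon Y\to X$ itself this gives a strictly commutative diagram of prespectra, namely a map of projection-formula squares along $i_*\colon \mathbb{G}(\wt{D}_{{\scriptscriptstyle\bullet}})\to G(\cdot)$. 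Taking cofibers of $i_*$, and recalling that the pairings $G(X(D_X))\wedge K(X)\to G(X(D_X))$ and $G(Y(D_Y))\wedge K(Y)\to G(Y(D_Y))$ were defined in the previous lemma by exactly this cofiber construction together with the canonical weak equivalences $\cofiber(A\wedge K\to B\wedge K)\stackrel{\sim}{\to}\cofiber(A\to B)\wedge K$, one recovers the square of the lemma, commuting up to the homotopy assembled from those canonical weak equivalences. The asserted compatibility with $G(X\setminus D)\wedge K(X)\to G(X\setminus D)$ then follows in the same way from the corresponding compatibility already established for these pairings in the previous lemma.

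The step requiring genuine care --- and which I expect to be the main obstacle --- is the canonicity of the homotopy, since the cofiber construction is not strictly functorial. Concretely, one must check that the weak equivalences $\cofiber(A\wedge K\to B\wedge K)\stackrel{\sim}{\to}\cofiber(A\to B)\wedge K$ are natural in the map of prespectra $A\to B$ and compatible with the $K(X)$- and $K(Y)$-module structures and with $f_*$ and $f^*$. Granting the strict Thomason--Trobaugh projection formula, this reduces to a bookkeeping exercise with explicit mapping cones of the same kind as in \cite[5.1.3]{GS1} and in the proof of the preceding lemma, which I would only sketch.
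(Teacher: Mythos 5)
Your route is genuinely different from the paper's, and it has a real gap at its starting point. The strict projection formula you invoke (\cite[3.15.3]{T2}, \cite[5.1.3]{GS1}) is the statement that $f_*\colon G(Y)\to G(X)$ is a map of $K(X)$-module spectra: in that square the pullback $f^*$ is applied to the \emph{$K$-theory} factor and the pushforward $f_*$ to the \emph{$G$-theory} factor. The square of the present lemma has the variances reversed: $f^*$ is applied to the log-$G$-theory spectrum $G(X(D_X))$ and $f_*$ to $K(Y)$. This is not the strict module-structure formula but the $K$-theory projection formula of \cite[3.17]{T2}, which is available here only because all schemes in sight are regular (so $K\simeq G$ and both variances exist) and which commutes only up to a canonical homotopy, not strictly. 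Once the levelwise input is merely homotopy-commutative, your plan of assembling it through $\hocolim$ and then through cofibers of $i_*$ becomes a genuine homotopy-coherence problem rather than the ``bookkeeping exercise'' you defer. It is compounded by a second issue you do not address: $f^*\colon G(X(D_X))\to G(Y(D_Y))$ is not induced levelwise by $G$-theoretic pullbacks ($G$-theory is not contravariant for closed immersions), so constructing it on the cofiber model requires the identifications $K\simeq G$ on every stratum together with Tor-independent base change $i_*f^*\simeq f^*i_*$ of \cite[3.18]{T2} for each square $\wt{D}_{Y,n}\to Y$, $\wt{D}_{X,n}\to X$ --- again only up to homotopy.

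The paper avoids all of this: it uses the equivalences $G(Y(D_Y))\stackrel{\sim}{\to}K(Y\setminus D_Y)$ and $G(X(D_X))\stackrel{\sim}{\to}K(X\setminus D_X)$ established in the preceding lemma (log-$G$-theory is $G$-theory of the open complement, hence $K$-theory by regularity), under which the whole square becomes the classical projection-formula square for the closed immersion $Y\setminus D_Y\hookrightarrow X\setminus D_X$, and that square commutes up to canonically chosen homotopy by \cite[3.17]{T2}. If you want to keep a cofiber-level argument, you would at minimum have to replace the strict formula by the levelwise use of \cite[3.17]{T2} and \cite[3.18]{T2} and then actually carry out the coherence argument; the reduction to the open complement is both shorter and the only place where $f^*$ on $G(X(D_X))$ is unambiguously defined in the first place.
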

\begin{proof}
 The easiest way to see this is to use the natural maps $G(Y( D_Y))\stackrel{\sim}{\to} K(Y\setminus D_Y)$ and $G(Y( D_Y))\stackrel{\sim}{\to }K(X\setminus D_X)$ to pass to the classical diagram that we know commutes up to canonically chosen homotopy \cite[3.17]{T2}.
$$
\xymatrix{
&   K(Y\setminus D_Y)\wedge K(Y)
\ar[r]^-{\wedge} & K(Y\setminus D_Y)\ar[dd]^{f_*}\\
K(X\setminus D_X)\wedge K(Y)\ar[ur]^{f^*\wedge 1 }\ar[dr]^{1\wedge f_*}  & &\\
& K(X\setminus D_X)\wedge K(X)\ar[r]^-{\wedge} & K(X\setminus D_X)
}
$$
\end{proof}
\begin{lemma}
\label{torindep}
Consider a pullback diagram of regular log-schemes
$$
\begin{CD}
Y@>i >> X\\
@VV j V @VV j^{\prime} V\\
Y^{\prime} @>i^{\prime} >> X^{\prime}
 \end{CD}
$$
where all the maps are regular closed immersions of codimension one and the 
 log-structures are induced by simple normal crossing divisors $D_{X^{\prime}}$, $D_{Y^{\prime}}$, $D_{X}$, $D_{Y}$ such that 
$$D_{Y^{\prime}}=(i^{\prime})^{-1}(D_{X^{\prime}}),\quad
 D_{Y}=j^{-1}D_{Y^{\prime}},\quad D_{X}=(j^{\prime})^{-1}D_{X^{\prime}}.
$$
Then there is a canonical homotopy between
$$
j^{{\prime}*}i^{\prime}_*\simeq i_*j^*:\quad K(Y^{\prime}( D_{Y^{\prime}}))\to K(X( D_Y)).
$$
\end{lemma}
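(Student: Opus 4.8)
The plan is to reduce the statement, via the homotopy equivalences of the preceding lemma, to the classical base-change homotopy for Thomason--Trobaugh $K$-theory of the open complements of the divisors. Since all the irreducible components of $D_{X'}$, $D_{Y'}$, $D_X$, $D_Y$ are regular, that lemma provides functorial homotopy equivalences $K(X'(D_{X'}))\simeq K(X'\setminus D_{X'})$, and similarly with $X'$ replaced by $X$, $Y'$, $Y$, obtained by composing $K(\,\cdot\,(D))\simeq G(\,\cdot\,(D))\simeq G(\,\cdot\,\setminus D)\simeq K(\,\cdot\,\setminus D)$ on the regular schemes. Functoriality of these equivalences makes them compatible with the pullbacks $j^*$, $j'^*$ and with the pushforwards $i_*$, $i'_*$ along the closed immersions $i$, $i'$; here one uses that a closed immersion of regular noetherian schemes is a regular immersion, hence perfect, so that $i_*$, $i'_*$ are defined on $K$-theory, exactly as in the proof of Lemma \ref{Kproj}. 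Note also that the log-structures play no role: log-$K$-theory is by definition the cofiber construction, and it coincides with the $K$-theory of the open complement.

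Next I would record that the square of open complements
$$
\begin{CD}
Y\setminus D_Y @>{i}>> X\setminus D_X\\
@V{j}VV @V{j'}VV\\
Y'\setminus D_{Y'} @>{i'}>> X'\setminus D_{X'}
\end{CD}
$$
is Cartesian and Tor-independent. It is Cartesian because the three hypotheses $D_{Y'}=(i')^{-1}D_{X'}$, $D_X=(j')^{-1}D_{X'}$, $D_Y=j^{-1}D_{Y'}$ force $D_Y=i^{-1}D_X=j^{-1}D_{Y'}$, so that $Y\setminus D_Y$ is simultaneously $i^{-1}(X\setminus D_X)$ and $j^{-1}(Y'\setminus D_{Y'})$, hence equal to the fibre product $(X\setminus D_X)\times_{X'\setminus D_{X'}}(Y'\setminus D_{Y'})$. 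It is Tor-independent because $j'$ is a codimension-one regular immersion of regular schemes, so Zariski-locally the ideal of $X\setminus D_X$ in $X'\setminus D_{X'}$ is generated by a single nonzerodivisor $f$; its restriction to $Y'\setminus D_{Y'}$ cuts out $Y\setminus D_Y$, which by hypothesis is again of codimension one, so by Cohen--Macaulayness of the regular scheme $Y'\setminus D_{Y'}$ this restriction is again a nonzerodivisor, giving $\Tor^{\so_{X'\setminus D_{X'}}}_q(\so_{X\setminus D_X},\so_{Y'\setminus D_{Y'}})=0$ for $q>0$. (This is just the standard criterion that a regular immersion stays regular under a base change meeting it in the expected codimension.)

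Finally I would invoke the classical base-change theorem of Thomason--Trobaugh \cite{T2}: for a Tor-independent Cartesian square of regular noetherian schemes in which $i'$ (equivalently $i$) is a closed immersion, there is a canonical homotopy $j'^*i'_*\simeq i_*j^*\colon K(Y'\setminus D_{Y'})\to K(X\setminus D_X)$ — one may equally argue in $G$-theory, where this base change is standard, and use $K\simeq G$ for regular schemes. Transporting this homotopy along the four functorial equivalences of the first step yields the asserted canonical homotopy $j'^*i'_*\simeq i_*j^*\colon K(Y'(D_{Y'}))\to K(X(D_X))$. I expect the only point genuinely requiring care to be the Tor-independence of the auxiliary square (the transversality computation of the second paragraph); once that and the routine compatibility of the equivalences $K(\,\cdot\,(D))\simeq K(\,\cdot\,\setminus D)$ with the four maps are in place, the lemma is a formal consequence of classical base change, in the same spirit as the proof of Lemma \ref{Kproj}.
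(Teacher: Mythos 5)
Your proposal is correct and follows essentially the same route as the paper: pass to the $K$-theory of the open complements via the equivalences $K(\,\cdot\,(D))\simeq K(\,\cdot\,\setminus D)$ and then invoke the classical Tor-independent base-change homotopy of Thomason--Trobaugh \cite[3.18]{T2}. The only difference is that you actually verify the Tor-independence of $i'$ and $j'$ (correctly, via the codimension-one regular-immersion argument), whereas the paper simply asserts it.
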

\begin{proof}
Again, pass to the maps
$$j^{{\prime}*}i^{\prime}_*\simeq i_*j^*:\quad K(Y^{\prime}\setminus D_{Y^{\prime}})\to K(X\setminus D_Y).
$$
That there exists a canonical homotopy between these maps is a classical result
\cite[3.18]{T2} that follows from Tor-independence of $i^{\prime}$ 
and $j^{\prime}$.
\end{proof}
\subsection{Operations on log-$K$-theory}
In this section we will study the behaviour of certain localization sequences in $K$-theory   under $\lambda$-operations. 
\subsubsection{Adams-Riemann-Roch without denominators}We start with the proof of the Adams-Riemann-Roch without denominators.
Let $\tau$ be a natural transformation of $\lambda$-rings such that $\tau(0)=0$.
Let $X$ be a regular  scheme  and $i: Y\hookrightarrow X$ a regular divisor. Assume that both schemes are defined over $\so_K$. Set $U=X\setminus Y$, $j:U\hookrightarrow X$, and 
$$K_m^Y(X,{\mathbf Z}/p^n):=H^{-m}(C(X,U),K;{\mathbf Z}/p^n),\quad m\geq 2.
$$
For $m\geq 3$, by Example \ref{relative}, the long exact sequence 
$$\to K_{m}^Y(X,{\mathbf Z}/p^n)\stackrel{}{\to}K_m(X,{\mathbf Z}/p^n)\stackrel{}{\to}K_m(U,{\mathbf Z}/p^n)\stackrel{}{\to}K_{m-1}^Y(X,{\mathbf Z}/p^n)\to 
$$
is compatible with the action of $\tau$.

  We have a natural isomorphism  $i_*: K_m(Y,{\mathbf Z}/p^n)\stackrel{\sim }{\to }K_m^Y(X,{\mathbf Z}/p^n)$, $m\geq 2$. We need to understand how it behaves with respect to operations.  Recall that the group $K_m(Y,{\mathbf Z}/p^n)$ is a 
$K_0(Y)$-$\lambda$-algebra, i.e., the sum $K_0(Y)\oplus K_m(Y,{\mathbf Z}/p^n)$ is a $\lambda$-ring. As in any $\lambda$-ring, for every element $x\in K_m(Y,{\mathbf Z}/p^n)$, there exists an element 
$\tau(Y/X,x)\in K_m(Y,{\mathbf Z}/p^n)$ (cf., \cite[1.1.1]{Jou})
which is a universal polynomial with integral coefficients in $\lambda(\sn_{Y/X}^{\vee})$ and $\lambda (x)$, where $\sn_{Y/X}^{\vee}$ is the conormal sheaf of $Y$ in $X$, 
such that 
\begin{equation}
 \label{oper2}
\tau_Y(\lambda_{-1}([\sn_{Y/X}^{\vee}])x)=\lambda_{-1}([\sn_{Y/X}^{\vee}])\tau(Y/X,x).
\end{equation}
\begin{lemma}({\bf Adams-Riemann-Roch without denominators})
\label{RRK}
The diagram
$$
\xymatrix{
K_m(Y,{\mathbf Z}/p^n)\ar[r]^{\tau(Y/X,{\scriptscriptstyle\bullet})}\ar[d]^{i_*} &  K_m(Y,{\mathbf Z}/p^n)\ar[d]^{i_*}\\
K_m^Y(X,{\mathbf Z}/p^n)\ar[r]^{\tau_X} & K_m^Y(X,{\mathbf Z}/p^n)
}
$$
commutes, i.e., 
$$\tau_X(i_*(y))=i_*(\tau(Y/X,y)),\quad y\in K_m(Y,{\mathbf Z}/p^n)$$
\end{lemma}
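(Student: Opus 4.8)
The plan is to reduce the simplicial/mod-$p^n$ statement to the classical Adams–Riemann–Roch without denominators for a closed immersion, which is due to Jouanolou and Soulé in the setting of $\lambda$-rings (see \cite[1.1.1]{Jou}, \cite[Exp. VII]{So}). First I would exploit the deformation-to-the-normal-cone device: by Example \ref{relative}, the groups $K_m^Y(X,{\mathbf Z}/p^n)=H^{-m}(C(X,U),K;{\mathbf Z}/p^n)$ and the long exact localization sequence relating them to $K_m(X,{\mathbf Z}/p^n)$ and $K_m(U,{\mathbf Z}/p^n)$ are compatible with the action of any natural transformation of $\lambda$-rings $\tau$ with $\tau(0)=0$, because such operations are defined as honest self-maps of the $K$-theory presheaf on the site $C$ and hence commute with the maps induced by the cofibre sequence $U\to X\to C(X,U)$. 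So the content is entirely about the self-map $i_*:K_m(Y,{\mathbf Z}/p^n)\stackrel{\sim}{\to}K_m^Y(X,{\mathbf Z}/p^n)$ and its interaction with $\tau_X$ versus $\tau(Y/X,{\scriptscriptstyle\bullet})$.

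Next I would set up the purity/excision isomorphism $i_*$ concretely: since $Y\hookrightarrow X$ is a regular divisor, the homotopy fibre $C(X,U)$ is identified, via the Thomason–Trobaugh localization and dévissage theorems \cite[5.7]{T2} for $K$-theory with supports on a regular scheme, with the $K$-theory of coherent sheaves on $Y$, i.e. with $G(Y)\simeq K(Y)$. Under this identification $i_*$ is the usual Gysin pushforward. Then I would invoke the classical statement: on the level of $\lambda$-rings, for $x\in K_m(Y,{\mathbf Z}/p^n)$ one has $\tau_X(i_*(x))=i_*(\tau(Y/X,x))$, where $\tau(Y/X,x)$ is the universal polynomial in $\lambda(\sn^{\vee}_{Y/X})$ and $\lambda(x)$ characterized by the Bott-type identity \eqref{oper2}: $\tau_Y(\lambda_{-1}([\sn^{\vee}_{Y/X}])x)=\lambda_{-1}([\sn^{\vee}_{Y/X}])\tau(Y/X,x)$. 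The verification that the classical proof applies verbatim rests on: (a) the projection formula in the strict form recorded above (so $i_*$ is a $K_0(X)$-module map, and in particular $i_*i^*(a)=i_*(1)\cdot a=\lambda_{-1}([\sn^{\vee}_{Y/X}])\cdot a$ after the self-intersection computation), and (b) the fact that the composite $i^*i_*$ on $K_*(Y)$ is multiplication by $\lambda_{-1}([\sn^{\vee}_{Y/X}])=1-[\sn^{\vee}_{Y/X}]$, which holds for a regular divisor. Given (a) and (b), the argument is the standard algebraic manipulation inside the $\lambda$-ring $K_0(Y)\oplus K_m(Y,{\mathbf Z}/p^n)$, replacing $K_0(Y)\oplus K_0(Y)$ by this relative $\lambda$-algebra; the integrality of the universal polynomial $\tau(Y/X,x)$ is exactly the "`without denominators"' content and is purely formal once the self-intersection formula is in place.

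The main obstacle I anticipate is bookkeeping rather than conceptual: one must be careful that all the operations involved ($\lambda^k$, $\gamma^k$, the natural transformation $\tau$) are genuinely defined on the mod-$p^n$ relative groups $K_m^Y(X,{\mathbf Z}/p^n)$ for $m\geq 2$ and that they commute with $i_*$ — which amounts to knowing that the excision equivalence $C(X,U)\simeq$ "`$K(Y)$ with the $\sn_{Y/X}$-twisted multiplicative structure"' can be realized at the level of presheaves of spectra on $C$, not merely on homotopy groups, so that the operations (which are defined presheaf-theoretically, following Gillet–Soulé \cite{GS}) transport correctly. I would handle this by using the deformation to the normal bundle $p:\A^1_Y\leftrightarrow$ (blow-up) to reduce to the split case $X=$ total space of $\sn_{Y/X}$ with its zero section, where the self-intersection and module structures are transparent and the operations are computed directly on $BGL$-style models as in \cite[5.1]{GS} and \cite[B.2]{HW}; the general case then follows by homotopy invariance of $K$-theory of regular schemes. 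Since the argument in \cite[Exp. VII]{So} and \cite[1.1.1]{Jou} is written precisely so as to go through formally in any $\lambda$-ring equipped with a $\lambda$-module, I would simply cite that and indicate that the hypotheses (projection formula, self-intersection formula, compatibility of localization with operations) are supplied by Example \ref{relative} and the strict projection formula above.
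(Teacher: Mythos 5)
Your proposal is correct and follows essentially the same route as the paper: deformation to the normal cone to reduce to the zero-section case, where the statement becomes the formal $\lambda$-ring computation combining the Koszul resolution ($\lambda_{-1}(\pi^*[\sn])=[i_*\so_Y]$), the projection formula, and the defining identity (\ref{oper2}) for $\tau(Y/X,\cdot)$, with compatibility of the localization sequence with operations supplied by the presheaf-level definition of the operations. The only point where the paper is more explicit than you are is in closing the deformation argument: rather than a bare appeal to homotopy invariance, it verifies that $j_{0*}i_*\tau(Y_0/W_0,y)=j_{0*}\tau_Xi_*(y)$ on the deformation space and then concludes by the injectivity of $j_{0*}$, which follows from the projective space theorem.
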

\begin{proof}
Like in the proof of Lemma \ref{deformation} we argue by deformation to the normal cone. Since many computations here are very similar to the ones done in the proof of that lemma we will just describe those that are substantially different. 

 So we start with a special case of a closed immersion $i:Y\hookrightarrow X$, which is the zero section of a projective bundle $X={\mathbb P}(\sn\oplus \so_Y)$, where $\sn$ is an invertible sheaf on $Y$. The sheaf $\sn$ is the conormal sheaf of the immersion. Let $\pi: X\to Y$ be the projection.  Notice that the natural map $K_m^Y(X,{\mathbf Z}/p^n)\to K_m(X,{\mathbf Z}/p^n)$ is an injection. Hence it suffices to prove the commutativity of the following diagram
$$
\xymatrix{
K_m(Y,{\mathbf Z}/p^n)\ar[r]^{\tau(Y/X,{\scriptscriptstyle\bullet})}\ar[d]^{i_*} &  K_m(Y,{\mathbf Z}/p^n)\ar[d]^{i_*}\\
K_m(X,{\mathbf Z}/p^n)\ar[r]^{\tau_X} & K_m(X,{\mathbf Z}/p^n)
}
$$

  Take $y\in K_m(Y,{\mathbf Z}/p^n)$. As in the proof of Lemma \ref{deformation} we compute that there is an equality
$i_*(y)=\pi^*(\lambda_{-1}([\sn])y)$.  From the functoriality of operations
it follows that
$$
\tau_Xi_*(y)=\pi^*\tau_Y([\lambda_{-1}(\sn)]y).
$$
The equality (\ref{oper2}), the equality $\lambda_{-1}(\pi^*[\sn])=[i_*\so_Y]$ in $K_0(X)$ that follows from the exact sequence (\ref{Koszul}), and the projection formula in $K$-theory imply now that
\begin{align*}
 \tau_Xi_*(y)& = \pi^*(\tau(Y/X,y)\lambda_{-1}([\sn]))= \pi^*(\tau(Y/X,y))\lambda_{-1}(\pi^*[\sn])\\
& = \pi^*(\tau(Y/X,y))[i_*\so_Y]= i_*i^*\pi^*(\tau(Y/X,y))= i_*\tau(Y/X,y),
\end{align*}
as wanted.

  In general, we use deformation to the normal cone. We will use freely the notation from the proof of Lemma \ref{deformation}. As before, by functoriality of $K$-theory and \cite[3.18]{T2},
 we find that
$$j_{0*}i_*\tau(Y_0/W_0,y)= j_{0*}j_0^*\overline{i}_*\rho^*\tau(Y_0/W_0,y),
$$
where 
$$
\overline{i}_*:  K_m({\mathbb P}^1_Y,{\mathbf Z}/p^n)\to K_m^{{\mathbb P}^1_Y}(W,{\mathbf Z}/p^n),\quad 
j_{0}^*: K_m^{{\mathbb P}^1_Y}(W,{\mathbf Z}/p^n)\to K_m^Y(X,{\mathbf Z}/p^n).
$$
The projection formula in $K$-theory from \cite[3.17]{T2} and functoriality of $K$-theory and operations
imply now that
$$
j_{0*}i_*\tau(Y_0/W_0,y)= \overline{i}_*\tau({\mathbb P}^1_Y/W,y)\wedge ([t_*\so_X]+[l_*\so_{{\mathbb P}(\sn^{\vee}_{Y/X}\oplus \so_Y)}]).
$$
Functoriality,  the projection formula in $K$-theory, the fact that $l^*\overline{i}_*=0$,
 and \cite[3.18]{T2} imply that
$$
j_{0*}i_*\tau(Y_0/W_0,y)=t_*i_{\infty*}\tau(Y/{\mathbb P}(\sn^{\vee}_{Y/X}\oplus \so_Y),y).
$$
Here 
$$
i_{\infty*}: K_m(Y,{\mathbf Z}/p^n)\to K_m^Y({\mathbb P}(\sn_{Y/X}^{\vee}\oplus \so_Y),{\mathbf Z}/p^n),\quad 
t_*:  K_m^Y({\mathbb P}(\sn_{Y/X}^{\vee}\oplus \so_Y),{\mathbf Z}/p^n)\to K_m^{{\mathbb P}(\sn_{Y/X}^{\vee}\oplus \so_Y)}(W,{\mathbf Z}/p^n).
$$
Now we apply the computation we did in the special case to the embedding $i_{\infty}: Y\to {\mathbb P}(\sn^{\vee}_{Y/X}\oplus \so_Y)$ to conclude that
$$j_{0*}i_*\tau(Y_0/W_0,y)= t_*\tau_{{\mathbb P}(\sn^{\vee}_{Y/X}\oplus \so_Y)}i_{\infty*}(y).
$$

  Similarly, in Lemma \ref{deformation} we computed that $i_*(y)= j^*_0\overline{i}_*\rho^*(y)$.
This, functoriality of $K$-theory and operations, the projection formula in $K$-theory, 
the fact that $l^*\overline{i}_*=0$,
 and \cite[3.18]{T2} imply that
$$
j_{0*}\tau_Xi_*(y)=t_*\tau_{{\mathbb P}(\sn^{\vee}_{Y/X}\oplus \so_Y)}i_{\infty*}(y).
$$
Hence we have that $j_{0*}i_*\tau(Y_0/W_0,y)=j_{0*}\tau_Xi_*(y)$.

  It suffices now to show that the map
$$
j_{0*}: K_m^Y( X,{\mathbf Z}/p^n)\to K_m^{{\mathbb P}_Y}( W,{\mathbf Z}/p^n)
$$
is injective. But this map is isomorphic to the map $j_{0*}: K_m( Y,{\mathbf Z}/p^n)\to K_m( {{\mathbb P}_Y},{\mathbf Z}/p^n)$ that is injective by the projective spaces theorem in $K$-theory. 
\end{proof}
\subsubsection{$\gamma$-filtration and localization sequences}
 We need to understand the behaviour of the integral $\gamma$-filtrations in localization sequences. 
 We will need the following lemma. 
\begin{lemma}
\label{length2}
Let $X$ be a regular scheme over $\so_K$ of dimension $d$. Then
\begin{enumerate}
\item $F^{d+j+1}_{\gamma}K_j(X,{\mathbf Z}/p^n)=0$;
\item $M(d,i,2j)F^{i}_{\gamma}K_j(X,{\mathbf Z}/p^n)\subset \wt{F}^{i}_{\gamma}K_j(X,{\mathbf Z}/p^n)\subset F^{i}_{\gamma}K_j(X,{\mathbf Z}/p^n).$
\end{enumerate}
\end{lemma}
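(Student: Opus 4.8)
The plan is to reduce both statements to Lemmas \ref{length} and \ref{length1}. Regard $X$ as a constant simplicial scheme; since $X\simeq\skl_0X$, it is a regular finite simplicial scheme that is degenerate above simplicial degree $m=0$, and the maximum of the dimensions of the schemes occurring in it is $d=\dim X$. With this dictionary, assertion (1) is the special case $m=0$ of Lemma \ref{length} (note $j+d+m+1=d+j+1$), and assertion (2) is the special case $m=0$ of Lemma \ref{length1}, which is exactly the inclusion (\ref{gmma}), with the integers $M(d,i,2j)$ given by the procedure recalled there, following \cite[3.4]{So}.

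The only point that requires a word is that Lemmas \ref{length} and \ref{length1} are phrased for simplicial schemes over a base field, whereas here $X$ lies over $\so_K$. Inspecting the proof of Lemma \ref{length}, the base enters solely through the estimate $\cd X_s\le\dim X_s$ used, via the spectral sequence (\ref{spectr}), to bound the cohomological dimension of the simplicial scheme; this estimate is Grothendieck's vanishing theorem and is valid for any finite-dimensional noetherian scheme, in particular for $X$ over $\so_K$, giving $\cd X\le d$. The remaining ingredients --- the inclusion $F^{q}_{\gamma}K_j(X,{\mathbf Z}/p^n)\subset F^{q-j}K_j(X,{\mathbf Z}/p^n)$ into the Brown filtration, proved by combining $S$-duality with the nullhomotopy of \cite[5.1]{GS}, and the vanishing $F^{b}K_j(X,{\mathbf Z}/p^n)=0$ for $b>\cd X$ --- involve no hypothesis on the base. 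Chaining them yields (1); the argument of \cite[3.4]{So} underlying Lemma \ref{length1} then goes through verbatim with $d$ in place of the cohomological dimension, yielding (2).

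Thus the lemma is essentially a specialization of results already in hand, and the main (indeed only) obstacle is bookkeeping: verifying that nothing in the proofs of Lemmas \ref{length} and \ref{length1} tacitly uses that the base is a field. As just indicated, this comes down to the single observation that Grothendieck vanishing $\cd X\le\dim X$ holds over $\so_K$, after which both parts follow at once.
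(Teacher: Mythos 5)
Your proof is correct, but it takes a different route from the paper for part (1). The paper simply cites Levine \cite[14.5]{Ls} for the vanishing $F^{d+j+1}_{\gamma}K_j(X)=0$ over a Dedekind base, noting that his Corollary 13.11 extends immediately to finite coefficients; part (2) is then deduced from \cite[3.4]{So}, exactly as you do. You instead rerun the Gillet--Soul\'e argument of Lemma \ref{length} in the case $m=0$: the inclusion $F^{q}_{\gamma}K_j\subset F^{q-j}K_j$ into the Brown filtration (via $S$-duality and the nullhomotopy of \cite[5.1]{GS}) combined with $F^{b}K_j=0$ for $b>\cd X$ and Grothendieck vanishing $\cd X\le d$. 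Your check that the ``over a field'' hypothesis in Lemmas \ref{length} and \ref{length1} enters only through the estimate $\cd X_s\le\dim X_s$ is accurate --- indeed the site $C$ is already set up over ``a field or a local ring,'' and a regular scheme over $\so_K$ is $K$-coherent by \cite[Lemma 2.1]{Jeu}, so the $\gamma$-operations are available. What your approach buys is self-containedness: it avoids the external reference to \cite{Ls} and the (easy but unwritten) extension of his statement to mod $p^n$ coefficients, at the cost of having to verify that nothing in the earlier proofs is field-specific. What the paper's citation buys is brevity. Both yield the identical numerical bound, so part (2) proceeds the same way in either case.
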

\begin{proof}
The property (1) is proved in \cite[14.5]{Ls}. It is stated there just for $K$-theory wth no coefficients but it hold as well for $K$-theory modulo $p^n$. 
All one needs to do is to paraphrize Corollary 13.11 to the case with coefficients which is immediate. The property (2) follows now as in \cite[3.4]{So}.
\end{proof}
Consider a triple of regular schemes 
$$U\stackrel{j}{\hookrightarrow}X\stackrel{i}{\hookleftarrow} Z
$$
with $Z$ a closed subscheme of $X$ of codimension one and $U$ its complement.
For $j\geq 3$, we have the localization sequence in $K$-theory
\begin{equation}
\label{locseq}
\to K_j(Z,{\mathbf Z}/p^n)\stackrel{i_*}{\to}K_j(X,{\mathbf Z}/p^n)\stackrel{j^*}{\to}K_j(U,{\mathbf Z}/p^n)\stackrel{\partial}{\to}K_{j-1}(Z,{\mathbf Z}/p^n)\to
\end{equation}
obtained from the long exact sequence
$$
\to K_j^Z(X,{\mathbf Z}/p^n)\to K_j(X,{\mathbf Z}/p^n)\stackrel{j^*}{\to}K_j(U,{\mathbf Z}/p^n)\to K_{j-1}^Z(X,{\mathbf Z}/p^n)\to
$$
and the isomorphism  $i_*: K_j(Z,{\mathbf Z}/p^n)\stackrel{\sim}{\to}K_j^Z(X,{\mathbf Z}/p^n)$. 

 We claim that, modulo certain universal constants, this sequence of maps behaves well with respect to the $\gamma$-filtration.
\begin{lemma}
\label{exactseq1}
Let $j\geq 3$. There exists a natural number $N=N(d,i,j)$ dependent only on $d,i,j$, where $d$ is the dimension of $X$, such that  we have a long sequence of maps
$$\to F^{i-1}_{\gamma}K_j(Z,{\mathbf Z}/p^n)\stackrel{Ni_*}{\to}F^i_{\gamma}K_j(X,{\mathbf Z}/p^n)\stackrel{Nj^*}{\to}F^i_{\gamma}K_j(U,{\mathbf Z}/p^n)\stackrel{N\partial}{\to}F^{i-1}_{\gamma}K_{j-1}(Z,{\mathbf Z}/p^n)\to.
$$
If a prime number  $p>j+d+1$ then $p$ does not divide $N(d,i,j)$.
\end{lemma}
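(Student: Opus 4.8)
The plan is to check that each of the three arrows in the sequence sends the indicated filtration step of its source into that of its target after multiplication by a universal integer depending only on $d,i,j$, and then to take $N=N(d,i,j)$ to be the product of the three constants so obtained; since the localization sequence (\ref{locseq}) is exact, and in particular a complex, the resulting sequence of filtered maps is automatically a complex, which is all that the statement asserts. The restriction $j^{*}$ needs no constant: it is a morphism of augmented $\gamma$-rings (on $K_{0}\oplus K_{j}(-,{\mathbf Z}/p^n)$, $j\geq 2$), so $j^{*}(F^{i}_{\gamma}K_{j}(X,{\mathbf Z}/p^n))\subseteq F^{i}_{\gamma}K_{j}(U,{\mathbf Z}/p^n)$. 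For the Gysin map $i_{*}$ I would proceed exactly as in the proof of Lemma \ref{RRK}: deformation to the normal cone reduces the claim to the case in which $i$ is the zero section of the projective line bundle ${\mathbb P}(\sn^{\vee}_{Z/X}\oplus\so_{Z})$ over $Z$, and there the computation recalled in that proof gives $i_{*}(y)=\pi^{*}(\lambda_{-1}([\sn^{\vee}_{Z/X}])\cdot y)$ for the projection $\pi$. As $\lambda_{-1}([L])=1-[L]=-\gamma^{1}([L]-1)$ for a line bundle $L$, this class lies in $\wt{F}^{1}_{\gamma}K_{0}$; hence for $y\in F^{i-1}_{\gamma}K_{j}(Z,{\mathbf Z}/p^n)$ the product $\lambda_{-1}([\sn^{\vee}_{Z/X}])\cdot y$ lies in $\sff^{i}_{\gamma}K_{j}(Z,{\mathbf Z}/p^n)$, and comparing $\sff^{\bullet}_{\gamma}$ with $F^{\bullet}_{\gamma}$ through the universal constants $M(\,\cdot\,)$ of Lemma \ref{length2}(2) (together with the vanishing of Lemma \ref{length2}(1), which keeps the procedure finite) produces $N_{1}=N_{1}(d,i,j)$ with $N_{1}\,i_{*}(F^{i-1}_{\gamma}K_{j}(Z,{\mathbf Z}/p^n))\subseteq F^{i}_{\gamma}K_{j}(X,{\mathbf Z}/p^n)$.

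The residue map $\partial$ is the step I expect to be the main obstacle, since one must show that it lowers the $\gamma$-filtration by exactly the codimension $1$, up to universal torsion. I would reduce this to the case of $i_{*}$ just handled, using that (\ref{locseq}) is a sequence of $K_{*}(X,{\mathbf Z}/p^n)$-modules: transporting the filtration along the Gysin isomorphism $i_{*}\colon K_{j-1}(Z,{\mathbf Z}/p^n)\stackrel{\sim}{\to}K^{Z}_{j-1}(X,{\mathbf Z}/p^n)$ and exploiting the $K_{*}(X)$-linearity of $\partial$, one is reduced to estimating $\partial$ on the generators $\gamma_{i_{1}}(x_{1})\cup\cdots\cup\gamma_{i_{n}}(x_{n})$ of $F^{i}_{\gamma}K_{j}(U,{\mathbf Z}/p^n)$, after lifting the $K_{0}$-factors of positive filtration along $j^{*}$ (possible up to a universal constant, again by the comparison behind (\ref{gmma})). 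This is precisely the computation carried out by Soul\'e in \cite[3.4]{So}, which rests on the Adams--Riemann--Roch without denominators (Lemma \ref{RRK}) and on the vanishing $F^{d+j+1}_{\gamma}K_{j}=0$ (Lemma \ref{length2}(1)); it goes through here with the cohomological dimension replaced by the dimension $d$ of $X$, and running it while tracking the constants yields $N_{2}=N_{2}(d,i,j)$.

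Finally, put $N(d,i,j)=N_{1}N_{2}$. Every constant produced above is a product of the integers $M(\,\cdot\,)$ recalled after (\ref{gmma}); by the description of those integers, an odd prime $p$ divides $N(d,i,j)$ only if $p<(2j+2d+3)/2$, i.e. only if $p\leq j+d+1$. Hence no prime $p>j+d+1$ divides $N(d,i,j)$, which is the last assertion of the lemma.
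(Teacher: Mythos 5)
Your skeleton matches the paper's: handle the three maps separately, observe that $j^*$ needs no constant by functoriality, and take $N$ to be the product of the constants for $i_*$ and $\partial$; the prime bound then comes from the explicit shape of those constants. But both nontrivial steps, as you sketch them, have gaps. For $i_*$, you propose to redo the deformation to the normal cone and use the zero-section formula $i_*(y)=\pi^*(\lambda_{-1}([\sn^{\vee}])\,y)$. Even granting the (unaddressed) problem of transporting filtration estimates back across the deformation diagram from the relative groups $K^{\jcdot}_*(W,\mathbf{Z}/p^n)$ to $K_j(X,\mathbf{Z}/p^n)$, the product $\lambda_{-1}([\sn^{\vee}])\cdot y$ only lands in $\sff^{i}_{\gamma}$, and Lemma \ref{length2}(2) does \emph{not} convert $\sff^{\jcdot}_{\gamma}$ into $F^{\jcdot}_{\gamma}$ — it compares $\wt{F}^{\jcdot}_{\gamma}$ with $F^{\jcdot}_{\gamma}$. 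The filtration $F^{i}_{\gamma}K_j(\cdot,\mathbf{Z}/p^n)$ is generated only by $\gamma$'s of classes in $K_{q}(\cdot,\mathbf{Z}/p^n)$ with $q\geq 2$, with no $K_0$-factors allowed, so absorbing the factor $\lambda_{-1}([\sn^{\vee}])\in F^1_{\gamma}K_0$ is exactly the point at issue and is nowhere established. The paper's argument avoids all of this: apply Lemma \ref{RRK} with $\tau=\gamma^{i}$ to get $\gamma^{i}(i_*x)=i_*(\gamma^{i}(\sn,x))$, use the formal $\lambda$-ring identity $\gamma^{i}(\sn,x)\equiv(-1)^{i-1}(i-1)!\,x \bmod F^{i}_{\gamma}K_j(Z,\mathbf{Z}/p^n)$, and since $\gamma^{i}(i_*x)\in\wt{F}^{i}_{\gamma}\subset F^{i}_{\gamma}$, iterate downward using $F^{j+d}_{\gamma}K_j(Z,\mathbf{Z}/p^n)=0$. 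The resulting constant is the factorial product $(i-1)!\,i!\cdots(j+d-1)!$, not a product of $M(\cdot)$'s as you assert — though the prime bound $p>j+d+1$ still holds for it.

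For $\partial$, your reduction "by $K_*(X)$-linearity after lifting the $K_0$-factors along $j^*$" does not engage with the real difficulty: the generators of $F^{i}_{\gamma}K_j(U,\mathbf{Z}/p^n)$ contain no $K_0$-factors to lift, and the boundary map of a localization sequence satisfies no Leibniz rule on products of two positive-degree classes coming from $U$, so the module structure gives you nothing on those generators. The mechanism that actually works (and is the paper's, following Soul\'e) is: first pass from $F^{i}_{\gamma}$ to $\wt{F}^{i}_{\gamma}$ at the cost of $M(d,i,2j)$ (Lemma \ref{length2}(2)); then for a single class $\gamma^{k}(y)$, $k\geq i$, the compatibility of the relative sequence with operations plus Lemma \ref{RRK} yields $\partial\gamma^{k}(y)=\gamma^{k}(\sn,\partial y)$, and the twisted operation $\gamma^{k}(\sn,-)$ lands in $F^{k-1}_{\gamma}K_{j-1}(Z,\mathbf{Z}/p^n)$ by SGA~6. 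You cite the right sources, but the argument you actually describe would not produce the inclusion $N\partial(F^{i}_{\gamma})\subset F^{i-1}_{\gamma}$.
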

\begin{proof}
By functoriality, the restriction map $j^*$ is compatible with $\gamma$-filtrations. So is the boundary map $\partial$ with $\wt{F}^{i}_{\gamma}$-filtration, i.e., $\partial: \wt{F}^i_{\gamma}K_j(U,{\mathbf Z}/p^n){\to}F^{i-1}_{\gamma}K_{j-1}(Z,{\mathbf Z}/p^n)$.
 To see that take an element $x=\gamma^k(y)\in \wt{F}^i_{\gamma}K_j(U,{\mathbf Z}/p^n)$ for $y\in K_j(U,{\mathbf Z}/p^n)$, $k\geq i$. Then, by the Adams-Riemann-Roch without denominators (Proposition \ref{RRK}) we have that $\partial x=\gamma^k(\sn,x)(\partial y)$, for the normal bundle $\sn=\sn_{Z/X}$ of $Z$ in $X$. But, for every $z\in K_{j-1}(Z,{\mathbf Z}/p^n)$, we have $\gamma^k(x)(\sn,z)\in F^{k-1}_{\gamma}K_{j-1}(Z,{\mathbf Z}/p^n)$ 
 \cite[0, Appendice, Prop. 1.5]{S6}. Since by Lemma \ref{length2}
$M(d,i,2j)F^i_{\gamma}K_j(U,{\mathbf Z}/p^n)\subset \wt{F}^i_{\gamma}K_j(U,{\mathbf Z}/p^n)$ 
this implies that
$$M(d,i,2j)\partial: \quad F^i_{\gamma}K_j(U,{\mathbf Z}/p^n){\to}F^{i-1}_{\gamma}K_{j-1}(Z,{\mathbf Z}/p^n).
$$
Consider now $x\in F^{i-1}_{\gamma}K_j(Z,{\mathbf Z}/p^n)$. We know that 
$\gamma^i(\sn,x)=(-1)^{i-1}(i-1)!x \mod F^{i}_{\gamma}K_j(Z,{\mathbf Z}/p^n)$. By Adama-Riemann-Roch without denominators from Proposition \ref{RRK} we have 
$$i_*((-1)^{i-1}(i-1)!x)=\gamma^{i}(i_*x)\quad \mod i_*(F^{i}_{\gamma}K_j(Z,{\mathbf Z}/p^n)).
$$
By induction, since $F^{j+d}_{\gamma}K_j(Z,{\mathbf Z}/p^n)=0$ , we get that 
$$C(d,i,j)i_*:\quad F^{i-1}_{\gamma}K_j(Z,{\mathbf Z}/p^n){\to}F^i_{\gamma}K_j(X,{\mathbf Z}/p^n),\quad C(d,i,j)=(i-1)!i!\cdots (j+d-1)!
$$
Set $N(d,i,j)=M(d,i,2j)C(d,i,j)$. Since an odd prime number $p$ divides $M(d,i,j)$ if and only if 
$p<(j+2d+3)/2$ we get the last statement of the lemma. 
\end{proof}
\begin{lemma}
\label{homolog1}
let $j\geq 3$, $N=N(d,i-1,j)N(d,i,j)$ for the constants $N(d,i-1,j)$ and $N(d,i,j)$ from Lemma \ref{exactseq1}. Then the following long sequence is exact up to certain universal constants
$$\to F^{i-1}_{\gamma}/F^{i}_{\gamma}K_j(Z,{\mathbf Z}/p^n)\stackrel{Ni_*}{\to}F^{i}_{\gamma}/F^{i+1}_{\gamma}K_j(X,{\mathbf Z}/p^n)\stackrel{Nj^*}{\to}F^{i}_{\gamma}/F^{i+1}_{\gamma}K_j(U,{\mathbf Z}/p^n)\stackrel{N\partial}{\to}F^{i-1}_{\gamma}/F^{i}_{\gamma}K_{j-1}(Z,{\mathbf Z}/p^n)\to,
$$
where $N=N(d,i-1,j)N(d,i,j)$ for the constants from Lemma \ref{exactseq1}.
More precisely, 
if the element $[x]$ at any level of the above long sequence is a cocycle then  $C[x]$ is a coboundary for the following  constant $C$ 
\begin{enumerate}
\item $Ni_*([x])=0$ then $C=(i-1)!i!\cdots (j+d-1)!N(d,i,j)^2N(d,i,j+1)^2$;
\item $N\partial ([x])=0$ then $C=(i-1)!(i-1)!i!\cdots (j+d-3)!N^2$;
\item $Nj^*([x])=0$ then $C=(i-1)!i!\cdots (j+d-1)!N^2$.
\end{enumerate}
\end{lemma}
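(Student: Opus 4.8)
The plan is to run a diagram chase through the \emph{honest} localization long exact sequence (\ref{locseq}), which is exact on the nose and --- by Example \ref{relative} --- compatible with the $\lambda$-operations, bounding at each step the denominators that appear when one moves between the $\gamma$-filtration $F^\bullet_\gamma$, the auxiliary filtration $\widetilde F^\bullet_\gamma$ and the unfiltered $K$-groups. Three facts feed in: the exactness of (\ref{locseq}); the comparison $M(d,i,2j)F^i_\gamma\subseteq\widetilde F^i_\gamma\subseteq F^i_\gamma$ (on $K_j$ of each of $X$, $Z$, $U$, with ${\mathbf Z}/p^n$-coefficients) together with the vanishing $F^{d+j+1}_\gamma K_j(X,{\mathbf Z}/p^n)=0$ (and $F^{d+j}_\gamma K_j(Z,{\mathbf Z}/p^n)=0$, since $\dim Z=d-1$) from Lemma \ref{length2}; and the Adams--Riemann--Roch without denominators, Lemma \ref{RRK}, used exactly as in the proof of Lemma \ref{exactseq1}. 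The last of these is the mechanism that moves classes down the filtration: the congruence $\gamma^i(\sn,x)\equiv(-1)^{i-1}(i-1)!\,x$ modulo the next filtration step, together with the identity $i_*((-1)^{i-1}(i-1)!\,x')\equiv\gamma^i(i_*x')$ modulo $i_*$ of the next step, pushes an element from $F^{i-1}_\gamma$ into $F^i_\gamma$ at the cost of a factor $(i-1)!$; since the filtration is finite, iterating this push-down terminates and accumulates precisely the factorial products $(i-1)!\,i!\cdots(j+d-1)!$ of the statement, while the numerical constants $M$ and $N$ are each invoked only a bounded number of times. (Recall that $N(d,i,j)=M(d,i,2j)(i-1)!\,i!\cdots(j+d-1)!$ already absorbs such a product.)

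I would treat the three nodes of the sequence separately. For exactness at $F^i_\gamma/F^{i+1}_\gamma K_j(X,{\mathbf Z}/p^n)$, i.e., case (3): represent the cocycle by $x\in F^i_\gamma K_j(X,{\mathbf Z}/p^n)$, so that $j^*(Nx)\in F^{i+1}_\gamma K_j(U,{\mathbf Z}/p^n)$. First correct $Nx$ by an element of $F^{i+1}_\gamma K_j(X,{\mathbf Z}/p^n)$ so that its $j^*$-image becomes zero, performing this correction one filtration step at a time --- this is where the finiteness of the filtration and the comparison with $\widetilde F^\bullet_\gamma$ are used --- at the cost of a factor $N(d,i,j)$ times factorials. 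Honest exactness of (\ref{locseq}) then produces $z_0\in K_j(Z,{\mathbf Z}/p^n)$ with $i_*z_0$ equal to the corrected element, and pushing $z_0$ into $F^{i-1}_\gamma K_j(Z,{\mathbf Z}/p^n)$ by the Adams--Riemann--Roch argument of the last paragraph of the proof of Lemma \ref{exactseq1} costs a further factor $N(d,i-1,j)$ times factorials. Collecting the constants yields $C=(i-1)!\,i!\cdots(j+d-1)!\,N^2$ with $N=N(d,i-1,j)N(d,i,j)$.

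Cases (1) and (2) proceed identically, the differences being exactly those visible in the stated constants. In case (1) the cocycle $[x]\in F^{i-1}_\gamma/F^i_\gamma K_j(Z,{\mathbf Z}/p^n)$ is exhibited as a coboundary for $N\partial$ issuing from $F^i_\gamma K_{j+1}(U,{\mathbf Z}/p^n)$, so the degree $j+1$ constants $N(d,i,j+1)$ enter alongside the $N(d,i,j)$, giving the factor $N(d,i,j)^2N(d,i,j+1)^2$. In case (2) the cocycle $[x]\in F^i_\gamma/F^{i+1}_\gamma K_j(U,{\mathbf Z}/p^n)$ for $N\partial$ is lifted through $j^*$ using honest exactness of (\ref{locseq}) in degree $j$ together with the compatibility $\partial\colon\widetilde F^i_\gamma K_j(U,{\mathbf Z}/p^n)\to F^{i-1}_\gamma K_{j-1}(Z,{\mathbf Z}/p^n)$ of Lemma \ref{exactseq1}; since the homological degree on $Z$ is now $j-1$ the push-down on $Z$ terminates sooner, over the shorter factorial range $(i-1)!\,(i-1)!\,i!\cdots(j+d-3)!$, with an overall factor $N^2$.

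The hard part is not the homological mechanism: that is the standard one already used in the proofs of Lemmas \ref{exactseq1} and \ref{RRK}, and amounts to the principle that a finitely filtered long exact sequence whose maps respect the filtrations up to bounded multiples has associated graded exact up to bounded multiples. The hard part is the bookkeeping --- arranging the step-by-step corrections so that each invokes only data already established at higher filtration level, and then multiplying together the constants $M$, $N$ and the factorial products without double-counting, so as to land on exactly the constants displayed in (1)--(3). As in the proof of Lemma \ref{length1}, I would carry out this constant-chasing following \cite{So} rather than reproduce it here in full.
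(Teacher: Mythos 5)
Your proposal follows essentially the same route as the paper: a diagram chase through the honest localization sequence (\ref{locseq}), using Adams--Riemann--Roch without denominators to correct a representative one filtration step at a time (via the congruence $\gamma^{i}(\sn,x)\equiv(-1)^{i-1}(i-1)!\,x$ modulo the next step), terminating by the vanishing $F^{j+d+1}_{\gamma}=0$, and then invoking honest exactness to produce the preimage, exactly as in the paper's three case-by-case arguments. The constants you describe accumulating match those in the statement, so the proof is correct in approach and substance.
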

\begin{proof}
For the first case, consider
an element $[x]\in F^{i-1}_{\gamma}/F^{i}_{\gamma}K_j(Z,{\mathbf Z}/p^n)$ for
$x\in F^{i-1}_{\gamma}K_j(Z,{\mathbf Z}/p^n)$  such that $Ni_*[x]=0$, i.e., $Ni_*(x)\in F^{i+1}_{\gamma}K_j(X,{\mathbf Z}/p^n)$. By Adams-Riemann-Roch without denominators
from Lemma \ref{RRK} we have
$$i_*\gamma^{i+1}(\sn,Nx)=\gamma^{i+1}(i_*(Nx))=(-1)^ii!i_*(Nx) 
\quad \mod F^{i+2}_{\gamma}K_j(X,{\mathbf Z}/p^n).
$$
Since $\gamma^{i+1}(\sn,Nx)\in F^{i}_{\gamma}K_j(Z,{\mathbf Z}/p^n)$, we get 
$$[(-1)^ii!Nx]=[(-1)^ii!Nx-\gamma^{i+1}(\sn,Nx)]\quad \text{and}\quad i_*((-1)^ii!Nx-\gamma^{i+1}(\sn,Nx))\in
F^{i+2}_{\gamma}K_j(X,{\mathbf Z}/p^n)$$
Since $F^{j+d+1}_{\gamma}K_j(X,{\mathbf Z}/p^n)=0$, by induction, we get that $[i!(i+1)!\cdots (j+d-1)!Nx]=[y]$ for $y\in F^{i-1}_{\gamma}K_j(Z,{\mathbf Z}/p^n)$ such that $i_*(y)=0$. By the localization sequence (\ref{locseq})
$y=\partial (w)$ for $w\in K_{j+1}(U,{\mathbf Z}/p^n)$. We also have $\gamma^i(\sn,y)=(-1)^{i-1}(i-1)!y$
modulo $F^{i}_{\gamma}K_j(Z,{\mathbf Z}/p^n)$ \cite[0, Appendice, Prop. 1.5]{S6}. From Adams-Riemann-Roch without denominators it now follows that
$$[\gamma^i(\sn,y)]=[\gamma^i(\sn,\partial (w))]=[\partial (\gamma^i(w))].
$$
Since clearly $\gamma^i(w)\in F^i_{\gamma}K_{j+1}(U,{\mathbf Z}/p^n)$, we have that 
$[(i-1)!y]$ is a coboundary for $\partial$. Summing up the above, $[(i-1)!i!\cdots (j+d-1)!N^2(d,i,j)N^2(d,i,j+1)]$ is a coboundary, as wanted.

  For the second case, consider
an element $[x]\in F^{i}_{\gamma}/F^{i+1}_{\gamma}K_j(U,{\mathbf Z}/p^n)$ for
$x\in F^{i}_{\gamma}K_j(U,{\mathbf Z}/p^n)$  such that $N\partial [x]=0$, i.e., $N\partial (x)\in F^{i}_{\gamma}K_{j-1}(Z,{\mathbf Z}/p^n)$. As above, Adama-Riemann-Roch without denominators implies that there exists an element
$w\in F^{i+1}_{\gamma}K_j(U,{\mathbf Z}/p^n)$ such that $i!\partial (Nx)=\partial (w)$ modulo $F^{i}_{\gamma}K_j(Z,{\mathbf Z}/p^n)$. Since $F^{j+d-1}_{\gamma}K_{j-1}(Z,{\mathbf Z}/p^n)$, this gives by induction that 
$$i!\cdots (j+d-3)!\partial (Nx)=\partial (w),\quad\text{for }w\in F^{i+1}_{\gamma}K_j(U,{\mathbf Z}/p^n).
$$
Set $z=i!\cdots (j+d-3)!\partial (Nx)-w$. We have $[i!\cdots (j+d-3)!\partial (Nx)]=[z]$ 
and $\partial (z)=0$. By the localization sequence (\ref{locseq})
$z=j^*(y)$ for $y\in K_{j}(X,{\mathbf Z}/p^n)$. We have $[\gamma^i(z)]=[(-1)^{i-1}(i-1)!z]$. Hence
$[(-1)^{i-1}(i-1)!z]=[j^*(\gamma^i(y))]$, i.e., 
$[(-1)^{i-1}(i-1)!z]$ is in the image of $F^i_{\gamma}K_{j}(X,{\mathbf Z}/p^n)$ by $j^*$. Summing it all up, we get that $[(i-1)!i!\cdots (j+d-3)!N^2x]$ is a coboundary, as wanted. 

   Because the restriction $j^*$ is compatible with $\gamma$-operations, the third case is proved in a similar manner. Consider
an element $[x]\in F^{i}_{\gamma}/F^{i+1}_{\gamma}K_j(X,{\mathbf Z}/p^n)$ for
$x\in F^{i}_{\gamma}K_j(X,{\mathbf Z}/p^n)$  such that $Nj^* [x]=0$, i.e., $Nj^* (x)\in F^{i+1}_{\gamma}K_{j}(U,{\mathbf Z}/p^n)$. We have
$$
(-1)^ii!Nj^*(x)=\gamma^{i+1}(j^*(Nx))=j^*(\gamma^{i+1}(Nx)) \quad\mod F^{i+2}_{\gamma}K_{j}(U,{\mathbf Z}/p^n).
$$
Hence $[Nx]=[Nx-\gamma^{i+1}(Nx)]$ and $j^*(Nx-\gamma^{i+1}(Nx))\in F^{i+2}_{\gamma}K_{j}(U,{\mathbf Z}/p^n)$. 
Since $F^{j+d+1}_{\gamma}K_{j}(U,{\mathbf Z}/p^n)$, inductively this implies that $
[i!(i+1)!\cdots (j+d-1)!Nx]=[z]$. and $j^*(z)=0$. Now, by the localization sequence (\ref{locseq}),
$z=i_*(y)$ for $y\in K_j(Z,{\mathbf Z}/p^n)$. By Adama-Riemann-Roch without denominators we have
$$
[i_*\gamma^{i}(\sn,y)]=[\gamma^{i}(i_*y)]=[\gamma^{i}(z)]=[(-1)^{i}(i-1)!z].
$$
Hence $[(-1)^{i}(i-1)!z]$ is a coboundary for $i_*$. Summing up, we get that
$[(i-1)!i!\cdots (j+d-1)!N^2x]$ is a coboundary, as wanted.
\end{proof}
\section{Syntomic Chern classes}
\subsection{Classical syntomic Chern classes}
   We will briefly review the construction and basic properties of classical syntomic Chern classes. 
 For details the reader can consult \cite[2.3]{N4}, \cite[2.2]{N2}, and \cite[2.3]{N10}.

  For $i\geq 0$ and a scheme $X$ flat over $W(k)$, there are
 functorial
and compatible families of syntomic Chern classes
\begin{align*}
c_{i,j}^{\synt}:\quad  & K_j(X)  \rightarrow H^{2i-j}(X,S_n(i))
\quad\text{for }j\geq 0,\\
\overline{c}_{i,j}^{\synt}:\quad  & K_j(X,{\mathbf Z}/p^n)
  \rightarrow H^{2i-j}(X,S_n(i))\quad\text{for }j\geq 2,
\end{align*}
that are also compatible with the crystalline Chern classes in
$H^{2i-j}_{\crr}(X_n,\so_{X_n}) $ via the
canonical map $H^{2i-j}\!(X_n,S_n\!(i))\!\to\!
H^{2i-j}_{\crr}(\!X_n,J^{<i>}_{X_n})$. Similarly, we
have syntomic Chern classes in $H^{2i-j}(X,S^{\prime}_n(i))$. 

   Recall the
construction  of the  classes $c_{i,j}^{\synt}$ and $\overline{c}_{i,j}^{\synt}$.
First, one constructs universal classes
$C^{\synt}_{i,l}\in H^{2i}(BGL_l,S_n(i))$, $C^{\synt}_{i,l}\in H^{2i}(BGL_l,S^{\prime}_n(i))$.
For $l\geq i$, $i\geq 0$, one defines $$C^{\synt}_{i,l}=x_i\in
H^{2i}(BGL_l,S_n(i)), \quad C^{\synt}_{i,l}=x_i\in
H^{2i}(BGL_l,S^{\prime}_n(i)).$$ By construction these classes
are compatible with the crystalline classes. Recall (see formulas (\ref{symbol})) that
classes $x_1\in
H^{2}(BGL_l,S_n(1))\hookrightarrow H^{2}(BGL_l,S^{\prime}_n(1))$
have a direct definition via symbol maps.

   The classes $C^{\synt}_{i,l}\in H^{2i}(BGL_l,S_n(i))$, $i\geq 0$,  yield compatible universal classes (see \cite[p. 221]{Gi})
$C^{\synt}_{i,l}\in H^{2i}(X,GL_l(\so_X),S_n(i))$, where the last group is the $GL_l(\so_X)$-cohomology of $X$ with values in $S_n(i)$ (equipped with the trivial action of $GL_l(\so_X)$). 
 Hence
 a natural map of pointed simplicial presheaves on $X$,
$$C^{\synt}_i:BGL(\so_X)\to \sk(2i,\tilde{S}_n(i){}_{X}),$$
where $\sk $ is the Dold--Puppe functor of
$\tau_{\leq 0}\tilde{S}_n(i){}_{X}[2i]$
and $\tilde{S}_n(i){}_{X}$ is an
injective resolution of $S_n(i){}_{X}$.
These classes induce Chern class maps $$C^{\synt}_i: K_X\to \sk(2i,\tilde{S}_n(i){}_{X}),\quad i\geq 0.$$
 Here we wrote $K_X$ for the presheaf of simplicial sets $U\mapsto K(U)_0$. 

   We can now define the  total Chern class map
$$
C^{\synt}_{X}:\quad K_X\to \prod_{i\geq 0}\sk(2i,\tilde{S}_n(i){}_{X})
$$ 
by putting the map $C^{\synt}_i$ in degree $i$. Similarly, we get total Chern class maps 
\begin{equation}
 \label{trivial}
C^{\synt}_{X}:\quad K_X\to \prod_{i\geq 0}\sk(2i,\tilde{S}^{\prime}_n(i){}_{X}).
\end{equation}
Notice that the Chern class map $C^{\synt}_0$ is a composition of the rank map $rk: K_X\to {\mathbf Z}$  with the natural map 
${\mathbf Z}\to S_n(0)=S^{\prime}_n(0)=(\so^{\crr}_n)^{\phi=1}$. The related augmented total Chern class maps $\wt{C}^{\synt}_X$ are defined by replacing $C^{\synt}_0$ with the
 rank map $rk: K_X\to {\mathbf Z}$. 
 
   We list the following properties of the  total Chern class maps. They are proved using the embedding $H^{2i}(BGL_l,S_n(i))\hookrightarrow H^{2i}_{\dr}(BGL_{l,n})$ 
   and the properties of de Rham classes.
\begin{lemma}(\cite[Lemma 2.1]{N4})
\label{universal}
\begin{enumerate}
\item The syntomic total Chern class is functorial, i.e., for a map $f: Y\to X$ of flat schemes over $W(k)$
the following diagram of presheaves of simplicial sets on $X$ commutes in the homotopy category.
$$
\xymatrix{
K_X\ar[d]^{f^*}\ar[r]^-{C^{\synt}_{X}} & \prod_{i\geq 0}\sk(2i,\tilde{S}_n(i){}_{X})\ar[d]^{f^*}\\
\R f_*K_Y\ar[r]^-{C^{\synt}_{Y}} & \R f_*\prod_{i\geq 0}\sk(2i,\tilde{S}_n(i){}_{Y})
}
$$
\item The syntomic total Chern class is compatible with addition, i.e., the following diagram is
 commutes in the homotopy category.
$$
 \xymatrix{K_X\times K_X\ar[r]^-{+}\ar[d]^{C_{X}^{\synt}\times
 C_{X}^{\synt}}  & K_X\ar[d]^{C_{X}^{\synt}} \\
 \prod_{i\geq 0}\sk(2i,\wt{S}_n(i)_X)\times\prod_{i\geq 0}\sk(2i,\wt{S}_n(i)_X)\ar[r]^-{\ast}&
\prod_{i\geq 0}\sk(2i,\wt{S}_n(i)_X).
 }
$$
\item The syntomic augmented total Chern class is compatible with products, i.e., the following diagram 
 commutes in the homotopy category.
$$
 \xymatrix{K_X\wedge K_X\ar[r]^-{\wedge}\ar[d]^{\wt{C}_{X}^{\synt}\wedge
 \wt{C}_{X}^{\synt}}  & K_X\ar[d]^{\wt{C}_{X}^{\synt}} \\
 {\mathbf Z}\times \prod_{i\geq 1}\sk(2i,\wt{S}_n(i)_X)\wedge {\mathbf Z}\times \prod_{i\geq 1}\sk(2i,\wt{S}_n(i)_X)\ar[r]^-{\star}&
{\mathbf Z}\times \prod_{i\geq 1}\sk(2i,\wt{S}_n(i)_X).
 }
$$
Here $\star$ is the Grothendieck product \cite[2.27]{Gi} defined via certain universal polynomials with integral coefficients. 

  Similarly for total Chern classes with values in $S^{\prime}_n(i)$-cohomology.
\end{enumerate}
\end{lemma}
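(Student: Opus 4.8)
The plan is to deduce all three assertions from the corresponding (and classical) facts about de Rham Chern classes, transported across the injections $H^{2i}(B,S_n(i))\hookrightarrow H^{2i}_{\dr}(B_n)$ supplied by Lemma~\ref{syntcomp} and Remark~\ref{product11} for $B=BGL_l$, $B(GL_a\times GL_b)$, $BGL(a,b)$, together with the defining fact that the syntomic universal classes $C^{\synt}_{i,l}=x_i$ map, under these injections, to the de Rham universal classes. In other words, the content of the lemma is the classical axiomatics of Chern classes, and the only work is to move it along an injective map of cohomology theories.

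The first step is a \emph{translation principle}. The target $\prod_{i\geq 0}\sk(2i,\wt{S}_n(i)_X)$ is a product of (generalized) Eilenberg--MacLane objects: since $\wt{S}_n(i)_X$ is an injective resolution of $S_n(i)_X$, each $\sk(2i,\wt{S}_n(i)_X)$ is a fibrant presheaf of simplicial sets and, for a presheaf built from schemes, $[S^mX,\sk(2i,\wt{S}_n(i)_X)]=H^{2i-m}(X,S_n(i))$, and likewise with $P^mX$ and mod-$p^n$ coefficients. Hence two maps of simplicial presheaves out of a $K$-coherent presheaf built from schemes that land in such a product agree in the homotopy category as soon as they induce the same maps on all the groups $K_m(-)\to\bigoplus_i H^{2i-m}(-,S_n(i))$ (resp.\ with $\Z/p^n$-coefficients) after evaluation on every scheme of the site. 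Consequently each of the three diagrams of the lemma commutes in the homotopy category if and only if the induced diagram of cohomology groups of the relevant classifying (simplicial) scheme commutes; and by the projective-space/splitting-principle computations of $H^*(B,S_n(*))$ from Lemma~\ref{syntcomp}, this amounts to an identity between explicit polynomials in the universal classes $x_i$ (and $y_j$), which by the quoted injections may be checked in de Rham cohomology.

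With this in hand the three parts are routine. For (1), the universal classes $C^{\synt}_{i,l}$ are by construction pulled back along any $f\colon Y\to X$ (they live in the cohomology of the classifying simplicial scheme over the base), so $f^*C^{\synt}_X$ and $C^{\synt}_Y\circ f^*$ induce the same map on cohomology, hence are homotopic. For (2), the Whitney sum operation $*$ is, by definition, the operation on $\prod_i\sk(2i,\wt{S}_n(i)_X)$ realizing $c(E\oplus F)=c(E)\cdot c(F)$; by the translation principle it suffices to verify this identity for the universal locally free sheaves on $BGL_a$ and $BGL_b$ inside $H^*(B(GL_a\times GL_b),S_n(*))$, which — via the injection into $H^*_{\dr}((BGL_a\times BGL_b)_n)$, compatibility of that injection (and of $\tau$) with products (Remark~\ref{product11}), and the fact that the syntomic universal classes hit the de Rham ones — is exactly the classical Whitney sum formula in de Rham cohomology. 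Part (3) is identical, working over $BGL(a,b)$ and with the Grothendieck product $\star$: the defining universal polynomials have integer coefficients, so the identity $\wt{C}^{\synt}(\text{universal}\otimes\text{universal})=\wt{C}^{\synt}\star\wt{C}^{\synt}$ in $H^*(BGL(a,b),S_n(*))$ is detected in $H^*_{\dr}(BGL(a,b)_n)$, where it is the classical product formula. The statements for $S^{\prime}_n(i)$-cohomology are obtained the same way, using the analogous injection into de Rham cohomology from Lemma~\ref{syntcomp1} and Remark~\ref{product11}.

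The main obstacle is the homotopy-theoretic bookkeeping in the translation principle: one must make sure that "inducing the same map on the cohomology of every scheme of the site" genuinely forces two maps of simplicial presheaves into $\prod_i\sk(2i,\wt{S}_n(i)_X)$ to be homotopic. This rests on fibrancy of $\sk$ applied to an injective resolution, on the identification of $[-,\sk(2i,\mathcal F)]$ with a hypercohomology group (and of the relevant $\Hom$-set in the homotopy category with $\pi(-,-)$ of fibrant replacements), and on $K$-coherence of $K_X$ (resp.\ $BGL(\so_X)$) together with its being built from schemes, so that these generalized-cohomology computations are available and strongly convergent. Once this is set up, all three assertions carry no further content.
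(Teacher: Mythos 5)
Your proposal follows essentially the same route as the paper: the paper introduces the lemma with the remark that the properties ``are proved using the embedding $H^{2i}(BGL_l,S_n(i))\hookrightarrow H^{2i}_{\dr}(BGL_{l,n})$ and the properties of de Rham classes,'' and the proof of the truncated analogue (Lemma \ref{Zariskiprop}) is exactly your reduction to the universal bundles on $BGL(a,b)$ and $B(GL_a\times GL_b)$ followed by checking the Whitney and Grothendieck product formulas in de Rham cohomology. Your explicit ``translation principle'' is just a careful spelling-out of the standard universal-class argument (via $K$-coherence and representability of $H^{2i-m}(-,S_n(i))$ by $\sk(2i,\wt S_n(i))$) that the paper takes from \cite[Lemma 2.1]{N4}.
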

\begin{lemma}
\label{lambda-structure} 
The augmented total Chern class 
$$
\wt{C}_{X}^{\synt} :\quad  K_0(X)\to {\mathbf Z}\times \{1\}\times \prod_{i\geq 1}H^{2i}(X,{S}_n(i))
$$
is a morphism of $\lambda$-rings.
\end{lemma}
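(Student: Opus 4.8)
The plan is to reduce the statement for $K_0(X)$, $X$ flat over $W(k)$, to the universal case of the classifying spaces $BGL_l$, where it becomes a computation with de Rham Chern classes. Recall that a map $f\colon A\to B$ of $\lambda$-rings (with augmentation) is the same datum as compatibility of $f$ with the additive structure, the multiplicative structure, and the operations $\lambda^k$ (equivalently the total $\lambda$-operation $\lambda_t$). By Lemma \ref{universal} the augmented total syntomic Chern class $\wt C^{\synt}_X$ is already compatible (in the homotopy category of presheaves of simplicial sets on $X$, hence on $\pi_0=K_0$) with addition and with products via the Grothendieck product $\star$ on $\prod_i H^{2i}(X,S_n(i))$. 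So the only thing left to check is compatibility with $\lambda$-operations, i.e. that for $x\in K_0(X)$ one has $\wt C^{\synt}_X(\lambda^k(x)) = \lambda^k_{\star}(\wt C^{\synt}_X(x))$, where $\lambda^k_{\star}$ denotes the $\lambda$-operation on the target $\lambda$-ring ${\mathbf Z}\times\{1\}\times\prod_{i\geq 1}H^{2i}(X,S_n(i))$ induced by the Grothendieck product — i.e. the target is the \emph{special} $\lambda$-ring structure built on the multiplicative group of the graded ring, normalized so that line bundles get their total Chern class as a $1+c_1+0+\cdots$.

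First I would recall the standard fact (Grothendieck) that in any augmented $\lambda$-ring, additivity + multiplicativity of the total Chern class + the normalization on line bundles (first Chern class) + the splitting principle \emph{force} compatibility with all $\lambda^k$; this is because $\lambda^k$ is computed from $\lambda^1=\mathrm{id}$ and the ring and $\lambda$-ring structures via universal polynomials, once one knows the value on sums of line bundles. Concretely, I would pass to the universal situation: let $B=BGL_l/W(k)$ and let $\xi$ be the universal rank-$l$ bundle, whose class in $K_0(B)$ generates (over $K_0(W(k))$-operations) a dense enough subring. By Corollary \ref{truncatedcor} and the analogous statement for $B(GL_a\times GL_b)$, the syntomic cohomology ring $H^*(B,S_n(*))$ is a polynomial ring on the de Rham Chern classes $x_1,\dots,x_l$, and the injection $H^{2i}(B,S_n(i))\hookrightarrow H^{2i}_{\dr}(B_n)$ of Lemma \ref{syntcomp} (or Remark \ref{product11}) is a ring map compatible with the $x_i$. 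Hence the total syntomic Chern class of $\xi$ maps, under this injection, to the total de Rham Chern class of $\xi$. Since de Rham cohomology is a classical Weil cohomology where the total Chern class is known to be a $\lambda$-ring map (the splitting principle holds and line bundles are sent to $1+c_1$), and since the injection into de Rham cohomology respects the Grothendieck product (Lemma \ref{universal}(3) and Remark \ref{product11} — both products are defined through the de Rham product), compatibility of $\wt C^{\synt}$ with $\lambda^k$ on the image of $\xi$ follows by transport of structure. By naturality (Lemma \ref{universal}(1)) and the fact that every class in $K_0(X)$ is, locally and up to refinement, pulled back from a $BGL_l$ via a classifying map, the identity $\wt C^{\synt}_X(\lambda^k(x))=\lambda^k_\star(\wt C^{\synt}_X(x))$ propagates from the universal case to general $X$.

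The main obstacle I anticipate is bookkeeping rather than conceptual: one must be careful that the $\lambda$-ring structure on the target is exactly the one for which ``total Chern class is a $\lambda$-map'' is the defining property — that is, the multiplicative group $1+\prod_{i\geq 1}H^{2i}(X,S_n(i))$ with Grothendieck's $\star$-product and the associated $\gamma$/$\lambda$-operations — and that this matches the operations Gillet--Soulé put on the left via the presheaf-level construction. This is where one invokes that $C^{\synt}_0$ is the rank map composed with ${\mathbf Z}\to S_n(0)=(\so^{\crr}_n)^{\phi=1}$, so the augmentations agree, and that the first Chern class $x_1\in H^2(BGL_l,S_n(1))$ is defined through the symbol map (formulas (\ref{symbol})), which pins down the normalization. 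A secondary point is the splitting principle for syntomic cohomology: it follows formally from Corollary \ref{truncatedcor} and its analogue for $BGL(a,b)$, since these give the projective bundle formula, so no new input is needed.

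Finally I would remark that the same argument, verbatim, gives the corresponding statement with $S_n(i)$ replaced by $S'_n(i)$, using Lemma \ref{syntcomp1} and the map $\tau\colon H^{2i}(B,S_n(i))\hookrightarrow H^{2i}(B,S'_n(i))$ of Remark \ref{product11}, which is compatible with products; note that for $S'_n$ the products are again defined through the de Rham product, so the transport-of-structure step is unchanged, and the only difference is a harmless twist bookkeeping that does not affect the $\lambda$-ring identity at the level of $K_0$.
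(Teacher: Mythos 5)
Your argument is correct and is essentially the one the paper intends: the paper's own proof is just a pointer to the proof of Lemma 2.1 in \cite{N4}, and what you have written is precisely that argument — reduce to the universal bundle on $BGL_l$ (and $B(GL_a\times GL_b)$ for products), use the injection $H^{2i}(B,S_n(i))\hookrightarrow H^{2i}_{\dr}(B_n)$ together with compatibility of the syntomic and de Rham products, and transport the classical splitting-principle verification of $\lambda$-compatibility from de Rham cohomology. This is also the same reduction-to-de-Rham strategy the paper uses for the truncated analogue in Lemma \ref{Zariskiprop}, so no gap to report.
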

\begin{proof}
This was shown in the proof of Lemma 2.1 in \cite{N4}.
\end{proof}

   The characteristic classes 
\begin{align*}
\overline{c}^{\synt}_{i,j}:\quad &  K_j(X,{\mathbf Z}/p^n)
  \rightarrow H^{2i-j}(X,S_n(i)),\quad j\geq 2,\\
  c^{\synt}_{i,j}:\quad &  K_j(X)
  \rightarrow H^{2i-j}(X,S_n(i)),\quad j\geq 0,
\end{align*}
 are 
defined \cite[2.22]{Gi}
as the composition
\begin{align*}
K_j(X,{\mathbf Z}/p^n) & \to
H^{-j}(X,K_X;{\mathbf Z}/p^n)
\to H^{-j}(X,BGL(\so_X)^+;{\mathbf Z}/p^n)\\
&  \stackrel{C^{\synt}_i}{\longrightarrow}  H^{-j}(X,
\sk(2i,\tilde{S}_n(i){}_{X});{\mathbf Z}/p^n)
 \stackrel{f}{\rightarrow}  H^{2i-j}(X,S_n(i)),
\end{align*}
where  $BGL(\so_X)^+$ is the (pointed) simplicial presheaf on
$X$ associated to the $+\,$- construction.  The map $f$ is defined as the composition
{\small\begin{align*} H^{-j}(X, \sk(2i,\tilde{S}_n(i){}_{X}
);{\mathbf Z}/p^n)  &  = \pi_j(\sk(2i,\tilde{S}_n(i)(X)),{\mathbf
Z}/p^n)) \!\stackrel{h_j}{\rightarrow}\!
H_j(\sk(2i,\tilde{S}_n(i)(X)),{\mathbf Z}/p^n))\\
 &  \to H_j(\tilde{S}_n(i)(X)[2i])= H^{2i-j}(X,S_n(i)),
\end{align*}}
where $h_j$ is the Hurewicz morphism.

This gives mod $p^n$ Chern classes with values in $H^{*}(X,S_n(*))$. Those 
with values in $H^{*}(X,S^{\prime}_n(*))$ and the  ones 
from integral $K$-theory are defined in an analogous way. All of the above also works (with basically identical proofs) if we replace $X$ with a finite simplicial scheme $X$, flat over $W(k)$.

 As we have shown in \cite[Lemma 2.1]{N2} the above Chern classes for schemes have the properties listed below. Again they 
hold as well  for finite simplicial schemes $X$, flat over $W(k)$.
\begin{lemma}
\label{prop}Let $X$ be a finite simplicial scheme that is flat over $W(k)$.
The syntomic Chern classes are functorial in $X$ and have the following properties.
\begin{enumerate}
\item $c_{ij}^{\,\synt}$, for $j>0$, is a group homomorphism.
\item $c_{{\scriptscriptstyle\bullet},0}^{\synt}(a+b)=c_{{\scriptscriptstyle\bullet},0}^{\synt}(a)c_{{\scriptscriptstyle\bullet},0}^{\synt}(b)$, for $a,b\in K_0(X)$.
\item $\overline{c}_{ij}^{\,\synt}$, for $j\geq 2$  
is a group homomorphism unless $j=2$ and $p=2$.
\item $\overline{c}_{ij}^{\,\synt}$ are compatible with the reduction 
maps $S_n(i)\to S_m(i)$, $n\geq m$.\\
Moreover, if $X$ is regular,  then
\item Let $p$ be odd or $p=2$, $n\geq 2$ and $l,q\neq 2$. 
If $\alpha\in K_l(X,{\mathbf Z}/p^n)$ and 
$\alpha'\in K_q(X,{\mathbf Z}/p^n)$, then
$$
\overline{c}_{ij}^{\,\synt}(\alpha\alpha')=-\sum_{r+s=i}
\frac{(i-1)!}{(r-1)!(s-1)!}
\overline{c}_{rl}^{\,\synt}(\alpha)\overline{c}_{sq}^{\,\synt}(\alpha'),
$$
assuming that $l,q\geq 2$, $l+q=j$, $2i\geq j$, $i\geq 0$.
\item If $\alpha\in F^j_{\gamma}K_0(X)$, $j\neq 0$,
 and 
$\alpha'\in F^k_{\gamma}K_q(X,{\mathbf Z}/p^n)$, $q\geq 2$, is such that
 $\overline{c}^{\,\synt}_{lq}(\alpha')=0$ for $l\neq k$,
then
\begin{equation*}\overline{c}_{j+k,q}^{\,\synt}(\alpha\alpha')=
-\frac{(j+k-1)!}{(j-1)!(k-1)!}c_{j0}^{\,\synt}(\alpha)
\overline{c}_{kq}^{\,\synt}(\alpha'),
\end{equation*}
assuming that $p\neq 2$ or $q>2$.
\item If $X$ is a scheme, the above multiplication formulas  hold also for  $p=2$, $n\geq 4$,
$q=2$ and $\alpha'$ such that $\partial \alpha' \in K_1(X)$ belongs to ${\so_K}^*$. 
\item The integral Chern class maps $c_{i0}^{\synt}$ restrict to zero on 
$F^{i+1}_{\gamma}K_0(X)$.
\item The Chern class maps $\overline{c}_{ij}^{\synt}$ restrict to zero on 
$F^{i+1}_{\gamma}K_j(X,{\mathbf Z}/p^n)$, $j\geq 2$, unless $j=2$, $p=2$.
\end{enumerate}
\end{lemma}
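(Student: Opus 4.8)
The plan is to deduce the lemma from the three structural facts already in place --- the universal syntomic Chern classes $C^{\synt}_{i,l}=x_i\in H^{2i}(BGL_l,S_n(i))$ (and their primed analogues), the compatibilities of the total Chern class map recorded in Lemma~\ref{universal}, and the $\lambda$-ring statement of Lemma~\ref{lambda-structure} --- exactly as was done for honest schemes in \cite[Lemma 2.1]{N4} and \cite[Lemma 2.1]{N2}. Since $c^{\synt}_{i,j}$ and $\overline c^{\synt}_{i,j}$ are built functorially and level-wise from the universal classes on $BGL_l$, every assertion reduces to a statement about those universal classes, and that statement can be checked after the injection $H^{2i}(BGL_l,S_n(i))\hookrightarrow H^{2i}_{\dr}(BGL_{l,n})$ of Section~\ref{classical}, where one is manipulating ordinary de Rham Chern classes and the required identities are classical (Gillet \cite{Gi}). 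The one extra point to verify is that nothing in this argument uses more than we have for a finite simplicial scheme $X$ flat over $W(k)$: the $K$-theory spectral sequences (\ref{spectr})--(\ref{spectr2}) converge in the relevant range, and the cohomology $H^*(X,S_n(*))$ together with the universal classes on $BGL_l$ are all available in this generality.

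First I would dispose of the soft items. Functoriality is inherited from Lemma~\ref{universal}(1). Property~(2), multiplicativity of $c^{\synt}_{{\scriptscriptstyle\bullet},0}$ on $K_0$, is the Whitney sum formula and is Lemma~\ref{universal}(2). Properties~(1) and~(3) --- that $c^{\synt}_{ij}$ ($j>0$) and $\overline c^{\synt}_{ij}$ ($j\geq 2$, barring $j=2$, $p=2$) are group homomorphisms --- follow from the same additivity statement together with the observation that on the suspension $S^jX$ (resp.\ the Moore space $P^jX$) with $j>0$ the Grothendieck ``$*$''-product collapses, for degree reasons, to the sum of the two factors; the exceptional pair $j=2$, $p=2$ is exactly where $P^2\wedge P^2$ carries an extra homotopy class, which is acknowledged, not contradicted, in the statement. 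Property~(4) holds because the whole construction is compatible with the reduction maps $S_n(i)\to S_m(i)$ already at the level of $BGL_l$.

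The product formulas (5)--(8) come from the compatibility of the \emph{augmented} total Chern class with products, Lemma~\ref{universal}(3): the Grothendieck product $\star$ is given by universal integer polynomials, and unwinding it in a fixed bidegree --- using once more that on a Moore-space smash the decomposable part of $\star$ reduces to the single cross-term --- produces the Newton-type identity with coefficient $(i-1)!/((r-1)!(s-1)!)$ in~(5); (6) is its specialization in which one factor lies in $F^j_\gamma K_0$ and Lemma~\ref{lambda-structure} controls the $K_0$-side, and (7) is the further specialization to $\alpha'\in F^k_\gamma K_q$ with $\overline c^{\synt}_{lq}(\alpha')=0$ for $l\neq k$. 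Item~(8), the $p=2$, $n\geq 4$, $q=2$ case under the hypothesis $\partial\alpha'\in\so_K^*$, needs the delicate $2$-adic Bockstein bookkeeping of \cite[Lemma 2.1]{N2}: one lifts $\alpha'$ along a reduction and uses that the class in $K_2(\cdot,{\mathbf Z}/2^n)$ responsible for the failure of additivity is detected on $\so_K^*$, so that the universal-polynomial computation still goes through after multiplying by the relevant power of $2$. This is the step I expect to be the main obstacle; in the simplicial generality I would simply invoke \cite{N2}, since no new phenomenon appears there.

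Finally, the vanishing statements. For~(9), $\wt C^{\synt}_X|_{K_0(X)}$ is a morphism of $\lambda$-rings (Lemma~\ref{lambda-structure}), so $c^{\synt}_{i0}$ kills a product $\gamma^{k_1}(x_1)\cdots\gamma^{k_r}(x_r)$ with $\ve(x_s)=0$ and $\sum k_s\geq i+1$ because, after pulling back to the relevant $BGL$, such a product is decomposable of weight $>i$ and contributes nothing in bidegree $(2i,i)$; this is the standard argument of \cite{Gi}, \cite{So}, and it also shows (using Lemma~\ref{length} to bound $\gamma$-lengths) that one may work with finite simplicial $X$. For~(10) the same weight argument applies to the $\gamma$-filtration on $K_j(X,{\mathbf Z}/p^n)$ of Section~4.1.4: the universal class $C^{\synt}_i$ factors through $\sk(2i,\cdot)$, hence vanishes on $F^{i+1}_\gamma$ by the identical decomposability, once more with the exception $j=2$, $p=2$. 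In all cases the passage from schemes to finite simplicial schemes flat over $W(k)$ is harmless, the spectral sequences (\ref{spectr})--(\ref{spectr2}) converging in our range and every cohomology group in sight having been defined in Sections~3 and~4.
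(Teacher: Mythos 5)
Your proposal is correct and follows essentially the same route as the paper, whose entire proof is the one-line remark that the argument is a translation of \cite[Lemma 2.1]{N2} into the language of spaces of \cite{GS}; your write-up simply makes that translation explicit, reducing each item to the universal classes on $BGL_l$ via Lemma \ref{universal} and Lemma \ref{lambda-structure} and deferring the delicate $p=2$ case to \cite{N2}, exactly as intended.
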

\begin{proof}
The proof is basically a translation of the proof of Lemma 2.1 in \cite{N2}
into the language of spaces used in \cite{GS}.
\end{proof}
\begin{remark}
 Let $X$ be a scheme over $K$ or $\ovk$. In an analogous way one constructs the \'etale total Chern classes
$$C_{X}^{\eet} :\quad  K_X\to \prod_{i\geq 0}\sk(2i,\R \ve_*{\mathbf Z}/p^n(i)_X),
$$
where $\ve: X_{\eet}\to X_{\Zar}$ is the natural projection, 
that have all the properties listed in Lemma \ref{universal}. They induce the characteristic classes
\begin{align*}
\overline{c}^{\eet}_{i,j}:\quad &  K_j(X,{\mathbf Z}/p^n)
  \rightarrow H^{2i-j}(X,{\mathbf Z}/p^n(i)),\quad j\geq 2,\\
  c^{\eet}_{i,j}:\quad &  K_j(X)
  \rightarrow H^{2i-j}(X,{\mathbf Z}/p^n(i)),\quad j\geq 0,
\end{align*}
that have all the properties listed in Lemma \ref{prop}.
\end{remark}
\subsection{Truncated syntomic Chern classes}
\label{truncated-kolo}

We will show now that there exists Chern classes into truncated syntomic as well as syntomic-\'etale cohomology that are compatible with the classical syntomic Chern classes. 
We define the universal classes $C^{\synt}_{i,a}\in H^{2i}(BGL_a/W(k),S_n(i)_{\Nis})$ as the unique classes mapping to the classical  syntomic universal classes. This can be done by Proposition \ref{truncated}. They induce Chern class  and  total Chern class maps
$$C^{\synt}_i:\quad K_X\to \sk(2i,\tilde{S}_n(i)_{\Nis,X}),\quad i\geq 0; \quad C^{\synt}_X: K_X\to\prod_{i\geq 0}\sk(2i,\tilde{S}_n(i)_{\Nis,X})
$$
As before, we get the characteristic classes
\begin{align*}
\overline{c}^{\synt}_{i,j}:\quad &  K_j(X,{\mathbf Z}/p^n)
  \rightarrow H^{2i-j}(X,S_n(i)_{\Nis}),\quad j\geq 2,\\
  c^{\synt}_{i,j}:\quad &  K_j(X)
  \rightarrow H^{2i-j}(X,S_n(i)_{\Nis}),\quad j\geq 0,
\end{align*}
 Via the map $\tau: H^{2i}(BGL_a,S_n(i)_{\Nis})\to H^{2i}(BGL_a,S^{\prime}_n(i)_{\Nis})$ we get induced
universal classes, total Chern maps and characteristic classes with values in $S^{\prime}_n(i)_{\Nis}$.
\begin{lemma}
\label{Zariskiprop}
The above truncated syntomic total Chern class maps and the induced characteristic classes satisfy analogs of Lemma \ref{universal}, Lemma \ref{lambda-structure},   and Lemma \ref{prop}.
\end{lemma}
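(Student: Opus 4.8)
The plan is to deduce every assertion from the corresponding classical statement by means of the natural comparison map $S_n(i)_{\Nis}\to S_n(i)$, together with its twisted companion $\tau\colon S_n(i)_{\Nis}\to S^{\prime}_n(i)_{\Nis}$. The point is that Lemma \ref{universal} and Lemma \ref{lambda-structure} are really identities among the universal classes, i.e.\ statements in the groups $H^{2i}(B,S_n(i)_{\Nis})$ and $H^{2i}(B,S^{\prime}_n(i)_{\Nis})$ for $B$ one of $BGL_a$, $B(GL_a\times GL_b)$, $BGL(a,b)$. By Proposition \ref{truncated}(1) the morphism $S_n(i)_{\Nis}\to S_n(i)$ induces an isomorphism $H^{2i}(B,S_n(i)_{\Nis})\xrightarrow{\sim}H^{2i}(B,S_n(i))$ for each such $B$, and by Proposition \ref{truncated}(2) this isomorphism, as well as the map $\tau$, is compatible with cup products. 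Hence the composite $H^{2i}(B,S_n(i)_{\Nis})\hookrightarrow H^{2i}(B,S_n(i))\hookrightarrow H^{2i}_{\dr}(B_n)$ — the second injection being the one used throughout Section \ref{classical} — is injective and multiplicative. The classical identities of Lemma \ref{universal} are all established by verifying them after this injection into de Rham cohomology, where the behaviour of de Rham Chern classes under pullback, Whitney sum and products is known; since by construction the truncated universal classes map to the classical universal classes, the identical verification applies verbatim to the truncated classes.

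\textbf{Functoriality, additivity, products, $\lambda$-structure.} First I would observe that the truncated total Chern class map $C^{\synt}_X\colon K_X\to\prod_{i\geq 0}\sk(2i,\wt S_n(i)_{\Nis,X})$ is assembled from the universal classes $C^{\synt}_{i,a}\in H^{2i}(BGL_a/W(k),S_n(i)_{\Nis})$ exactly as in the classical case, so functoriality in $X$ (the analog of Lemma \ref{universal}(1)) is automatic from the naturality of the construction. Compatibility with addition (analog of Lemma \ref{universal}(2)) and with the Grothendieck product (analog of Lemma \ref{universal}(3)) reduce to the corresponding universal identities on $B(GL_a\times GL_b)$ and $BGL(a,b)$, which hold because the comparison map there is an isomorphism compatible with products by Proposition \ref{truncated}; and the $\lambda$-ring statement (analog of Lemma \ref{lambda-structure}) is likewise a universal identity on $BGL_a$, valid after the injection into de Rham cohomology. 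The $S^{\prime}_n(i)_{\Nis}$-valued classes are obtained from the $S_n(i)_{\Nis}$-valued ones by pushforward along $\tau$, which is multiplicative by Proposition \ref{truncated}(2) together with Remark \ref{product11}; hence all these identities transport to the primed setting.

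\textbf{Properties of the characteristic classes.} The analog of Lemma \ref{prop} is then a formal consequence of the universal structure just established together with inputs coming from $K$-theory alone. The group-homomorphism statements, the compatibility with the reduction maps $S_n(i)_{\Nis}\to S_m(i)_{\Nis}$ (which is induced from $S_n(i)\to S_m(i)$ after truncation, hence compatible with the universal classes since the classical ones are and the comparison maps are injective), the multiplication formulas, and the vanishing of $c^{\synt}_{i,0}$ and $\overline c^{\synt}_{i,j}$ on $F^{i+1}_{\gamma}$ all follow by translating the arguments of \cite[Lemma 2.1]{N2} into the language of presheaves of spaces as in \cite{GS}; the only cohomology-theoretic ingredients needed are the universal identities of the previous paragraph, and the only $K$-theoretic ingredient is the $\gamma$-filtration length bound, which for finite simplicial schemes is Lemma \ref{length}.

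\textbf{Main obstacle.} The single delicate point is the compatibility of the truncated total Chern class with products (equivalently, the Whitney sum formula), since the truncation functor $\tau_{\leq i}$ does not a priori commute with $\otimes^{\mathbb L}$, so the product on truncated syntomic cohomology of classifying spaces must be shown to agree with the classical (genuinely multiplicative, untruncated) one. This is precisely what Proposition \ref{truncated}(2) secures, and it is for this reason that that proposition was proved beforehand. Granting it, there is no further obstruction and the lemma follows by the routine bookkeeping sketched above.
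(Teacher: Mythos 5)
Your proposal is correct and follows essentially the same route as the paper: reduce every assertion to a universal identity on the classifying spaces $BGL_a$, $B(GL_a\times GL_b)$, $BGL(a,b)$, verify it after the multiplicative injection into de Rham cohomology, and transport it back to truncated (and then primed) syntomic cohomology via Proposition \ref{truncated}, after which the analog of Lemma \ref{prop} carries over verbatim from the untruncated case. You also correctly identify the product-compatibility of the comparison map (Proposition \ref{truncated}(2)) as the one nontrivial input.
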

\begin{proof}We start with Lemma \ref{universal} describing properties of the universal total Chern class maps. Property (1) follows from functoriality of truncated syntomic universal classes.
Property (2) follows from a universal Whitney sum formula. More specifically, consider the universal short exact sequence 
$$0\to E^a_{{\scriptscriptstyle\bullet}}\to E^{a+b}_{{\scriptscriptstyle\bullet}}\to E^b_{{\scriptscriptstyle\bullet}}\to 0
$$
of vector bundles over $BGL(a,b)$. Here the middle term is the universal vector bundle and the first and last terms  are classified by the natural maps to  $BGL_a$ and $BGL_b$, respectively.
A classical argument using only the projective space theorem gives the Whitney sum formula in de Rham cohomology
$$C_{{\scriptscriptstyle\bullet}}(E^{a+b}_{{\scriptscriptstyle\bullet}})=C_{{\scriptscriptstyle\bullet}}(E^a_{{\scriptscriptstyle\bullet}})C_{{\scriptscriptstyle\bullet}}(E^b_{{\scriptscriptstyle\bullet}}).
$$
By Proposition \ref{truncated} this induces the Whitney sum formula in the truncated syntomic cohomology.
 
 For property (3), again  one needs to consider only   the universal situation, i.e., the universal tensor product bundle $E^{a}_{{\scriptscriptstyle\bullet}}\otimes E^{b}_{{\scriptscriptstyle\bullet}}$ over $B(GL_a\times GL_b)$. Splitting principle gives the formula
 $$\wt{C}_{{\scriptscriptstyle\bullet}}(E^{a}_{{\scriptscriptstyle\bullet}}\otimes E^{b}_{{\scriptscriptstyle\bullet}})=\wt{C}_{{\scriptscriptstyle\bullet}}(E^{a}_{{\scriptscriptstyle\bullet}})\star\wt{C}_{{\scriptscriptstyle\bullet}}(E^{b}_{{\scriptscriptstyle\bullet}}).
 $$
 in de Rham cohomology. By Proposition \ref{truncated} it carries over to truncated syntomic cohomology.
 
 The analog of Lemma \ref{lambda-structure} is proved in the same way as the original lemma using Proposition \ref{truncated}. Having an analog of Lemma \ref{universal}, the proof of Lemma \ref{prop} for truncated syntomic cohomology carries verbatim from the untruncated case.
 \end{proof}
 
 Similarly, we construct (using Proposition \ref{truncatedetale}) universal Chern classes, total Chern maps, and characteristic classes with values in (truncated) syntomic-\'etale cohomologies $E_n(i)_{\Nis}$, $E^{\prime}_n(i)_{\Nis}$ and  $E_n(i)$, $E^{\prime}_n(i)$. Clearly, so defined  (truncated) syntomic-\'etale total Chern class maps and the induced characteristic classes satisfy analogs of Lemma \ref{universal},  Lemma \ref{lambda-structure},  and Lemma \ref{prop}.
\subsection{Logarithmic syntomic Chern classes}
\label{6.1}
Let $X$ be a semistable scheme over $\so_K^{\times}$ or a semistable scheme over $\so_K$ with a smooth special fiber. Let $D$ be  the horizontal divisor and set $j: X\setminus D\hookrightarrow X$. 
 In this section, we will construct universal 
Chern class maps (in the homotopy category of pointed presheaves of simplicial sets on $X_{\Nis}$)
$$
C^{\synt}_{X\setminus D,i}: j_*K_{X\setminus D}\to \sk(2i, \tilde{E}^{\prime}_n(i)_X( D)),\quad i\geq 0,
$$
where $\wt{E}_n^{\prime}(i)_X( D)_{\Nis}$ is an injective resolution of $E_n^{\prime}(i)_X(D)_{\Nis}$. In what follows we will skip the subscript ${\Nis}$ if confusion does not arise. Up to some universal constants these maps will have all the expected properties.

   Consider the map
$C^{\synt}_{X\setminus D}: j_*K_{X\setminus D} \to
\R j_*\prod_{i\geq 0}\sk(2i,\wt{E}_n^{\prime}(i)_{X\setminus D})$ induced by the total syntomic-\'etale Chern class maps defined in Section \ref{truncated-kolo}.
Lemma \ref{kolo1} below shows that a multiple $[p^{mi}]C^{\synt}_{X\setminus D}$, for a universal constant $m$,  of this map lifts, via the 
 map 
$$\sk(2i,\wt{E}_n^{\prime}(i)_{X}(D))\to  \sk(2i,\R j_*\wt{E}_n^{\prime}(i)_{X\setminus D})\to \R j_*\sk(2i,\wt{E}_n^{\prime}(i)_{X\setminus D}),
$$
to a map 
$$
C^{\synt}_{X\setminus D}(m):\quad j_*K_{X\setminus D}
\to
\prod_{i\geq 0}\sk(2i,\wt{E}_n^{\prime}(i)_{X}(  D)).
$$

\begin{lemma}
\label{kolo1}Let $\sll$ be a complex of pointed presheaves of simplicial sets on $X$ with homotopy presheaves concentrated in nonnegative degrees. 
The kernel and cokernel of the map
$$
\Hom_{\spp}(\sll,\sk(2i,\wt{E}_n^{\prime}(i)_{X}(D)))\to  \Hom_{\spp}(\sll, \R j_*\sk(2i,\wt{E}_n^{\prime}(i)_{X\setminus D}))
$$ 
is annihilated by $p^{Ni}$ for a constant $N=c_1c_2$, where $c_1$ is the constant\footnote{We take $c_1=N$ in the notation of Theorem \ref{input1}.} from Theorem \ref{input1} and $c_2$ is a constant depending only on the dimension $d$ of $X$.  Here the homomorphisms are taken in the homotopy category $\spp$ of  pointed presheaves of simplicial sets on $X_{\Nis}$.
\end{lemma}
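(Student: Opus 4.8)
The plan is to reduce the statement to a pointwise (sectionwise) statement about the complexes of sheaves themselves, where it becomes an immediate consequence of Corollary \ref{reduction-kolo} together with a cohomological-dimension argument. First I would unwind the homotopy-theoretic formulation: since $\sll$ has homotopy presheaves concentrated in nonnegative degrees and the target Dold--Puppe objects $\sk(2i,-)$ are built from the truncations $\tau_{\leq i}$ of $\se^{\prime}_n(i)$, the group $\Hom_{\spp}(\sll,\sk(2i,\wt{E}^{\prime}_n(i)_X(D)))$ is computed by a hypercohomology spectral sequence of the type \eqref{spectr} whose $E_2$-terms are $H^{a}(\sll_b, \se^{\prime}_n(i)_{\Nis,X}(D))$, and similarly for the target with $\se^{\prime}_n(i)_{\Nis,X}$ replaced by $\R j_{1*}\se^{\prime}_n(i)_{\Nis,X\setminus D}$ (here $j_1 = j$ in the notation of Corollary \ref{reduction-kolo}). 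The map in the statement is induced, level by level, by the morphism of complexes of sheaves
$$
\se^{\prime}_n(i)_X(D) \to \R j_{1*}\se^{\prime}_n(i)_{X\setminus D}
$$
coming from restriction; after applying $\tau_{\leq i}$ this is exactly the morphism ${\alpha}_i$ of Corollary \ref{reduction-kolo}(2), whose cone has cohomology sheaves in degrees $\leq i$ annihilated by $p^{Ni}$ with $N$ as in Theorem \ref{input1}. (One has to be slightly careful: the target $\R j_{1*}\se^{\prime}_n(i)$ is not a priori truncated, but since $\sll$ is concentrated in nonnegative degrees only the part of the target in degrees $\leq$ (something controlled by $\cd X$) contributes, so we may replace it by its truncation; this is where the constant $c_2$ depending on $d=\dim X$ enters.)

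Next I would make the cohomological-dimension bookkeeping precise. Write $C$ for the cone of $\tau_{\leq i}\se^{\prime}_n(i)_X(D) \to \tau_{\leq i}\R j_{1*}\se^{\prime}_n(i)_{X\setminus D}$; by Corollary \ref{reduction-kolo}(2) its cohomology sheaves $\sh^q(C)$ are annihilated by $p^{c_1 i}$ for all $q$ (with $c_1 = N$ of Theorem \ref{input1}). For each simplicial level $b$ the scheme $\sll_b$ has Nisnevich cohomological dimension at most $d$, so the hypercohomology spectral sequence $H^a(\sll_b,\sh^q(C)) \Rightarrow H^{a+q}(\sll_b,C)$ has at most $d+1$ nonzero columns; hence each $H^{*}(\sll_b,C)$ is annihilated by $p^{(d+1)c_1 i}$. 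The long exact sequence of the triangle then shows that
$$
H^{*}(\sll_b,\tau_{\leq i}\se^{\prime}_n(i)_X(D)) \to H^{*}(\sll_b,\tau_{\leq i}\R j_{1*}\se^{\prime}_n(i)_{X\setminus D})
$$
has kernel and cokernel annihilated by $p^{2(d+1)c_1 i}$. Assembling this over the skeletal filtration of $\sll$: the spectral sequence \eqref{spectr} has, in the relevant total degree, a bounded number of contributing columns $s$ controlled by the homotopy dimension of $\sll$ in that degree, so a further finite multiplicative constant $c_2'(d)$ absorbs the passage from the $E_2$-page to the abutment. Setting $c_2 = 2(d+1)c_2'(d)$ and $N = c_1 c_2$ gives the claimed bound.

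The main obstacle, and the step requiring the most care, is the interchange of $\R j_*$ (derived pushforward along the open immersion at infinity, in the Nisnevich topology) with the truncation $\tau_{\leq i}$ and with the identification of the comparison morphism as exactly ${\alpha}_i$ of Corollary \ref{reduction-kolo}: a priori $\se^{\prime}_n(i)_X(D)$ is defined as $\tau_{\leq i}$ of a glued complex, while $\R j_{1*}\se^{\prime}_n(i)_{X\setminus D}$ involves $\tau_{\leq i}$ applied on $X\setminus D$ before pushing forward, and one needs that the two truncations are compatible up to the allowed $p$-power, which again follows from Corollary \ref{reduction-kolo}(2) since that corollary is stated precisely at the level of truncated complexes. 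The bound on the number of contributing columns in \eqref{spectr} is likewise slightly delicate because $\sll$ is only assumed to have homotopy presheaves in nonnegative degrees, not to be a finite simplicial scheme; but for a fixed total cohomological degree only finitely many $(s,t)$ with $t-s$ equal to that degree and $t \geq 0$, $0 \leq s \leq \cd X_s + (\text{contribution of }\sll)$ can be nonzero once one also truncates the coefficient complex at degree $i$, which is all that is needed. Everything else is the formal manipulation of spectral sequences and long exact sequences sketched above.
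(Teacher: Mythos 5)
Your core reduction is the same as the paper's: identify the fiber $\scc(i)$ of the comparison map $\se^{\prime}_n(i)_X(D)\to \R j_{1*}\se^{\prime}_n(i)_{X\setminus D}$, use Corollary \ref{reduction-kolo}(2) to see that its cohomology sheaves are killed by $p^{c_1 i}$, bound its cohomological amplitude by a constant $c_2$ depending only on $d$ (via the Nisnevich/\'etale cohomological dimension of $X$ and the length of filtered crystalline cohomology), and conclude that any Hom into it is killed by $p^{c_1c_2 i}$. That part is correct and is exactly what the paper does.

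Where you diverge is in how $\Hom_{\spp}(\sll,-)$ is processed, and this is the soft spot. You run the skeletal (Bousfield--Kan type) spectral sequence (\ref{spectr}) over the simplicial levels of $\sll$ and then argue about the number of contributing columns. But that spectral sequence is only guaranteed to converge strongly when $\sll$ is degenerate above a finite simplicial degree, whereas the lemma assumes only that $\sll$ has homotopy presheaves in nonnegative degrees; your closing remark that "only finitely many $(s,t)$ can be nonzero" does not actually establish convergence for a general $\sll$, and the extra factor $c_2'(d)$ you introduce to "absorb the passage from $E_2$ to the abutment" is not justified. The paper sidesteps this entirely: it first uses the nonnegativity of the homotopy presheaves of $\sll$ to replace $\R j_*\sk(2i,-)$ by $\sk(2i,\R j_*(-))$, then identifies $\Hom_{\spp}(\sll,\sk(2i,\sff))$ with $\Hom_{D}(C(\sll^a),\sff[2i])$ for the normalized chain complex $C(\sll^a)$ of the associated sheaf, and then the required $p$-power annihilation of $\Hom_D(C(\sll^a),\scc(i)[2i])$ follows from the bounded amplitude and uniform torsion of $\scc(i)$ alone, by d\'evissage along the truncation triangles of $\scc(i)$ --- no finiteness or convergence hypothesis on the source is needed. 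If you reformulate your final assembly in this derived-category language (d\'evissage on the target rather than a spectral sequence on the source), your argument closes up and yields the constant $N=c_1c_2$ directly, without the extra factors $2(d+1)$ and $c_2'(d)$.
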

\begin{proof}For $i=0$, we have an isomorphism by Corollary \ref{reduction-kolo}. Assume $i\geq 1$. Since the homotopy presheaves of $\sll$ are trivial in negative degrees, we have that $$
  \Hom_{\spp}(\sll, \sk(2i,\R j_*\wt{E}_n^{\prime}(i)_{X\setminus D}))
\stackrel{\sim}{\to } \Hom_{\spp}(\sll, \R j_*\sk(2i,\wt{E}_n^{\prime}(i)_{X\setminus D})).
$$
Hence it suffices to show that the above lemma holds for the map
$$
\Hom_{\spp}(\sll,\sk(2i,\wt{E}_n^{\prime}(i)_{X}(D)))\to  \Hom_{\spp}(\sll, \sk(2i,\R j_*\wt{E}_n^{\prime}(i)_{X\setminus D})).
$$ 

  We have, 
\begin{align*}
\Hom_{\spp}(\sll, & \sk(2i,\wt{E}_n(i)_X(D)))  \simeq \Hom_{\sss}(\sll^a,\sk(2i,\wt{E}_n(i)_X(D)))\\
& \simeq \Hom_{D}(C(\sll^a), E_n(i)_X(D)[2i]),
\end{align*}
where  $\Hom_{\sss}(,)$ refers to homomorphisms in the homotopy category of pointed sheaves of simplicial sets, $\sll^a$ is the complex of sheaves associated to $\sll$,
$C(\sll^a)$ is the sheaf of (normalized) chain complexes associated to $\sll^a$ that we see as a sheaf of cochain complexes by negating the degrees, and $\Hom_{D}(,)$ refers to homomorphisms in the derived category of complexes of sheaves of abelian groups on $X(D)_{\Nis}$. Similarly,
$$\Hom_{\spp}(\sll,  \sk(2i,\R j_*\wt{E}_n(i)_{X\setminus D}))  
\simeq \Hom_{D}(C(\sll^a), j_*\wt{E}_n(i)_{X\setminus D}[2i]).
$$

  Let $\scc(i)$ denote the mapping fiber of the map $\wt{E}_n^{\prime}(i)_{X}(D)\to  j_*\wt{E}_n^{\prime}(i)_{X\setminus D}$. 
We need to show that
  $\Hom_{D}(\cdot, \scc(i))$ is annihilated by $p^{Ni}$ for a constant $N$ described above. It suffices to show that $\scc(i)$ has cohomology annihilated by $p^{c_1ri}$ for the constant $c_1$ as above  and that $\scc(i)$ has cohomological length less than $c_2$ for a constants $c_2$ depending only on $d$.

 The first claim follows from  Corollary \ref{reduction-kolo}.
For the second claim, since Nisnevich topology of $X$ has cohomological dimension less than $d$, it suffices to show that $\scc(i)$ has cohomological length less than $c_3$ for a constant $c_3$ depending only on $d$.  But this is clear since the cohomological dimension of the \'etale topos of $X$ and the length of the (filtered) crystalline cohomology of $X/W(k)$ are bounded by $2d+3$.
 \end{proof}

 Let $m\geq N$. Set $C^{\synt,m}_{X\setminus D}:=[p^{mi}]C^{\synt}_{X\setminus D}(m)$. By the above lemma two different lifts $C^{\synt}_{X\setminus D}(m)$ yield the same class $C^{\synt,m}_{X\setminus D}$.  Similarly, we define the total syntomic-\'etale Chern class maps
$$C^{\synt}_{X\setminus D}:\quad j_*K_{X\setminus D}
\to
\prod_{p-2\geq i\geq 0}\sk(2i,\wt{E}_n(i)_{X}(  D)).
$$No additional constants are needed here. 

  We record the 
following properties of these Chern class maps.
\begin{theorem}
\label{augmented}
\begin{enumerate}
\item The Chern class maps  $C^{\synt,m}_{X\setminus D}$  are compatible with the reduction 
maps $E^{\prime}_n(i)_{X}(D)\to E^{\prime}_{n_1}(i)_X(D)$, $n\geq n_1$.
\item  The Chern class maps $C^{\synt,m}_{X\setminus D}$ are independent of the number $m$ chosen, that is, for two different numbers $m_1<m_2$  the following diagram commutes
\begin{equation}
\label{independencem}
\xymatrix{
G_X(D)  \ar[rd]_-{C^{\synt,m_2}_{X\setminus D}}
\ar[r]^-{C^{\synt,m_1}_{X\setminus D}} & 
\prod_{i\geq 0}\sk(2i, \tilde{E}^{\prime}_{n}(i)_X(D))\ar[d]^{[p^{2(m_2-m_1)i}]}\\
& \prod_{i\geq 0}\sk(2i, \tilde{E}^{\prime}_n(i)_X(D))
}
\end{equation}

\item The Chern class map $C^{\synt,m}_{X\setminus D}$ is functorial for morphisms of log-schemes as above $\pi: Y\to X$ such that $\pi^{-1}(D_X)\subset D_Y$, i.e., 
the following diagram commutes in the homotopy category
$$
\xymatrix{
j_*K_{X\setminus D_X}\ar[r]^-{C^{\synt,m}_{X\setminus D_X}}\ar[d]^{\pi^*} & \sk(2i,\wt{E}_n^{\prime}(i)_{X}( D_X))\ar[d]^{\pi^*}\\
\pi_*j^{\prime}_*K_{Y\setminus D_Y}\ar[r]^-{C^{\synt,m}_{Y\setminus D_Y}} & \R \pi_*\sk(2i,\wt{E}_n^{\prime}(i)_{Y}( D_Y)),
}
$$
where $j^{\prime}: Y\setminus D_Y\hookrightarrow Y$ is the natural open immersion.
 \item The total Chern class map $C^{\synt,m}_{X\setminus D}$ is compatible with addition.  The augmented total Chern class map $\wt{C}^{\synt,m}_{X\setminus D}$ is compatible with products. 
\item The   total Chern class map
$$C^{\synt,m}_{X\setminus D}: \quad j_*K_{X\setminus D}\to \prod_{i\geq 0}\sk(2i, \tilde{E}^{\prime}_n(i)_X(D)),\quad n\geq 1,
$$
is an extension of  the syntomic-\'etale Chern class map
$$[p^{2mi}]C^{\synt}_{X}: \quad K_X\to \prod_{i\geq 0}\sk(2i, \tilde{E}^{\prime}_n(i)_X),\quad n\geq 1.
$$
Two such extensions become equal after multiplication by $[p^{mi}]$.
\end{enumerate}

 Similarly for the Chern classes with values in $E_n(i)$-cohomology.
\end{theorem}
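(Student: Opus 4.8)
The plan is to deduce every assertion from the corresponding property of the classical (truncated) syntomic-\'etale Chern class maps of Section \ref{truncated-kolo} by restricting to the open locus $X\setminus D$, where by construction $C^{\synt,m}_{X\setminus D}$ restricts to $[p^{2mi}]C^{\synt}_{X\setminus D}$, and then lifting the resulting identities back to $X$ by means of Lemma \ref{kolo1}. Throughout one uses $G_X(D)\simeq j_*K_{X\setminus D}$ (the $G$-theory lemmas of Section 4) and the defining formula $C^{\synt,m}_{X\setminus D}=[p^{mi}]\circ C^{\synt}_{X\setminus D}(m)$, where $C^{\synt}_{X\setminus D}(m)$ is a (non-unique) lift of $[p^{mi}]C^{\synt}_{X\setminus D}$ through the canonical map $\prod_i\sk(2i,\wt{E}^{\prime}_n(i)_X(D))\to \R j_*\prod_i\sk(2i,\wt{E}^{\prime}_n(i)_{X\setminus D})$.

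The one mechanism used everywhere is the following. The presheaves $j_*K_{X\setminus D}$, $j_*K_{X\setminus D}\times j_*K_{X\setminus D}$ and $j_*K_{X\setminus D}\wedge j_*K_{X\setminus D}$ are built from $0$-th spaces of $K$-theory spectra, hence have homotopy presheaves concentrated in nonnegative degrees, so Lemma \ref{kolo1} applies to each of them: for such $\sll$ the restriction map
$$\Hom_{\spp}(\sll,\sk(2i,\wt{E}^{\prime}_n(i)_X(D)))\longrightarrow \Hom_{\spp}(\sll,\R j_*\sk(2i,\wt{E}^{\prime}_n(i)_{X\setminus D}))$$
has kernel annihilated by $p^{Ni}$, with $N$ the universal constant of Lemma \ref{kolo1}. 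Since $m\geq N$, this already shows $C^{\synt,m}_{X\setminus D}$ is independent of the chosen lift; more importantly, whenever two maps into $\prod_i\sk(2i,\wt{E}^{\prime}_n(i)_X(D))$ can be written as $[p^{mi}]\circ L_1$ and $[p^{mi}]\circ L_2$ with $L_1,L_2$ lifts of one and the same map over $X\setminus D$, they coincide. So every item reduces to: identify the restriction of each side to $X\setminus D$ by the classical property, check those restrictions agree, and recognise each side structurally as $[p^{mi}]$ composed with a lift.

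Concretely: (1) follows from compatibility of the classical syntomic-\'etale classes with reduction (analogue of Lemma \ref{prop}(4)) and the fact that the reduction of a level-$n$ lift is a level-$n_1$ lift. For (2), $[p^{(m_2-m_1)i}]C^{\synt}_{X\setminus D}(m_1)$ is an admissible choice of $C^{\synt}_{X\setminus D}(m_2)$, whence $C^{\synt,m_2}_{X\setminus D}=[p^{(2m_2-m_1)i}]C^{\synt}_{X\setminus D}(m_1)=[p^{2(m_2-m_1)i}]C^{\synt,m_1}_{X\setminus D}$ on the nose. For (3) one uses functoriality of the classical syntomic-\'etale classes (analogue of Lemma \ref{universal}(1)) and of the complexes $\wt{E}^{\prime}_n(i)$: both $\pi^*C^{\synt,m}_{X\setminus D_X}$ and $C^{\synt,m}_{Y\setminus D_Y}\circ\pi^*$ are $[p^{mi}]$ composed with a lift of $[p^{mi}]C^{\synt}_{Y\setminus D_Y}\circ\pi^*$, so they agree once one has the version of Lemma \ref{kolo1} over $Y$ pushed forward along $\R\pi_*$ (which only enlarges the annihilating constant by a factor controlled by $\dim Y$ and the fibre dimension of $\pi$). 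For (4) the crucial point is that the weight-$i$ twist $[p^{mi}]$, resp.\ $[p^{2mi}]$, intertwines the Whitney sum $\ast$ and the Grothendieck product $\star$ with themselves, since $p^{ma}\cdot p^{mb}=p^{m(a+b)}$ in the grading; combined with the classical Whitney-sum and product formulas (analogue of Lemma \ref{universal}(2),(3)) this identifies the two sides over $X\setminus D$, and the mechanism above concludes. For (5): the composite of $C^{\synt,m}_{X\setminus D}$ with $K_X\to j_*K_{X\setminus D}$ and the canonical map to the non-logarithmic complex restricts over $X\setminus D$ to $[p^{2mi}]C^{\synt}_{X\setminus D}$, which over $X$ — where the comparison with the non-logarithmic complex applies — gives $[p^{2mi}]C^{\synt}_X$, so $C^{\synt,m}_{X\setminus D}$ is an extension; and any two extensions restrict to the same map over $X\setminus D$, hence differ by an element killed by $p^{Ni}$ by Lemma \ref{kolo1}, so become equal after $[p^{mi}]$. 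The $E_n(i)$-case for $i\leq p-2$ runs the same way but with no twist at all: by Corollary \ref{reduction-kolo}(1) the relevant restriction map is already a quasi-isomorphism, so no lifting ambiguity occurs.

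I expect the only genuine obstacle to be the bookkeeping: keeping the three twist factors $[p^{mi}]$, $[p^{2mi}]$, $[p^{2(m_2-m_1)i}]$ mutually consistent, verifying that each auxiliary presheaf to which Lemma \ref{kolo1} is applied really has homotopy presheaves in nonnegative degrees, and establishing the $\R\pi_*$-pushed-forward form of Lemma \ref{kolo1} needed for (3). Everything else is formal, given the classical properties collected at the end of Section \ref{truncated-kolo} and the $p^{Ni}$-approximation of Lemma \ref{kolo1}.
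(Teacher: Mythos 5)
Your proposal follows the same architecture as the paper's proof: claims (1)--(2) are formal consequences of the construction, (3)--(4) reduce to the classical properties of Section \ref{truncated-kolo} via the $p^{Ni}$-annihilation of Lemma \ref{kolo1} together with the homogeneity of the Whitney-sum and product polynomials, and the $E_n$-case needs no twist. The one genuine divergence is the uniqueness half of (5): you deduce it from Lemma \ref{kolo1}, observing that two extensions of $[p^{2mi}]C^{\synt}_X$ agree after restriction to $X\setminus D$ (since $j^*\iota=\id$ for $\iota\colon K_X\to j_*K_{X\setminus D}$), so their difference lies in the kernel of restriction and is killed by $p^{Ni}$; the paper instead factors the difference through the cofiber $i_*G_{D}[1]$ of the localization sequence $G(D)\to G(X)\to G(X\setminus D)$ and applies Lemma \ref{keylemma}. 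These are the two complementary halves of the same localization triangle and both are available in the paper, so your route is equally valid and arguably avoids the (mildly delicate) factorization through a cofiber in the homotopy category. One point to tighten: in (3) you concede that pushing Lemma \ref{kolo1} forward along $\R\pi_*$ "enlarges the annihilating constant by a factor controlled by $\dim Y$"; taken literally this would only give commutativity of the functoriality square after an extra twist, whereas the theorem asserts it on the nose. The paper is silent on this, and the intended resolution is that $m$ is taken at least as large as the constant of Lemma \ref{kolo1} for every scheme in play (the constant being monotone in the dimension), so you should state that normalization rather than leave an unquantified extra factor.
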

\begin{proof}
The first two claims are immediate from construction. The next two  follow  from Section \ref{truncated-kolo} and Lemma \ref{kolo1}; to control the constants we  use  the fact that the Whitney sum formula and the product formula involve homogenous polynomials of the right degrees. For the fifth claim it remains to show that an extension  of the Chern class maps
      $[p^{2mi}]C^{\synt}_{X}$ to $X\setminus D$ is unique up to multiplication by $[p^{mi}]$. To do this, consider  the localization homotopy cofiber sequence
 $$G(D)\stackrel{i_*}{\to} G(X) \to G(X\setminus D)
 $$
and apply Lemma \ref{keylemma} below  to $\sll=i_*G_D[1]$.
\end{proof}
\begin{lemma}
\label{keylemma}
Let $\sll$ be a complex of pointed presheaves of simplicial sets on $D_1$ with homotopy presheaves concentrated in nonnegative degrees. 
\begin{enumerate}
\item For $0\leq i\leq p-2$, we have 
$$\Hom_{\spp}(i_*\sll,\sk(2i,\wt{E}_n(i)_X(D)))=0.
$$
\item 
For $ 0\leq i$, the group
$$\Hom_{\spp}(i_*\sll,\sk(2i,\wt{E}^{\prime}_n(i)_X(D)))
$$
is annihilated by $p^{Ni}$, where $N$ is the constant from Lemma \ref{kolo1}.
\end{enumerate}
\end{lemma}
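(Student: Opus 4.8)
The plan is to deduce both statements formally from Lemma~\ref{kolo1}, applied to $i_*\sll$, together with the elementary observation that $i_*\sll$ restricts to a point on $X\setminus D$.

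First I would note that $i_*\sll$ is again a complex of pointed presheaves of simplicial sets on $X$ whose homotopy presheaves are concentrated in nonnegative degrees, since pushforward along the closed immersion $i$ is exact and commutes with the formation of homotopy presheaves. Hence Lemma~\ref{kolo1}, applied with $\sll$ there replaced by $i_*\sll$, shows that the kernel of the natural map
\[
\Hom_{\spp}\bigl(i_*\sll,\ \sk(2i,\wt{E}^{\prime}_n(i)_{X}(D))\bigr)\ \longrightarrow\ \Hom_{\spp}\bigl(i_*\sll,\ \R j_*\sk(2i,\wt{E}^{\prime}_n(i)_{X\setminus D})\bigr)
\]
is annihilated by $p^{Ni}$, where $j\colon X\setminus D\hookrightarrow X$ is the complementary open immersion.

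The next step is to see that the target of this map is trivial. Restriction $j^{*}$ is exact, hence (without any derived functor) left adjoint to $\R j_{*}$ on the relevant homotopy categories, so the target is isomorphic to $\Hom\bigl(j^{*}i_*\sll,\ \sk(2i,\wt{E}^{\prime}_n(i)_{X\setminus D})\bigr)$, computed in the homotopy category of pointed simplicial presheaves on $(X\setminus D)_{\Nis}$. Since $i_*\sll$ is pushed forward from $D$, its restriction $j^{*}i_*\sll$ is contractible (the recollement identity $j^{*}\R i_{*}\simeq 0$), so this $\Hom$-group is trivial. Consequently the displayed map is the zero map, its kernel is the whole source, and the source is therefore annihilated by $p^{Ni}$; this is assertion~(2). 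For assertion~(1) I would run the same argument with $\wt{E}_n(i)$ in place of $\wt{E}^{\prime}_n(i)$ and $0\le i\le p-2$: here the corresponding analogue of Lemma~\ref{kolo1} holds with no constant at all, because the natural map $\wt{E}_n(i)_{X}(D)\to\R j_*\wt{E}_n(i)_{X\setminus D}$ is an honest quasi-isomorphism---its mapping fiber vanishes by Corollary~\ref{reduction-kolo}(1), Theorem~\ref{keylemma10}(1) and the Beilinson--Lichtenbaum property, which together force $\R j_*\wt{E}_n(i)_{X\setminus D}$ to already be concentrated in cohomological degrees $\le i$---rather than merely being a $p^{Ni}$-quasi-isomorphism; hence the source is isomorphic to the trivial target and vanishes.

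Thus, once Lemma~\ref{kolo1} and Corollary~\ref{reduction-kolo} are available, the present lemma is essentially formal, and the only points genuinely requiring care---the place where whatever little difficulty there is will sit---are the two homotopy-theoretic bookkeeping inputs used above: that the identification of maps in $\spp$ into Dold--Puppe objects with morphisms in the derived category of sheaves (carried out inside the proof of Lemma~\ref{kolo1}) is unaffected by replacing $\sll$ with $i_*\sll$; and that $j^{*}i_*\sll$ is genuinely contractible in the homotopy category, which---since $j^{*}$ of the naive presheaf $i_*\sll$ is only levelwise the constant presheaf with value $\sll(\emptyset)$---requires first passing to a fibrant replacement and then invoking $j^{*}\R i_{*}\simeq 0$. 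Neither presents a real obstacle.
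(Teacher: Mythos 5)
Your proof is correct, but it assembles the ingredients differently from the paper, and the comparison is worth recording. The paper's proof goes through the \emph{other} adjunction: it rewrites $\Hom_{\spp}(i_*\sll,\sk(2i,\wt{E}^{(\prime)}_n(i)_X(D)))$ as $\Hom$ in the derived category out of $C(\sll^a)$ into $\R i^!E^{(\prime)}_n(i)_X(D)[2i]$, and then computes $\R i^!$ of the coefficient complex directly — re-running the identification of $E^{(\prime)}_n(i)_X(D)$ with $\R j'_*{\mathbf Z}/p^n(i)_{\M}$ via Theorems \ref{keylemma11}/\ref{keylemma10} and Beilinson--Lichtenbaum, using $\R i^!\R j'_*=0$, and for part (2) separately bounding the $p$-torsion and the cohomological amplitude of the error term. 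You never touch $\R i^!$: you feed $i_*\sll$ into Lemma \ref{kolo1} and kill the target by the adjoint recollement identity $j^*i_*\simeq *$, so that part (2) becomes a purely formal consequence of Lemma \ref{kolo1}, with all the hard content (the comparison with motivic cohomology, the amplitude bound, the constant $N$) already packaged there; your constant therefore agrees with the paper's. What your route buys is economy and robustness — the vanishing of $\Hom(i_*\sll,\R j_*(-))$ needs no information about the coefficients — at the cost of part (1) still requiring real input: Corollary \ref{reduction-kolo}(1) only gives $\tau_{\leq i}\se_n(i)_X\simeq\tau_{\leq i}\R j_*\se_n(i)_{X\setminus D}$ with the truncation \emph{outside} $\R j_*$, so to upgrade the $p^{Ni}$-statement of Lemma \ref{kolo1} to an honest isomorphism you must, as you acknowledge, still invoke the motivic identification to see that $\R j_*$ of the truncated complex is concentrated in degrees $\leq i$; that is precisely the computation the paper carries out inside its own proof, so nothing is circumvented there.
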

\begin{proof} We start with the first claim.
 Take $p-2\geq i\geq 0$.  We have 
\begin{align*}
\Hom_{\spp}(i_*\sll, & \sk(2i,\wt{E}_n(i)_X(D)))  \simeq \Hom_{\sss}(i_*\sll^a,\sk(2i,\wt{E}_n(i)_X(D)))\\
& \simeq \Hom_{D}(i_*C(\sll^a), E_n(i)_X(D)[2i])\simeq \Hom_{D}(C(\sll^a). \R i^!E_n(i)_X(D)[2i]).
\end{align*}

  We claim that $\R i^!E_n(i)_X(D)=0$. Indeed, for $X$ semistable over ${\so_K}^{\times}$, by Theorem \ref{keylemma11}, we have a quasi-isomorphism
  $E_n(i)_X(D)_{\Nis}\stackrel{\sim}{\to}\tau_{\leq i}\R \varepsilon_*\R j_*{\mathbf Z}/p^n(i)$, where $\varepsilon:X_{\eet}\to X_{\Nis}$ is the projection and 
  $j:X_K\setminus D_{K}\hookrightarrow X$ is the natural open immersion.
  But the Beilinson-Lichtenbaum conjecture implies that
\begin{align*}
\tau_{\leq i}\R \varepsilon_*\R j_*{\mathbf Z}/p^n(i)  \simeq \tau_{\leq i}\R j_*\R \varepsilon_*{\mathbf Z}/p^n(i)\simeq \tau_{\leq i}\R j_*\tau_{\leq i}\R \varepsilon_*{\mathbf Z}/p^n(i)
   & \simeq \tau_{\leq i}\R j_*{\mathbf Z}/p^n(i)_{\M}
\end{align*}
and we have 
$$\tau_{\leq i}\R j_*{\mathbf Z}/p^n(i)_{\M}\simeq \tau_{\leq i}j_*{\mathbf Z}/p^n(i)_{\M}\simeq j_*{\mathbf Z}/p^n(i)_{\M}\simeq \R j_*{\mathbf Z}/p^n(i)_{\M}.$$
Hence 
$$\R i^!E_n(i)_X(D)\simeq \R i^!\R j_*{\mathbf Z}/p^n(i)_{\M}=0,
$$
as wanted.

  If $X$ is a semistable scheme over $\so_K$ with a smooth special fiber,  by Theorem \ref{keylemma10}, we have a quasi-isomorphism
  $$E_n(i)_X(D)_{\Nis}\stackrel{\sim}{\to}\tau_{\leq i}\R \varepsilon_*\R j^{\prime}_*{\mathbf Z}/p^n(i)_{\M},$$
   where  
  $j^{\prime}:X\setminus D\hookrightarrow X$ is the natural open immersion.
  But by the Beilinson-Lichtenbaum conjecture in mixed characteristic \cite[Theorem 1.2.2]{GD} that states that we have a quasi-isomorphism
$$
\tau_{\leq i}\R \varepsilon_*{\mathbf Z}/p^n(i)_{\M}
    \simeq {\mathbf Z}/p^n(i)_{\M}
$$ 
and by the fact that 
$\tau_{\leq i}{\mathbf Z}/p^n(i)_{\M}\stackrel{\sim}{\to}{\mathbf Z}/p^n(i)_{\M}$  \cite[Cor. 4.2]{GD}, we have
\begin{align*}
\tau_{\leq i}\R \varepsilon_*\R j^{\prime}_*{\mathbf Z}/p^n(i) & \simeq \tau_{\leq i}\R j^{\prime}_*\R \varepsilon_*{\mathbf Z}/p^n(i)\simeq \tau_{\leq i}\R j^{\prime}_*\tau_{\leq i}\R \varepsilon_*{\mathbf Z}/p^n(i)
    \simeq \tau_{\leq i}\R j^{\prime}_*{\mathbf Z}/p^n(i)_{\M}\\
    & \simeq \tau_{\leq i}j^{\prime}_*{\mathbf Z}/p^n(i)_{\M}\simeq j^{\prime}_*{\mathbf Z}/p^n(i)_{\M}\simeq \R j^{\prime}_*{\mathbf Z}/p^n(i)_{\M}.
\end{align*}
 Hence 
$$\R i^!E_n(i)_X(D)\simeq \R i^!\R j^{\prime}_*{\mathbf Z}/p^n(i)_{\M}=0,
$$
as wanted.

  For the second claim of the lemma, we can assume that  $i\geq 1$ (the case of $i=0$ being treated above). As above we compute that
 $$\Hom_{\spp}(i_*\sll,  \sk(2i,\wt{E}^{\prime}_n(i)_X(D)))  \simeq \Hom_{D}(C(\sll^a), \R i^!E^{\prime}_n(i)_X(D)[2i]).
 $$
 We need to show that
  $\Hom_{D}(C(\sll^a), \R i^!E^{\prime}_n(i)_X(D)[2i])$ is annihilated by $p^{Ni}$ for a constant
  $N$ as above. It suffices to show that $\R i^!E^{\prime}_n(i)_X(D)$ has cohomology annihilated by $p^{Ni}$, $N$ as in Lemma \ref{kolo1},  and that $\R i^!E^{\prime}_n(i)_X(D)$ has cohomological length less than ${c_1}$ for a constant  $c_1$ depending only on $d$.

 Concerning the cohomological length of $\R i^!E^{\prime}_n(i)_X(D)$, since Nisnevich topology of $X$ has cohomological dimension less than $d$, it suffices to show that $E^{\prime}_n(i)_X(D)$ has cohomological length less than ${c_1}$. By (\ref{Niis}) we have a distinguished triangle (for $i_0: X_0\hookrightarrow X$)
$$j_{\Nis !}\R \varepsilon_*\R j^{\prime}_*{\mathbf Z}/p^n(i)^{\prime}\to \R \varepsilon_*E^{\prime}_n(i)_X(D)\to i_{0 *}\R \varepsilon_*S^{\prime}_n(i)_X(D).
$$ It suffices thus to show that both 
$\R \varepsilon_*\R j^{\prime}_*{\mathbf Z}/p^n(i)^{\prime}$ and 
$ \R \varepsilon_*S^{\prime}_n(i)_X(D)$ have cohomological length less than $c_1$. But this is clear since the first complex has length bounded by $2d$ - the cohomological dimension of the \'etale topos of $X$ and the second complex has length bounded by $2d+3$ ($2d+2$ being the length of the (filtered) crystalline cohomology of $X/W(k)$).

  It remains to show that $\R i^!E^{\prime}_n(i)_X(D)$ has cohomology annihilated by $p^{Ni}$. Let us look first at the case when $X$ is semistable  over $\so_K$ with a smooth special fiber. 
By  Theorem \ref{keylemma10} we have a distinguished triangle
\begin{equation}
\label{kwak}
\scc \to E^{\prime}_n(i)_X(D) \to \tau_{\leq i}\R j^{\prime}_*{\mathbf Z}/p^n(i)_{\M},
\end{equation}
where $\scc$ is a complex whose cohomology is annihilated by $p^{Ni}$.  Arguing as above we have that 
$\R i^!\tau_{\leq i}\R j^{\prime}_*{\mathbf Z}/p^n(i)_{\M}=0$. Hence $
\R i^!\scc \stackrel{\sim}{\to} \R i^!E^{\prime}_n(i)_X(D) $. Since Nisnevich topos of $X$ has cohomological dimension less than $d$ it suffices now to show that the complex $\scc$ has cohomological length less than $c_1$ for a constant $c_1$ depending only on $d$. By the distinguished triangle (\ref{kwak}), since we have shown this for $ E^{\prime}_n(i)_X(D) $,  it remains to show it for $ \tau_{\leq i}\R j^{\prime}_*{\mathbf Z}/p^n(i)_{\M}$. Hence it suffices to show that for a log-scheme $T$, smooth over ${\so_K}$
the cohomology groups  $H^*(T_{\tr,\Nis},{\mathbf Z}/p^n(i)_{\M})$ vanish in degree  larger than twice the dimension $d$ of $T$. But this is clear since Nisnevich topos has cohomological dimension $d$ and cohomology sheaves of ${\mathbf Z}/p^n(i)_{\M}$ are trivial above degree $d$ \cite[Theorem 1.2]{GD}.

  Let now $X$ be semistable over ${\so_K}^{\times}$. Arguing as above using Theorem \ref{keylemma11} we reduce to showing that $ \tau_{\leq i}\R j^{\prime}_*{\mathbf Z}/p^n(i)_{\M}$ has cohomological length less than $c_1$ - a constant depending only on $d$. This can be treated as above and we have finished the proof of our lemma.
\end{proof}

 Having the universal Chern class maps, as before, we get the (compatible) characteristic classes \begin{align*}
\overline{c}^{\synt,m}_{i,j}: \quad & K_j(X\setminus D,{\mathbf Z}/p^n) \to  H^{2i-j}(X,E^{*}_n(i)_X(D)),\quad
j\geq 2,E^*=E,E^{\prime}\\
c^{\synt,m}_{i,j}:\quad &  K_j(X\setminus D) \to  H^{2i-j}(X,E^{*}_n(i)_X(D)),\quad
j\geq 0, E^*=E,E^{\prime}.
\end{align*}
By composition with the maps $E^{\prime}_n(i)\to S^{\prime}_n(i)$ and $
  E_n(i)\to S_n(i)$ we obtain syntomic Chern classes with values in $S^{\prime}_n(i)$ and $S_n(i)$. Clearly, all these Chern classes satisfy analogs of Lemma \ref{prop}.
\begin{corollary}Set $U=X\setminus D$. 
The  syntomic Chern classes
 $$
 \overline{c}^{\synt,m}_{ij}:\quad K_j(U, \Z/p^n)\to H^{2i-j}_{\synt}(X,\sss^{\prime}_n(i)),\quad j\geq 2,
 $$
  are compatible, via the period maps $\alpha_{*,*}$ of Fontaine-Messing, 
  with \'etale Chern classes, i.e., the following diagram commutes
   $$
   \xymatrix{
   K_j(U, \Z/p^n)\ar[d]^{\jmath^*} \ar[r]^-{\overline{c}^{\synt,m}_{ij}}  & H^{2i-j}_{\synt}(X,\sss^{\prime}_n(i)) \ar[d]^{\alpha_{2i-j,i} } \\
    K_j(U_K, \Z/p^n) \ar[r]^-{p^{(m+1)i}\overline{c}^{\eet}_{ij}}  & H^{2i-j}_{\eet}(U_K,\Z/p^n(i)^{\prime}),   }
   $$
   where $\jmath:U_K\hookrightarrow U$ is the natural open immersion. We have analogous statement for $j\leq p-2$ and $\sss$-cohomology, where no twist is necessary.
  
\end{corollary}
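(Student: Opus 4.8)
The plan is to deduce the compatibility from the universal computation of Corollary \ref{syntet} together with the functoriality of the Chern class formalism in its coefficients. Recall that $\overline{c}^{\synt,m}_{ij}$ is the characteristic class attached (as in Section \ref{truncated-kolo}) to the universal total Chern class map $C^{\synt,m}_{X\setminus D}\colon j_*K_{X\setminus D}\to\prod_i\sk(2i,\wt{E}^{\prime}_n(i)_X(D))$ of Section \ref{6.1}, followed by the projection $E^{\prime}_n(i)\to S^{\prime}_n(i)$; likewise $\overline{c}^{\eet}_{ij}$ is attached to the étale universal total Chern class map. The passage from a universal total Chern class map to its characteristic classes --- evaluate on the universal bundle over $BGL(\so_{X\setminus D})$, pass to homotopy groups of the value presheaf, apply the Hurewicz map, identify with sheaf cohomology --- is functorial in the target complex, and the universal classes on $BGL(\so_{X\setminus D})$ are pulled back from the $BGL_l$. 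So it is enough to produce a morphism of coefficient complexes under which the universal syntomic-étale Chern classes on $BGL_l$ go to $p^{(m+1)i}$ times the universal étale Chern classes.

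First I would reduce everything to syntomic-étale cohomology. The class $\overline{c}^{\synt,m}_{ij}$ factors through $H^{2i-j}(X,E^{\prime}_n(i)_X(D))$, and this group carries its own period map to $H^{2i-j}_{\eet}(U_K,\Z/p^n(i)^{\prime})$ coming from the distinguished triangle (\ref{Niis}) and the map $E^{\prime}_n(i)_X(D)\to\R j_*\Z/p^n(i)^{\prime}$. Since by construction $\se^{\prime}_n(i)$ is glued from $\sss^{\prime}_n(i)$ and the étale coefficients along the Fontaine--Messing map $\alpha_i$, this period map is intertwined with $\alpha_{2i-j,i}$ by the projection $E^{\prime}_n(i)\to S^{\prime}_n(i)$ (both restrict on the special fibre to $\alpha_i$ on $\sss^{\prime}_n(i)$). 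Consequently $\alpha_{2i-j,i}\circ\overline{c}^{\synt,m}_{ij}$ equals the syntomic-étale characteristic class followed by its period map, and we may work throughout with $E^{\prime}_n(i)_X(D)$.

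Next I would unwind the construction of $C^{\synt,m}_{X\setminus D}$ from Section \ref{6.1}: after composition with $\sk(2i,\wt{E}^{\prime}_n(i)_X(D))\to\R j_*\sk(2i,\wt{E}^{\prime}_n(i)_{X\setminus D})$ it is, by Lemma \ref{kolo1} and Theorem \ref{augmented}(5), a fixed power of $p$ (the twist $p^{mi}$ introduced through the lifting, up to the normalization conventions of Section \ref{6.1}) times $\R j_*$ of the truncated syntomic-étale Chern class map $C^{\synt}_{X\setminus D}$ of Section \ref{truncated-kolo} on the open part. By Corollary \ref{reduction-kolo} the syntomic-étale period map on $X$ factors through $\R j_*$ of the one on $X\setminus D$, so applying $\alpha_i$ to $C^{\synt,m}_{X\setminus D}$ reduces to applying $\alpha_i$ to that multiple of $C^{\synt}_{X\setminus D}$ over $X\setminus D$. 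Finally Corollary \ref{syntet} gives the universal input: on $BGL_l$ the map $\alpha_i$ sends the universal syntomic-étale class $x^{\synt}_i$ to $p^i x^{\eet}_i$; by the functoriality recalled in the first paragraph this forces $\alpha_i\circ C^{\synt}_{X\setminus D,i}=p^i\,C^{\eet}_{X_K\setminus D_K,i}\circ\jmath^*$, and multiplying the two powers of $p$ produces the factor $p^{(m+1)i}$ in the statement. In the range $j\le p-2$ with $\sss$-cohomology in place of $\sss^{\prime}$, the same argument runs verbatim with \emph{both} powers trivial (no multiple is needed to build $C^{\synt}_{X\setminus D}$ into $E_n$-theory, by Theorem \ref{augmented}, and $\alpha_i(x^{\synt}_i)=x^{\eet}_i$ by Corollary \ref{syntet}), so the square commutes with no twist.

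I expect the main obstacle to be the bookkeeping of the power of $p$: one has to reconcile the two occurrences of $[p^{mi}]$ in the construction of $C^{\synt,m}$ (the multiple needed for the lift in Lemma \ref{kolo1} and the normalization $C^{\synt,m}_{X\setminus D}=[p^{mi}]C^{\synt}_{X\setminus D}(m)$), the $p^i$ coming from Corollary \ref{syntet}, and the effect of the projection $E^{\prime}_n(i)\to S^{\prime}_n(i)$, so that the total is exactly $p^{(m+1)i}$ and not a larger power; and one must verify once and for all that the Fontaine--Messing period map $\alpha_{2i-j,i}$ on plain syntomic cohomology agrees, through the factorization of $\overline{c}^{\synt,m}_{ij}$, with the glued period map on syntomic-étale cohomology, which is what makes the left vertical arrow in the diagram the composite we actually analyze. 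Both points are routine given the results already established, but getting the exponent right is the delicate part, as elsewhere in the paper.
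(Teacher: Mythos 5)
Your proposal is correct and follows essentially the same route as the paper: reduce to compatibility of the universal total Chern class maps, use functoriality (equivalently, restriction to the open part where the divisor becomes trivial) to reduce to the classical case, and invoke Corollary \ref{syntet} for the universal computation $\alpha_i(x^{\synt}_i)=p^i x^{\eet}_i$ on $BGL_l$. The exponent bookkeeping you flag as the delicate point is indeed the only subtlety, and the paper's own proof passes over it just as quickly as you do.
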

\begin{proof}We will present the argument for $\sss^{\prime}$-cohomology, the one for $\sss$-cohomology being analogous.
It suffices to show that the total universal syntomic-\'etale and \'etale Chern classes are compatible, i.e.,  that the following diagram commutes
$$
\xymatrix@C=55pt{
j_*K_{X\setminus D}\ar[r]^-{C^{\synt,m}_{X\setminus D}}\ar[d] &  \prod_{i\geq 0}\sk(2i, \tilde{E}^{\prime}_n(i)_X(D))\ar[d]^{[\alpha_i]}\\
j_{\Nis,*}j_*K_{X_K\setminus D_K}\ar[r]^-{[p^{(m+1)i}]C^{\eet}_{X_K\setminus D_K}} & \R j_{\Nis,*}\R j_*\prod_{i\geq 0}\sk(2i, \R\varepsilon_*\Z/p^n(i)^{\prime}_{X_K\setminus D_K}).
}
$$
It follows from functoriality of  syntomic-\'etale Chern classes and period maps that it suffices to show this in the case when the divisor $D$ is trivial, i.e., in the case of classical Chern classes. But this reduces to showing that
 the period map $\alpha_i: H^{2i}(BGL_l,E_n^{\prime}(i)_{\Nis})\to H^{2i}_{\eet}(BGL_{l,F},\Z/p^n(i)^{\prime})$ maps the syntomic-\'etale universal class $x^{\synt}_i$ to the \'etale universal class $p^ix^{\eet}_i$. But this we proved in Corollary \ref{syntet}.
\end{proof}

\begin{remark}
Theorem \ref{augmented}  allows us to define logarithmic Chern class maps (having all the described above properties) with values in 
 log-crystalline cohomology $H^*_{\crr}(X_n,\sj^{[*]}_{X_n})$ for $X$ semistable over $\so_K$ with a smooth special fiber.
\end{remark}

\section{Chern class maps and  Gysin sequences}We will show in this section that the  syntomic universal Chern class maps are compatible with the canonical Gysin sequences.

Let $X$ be a semistable scheme  over $\so_K^{\times}$ or a semistable scheme over $\so_K$ with a smooth special fiber. Let $D^{\prime}$ be the horizontal divisor. Assume that  $D^{\prime}=\cup_{i=1}^{m}D_i$, $m\geq 1$,  is a union of $m$  irreducible components $D_i$.
 Note that each scheme $D_i$ with the induced log-structure is of the same type as $X$  (with  at most $m-1$  components in the divisor at infinity). 
 Set  
$D=\cup_{i=2}^{m}D_i$, $Y=D_1$, $i:Y\hookrightarrow X$. 
The pairs $(X,D)$ and $(Y,D_Y)$ are of the same type as the pair $(X,D^{\prime})$ we started with but  with at most $m-1$ irreducible divisors at infinity.  
\subsection{Basic properties of syntomic cohomology}
   In this section, we will list  several basic properties of syntomic and syntomic-\'etale cohomologies. 
\subsubsection{Gysin sequences}
We start with Gysin sequences. We have the following localization sequences.
\begin{lemma}
\label{localization}
 For $r\geq 1$, there exist the following 
distinguished triangles 
\begin{align*}
i_*S^{i+1}_n(r-1)_Y(D_Y)[-2] & \stackrel{i_!}{\to }
S^{i}_n(r)_X( D)\to S^{i}_n(r)_X(Y\cup D),\quad i\geq 0;\\
i_*S_n(r-1)_Y(D_Y)[-2] & \stackrel{i_!}{\to }
S_n(r)_X( D)\to S_n(r)_X(Y\cup D),\quad r\leq p-1.
\end{align*}
There are analogous distinguished triangles for the complexes $E_n(r)$, $r\leq p-2$, $E^{\prime}_{n}(r)$, and $E^{1}_n(r)$. Moreover, for $r\geq 1$, 
there exist the following 
distinguished triangles 
\begin{align*}
i_*F^1_n(r-1)_X(Y\cup D)[-2] & \stackrel{i_!}{\to }
E^{\prime}_n(r)_{{\Nis},X}( D)\to E^{\prime}_n(r)_{{\Nis},X}(Y\cup D),\quad i\geq 0;\\
i_*E_n(r-1)_{{\Nis},Y}(D_Y)[-2] & \stackrel{i_!}{\to }
E_n(r)_{{\Nis},X}( D)\to E_n(r)_{{\Nis},X}(Y\cup D),\quad r\leq p-1.
\end{align*}
Here $F^1_n(r-1)_X(Y\cup D)$ is a complex of ${\mathbf Z}/p^n$-sheaves on $Y$ equipped with a natural map $F^1_n(r-1)_X(Y\cup D)\to E^1_n(r-1)_{{\Nis},Y}(D_Y)$ that has an inverse up to $p^{N(r-1)}$, i.e., composition either way is a multiplication by $p^{N(r-1)}$ (in the derived category), where $N$ is the constant from Theorem \ref{input1}.
\end{lemma}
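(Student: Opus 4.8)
The plan is to build the Gysin triangles in layers: first for the filtered log-de Rham (crystalline) complexes, then for the syntomic complexes $S^i_n(r)$ and $S_n(r)$, then glue in the \'etale Gysin sequence on the generic fibre to obtain the triangles for $E_n(r)$, $E^{\prime}_n(r)$ and $E^1_n(r)$ on $X_{\eet}$, and finally push everything down to $X_{\Nis}$ and truncate at level $r$. For the syntomic triangles I would work \'etale-locally in the lifted situation $X\hookrightarrow Z$, choosing $Z$ log-smooth over $W(k)$ with a compatible Frobenius lift in which $Y$ lifts to a smooth divisor. The Poincar\'e residue exact sequence of log-de Rham complexes, relating log poles along $D$, along $Y\cup D$, and the $(-1)$-shifted log-de Rham complex of $Y$ along $D_Y$, is compatible with the divided Hodge filtrations $J^{[\bullet]}$, $J^{<\bullet>}$ --- which shift by one along $Y$ since $Y$ has codimension one --- and with Frobenius, which picks up an extra factor $p$ along $Y$. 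Taking $\Cone$ of the two Frobenius maps turns this residue sequence into the first pair of distinguished triangles; the passage $S^i_n(r)\rightsquigarrow S^{i+1}_n(r-1)$ (resp.\ $S_n(r)\rightsquigarrow S_n(r-1)$) records exactly the Frobenius twist together with the filtration shift, and the $[-2]$ comes from rotating the residue triangle. Independence of the auxiliary data and gluing are checked as for the complexes themselves; no bound on $r$ is needed in the $S^i_n$ case, while the bound $r\leq p-1$ in the $S_n(r)$ case is the usual one ensuring $J^{<r>}=J^{[r]}$. These constructions are essentially those of \cite{Ts}, \cite{N2}.

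Next I would glue in the \'etale Gysin sequence on the generic fibre. Cohomological purity for the smooth divisor $Y_{\tr,K}\subset X_{\tr,K}$ produces the \'etale Gysin triangle with fibre $i_*{\mathbf Z}/p^n(r-1)^{\prime}_{Y_{\tr,K}}[-2]$, and under the identification ${\mathbf Z}/p^n(r)^{\prime}(-1)\simeq {\mathbf Z}/p^n(r-1)^{\prime}$ this is precisely the generic-fibre ingredient out of which $E^1_n(r-1)$ is glued. The Fontaine--Messing period maps $\alpha_r$, $\tilde\alpha_r$ are compatible with the syntomic and \'etale Gysin maps: by functoriality, multiplicativity of the period map and the projection formula this reduces to the known compatibility of the syntomic and \'etale cycle classes of the divisor $Y$ (\cite[Prop.\ 3.2.4]{Ts}, \cite{EN}). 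Gluing along these period maps, the complexes $E_n(r)$ (for $r\leq p-2$), $E^{\prime}_n(r)$ and $E^1_n(r)$ on $X_{\eet}$ inherit Gysin triangles whose fibre term is $i_*E_n(r-1)_Y(D_Y)[-2]$, resp.\ $i_*E^1_n(r-1)_Y(D_Y)[-2]$ --- the $E^{\prime}$-case being matched to the twisted complex $E^1$ by precisely the index bookkeeping noted above.

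Finally I would apply $\R\varepsilon_*$ and $\tau_{\leq r}$. Since $\tau_{\leq r}\R\varepsilon_* E_n(r)_X(D)=E_n(r)_{\Nis,X}(D)$ and likewise for the enlarged log-structure, the last two terms of the truncated triangle are the desired ones, and the issue is the fibre. In the clean range $r\leq p-2$ for $E$-cohomology, Theorem \ref{keylemma11} (over $\so_K^{\times}$) and Theorem \ref{keylemma10} (over $\so_K$ with smooth special fibre) identify the truncated triangle with the motivic Gysin triangle $i_*\R j^{\prime}_*{\mathbf Z}/p^n(r-1)_{\M}[-2]\to \R j^{\prime}_*{\mathbf Z}/p^n(r)_{\M}\to \R j^{\prime}_*{\mathbf Z}/p^n(r)_{\M}$ on the open complement; there purity of motivic cohomology together with the vanishing $\sh^a{\mathbf Z}/p^n(\ast)_{\M}=0$ for $a>\ast$ makes $\tau_{\leq r}$ compatible with the degree-$2$ Gysin shift on the nose, giving the second Nisnevich triangle with fibre $E_n(r-1)_{\Nis,Y}(D_Y)[-2]$. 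For $E^{\prime}$-cohomology and general $r$ one cannot expect a genuine quasi-isomorphism; instead I would \emph{define} $i_*F^1_n(r-1)_X(Y\cup D)[-2]$ to be the mapping fibre of $E^{\prime}_n(r)_{\Nis,X}(D)\to E^{\prime}_n(r)_{\Nis,X}(Y\cup D)$ (which is supported on $Y$), so that the first Nisnevich triangle holds by construction, and produce the comparison map $F^1_n(r-1)_X(Y\cup D)\to E^1_n(r-1)_{\Nis,Y}(D_Y)$ from $\R i^!$ of the untruncated syntomic-\'etale Gysin map of the previous step followed by truncation. That this map is a $p^{N(r-1)}$-quasi-isomorphism follows by comparing cohomology sheaves: by Theorems \ref{keylemma10} and \ref{keylemma11} the $E^{\prime}$-incarnation agrees with the motivic one up to $p^{Nr}$, the motivic version is a genuine quasi-isomorphism, and tracking the universal constants of \cite{CN} yields the bound $p^{N(r-1)}$.

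The main obstacle is this last step: $\tau_{\leq r}$ is not exact, so the Gysin triangles of the first two steps do not survive truncation formally. One really has to pass through the motivic model --- where purity of motivic cohomology and the Beilinson--Lichtenbaum vanishing force the correct interaction of $\tau_{\leq r}$ with the degree-$2$ Gysin shift --- and then carry the universal constants of Theorems \ref{input1}, \ref{keylemma10} and \ref{keylemma11} through the comparison. This is exactly why the fibre term of the $E^{\prime}$-sequence must be the auxiliary complex $F^1_n(r-1)_X(Y\cup D)$ rather than $E^1_n(r-1)_{\Nis,Y}(D_Y)$ itself.
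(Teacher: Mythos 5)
Your three-layer strategy --- residue sequence for filtered log-de Rham complexes giving the syntomic Gysin triangles, gluing with the \'etale Gysin triangle on the generic fibre via the period map, then pushing to the Nisnevich site and truncating with an auxiliary complex $F^1_n(r-1)_X(Y\cup D)$ absorbing the failure of $\tau_{\leq r}$ to preserve the triangle --- is exactly the architecture of the paper's proof, and your diagnosis that the last step is where the real content lies is correct. Two of your local choices differ from the paper's. First, for the surjectivity of the residue on $\sh^r$ after truncation (both in the $E_n(r)$, $r\leq p-2$ case and in the $p^{N(r-1)}$-cokernel bound for $E^{\prime}_n(r)$), you route through the motivic identification of Theorems \ref{keylemma10} and \ref{keylemma11} and purity of motivic cohomology; the paper instead argues directly with symbols, using the Bloch--Kato filtration on nearby cycles, the Gersten conjecture for Milnor $K$-theory, and (in the good-reduction case) Kurihara's exact sequence. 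Both work; the symbol argument is more self-contained at this point, while yours leans on the full comparison theorems. Second, you define $F^1_n(r-1)_X(Y\cup D)$ as the mapping fibre of $E^{\prime}_n(r)_{\Nis,X}(D)\to E^{\prime}_n(r)_{\Nis,X}(Y\cup D)$, whereas the paper defines it as the image of the truncated map onto the explicit cokernel complex $C(f,r-1)$; your version makes the triangle tautological but yields only a complex whose cohomology is supported on $Y$, not a complex of sheaves on $Y$ as the statement requires --- a repairable but real discrepancy with the lemma as stated.

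The one place where you genuinely underestimate the difficulty is the gluing step. You assert that compatibility of the period map with the syntomic and \'etale Gysin maps follows from "functoriality, multiplicativity and the projection formula," reducing to the compatibility of cycle classes of $Y$. But the period map is constructed via $\A_{\crr}$-type resolutions and does \emph{not} behave well with respect to closed immersions, so this compatibility is not formal; moreover, since $E^{\prime}_n(r)$ is defined as an explicit cone over a straightened zigzag, one needs an actual residue map of complexes $E^{\prime}_n(r)_X(Y\cup D)\to i_*E^{1}_n(r-1)_Y(D_Y)[-1]$ with the correct fibre, not merely a commuting square in the derived category (where the induced map on fibres is non-canonical). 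The paper spends the bulk of its proof on precisely this: it introduces Kummer-\'etale variants $E^{\prime}_n(r)_X(Y\cup D)_{\keet}$ and $E^{\prime}_n(r)_Y(D_Y^0)_{\keet}$ following Tsuji, realizes the residue as a surjection of complexes onto a cokernel $C(f,r-1)$, and identifies $C(f,r-1)$ with $i_*E^{1}_n(r-1)_Y(D_Y)[-1]$ through explicit commutative diagrams. Your proposal names the right input (Tsuji's compatibility, \cite{Ts2}) but compresses the hardest construction into a single sentence; as written it would not produce the distinguished triangles for the glued complexes without reproducing that machinery.
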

\begin{proof}
(1) {\em \'Etale complexes}

  We will start the proof in the case of complexes with undivided Frobenius.
Recall that we have the following short exact sequence of complexes
$$0\to 
S^{i}_n(r)_X( D)\to S^{i}_n(r)_X(Y\cup D)\lomapr{res_{Y}}i_*S^{i+1}_n(r-1)_Y(D_Y)[-1]\to 0\\
$$
This was shown in \cite[4.2,4.3]{Ts2} and follows relatively easily from two facts. First, that for a regular log-scheme $X_n$ log-smooth over $W_n(k)$, an irreducible regular divisor $D_1$ log-smooth over $W_n(k)$, and a divisor $D$ on $X_n$ such that $D_1\cup D$ is a relative simple normal crossing divisor on $X_n$ over $W_n(k)$ and $D_1\cap D$ a relative simple normal crossing divisor on $D_1$ over $W_n(k)$ the following sequence is exact 
$$
0\to \Omega^q_{X_n}(D_1)\to \Omega^q_{X_n}(D_1\cup D)\lomapr{res_{D_1}} \Omega^{q-1}_{D_{1,n}}(D_1\cap D)\to 0,\quad q\geq 1
$$
Second, that the corresponding divided power envelopes behave as expected.

  In the case of complexes $E^{\prime}_n(r)$, we want to construct the following distinguished triangle
\begin{equation}
\label{EGysin}
E^{\prime}_n(r)_X( D)\to E^{\prime}_n(r)_X(Y\cup D)\lomapr{res_{Y}}i_*E^{1}_n(r-1)_Y(D_Y)[-1],
\end{equation}
  where the first map is the natural map and the residue map is yet to be defined. Since we have Gysin sequences for syntomic and \'etale cohomologies this amounts to checking that they can be glued. Tsuji in \cite{Ts2} verified that these sequences are compatible. This is nontrivial since the period map does not behave well with respect to closed immersions. We will use his constructions to perform the gluing.

   To start, we replace the maps $Y(D_Y)\rightarrow X(D)\leftarrow X(Y\cup D)$ with the maps 
  $Y(D_Y)\stackrel{\id}{\to} Y(D_Y)\leftarrow Y(D_Y^0)$, where $Y(D_Y^0)$ is the scheme $Y$ endowed with the pullback of the log-structure of $X(Y\cup D)$.
  Consider the following commutative diagram. All the maps but the bottom rightmost map are genuine maps of complexes. 
\begin{equation}
\label{terror}
\xymatrix{
 E^{\prime}_n(r)_X( D)\ar[r]\ar@{=}[d] & E^{\prime}_n(r)_X(Y\cup D)\ar[d]^{\wr} & &\\
 E^{\prime}_n(r)_X( D)\ar@{^{(}->}[r]^-f \ar@{->>}[d]^{i^*}& E^{\prime}_n(r)_X(Y\cup D)_{\keet}\ar[d]^{i^*}\ar@{->>}^-{res}[r] & C(f,r-1)\ar[d]^{i^*}_{\wr} &\\
 i_*E^{\prime}_n(r)_Y( D_Y)\ar@{^{(}->}[r]^-{f^{\prime}} & i_*E^{\prime}_n(r)_Y( D^0_Y)_{\keet}\ar@{->>}^-{res}[r] & i_*C(f^{\prime},r-1)\ar@{-}[r]^-{\sim}& i_*E^{1}_n(r-1)_Y(D_Y)[-1]
  }
\end{equation}
The complex $E^{\prime}_n(r)_X(Y\cup D)_{\keet}$ is defined in an analogous way to $E^{\prime}_n(r)_X(Y\cup D)$ by (locally) using the map (assume that $U=\Spec(A)\to X$ is a strict \'etale map)
\begin{align*}
\alpha_{r,U}^{\prime}:\quad \Gamma(U, S^{\prime}_n(r)_{X^0,Z^0})\to  \Gamma(U^h,\theta_U & \tau_*G\overline{S}_n(r)_{U^0,Z^0})  \stackrel{\sim}{\leftarrow}\Gamma(U^h,\theta_U\tau_*G\Lambda_{U^{h}})\stackrel{\sim}{\leftarrow}\Gamma(U,\theta_U\tau_*G\Lambda_{U})\\
& \stackrel{\sim}{\leftarrow}\Gamma(U,\Cone(\eta(\tau_*G\Lambda_{U})))\to  \Gamma(U,j_{\eet !}j^{\prime}_*\tau_*G\Lambda_{U})[1]),
\end{align*}
 where $\tau: X(D)^0_{K,\tr,\keet}\to X(D)^0_{K,\tr,\eet}$ is the natural map from the Kummer \'etale topos to the \'etale topos and the subscripts $0$ refer to the log-structure induced by $Y\cup D$.
 We have a natural morphism $E^{\prime}_n(r)_X(Y\cup D)\stackrel{\sim}{\to }E^{\prime}_n(r)_X(Y\cup D)_{\keet}$ that is a quasi-isomorphism. Moreover the induced map of complexes 
 $f:E^{\prime}_n(r)_X(D)\to E^{\prime}_n(r)_X(Y\cup D)_{\keet}$ is injective \cite[Lemma 4.7.5]{Ts2}. We wrote $C(f,r-1)$ for the cokernel of this map.
 
  The complex $E^{\prime}_n(r)_Y(D^0_Y)_{\keet}$ is defined in an analogous way to $E^{\prime}_n(r)_X(Y\cup D)_{\keet}$. The local setting is as follows.
  We assume that $X$ (with the log-structure defined by $D$) is affine and we have a closed exact immersion $X\hookrightarrow Z$ over $W(k)$ into a fine log-scheme $Z$ that is log-smooth over $W(k)$ and has a compatible system of liftings of
  Frobenius $\{F_{Z_n}:Z_n\to Z_n\}$. Further we assume that $Z$ has a Cartier divisor $Z_1\hookrightarrow Z$
 defined by a global equation $g=0$ for $g\in\Gamma(Z,\so_Z)$ and such that $Y$ is a pullback of $Z_1$ and $Z_1$ equipped with the log-structure pullbacked from $Z$ is log-smooth over $W(k)$. 
 We denote by $Z^0$ the log-scheme $Z$ equipped with the log-structure induced by $Z_1$ and $M_Z$, and by $Z_1^0$ - the log-scheme equipped with the log-structure pullbacked from $Z^0$. 
 
 Let $U=\Spec(A)\to X$ be  a strict \'etale morphism. Write $U_1=Y\times_X U$. Under certain additional assumptions on $Z$ and $U$ (cf. \cite[4.7]{Ts2}) we get the following map 
\begin{align*}
\alpha_{r,U_1}^{\prime}:\quad \Gamma(U_1, S^{\prime}_n(r)_{Y^0,Z^0_1})\to  \Gamma(U^h_1,\theta_{U_1}^{\prime} & \tau_*^{\prime}G\overline{S}_n(r)_{U^0_1,Z^0_1})  \stackrel{\sim}{\leftarrow}\Gamma(U^h_1,\theta_{U_1}^{\prime}\tau_*^{\prime}G\Lambda_{U^{h}_1})\stackrel{\sim}{\leftarrow}\Gamma(U_1,\theta^{\prime}_{U_1}\tau_*^{\prime}G\Lambda_{U_1})\\
& \stackrel{\sim}{\leftarrow}\Gamma(U_1,\Cone(\eta^{\prime}(\tau_*^{\prime}G\Lambda_{U_1})))\to  \Gamma(U_1,\jmath_{\eet !}\jmath^{\prime}_*\tau_*^{\prime}G\Lambda_{U_1})[1]),
\end{align*}
 where $\tau^{\prime}: Y(D_Y)^0_{K,\tr,\keet}\to Y(D_Y)^0_{K,\tr,\eet}$ is the natural map of topoi and $\theta^{\prime}=\imath_*\imath^*\jmath_*G$ for $\imath:Y_0\hookrightarrow Y$, 
 $\jmath: Y_{K,\tr}\stackrel{\jmath^{\prime}}{\to}Y_K\stackrel{\jmath_{\eet}}{\to} Y$. We set $\eta^{\prime}(M)=\jmath_{\eet !}\jmath^{\prime}_*M\hookrightarrow \jmath_{\eet *}\jmath^{\prime}_*M$. Here the resolution 
 $\Lambda_{U_1}\stackrel{\sim}{\to}\overline{S}_n(r)_{U^0_1,Z^0_1}$ on the Kummer \'etale topos of $U_{1,K,\tr}^0$ is the one defined in \cite[4.7]{Ts2}. By globalizing, straightening, and taking cone of the morphisms $\alpha_{r,U_1}^{\prime}$ we get the complex $E^{\prime}_n(r)_Y(D^0_Y)_{\keet}$.
 
 Recall that the complex $E^{\prime}_n(r)_Y(D_Y)$ is defined locally by an analogous map
 \begin{align*}
\alpha_{r,U_1}^{\prime}:\quad \Gamma(U_1, S^{\prime}_n(r)_{Y,Z_1})\to  \Gamma(U^h_1,\theta_{U_1}^{\prime} & \overline{S}_n(r)_{U_1,Z_1})  \stackrel{\sim}{\leftarrow}\Gamma(U^h_1,\theta_{U_1}^{\prime}\Lambda_{U^{h}_1})\stackrel{\sim}{\leftarrow}\Gamma(U_1,\theta^{\prime}_{U_1}\Lambda_{U_1})\\
& \stackrel{\sim}{\leftarrow}\Gamma(U_1,\Cone(\eta(G\Lambda_{U_1})))\to  \Gamma(U_1,\jmath_{\eet !}\jmath^{\prime}_*G\Lambda_{U_1})[1]).
\end{align*}
By globalizing, straightening, and taking cone of the morphisms $\alpha_{r,U_1}^{\prime}$ we get the complex $E^{\prime}_n(r)_Y(D_Y)$.
 The induced map of complexes 
 $f^{\prime}:E^{\prime}_n(r)_Y(D_Y)\to E^{\prime}_n(r)_Y(D_Y^0)_{\keet}$ is injective \cite[Lemma 4.7.8]{Ts2}. We wrote $C(f^{\prime},r-1)$ for the cokernel of this map. It is supported on $Y$.
 
 We claim that the induced pullback map $i^*:C(f,r-1)\stackrel{\sim}{\to} C(f^{\prime},r-1)$ is a quasi-isomorphism. To verify this notice that locally
 $C(f,r-1)$ is defined as the cone of the  straightening of the following zigzag of maps
 \begin{align*}
 \Gamma(U, i_*S^1_n(r-1)_{Y,Z_1}[-1])\to  \Gamma(U^h,\theta_U & L_U)  \stackrel{\sim}{\leftarrow}\Gamma(U^h,\theta_UC_{U^{h}})\stackrel{\sim}{\leftarrow}\Gamma(U,\theta_UC_U)\\
& \stackrel{\sim}{\leftarrow}\Gamma(U,\Cone(\eta(GC_{U})))\to  \Gamma(U,j_{\eet !}j^{\prime}_*GC_{U})[1]),
 \end{align*}
 where $C$ and $L$ are the cokernels of the injective morphisms $\Lambda_{X}\to \tau_*G(\Lambda_{X^0})$ and $\overline{S}_n(r)_{U,Z}\to \tau_*G\overline{S}_n(r)_{U^0,Z^0}$, respectively.
 
 Similarly, $C(f^{\prime},r-1)$ is defined as the cone of the straightening of the following zigzag of maps (c.f, \cite[4.7.12]{Ts2})
 \begin{align*}
 \Gamma(U_1, S^1_n(r-1)_{Y,Z_1}[-1])\to  \Gamma(U^h_1,\theta_{U_1}^{\prime} & L^{\prime}_{U_1})  \stackrel{\sim}{\leftarrow}\Gamma(U^h_1,\theta_{U_1}^{\prime}C^{\prime}_{U^{h}_1})\stackrel{\sim}{\leftarrow}\Gamma(U_1,\theta^{\prime}_{U_1}C^{\prime}_{U_1})\\
& \stackrel{\sim}{\leftarrow}\Gamma(U_1,\Cone(\eta^{\prime}(GC^{\prime}_{U_1})))\to  \Gamma(U_1,\jmath_{\eet !}\jmath^{\prime}_*GC^{\prime}_{U_1})[1]),
 \end{align*}
 where $C^{\prime}$ and $L^{\prime}$ are the cokernels of the injective morphisms $\Lambda_{Y}\to \tau_*G(\Lambda_{Y^0})$ and $\overline{S}_n(r)_{U_1,Z_1}\to \tau_*G\overline{S}_n(r)_{U_1^0,Z_1^0}$, respectively. 
 There exists a natural map ($i^*$) from the first zigzag to the second one. Hence the two maps are equal in the derived category and we obtain a  genuine map of complexes that yields a quasi-isomorphism $i^*:C(f,r-1)\stackrel{\sim}{\to} C(f^{\prime},r-1)$, as wanted. 
 
 It remains to construct a quasi-isomorphism $C(f^{\prime},r-1)\simeq E^{1}_n(r-1)_Y(D_Y)[-1]$. Consider the following  two commutative diagrams of maps of complexes. Here we set
  $\Gamma^h(M)=\Gamma(U_1^h,\theta^{\prime}_{U_1}M)$, $\Gamma(M)=\Gamma(U_1,\theta^{\prime}_{U_1}M)$, $\Lambda(a)={\mathbf Z}/p^n(a)^{\prime}$.
 \begin{equation}
 \label{first-diagram}
 \xymatrix{
 \Gamma^h(\Lambda(r-1)[-1])\ar[d]_{\wr}\ar[r]^-{h} & \Gamma^h(\Lambda(r)(-1)[-1])& \Gamma^h(\Lambda(r)(-1)[-1])\ar@{=}[l] & \Gamma(\Lambda(r)(-1)[-1]) 
 \ar[l]^-{\sim}\\
 \Gamma^h(\sh^1(\overline{S}_n(r-1)_{Y,Z_1})[-2])\ar[r] & \Gamma^h(\sh^1(L^{\prime}_{U_1})[-1])\ar[u]^{\wr} & \Gamma^h(\sh^1(C^{\prime}_{U_1^h})[-1])\ar[u]^{\wr}\ar[l]^-{\sim} & \Gamma(\sh^1(C^{\prime}_{U_1})[-1])\ar[l]^-{\sim}\ar[u]^{\wr}\\
 \Gamma^h(\tau_{\leq 1}\overline{S}_n(r-1)_{Y,Z_1}[-1])\ar[d]^{\wr}\ar[r]\ar[u]^{\wr} & \Gamma^h(\tau_{\leq 1}L^{\prime}_{U_1})\ar[d]^{\wr}\ar[u]^{\wr} & \Gamma^h(\tau_{\leq 1}C^{\prime}_{U_1^h})\ar[d]^{\wr}\ar[u]^{\wr}\ar[l]^{\sim} & \Gamma(\tau_{\leq 1}C^{\prime}_{U_1})\ar[d]^{\wr}\ar[u]^{\wr}\ar[l]^{\sim}\\
 \Gamma^h(\overline{S}_n(r-1)_{Y,Z_1}[-1])\ar[r]& \Gamma^h(L^{\prime}_{U_1}) & \Gamma^h(C^{\prime}_{U_1^h})\ar[l]^{\sim} & \Gamma(C^{\prime}_{U_1})\ar[l]^{\sim}
 }
 \end{equation}
 \begin{equation}
 \label{second-diagram}
 \xymatrix{
 \Gamma(\Lambda(r)(-1)[-1]) & \Gamma(\Cone(\eta^{\prime}(G\Lambda(r)(-1)[-1]))) \ar[l]^-{\sim}\ar[r] & \Gamma(\jmath_{\eet !}\jmath^{\prime}_*G\Lambda(r)(-1))\\
 \Gamma(\sh^1(C^{\prime}_{U_1})[-1])\ar[u]^{\wr} & \Gamma(\Cone(\eta^{\prime}(G\sh^1(C^{\prime}_{U_1})[-1])))\ar[l]^-{\sim}\ar[r]\ar[u]^{\wr} & \Gamma(\jmath_{\eet !}\jmath^{\prime}_*G\sh^1(C^{\prime}_{U_1}))\ar[u]^{\wr}\\
 \Gamma(\tau_{\leq 1}C^{\prime}_{U_1})\ar[u]^{\wr} \ar[d]^{\wr} & \Gamma(\Cone(\eta^{\prime}(G\tau_{\leq 1}C^{\prime}_{U_1})))\ar[l]^-{\sim}\ar[r]\ar[u]^{\wr}\ar[d]^{\wr} & \Gamma(\jmath_{\eet !}\jmath^{\prime}_*G\tau_{\leq 1}C^{\prime}_{U_1}[1])\ar[u]^{\wr}\ar[d]^{\wr}\\
 \Gamma(C^{\prime}_{U_1}) & \Gamma(\Cone(\eta^{\prime}(GC^{\prime}_{U_1})))\ar[l]^-{\sim}\ar[r] & \Gamma(\jmath_{\eet !}\jmath^{\prime}_*GC^{\prime}_{U_1}[1])
 }
\end{equation}
 The map $h$ is defined to make the upper left corner square of the first diagram commute. The bottom leftmost map in the same diagram is that defined in \cite[4.7.10]{Ts2}. By \cite[Prop. 4.8.4]{Ts2} it is equal to the natural map. Take the two maps
 $\Gamma(U_1,\overline{S}_n(r-1)_{Y,Z_1}[-1])\to \Gamma(U_1,\Lambda(-1)[-1])$ obtained by composing the maps from the the left and upper sides of the first diagram and the upper side of the second diagram, respectively, from the lower and right sides.
  Composing these maps with the map $\Gamma(U_1, S^1_n(r-1)_{Y,Z_1}[-1])\to \Gamma(U_1^h,\overline{S}_n(r-1)_{Y,Z_1}[-1])$ \cite[4.7.11]{Ts2} we obtain maps that define the complexes $E^{1}_n(r-1)_Y(D_Y)[-1]$ and $C(f^{\prime},r-1)$, respectively \cite[4.7.10]{Ts2}. Now our diagrams describe a specific quasi-isomorphism $C(f^{\prime},r-1)\simeq E^{1}_n(r-1)_Y(D_Y)[-1]$, as wanted.
        
 We finish by setting the residue map  $res_{Y}: E^{\prime}_n(r)_X(Y\cup D)\to i_*E^{1}_n(r-1)_Y(D_Y)[-1]$ to be equal to the composition of the maps from the diagram (\ref{terror})
 $$ E^{\prime}_n(r)_X(Y\cup D)\stackrel{\sim}{\to} E^{\prime}_n(r)_X(Y\cup D)_{\keet}\to C(f,r-1) \stackrel{i^*}{\to} i_*C(f^{\prime},r-1)\simeq  i_*E^{1}_n(r-1)_Y(D_Y)[-1],
 $$
 and by pushing everything down to the Nisnevich site using $\varepsilon_*G$ for $\varepsilon: X_{\eet}\to X_{\Nis}.$
 The same diagram shows now that we have the distinguished triangle (\ref{EGysin}). By construction, the above residue map is compatible with the syntomic and \'etale residue maps \begin{align}
\label{residues}
S^{\prime}_n(r)_X(Y\cup D) & \to i_*S^1_n(r-1)_Y(D_Y)[-1],\\
j_{\Nis !}\R^r(j^{\prime}\varepsilon)_*\Z/p^n(r)_{X_{\tr}} &  \to   i_*\jmath_{\Nis !}\R^{r-1} \varepsilon_*\Z/p^n(r-1)_{Y_{\tr}}=  j_{\Nis !}i_*\R^{r-1} \varepsilon_*\Z/p^n(r-1)_{Y_{\tr}}.\notag
\end{align}
 
  The constructions for the complexes $S_n(r)$, $r\leq p-1$, and  $E_n(r)$, $r\leq p-2$ are analogous. So is the construction for the complexes $E^1_n(r)$ after one notices that the residue maps are compatible with the maps $\omega_0$ and $\omega_1$. 
 
  (2) {\em Nisnevich complexes}\\
  (2a) {\em The case of $r\geq p-2$}
  
 In the case of complexes $E_n(r)_{\Nis}$, $r\leq p-2$, we  construct  the triangle
\begin{equation}
\label{ETGysin}
E_n(r)_{{\Nis},X}( D)\to E_n(r)_{{\Nis},X}(Y\cup D)\lomapr{res_{Y}}i_*E_n(r-1)_{{\Nis},Y}(D_Y)[-1]
\end{equation}
by truncating the complexes in the analog of the distinguished triangle (\ref{EGysin}). To show that the obtained triangle  is distinguished it suffices to show that the residue map induces a surjection
$$res_Y: \sh^r(E_n(r)_{{\Nis},X}(Y\cup D))\to i_*\sh^{r-1}(E_n(r-1)_{{\Nis},Y}(D_Y)).
$$
To simplify the notation we will assume that the divisor $D$ is trivial. Consider the following commutative diagram (see (\ref{residues}))
$$
\xymatrix{\sh^r(S_n(r)_X(Y))\ar@{->>}[r]^-{res_Y} & i_*\sh^{r-1}(S_n(r-1)_Y)\\
 \sh^r(E_n(r)_{{\Nis},X}(Y))\ar@{->>}[u]\ar[r]^-{res_Y} &  i_*\sh^{r-1}(E_n(r-1)_{{\Nis},Y})\ar@{->>}[u]\\
  j_{\Nis !}\R^r(j^{\prime}\varepsilon)_*\Z/p^n(r)_{X_{\tr}} \ar[u]\ar@{->>}[r]^-{\res_Y} &  i_*\jmath_{\Nis !}\R^{r-1} \varepsilon_*\Z/p^n(r-1)_{Y_K}=  j_{\Nis !}i_*\R^{r-1} \varepsilon_*\Z/p^n(r-1)_{Y_K}\ar[u]
}
$$

   It suffices to show that the bottom and the top residue maps in the above diagram are surjective. For that we can assume that $X$ is affine and $Y=(T)$. Assume first that $X$ is semistable over $\so_K^{\times}$. Since the period map is compatible with Gysin sequences by \cite[Prop. 4.5.3]{Ts2}, Theorem \ref{input0} implies that for the top map it suffices to show surjectivity of the map 
\begin{equation}
\label{paryz}
  \R^r(j\varepsilon)_*\Z/p^n(r)_{X_{\tr}} \to  i_*\R^{r-1}(\jmath\varepsilon)_*\Z/p^n(r-1)_{Y_K}. \end{equation}
For the bottom map, it suffices to show   surjectivity of the map
\begin{equation}
\label{surjectivity}
  \R^r(j^{\prime}\varepsilon)_*\Z/p^n(r)_{X_{\tr}} \to  i_*\R^{r-1}\varepsilon_*\Z/p^n(r-1)_{Y_K}. \end{equation}

  Set $U:=X_{\tr}$. We have the symbol map
$$
M^{\gp}_X=j_*\so^*_U\to \R^1j_*\Z/p^n(1)_U=\R^1j_{\eet *}\tau_{\leq 1}\R j^{\prime}_*\Z/p^n(1)_U
$$
and the following commutative diagram (cf. \cite[3.4.6]{Ts}). We set here $\wt{M}^{\gp}_X:=j^{\prime}_*\so_{X_K}^*$.
$$\xymatrix{ (M^{\gp}_X)^{\otimes r}\ar[r] & \R^rj_{\eet *}\tau_{\leq r}\R j^{\prime}_*\Z/p^n(r)_U \ar[r]\ar[dd] & \R^r j_*\Z/p^n(r)_U\ar@/^23pt/[dddl]^{res_Y}\\
M^{\gp}_X\otimes (\wt{M}^{\gp}_X)^{\otimes(r-1)}\ar[ru]\ar[u] & \\
(\wt{M}^{\gp}_X)^{\otimes(r-1)}\ar[d]\ar[u]^{\kappa} & \R^{r-1}j_{\eet*}\R^1j^{\prime}_*\Z/p^n(r)_U\ar[d]^{\wr}\\
(\wt{M}^{\gp}_Y)^{\otimes(r-1)}\ar[r] & \R^{r-1}\jmath_{\eet *}\Z/p^n(r-1)_{Y_K}
}
$$
The map $\kappa$ is defined by sending the section $m$ to $T \otimes m$. We claim that the symbol map
$$
(\wt{M}^{\gp}_Y)^{\otimes(r-1)}\to \R^{r-1}(\jmath_{\eet }\varepsilon)_*\Z/p^n(r-1)_{Y_K}
$$
is surjective. Since we have Gersten conjecture for Milnor K-theory \cite[Theorem 7.1]{KZ} and \'etale cohomology, this follows from Bloch-Kato conjecture and \cite[Lemma 7.2]{KZ}.
Since the map $\wt{M}^{\gp}_X\to  \wt{M}^{\gp}_Y$ is surjective in the Zariski topology on $X$, the above diagram shows  surjectivity of the map (\ref{paryz}) we wanted. The argument for the map (\ref{surjectivity}) is analogous.

  Assume now that $X$ is semistable  over $\so_K$ with a smooth special fiber. Consider the following diagram
$$
\xymatrix{
\sh^r(\wt{S}_n(r)_X)\ar@{^(->}[d]\ar[r] & \sh^r(\wt{S}_n(r)_X(Y))\ar@{^(->}[d]\ar[r]^-{res_Y} & i_*\sh^{r-1}(\wt{S}_n(r-1)_Y)\ar@{^(->}[d]\\
\sh^r(S_n(r)_X)\ar@{->>}[d]^{\beta}\ar[r] & \sh^r(S_n(r)_X(Y))\ar@{->>}[d]^{\beta}\ar@{->>}[r]^-{res_Y} & i_*\sh^{r-1}(S_n(r-1)_Y)\ar@{->>}[d]^{\beta}\\
i_*W_n\Omega^{r-1}_{X_0,\log} \ar@{^(->}[r] & i_*W_n\Omega^{r-1}_{U_0,\log}\ar@{->>}[r]^{res_Y}&  i_*W_n\Omega^{r-2}_{Y_0,\log}
}
$$
The horizontal sequences are Gysin sequences. The syntomic cohomology $\wt{S}$ uses the vertical log-structure, the syntomic cohomology $S$ uses the trivial log-structure. The vertical sequences are exact \cite[1]{Kur} (the map $\beta$ is the residue at $\varpi$ map). We claim that the diagram commutes. Indeed, for the top part this follows from functoriality. For the bottom part, the left square commutes by functoriality. For the right square,  it suffices to look at symbols. It follows from the definition that the map $\beta$ sends the symbol $\{ f_1,\ldots, f_{r}\}$, $f_i\in i^*_0\so_{X}^{*}$,  $i_0:X_0\hookrightarrow X$, to zero and the symbol $\{f_1,\ldots,f_{r-1},\varpi\}$, $f_i\in i^*_0\so_{X}^{*}$, to $\dlog [\overline{f}_1]\wedge\cdots\dlog [\overline{f}_{r-1}]$. The definition of the residue map $\res_Y$ is similar with $\varpi$ replaced by $T$. The commutativity we want is now immediate. 

  (2b) {\em The general case.} 
  
 In the case of complexes $E_n(r)^{\prime}_{\Nis}$, we  construct  the triangle
\begin{equation}
\label{ETTGysin}
E_n(r)^{\prime}_{{\Nis},X}( D)\to E_n(r)^{\prime}_{{\Nis},X}(Y\cup D)\lomapr{res_{Y}}i_*F^1_n(r-1)_{X}(Y\cup D)[-1]
\end{equation}
by truncating the complexes in the Nisnevich  version of the distinguished triangle (\ref{EGysin}) and by setting $F^1_n(r-1)_{X}(Y\cup D)$ equal to the image of
the composition
$$ (\tau_{\leq r}\varepsilon_*GE^{\prime}_n(r)_X(Y\cup D)_{\keet})[1]\to (\tau_{\leq r}\varepsilon_*GC(f,r-1))[1].
$$
Clearly  the obtained triangle  is distinguished. It remains to show that the natural injection
$$F^1_nr-1)_{X}(Y\cup D)\hookrightarrow (\tau_{\leq r}\varepsilon_*GC(f,r-1))[1]
$$
has an inverse up to $p^{N(r-1)}$. Since $C(f,r-1)\simeq i_*E^1_n(r-1)_{Y}(D_Y)[-1]$,  we have 
\begin{align*}
\tau_{\leq r-2}F^1_n(r-1)_{X}(Y\cup D) & \simeq \tau_{\leq r-1}\varepsilon_*GC(f,r-1),\\
\sh^r\varepsilon_*GC(f,r-1)& \simeq i_*\sh^{r-1} E^1_n(r-1)_{Y}(D_Y).
\end{align*}
It suffices thus to show that the map
$$res_Y: \sh^r(E_n^{\prime}(r)_{{\Nis},X}(Y\cup D))\to i_*\sh^{r-1}(E^1_n(r-1)_{{\Nis},Y}(D_Y))
$$
has a cokernel annihilated  by $p^{N(r-1)}$.  But this follows just as in the case $r\leq p-2$ treated above using Theorem \ref{input1} and, in the good reduction case,  \cite[Theorem 3.2]{EN}.
\end{proof}
Gysin distinguished triangles are functorial for certain morphisms of log-schemes.
\begin{lemma}
\label{torind1}
For $Y$, $X$, and $D$ as above, let $f:X^{\prime}\to X$ be a flat morphism
or a closed immersion such that $X^{\prime}$ is regular and syntomic over $W(k)$ and $f^{-1}(Y\cup D)$ 
is a relative simple normal crossing
divisor on $X^{\prime}$ over $W(k)$. Set $Y^{\prime}=f^{-1}(Y)$ and $D^{\prime}=f^{-1}(D)$.
 Then, for $r\geq 1$, $i\geq 0$, we have a map of distinguished triangles
$$
\begin{CD}
i_*S^{i+1}_n(r-1)_Y(D_Y)[-2]@>i_! >>
S^{i}_n(r)_X( D)@>>> S^{i}_n(r)_X (Y\cup D)\\
@VVf^* V @VVf^* V @VVf^* V \\
\R f_*i_*S^{i+1}_n(r-1)_{Y^{\prime}}(D^{\prime}_{Y^{\prime}})[-2]@> i_! >>
\R f_*S^{i}_n(r)_{X^{\prime}}( D^{\prime})@>>>
\R f_*S^{i}_n(r)_{X^{\prime}} (Y^{\prime}\cup D^{\prime})
\end{CD}
$$
Similarly, for the complexes $S_n(r)_X( D)$, $r\leq p-1$.
  
  There are analogous maps of Gysin distinguished triangles for the complexes $E_n(r)$ and $E_n(r)_{\Nis}$ (and $r\leq p-2$) as well as for the complexes $E^{\prime}_n(r)$. For the complexes $E^{\prime}_n(r)_{\Nis}$ we have the following maps of Gysin distinguished triangles
$$
\begin{CD}
i_*F^{1}_n(r-1)_X(Y\cup D)[-2]@>i_! >>
E^{\prime}_n(r)_{\Nis,X}( D)@>>> E^{\prime}_n(r)_{\Nis,X} (Y\cup D)\\
@VVf^* V @VVf^* V @VVf^* V \\
\R f_*i_*F^{1}_n(r-1)_{X^{\prime}}({Y^{\prime}}\cup D^{\prime})[-2]@> i_! >>
\R f_*E^{\prime}_n(r)_{\Nis,X^{\prime}}( D^{\prime})@>>>
\R f_*E^{\prime}_n(r)_{\Nis,X^{\prime}} (Y^{\prime}\cup D^{\prime})
\end{CD}
$$
\end{lemma}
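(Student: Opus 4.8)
The statement to prove is \textbf{Lemma \ref{torind1}}: the functoriality of the Gysin distinguished triangles (for syntomic complexes $S^i_n(r)$, $S_n(r)$, the syntomic-\'etale complexes $E_n(r)$, $E_n(r)_{\Nis}$, $E^{\prime}_n(r)$, and $E^{\prime}_n(r)_{\Nis}$) under a flat morphism or closed immersion $f\colon X'\to X$ that pulls back $Y\cup D$ to a relative simple normal crossing divisor.

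\begin{proof}
The plan is to reduce everything to the functoriality of the residue maps, which were constructed in the proof of Lemma \ref{localization}, since in each case the Gysin triangle is (or is truncated from, or is the image of a truncation of) the triangle built out of those residue maps. I would organize the argument by the same subdivision used in the proof of Lemma \ref{localization}.

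First, for the \'etale complexes with undivided Frobenius: recall that the Gysin triangle for $S^i_n(r)$ comes from the short exact sequence $0\to S^i_n(r)_X(D)\to S^i_n(r)_X(Y\cup D)\xrightarrow{\res_Y} i_*S^{i+1}_n(r-1)_Y(D_Y)[-1]\to 0$, which in turn is built from the exact sequence of sheaves of differential forms $0\to \Omega^q_{X_n}(D_1)\to \Omega^q_{X_n}(D_1\cup D)\xrightarrow{\res_{D_1}}\Omega^{q-1}_{D_{1,n}}(D_1\cap D)\to 0$ and the analogous behaviour of PD-envelopes. The key point is that for a flat $f$ or a closed immersion $f$ with $f^{-1}(Y\cup D)$ still a relative SNC divisor, pullback of differential forms commutes with the residue map $\res_{D_1}$ — this is a local computation in coordinates, essentially because $f^*(\tfrac{dt}{t}\wedge \omega)=\tfrac{df^*t}{f^*t}\wedge f^*\omega$ and $f^*t$ is (up to unit) the defining equation of $Y'$. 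The Tor-independence ensuring that $f^*$ of the PD-envelope is the PD-envelope of the pullback is exactly the hypothesis that $f^{-1}(Y\cup D)$ is a relative SNC divisor on $X'$. This gives the first displayed diagram; the case of $S_n(r)$, $r\le p-1$, is identical, and the crystalline Gysin sequence being functorial is \cite[4.2,4.3]{Ts2}.

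Next, for the \'etale syntomic-\'etale complexes $E_n(r)$ ($r\le p-2$) and $E^{\prime}_n(r)$: here the residue map $\res_Y\colon E^{\prime}_n(r)_X(Y\cup D)\to i_*E^1_n(r-1)_Y(D_Y)[-1]$ was constructed in the proof of Lemma \ref{localization} as a composition through the Kummer-\'etale variants $E^{\prime}_n(r)_X(Y\cup D)_{\keet}$, the cokernel complexes $C(f,r-1)$ and $C(f^{\prime},r-1)$, and the identification $C(f^{\prime},r-1)\simeq E^1_n(r-1)_Y(D_Y)[-1]$. Each of the intermediate objects — the local maps $\alpha'_{r,U}$, the Godement resolutions $G$, the functors $\theta$, the cones of $\eta$ — is strictly functorial in the strict \'etale site, and Tsuji already checked in \cite{Ts2} that the syntomic and \'etale residue maps are compatible with pullback by such $f$ (the relevant statements are \cite[Prop. 4.5.3, 4.7.x]{Ts2}). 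So I would just verify, level by level in that composition, that each arrow commutes with $f^*$; the only subtlety is that $f$ must carry the Cartier divisor $Z_1$ defining $Y$ to a Cartier divisor defining $Y'$, which is guaranteed by $Y'=f^{-1}(Y)$ and regularity. This yields the functoriality for $E_n(r)$ and $E^{\prime}_n(r)$; the $E^1_n(r)$ case follows because the residue maps were shown to be compatible with $\omega_0$, $\omega_1$.

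Finally, for the Nisnevich complexes: $E_n(r)_{\Nis}$ ($r\le p-2$) is obtained by truncating the \'etale Gysin triangle and pushing to $X_{\Nis}$ via $\R\varepsilon_*$ (using $\R\varepsilon_*i^*=i^*\R\varepsilon_*$), and $\tau_{\le r}$ and $\R\varepsilon_*$ are functorial, so the first diagram follows formally from the \'etale one; here I should note that $\res_Y$ being surjective on $\sh^r$ — which is what made the truncated triangle distinguished — does not re-enter, we only need the already-established triangle. For $E^{\prime}_n(r)_{\Nis}$, the complex $F^1_n(r-1)_X(Y\cup D)$ was defined as the image of $(\tau_{\le r}\varepsilon_*GE^{\prime}_n(r)_X(Y\cup D)_{\keet})[1]\to (\tau_{\le r}\varepsilon_*GC(f,r-1))[1]$, and since both the source and target of this map are functorial in $X$ (for our class of $f$), so is the image; the compatibility of the map $f^*$ with the natural map $F^1_n(r-1)_X(Y\cup D)\to E^1_n(r-1)_{\Nis,Y}(D_Y)$ then follows because both reduce, after $\R\varepsilon_*$, to the \'etale statement already proven. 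The main obstacle is the bookkeeping in the second paragraph: chasing $f^*$ through Tsuji's zigzag construction of the residue map (diagrams (\ref{terror}), (\ref{first-diagram}), (\ref{second-diagram})) and invoking the precise compatibility results from \cite{Ts2} at each stage, together with checking Tor-independence so that $f^*$ of all the local PD-envelopes and resolutions behaves correctly. None of this is conceptually hard, but it requires care to confirm that Tsuji's constructions are natural in exactly the generality of flat maps and closed immersions preserving the SNC condition.
\end{proof}
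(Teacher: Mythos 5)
Your proposal is correct and rests on exactly the same two pillars as the paper's own (very terse, two-sentence) proof: flat base change in the flat case, and Tor-independence of $X'\to X$ and $Y\to X$ in the closed-immersion case. You have simply unpacked where those facts enter — the residue maps on differential forms, the PD-envelopes, and Tsuji's zigzag constructions — which is a faithful expansion of the paper's argument rather than a different route.
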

\begin{proof}
The case of the map $f$ being flat follows from the flat base change. The case of $f$ being a closed immersion follows from the fact that the morphisms
$X^{\prime}\to X$ and $Y\to X$  are Tor-independent.
\end{proof}
\subsubsection{Projection formula}
We have the following projection formula for a closed immersion of codimension one.
\begin{lemma}
\label{projsynt}
 Let $X$, $Y$, and $D$ be as above.  Then, for $r_1,r_2,i,j\geq 0$, the following diagram commutes.
$$
\xymatrix{
S_n^{i}(r_1)_X(D)\otimes^{{\mathbb L}} i_*S_n^{j+1}(r_2-1)_Y(  D_Y)[-2] \ar[r]^-{1\otimes i_!} 
\ar[d]^{i^*\otimes 1}
& S_n^{i}(r_1)_X(D)\otimes^{{\mathbb L}} S_n^{j}(r_2)_X( D) \ar[dd]^{\cup}\\
i_*S_n^{i}(r_1)_Y( D_Y)\otimes^{{\mathbb L}} i_*S_n^{j+1}(r_2-1)_Y(D_Y)[-2]\ar[d]^{\cup} &\\
i_*S_n^{i+j+1}(r_1+r_2-1)_Y( D_Y)[-2]\ar[r]^-{i_!} & S_n^{i+j}(r_1+r_2)_{X}( D)
}
$$
Similarly, for the complexes $S_n(r)_X( D)$, $r\leq p-1$. 

 There are analogous projection formulas  for the complexes $E_n(r)$ and $E_{n}(r)_{\Nis}$ (and $r\leq p-2$) as well as for the complexes
 $E^{\prime}_n(r)$. For the complexes   $E^{\prime}(r)_{\Nis}$  we have the following commutative diagram (the constant $N$ is the one from Theorem \ref{input1})
\begin{equation}
\label{projZar}
\xymatrix@C=40pt{
E_n^{\prime}(r_1)_{\Nis,X}(D)\otimes^{{\mathbb L}} i_*F^1_n(r_2-1)_X(Y\cup D)[-2] \ar[r]^-{p^{N(r_1+r_2-1)}\otimes i_!} 
\ar[d]^{i^*\otimes 1}
& E_n^{\prime}(r_1)_{\Nis,X}(D)\otimes^{{\mathbb L}} E_n^{\prime}(r_2)_{\Nis,X}( D) \ar[dd]^{\cup}\\
i_*E_n^{\prime}(r_1)_{\Nis,Y}( D_Y)\otimes^{{\mathbb L}} i_*F_n^{1}(r_2-1)_X(Y\cup D)[-2]\ar[d]^{\cup} &\\
i_*F_n^{1}(r_1+r_2-1)_{X}( Y\cup D)[-2]\ar[r]^{i_!} & E_n^{\prime}(r_1+r_2)_{\Nis,X}( D)
}
\end{equation}
\end{lemma}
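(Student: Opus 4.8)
The plan is to establish all the listed projection formulas by working \'etale-locally in the lifted situation of Section~2, where the Gysin maps $i_!$ and the cup products are given by explicit formulas, and then to descend by the usual hypercovering/gluing and change-of-topology arguments. I begin with the complexes $S^i_n(r)_X(D)$ (and $S_n(r)_X(D)$, $r\leq p-1$). Recall from the proof of Lemma~\ref{localization} that $i_!$ is, up to the shift $[-2]$, the boundary map of the short exact sequence of complexes
$0\to S^i_n(r)_X(D)\to S^i_n(r)_X(Y\cup D)\stackrel{\res_Y}{\to}i_*S^{i+1}_n(r-1)_Y(D_Y)[-1]\to 0$,
itself built from the de Rham residue sequence $0\to\Omega^q_{Z_n}(D)\to\Omega^q_{Z_n}(Y\cup D)\stackrel{\res_Y}{\to}\Omega^{q-1}_{Z_{1,n}}(D_Y)\to 0$ and its divided-power refinement. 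The projection formula $i_*(i^*x\cup y)=x\cup i_*y$ is then equivalent to the assertion that this short exact sequence is a sequence of modules over the cup-product bigraded ring $\bigoplus_{r,i}S^i_n(r)_X(D)$, the $Y$-term being a module via restriction to $Y$ followed by cup product on $Y$; and that in turn follows from the elementary identity $\res_Y(\omega\wedge\eta)=(\omega|_Y)\wedge\res_Y(\eta)$ for $\omega$ regular along $Y$, refined to the PD-filtration and checked to be equivariant for $1-\phi_r$ (resp. $p^r-\phi$) after the Tate twist $(i+1,r-1)\rightsquigarrow(i,r)$ that is built into $i_!$ — this twist being exactly what makes the residue commute with the divided Frobenius, since $\phi$ acquires a factor $p$ on $d\log$ of the extra pole coordinate. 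The signs $(-1)^a$ in the product formulas of Section~2 and the Koszul sign coming from the $[-2]$ shift are routine bookkeeping. Descending from the lifted situation and from the large to the small site gives the first two displayed diagrams.

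For the glued complexes $E_n(r)$ ($r\leq p-2$) and $E'_n(r)$ I would use the explicit cone models $E'_n(r)_X=\Cone(\wt S'_n(r)_X\stackrel{\tilde\alpha_r}{\to}j_{\eet!}j'_*G\Lambda_X[1])[-1]$ of Section~2.2.4, together with the cone-level construction of the residue map $\res_Y$ performed in diagram~(\ref{terror}) of the proof of Lemma~\ref{localization}, which exhibits $i_!$ on $E'_n(r)$ as glued from the syntomic $i_!$ and the \'etale $i_!$ through genuine maps of complexes. Since the product on $\wt S'_n(r)_X$, the product on $j_{\eet!}j'_*G\Lambda_X$, and the product on the cone (constructed as in \cite[3.1]{N0} so as to be compatible with the two projections) are all mutually compatible, and since the syntomic and \'etale projection formulas are known (the latter being the classical projection formula for the Gysin map of a regular codimension-one closed immersion in \'etale cohomology), the projection formula for $E_n(r)$ and $E'_n(r)$ follows formally once one verifies that the cup product passes, stage by stage, through every step of diagram~(\ref{terror}) — the passage to the Kummer-\'etale models, the operation $\theta$, the zigzags and their straightenings. \emph{This is the main obstacle}: the period map does not behave well with respect to closed immersions, so $i_!$ on the $E$-complexes only exists through that long diagram, and threading the (sign-corrected) product through it is where essentially all of the work lies, though each individual compatibility is a diagram chase of the same type as those in \cite{Ts2}.

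Finally, for the Nisnevich-truncated versions one truncates the foregoing. For $E_n(r)_{\Nis}$ with $r\leq p-2$ the complexes are quasi-isomorphic, by Theorem~\ref{keylemma11}, Corollary~\ref{reduction-kolo} and Theorem~\ref{keylemma10}, to truncated \'etale (resp. motivic) complexes whose Gysin sequences and products already live in degrees $\leq r$, so $\tau_{\leq r}$ is harmless and the formula descends verbatim. For $E'_n(r)_{\Nis}=\tau_{\leq r}E'_n(r)$ two adjustments are needed. First, the source of the product, being $\tau_{\leq r_1}(\cdot)\otimes^{{\mathbb L}}\tau_{\leq r_2}(\cdot)$, is already concentrated in degrees $\leq r_1+r_2$, so the untruncated product composed with the truncation $E'_n(r_1+r_2)\to E'_n(r_1+r_2)_{\Nis}$ yields a well-defined product on the truncated complexes, and likewise for $i_!$. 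Second, the intermediate term of the Nisnevich Gysin triangle is $F^1_n(r-1)_X(Y\cup D)$ rather than $E^1_n(r-1)_{\Nis,Y}(D_Y)$, and these are linked only by a $p^{N(r-1)}$-quasi-isomorphism (Lemma~\ref{localization}); transporting the cup map on the left column of (\ref{projZar}) and the Gysin map on its top row through this comparison introduces precisely the factor $p^{N(r_1+r_2-1)}$ displayed in (\ref{projZar}). With these modifications the square~(\ref{projZar}) reduces to the untruncated projection formula for the $E'$- and $E^1$-complexes already proved, plus the functoriality of truncation, which completes the proof.
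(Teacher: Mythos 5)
Your proposal follows essentially the same route as the paper's proof: the $S$-case is reduced to the compatibility of the residue short exact sequence with the cup-product action, checked locally in the lifted situation via the explicit product formulas; the $E'$-case is handled by threading the product through the cone/gluing construction of the Gysin map (the diagram defining $\res_Y$ in the proof of the localization lemma), which you correctly identify as the crux; and the Nisnevich case is obtained by truncation plus a degree argument, with the factor $p^{N(r_1+r_2-1)}$ arising exactly as you say from the $p^{N(r_1+r_2-1)}$-inverse $\beta$ of the comparison map $F^1_n(r_1+r_2-1)\to E^1_n(r_1+r_2-1)_{\Nis}$. The only part left schematic is the verification that the pairing passes through every stage of the Kummer-\'etale zigzag diagrams, which the paper carries out by constructing an explicit pairing of the extended diagrams node by node; your sketch correctly locates this as the main work but does not execute it.
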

\begin{proof}For the first claim of the lemma, it suffices to show that the localization short exact sequence
$$0\to 
S^{j}_n(r_2)_X( D)\stackrel{f}{\to}S^{j}_n(r_2)_X(Y\cup D)\stackrel{res}{\to}i_*S^{j+1}_n(r_2-1)_Y(D_Y)[-1]\to 0
$$
is compatible with the action of $S^{i}_n(r_1)_X( D)$. For the localization map $f$ this follows from the functoriality of products; for the residue map
it will follow if we show that the following diagram of complexes commutes.
$$
\xymatrix{
S_n^{j}(r_2)_X(D\cup Y)\otimes S_n^{i}(r_1)_X(  D) \ar[r]^-{1\otimes f} 
\ar[d]^{res\otimes 1} 
& S_n^{j}(r_2)_X(D\cup Y)\otimes S_n^{i}(r_1)_X( D\cup Y) \ar[d]^{\cup}\\
i_*S_n^{j+1}(r_2-1)_Y( D_Y)[-1]\otimes S_n^{i}(r_1)_X(D)\ar[d]^{1\otimes i^*} &S_n^{i+j}(r_2+r_1)_X(D\cup Y)\ar[d]^{res}\\
i_*S_n^{j+1}(r_2-1)_Y( D_Y)[-1]\otimes i_*S_n^{i}(r_1)_Y( D_Y)\ar[r]^-{\cup}& i_*S_n^{i+j+1}(r_1+r_2-1)_Y( D_Y)[-1]
}
$$
But this can be checked locally (in the lifted situation) where it follows immediately from the formulas for product. 
The argument for the complexes $S_n(r)_X( D)$ is basically the same. 

  For the complexes $E^{\prime}_n(r)$, it suffices to argue on the \'etale site. Assume thus that $X$ is equipped with the \'etale topology. Consider the following commutative diagram of short exact sequences that appears in  the proof of Lemma \ref{localization} (diagram (\ref{terror})). Below
 we will use the notation from that proof freely.
$$
\begin{CD}
0@>>> E^{\prime}_{n}(r_2)_X( D) @> f >> E^{\prime}_{n}(r_2)_X(Y\cup D)_{\keet} @>res >> C(f,r_2-1) @>>> 0\\
@. @VV i^*V @VV i^*V @V\wr V i^*V \\
0@>>>  i_*E^{\prime}_{n}(r_2)_Y( D_Y) @>f^{\prime} >> i_*E^{\prime}_{n}(r_2)_Y(D_Y^0)_{\keet}@> res >> i_*C(f^{\prime},r_2-1)@>>> 0
\end{CD}
$$ 
Proceeding as in the definition of cup product on the complexes $E_n^{\prime}(r)$, we define  an action of $E^{\prime}_{n}(r_1)_X( D)$ on the first two terms of the top exact sequence; hence an action on $C(f,r_2-1)$. In a similar way we can define a compatible action of $i_*E^{\prime}_{n}(r_2)_Y( D_Y)$ on the first two terms of the bottom exact sequence; hence an action on $i_*C(f^{\prime},r_2-1)$. It follows immediately that these two actions are compatible with the above map between exact sequences. 

  It follows that if we define the product
$$\cup: i_*E^{\prime}_{n}(r_1)_Y( D_Y)\otimes^{\mathbb L}C(f,r_2-1)\stackrel{1\otimes i^*}{\to}i_*E^{\prime}_{n}(r_1)_Y( D_Y)\otimes^{\mathbb L}C(f^{\prime},r_2-1)\stackrel{\cup}{\to}
C(f^{\prime},r_1+r_2-1)\stackrel{i^*}{\simeq}C(f,r_1+r_2-1),
$$
  from the action of $E^{\prime}_{n}(r_1)_X( D)$ on the top exact sequence, we obtain the following projection formula
$$
\xymatrix{
E^{\prime}_{n}(r_1)_X( D)\otimes^{\mathbb L}C(f,r_2-1)[-1]\ar[r]^{1\otimes \partial}\ar[d]^{i^*\otimes 1} & E^{\prime}_{n}(r_1)_X( D)\otimes^{\mathbb L}E^{\prime}_{n}(r_2)_X( D)\ar[dd]^{\cup}\\
i_*E^{\prime}_{n}(r_1)_Y( D_Y)\otimes^{\mathbb L}C(f,r_2-1)[-1]\ar[d]^{\cup} &\\
C(f,r_1+r_2-1)[-1]\ar[r]^{\partial} & E^{\prime}_{n}(r_1+r_2)_X( D)
}
$$
Recall that we have the quasi-isomorphisms
$$\gamma^{\prime}: C(f,r_2-1)\stackrel{i^*}{\simeq}i_*C(f^{\prime},r_2-1)
\stackrel{\gamma}{\simeq}i_*E^{1}_{n}(r_2-1)_Y( D_Y)[-1]
$$
Thus to obtain the projection formula in the statement of the lemma it suffices to show that the following diagram commutes in the derived category.
$$
\begin{CD}
i_*E^{\prime}_{n}(r_1)_Y( D_Y)\otimes^{\mathbb L}C(f,r_2-1)@>\cup >> C(f,r_1+r_2-1)\\
@V\wr V 1\otimes \gamma^{\prime}  V @V\wr V \gamma^{\prime} V\\
i_*E^{\prime}_{n}(r_1)_Y( D_Y)\otimes^{\mathbb L}i_*E^{1}_{n}(r_2-1)_Y( D_Y)[-1]@>\cup >> i_*E^{1}_{n}(r_1+r_2-1)_Y( D_Y)[-1]
\end{CD}
$$
Or, unwinding the action of $i_*E^{\prime}_{n}(r_1)_Y( D_Y)$ on $C(f,r_2-1)$, that the following diagram commutes in the derived category.
\begin{equation}
\label{pairing}
\begin{CD}
E^{\prime}_{n}(r_1)_Y( D_Y)\otimes^{\mathbb L}C(f^{\prime},r_2-1)@>\cup >> C(f^{\prime},
r_1+r_2-1)\\
@V\wr V 1 \otimes \gamma  V @V\wr V \gamma V\\
E^{\prime}_{n}(r_1)_Y( D_Y)\otimes^{\mathbb L}E^{1}_{n}(r_2-1)_Y( D_Y)[-1]@>\cup >> E^{1}_{n}(r_1+r_2-1)_Y( D_Y)[-1]
\end{CD}
\end{equation}

  To see this consider the  commutative diagrams (\ref{first-diagram}) and (\ref{second-diagram}) that define the map $\gamma$. Set $r=r_2$. Consider also their analogs built on top of the map
\begin{align*}
\Gamma^h(\overline{S}_n(r_1)_{Y,Z_1}[-1]) & \stackrel{ \id}{\to} \Gamma^h(\overline{S}_n(r_1)_{Y,Z_1}[-1])\stackrel{\sim}{\leftarrow}\Gamma^h(\Lambda(r_1)))\stackrel{\sim}{\leftarrow}\Gamma(\Lambda(r_1)))\stackrel{\sim}{\leftarrow}\\
 & \Gamma(\Cone(\eta^{\prime} G\Lambda(r_1))) \to \Gamma(\jmath_{\eet !}\jmath^{\prime}_*G\Lambda(r_1))
\end{align*}
as well as the extension
$$\Gamma(S^{\prime}_n(r_1)_{Y,Z_1}[-1])\to \Gamma^h(\overline{S}_n(r_1)_{Y,Z_1}[-1])
$$
We claim that there exists a pairing of the above two sets of extended diagrams into the extended diagrams (\ref{first-diagram}) and (\ref{second-diagram}) for $r=r_1+r_2$ that matches corresponding nodes 
and all the maps are compatible with  pairings.
Indeed, by functoriality, it suffices to construct such a pairing for the bottom rows. And there it can be checked easily using the fact that the 
complexes $L^{\prime}$ and $C^{\prime}$ are defined as cokernels of maps that are compatible with certain obvious pairings. It is easy to see  that  this pairing of 
 diagrams  induces a quasi-isomorphism of pairings (\ref{pairing}) that we wanted.
 
 For the complexes $E^{\prime}_n(r)_{\Nis}$, consider the following commutative diagram of short exact sequences that is a truncated Zariski version of a diagram that appears in  the proof of Lemma \ref{localization} (diagram (\ref{terror})). 
$$
\begin{CD}
0@>>> E^{\prime}_{n}(r_2)_{\Nis,X}( D) @> f >> \tau_{\leq r_2}\pi_*GE^{\prime}_{n}(r_2)_X(Y\cup D)_{\keet} @>res >> i_*F^{1}_{n}(r_2-1)_{X}(Y\cup D) @>>> 0\\
@. @VV i^*V @VV i^*V @VV i^*V \\
0@>>>  i_*E^{\prime}_{n}(r_2)_{\Nis,Y}( D_Y) @>f^{\prime} >> \tau_{\leq r_2}\pi_*Gi_*E^{\prime}_{n}(r_2)_Y(D_Y^0)_{\keet}@> res >> i_*\wt{F}^{1}_{n}(r_2-1)_{X}(Y\cup D)@>>> 0
\end{CD}
$$ 
Here $i_*\wt{F}^{1}_{n}(r_2-1)_{X}(Y\cup D)$ is defined as the cokernel of the map $f^{\prime}$.
Proceeding as above we define  an action of $E^{\prime}_{n}(r_1)_{\Nis,X}( D)$ on the first two terms of the top exact sequence; hence an action on $i_*F^{1}_{n}(r_2-1)_{X}(Y\cup D)$. In a similar way we can define a compatible action of $i_*E^{\prime}_{n}(r_2)_{\Nis,Y}( D_Y)$ on the first two terms of the bottom exact sequence; hence an action on 
$i_*\wt{F}^{1}_{n}(r_2-1)_{X}(Y\cup D)$. Clearly these two actions are compatible with the above map between exact sequences and with the action on $i_*E_n^{1}(r_2-1)_{\Nis,X}(Y\cup D)[-2]$. 

  We set the product
  $$\cup: i_*E_n^{\prime}(r_1)_{\Nis,Y}( D_Y)\otimes^{{\mathbb L}} i_*F_n^{1}(r_2-1)_X(Y\cup D)[-2] \to 
i_*F_n^{1}(r_1+r_2-1)_{X}( Y\cup D)[-2]
$$
 to be  equal to the composition
  \begin{align*}
 i_*E_n^{\prime} (r_1)_{\Nis,Y}( D_Y)  & \otimes^{{\mathbb L}} i_*F_n^{1}(r_2-1)_X(Y\cup D)[-2] \to
  i_*E_n^{\prime}(r_1)_{\Nis,Y}( D_Y)\otimes^{{\mathbb L}} i_*E_n^{1}(r_2-1)_{\Nis,X}(Y\cup D)[-2] \\
   & \stackrel{\cup}{\to} 
 i_*E_n^{1}(r_1+r_2-1)_{\Nis,X}(Y\cup D)[-2]\stackrel{\beta}{\to}i_*F_n^{1}(r_1+r_2-1)_X(Y\cup D)[-2],
  \end{align*}
  where $\beta$ is an inverse (in the derived category and up to $p^{N(r_1+r_2-1)}$) of the natural map $F_n^{1}(r_2-1)_X(Y\cup D)[-2]\to E_n^{1}(r_1+r_2-1)_{\Nis,X}(Y\cup D)[-2]$. 
  
  To show that the diagram (\ref{projZar}) commutes it suffices to show it with the lower right-hand term $E_n^{\prime}(r_1+r_2)_{\Nis,X}( D)$
 replaced by $E_n^{\prime}(r_1+r_2)_{X}( D)$. This is by degree reason: cohomology of $E_n^{\prime}(r_1)_{\Nis,X}(D)\otimes^{{\mathbb L}} i_*F^1_n(r_2-1)_X(Y\cup D)[-2]$ is concentrated in degrees 
 $[0,r_1+r_2+1]$ and cohomology of $E_n^{\prime}(r_1+r_2)_{\Nis,X}( D)$ is concentrated in degrees $[0,r_1+r_2]$; hence the map 
 \begin{align*}
\Hom_{\sd}(E_n^{\prime}(r_1)_{\Nis,X}(D) & \otimes^{{\mathbb L}} i_*F^1_n(r_2-1)_X(Y\cup D)[-2], E_n^{\prime}(r_1+r_2)_{\Nis,X}( D))\\
& \to \Hom_{\sd}(E_n^{\prime}(r_1)_{\Nis,X}(D)\otimes^{{\mathbb L}} i_*F^1_n(r_2-1)_X(Y\cup D)[-2],E_n^{\prime}(r_1+r_2)_{X}( D))
\end{align*}
 between homomorphism groups in the derived category is injective. Now a simple diagram chase reduces this projection formula diagram  to the one for $E_n^{\prime}(r)_{X}$, which we have just proved.
\end{proof}

\begin{lemma}
\label{Chern}
Let $X$, $Y$, $D$ be as above. Then there exists a map of distinguished triangles 
$$
\begin{CD}
j_*\so^*_{X\setminus D}[-1] @>>> j^{\prime}_*\so^*_{X\setminus (D\cup Y)}[-1] 
@>\ord_Y >> i_*{\mathbf Z}_Y[-1]\\
@VV {c}^{\synt}_1 V @VV {c}^{\synt}_1 V @VV c_0 V\\
S^{\prime}_n(1)_X(D) @>>>  S^{\prime}_n(1)_X( Y\cup D) @>\res_Y >> 
 i_*S^{1}_n(0)_Y(D_Y)[-1]  
\end{CD}
$$
Here $j: X\setminus D\hookrightarrow X$ and 
$j^{\prime}: X\setminus (D\cup Y)\hookrightarrow X$ 
are the natural open immersions and the map $c_0$ on the right is defined by the natural morphism
${\mathbf Z}_Y\to \sh^0(S^{1}(0)_Y(D_Y))$. Similarly for the  Chern classes
with values in $S_n(1)_X(D)$.

  We have an analogous map of distinguished  triangles for $E_n(1)_X(D)$ and $E^{\prime}_n(1)_X(D)$ as well as the induced map of distinguished triangles
    $$
\begin{CD}
j_*\so^*_{X\setminus D}[-1] @>>> j^{\prime}_*\so^*_{X\setminus (D\cup Y)}[-1] 
@>\ord_Y >> i_*{\mathbf Z}_Y[-1]\\
@VV {c}^{\synt}_1 V @VV {c}^{\synt}_1 V @VV c_0V\\
E^{\prime}_n(1)_{\Nis,X}(D) @>>>  E^{\prime}_n(1)_{\Nis,X}( Y\cup D) @>\res_Y >> 
 i_*F^{1}_n(0)_X(Y\cup D)[-1]  
\end{CD}
$$
\end{lemma}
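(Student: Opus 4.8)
The plan is to build the three maps of distinguished triangles in turn, starting from the already-constructed first Chern class maps with values in the ``undivided'' complexes $S^{\prime}_n(1)$ and then deducing the $S_n(1)$, $E^{\prime}_n(1)$, $E_n(1)$ and Nisnevich versions by functoriality of the constructions in Section~2.

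First I would treat the $S^{\prime}_n(1)$-statement. The left square is the commutativity of $c_1^{\synt}$ with the restriction of log-structures, which is immediate from the functoriality of the Chern class maps (\ref{symbol}) under the map of log-schemes $X(D\cup Y)\to X(D)$ and the inclusion $j^{\prime}_*\so^*_{X\setminus(D\cup Y)}\to j_*\so^*_{X\setminus D}$ (both Chern classes are induced by the same morphism of complexes $C_n\to S^{\prime}_n(1)_{X,Z}$, restricted along the two different log-structures). The right square is the genuinely local computation: one must check that $\res_Y\circ c_1^{\synt}=c_0\circ\ord_Y$. Working in the lifted situation of (\ref{lifted}), a section of $j^{\prime}_*\so^*_{X\setminus(D\cup Y)}$ near the generic point of $Y$ can be written $u\cdot g^m$ with $u$ a unit and $g$ the local equation of $Y$; then $\ord_Y$ sends it to $m$, while $c_1^{\synt}$ sends it via $\dlog$ to a class whose residue along $Y$ is precisely $m$ viewed in $\sh^0(S^1_n(0)_Y(D_Y))=\mathbf{Z}/p^n$ (via the map $\mathbf{Z}_Y\to\sh^0(S^1_n(0)_Y(D_Y))$ that defines $c_0$). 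This is exactly the compatibility recorded in \cite[4.2,4.3]{Ts2} between the Kato--Tsuji symbol map and the residue, so I would quote that rather than redo it; the only thing to verify is that the twist $S^1_n(0)$ appearing in Lemma~\ref{localization} is the correct target, and here $S^1_n(0)_Y(D_Y)$ has $\sh^0=(\so^{\crr}_n)^{\phi=1}=\mathbf{Z}/p^n$ since $\phi_0=\phi$. The horizontal rows are distinguished by Lemma~\ref{localization} (the $i=0$ Gysin triangle for $S^{\prime}$) and by the classical localization sequence $0\to j_*\so^*_{X\setminus D}[-1]\to j^{\prime}_*\so^*_{X\setminus(D\cup Y)}[-1]\to i_*\mathbf{Z}_Y[-1]\to 0$ for the sheaf of units along a regular divisor. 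The $S_n(1)$-version is obtained by applying $\omega:S^1_n(0)\to S_n(0)$ and $\omega:S^{\prime}_n(1)\to S_n(1)$, using that $\res_Y$ is compatible with $\omega$ (part (1) of the proof of Lemma~\ref{localization}) and that $c_1^{\synt}$ for $S_n(1)$ is, up to the factor $p$ in the commuting triangle after (\ref{symbol}), the image of $c_1^{\synt}$ for $S^{\prime}_n(1)$.

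Next I would pass to the syntomic-\'etale complexes $E^{\prime}_n(1)_X(D)$ and $E_n(1)_X(D)$. By the construction in \S2.2.4 these are cones of maps out of (straightened) $S^{\prime}_n(1)$ and $S_n(1)$, and the first syntomic-\'etale Chern class $c_1^{\synt}$ into $E^{(\prime)}_n(1)_X(D)$ was constructed in (\ref{symbol1}) precisely as the unique lift of the syntomic $c_1^{\synt}$ compatible with the map $E^{(\prime)}_n(1)_X(D)\to S^{(\prime)}_n(1)_X(D)$; the analogous lift exists for $X(D\cup Y)$. So the middle and left vertical maps of the desired diagram are determined, and one only has to produce the right vertical map $c_0:i_*\mathbf{Z}_Y[-1]\to i_*E^1_n(0)_Y(D_Y)[-1]$ (resp.\ into $i_*F^1_n(0)_X(Y\cup D)[-1]$) making everything commute; this again comes from the natural map $\mathbf{Z}_Y\to\sh^0(E^1_n(0)_Y(D_Y))$, which by the distinguished triangle $j_{\eet!}\R j'_*\mathbf{Z}/p^n(1)'(-1)\to\se^1_n(0)\to i_*\sss^1_n(0)$ and the fact that $\sss^1_n(0)=(\so^{\crr}_n)^{\phi=1}$ sits over $\mathbf{Z}_Y$ in degree $0$. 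Commutativity of the two squares for $E^{(\prime)}$ then follows from commutativity for $S^{(\prime)}$ together with the compatibility of the syntomic-\'etale residue map with the syntomic one, which is exactly what was established in the diagram (\ref{terror}) and the ``residues'' display (\ref{residues}) in the proof of Lemma~\ref{localization}; the horizontal rows are the Gysin triangles (\ref{EGysin}) and (\ref{ETTGysin}) from that lemma. Finally the Nisnevich statement is obtained by applying $\tau_{\leq 1}\R\varepsilon_*G$ to the $E^{\prime}_n(1)_X(D)$-diagram, noting that in degree $\leq 1$ the truncation does not lose the residue-map compatibility and that $F^1_n(0)_X(Y\cup D)$ is by definition the relevant truncated image appearing in (\ref{ETTGysin}); the vertical map on the right is the composite of $c_0$ with the natural map to $F^1_n(0)_X(Y\cup D)[-1]$.

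The main obstacle is the right-hand square of the very first ($S^{\prime}_n(1)$) diagram: making precise that the Kato--Tsuji first Chern class is compatible with the residue map of Lemma~\ref{localization} and identifying the resulting class in the twisted complex $S^1_n(0)_Y(D_Y)$ with the image of $\ord_Y$ under $c_0$. Everything downstream ($S_n$, $E^{(\prime)}_n$, Nisnevich) is then formal functoriality plus the already-proven compatibilities of residue maps with $\omega$, $\omega_0$, $\omega_1$ and with truncation, so I would dispatch those quickly once the base case is in hand. I expect the base case itself to reduce, as in \cite[4.2,4.3,4.7]{Ts2}, to the elementary computation that $\dlog(g)=d\log g$ has residue $1$ along $Y=(g)$, combined with the observation that the $\phi/p^0$-structure on $S^1_n(0)$ is just $\phi$, so the target is honestly $\mathbf{Z}/p^n$ and no extra twisting constant intervenes in degree zero.
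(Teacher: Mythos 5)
Your treatment of the $S^{\prime}_n(1)$ and $S_n(1)$ rows is essentially the paper's (the paper dispatches these with ``follow immediately from the above formulas'', i.e.\ from the explicit symbol maps (\ref{symbol}) and the explicit residue of Lemma \ref{localization}), and your reduction of the $E$-versions to the $S$-versions is also how the paper proceeds, via the injection $\sh^0(E^1_n(0)_Y(D_Y))\hookrightarrow \sh^0(S^1_n(0)_Y(D_Y))$. The gap is in the one step that carries the actual content of the paper's proof: the construction of $c_0:{\mathbf Z}_Y\to E^1_n(0)_Y(D_Y)$. You assert that it ``comes from the natural map ${\mathbf Z}_Y\to\sh^0(E^1_n(0)_Y(D_Y))$'' and invoke the distinguished triangle $j_{\eet!}\R j^{\prime}_*{\mathbf Z}/p^n(1)^{\prime}(-1)\to\se^1_n(0)\to i_*\sss^1_n(0)$; but that triangle only hands you the map ${\mathbf Z}_Y\to\sh^0(i_*\sss^1_n(0))$ into the \emph{third} term. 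To lift it to the glued complex you must kill the boundary obstruction in $\sh^1(j_{\eet!}\R j^{\prime}_*{\mathbf Z}/p^n(1)^{\prime}(-1))$, i.e.\ show that the composition
$$
{\mathbf Z}_Y\stackrel{c_0}{\to} S^{1}_n(0)_Y(D_Y)\stackrel{\alpha}{\to}i_{Y*}i^*_Y\R j_{Y*}{\mathbf Z}/p^n(1)^{\prime}(-1)\to j_{Y\eet !}\R j^{\prime}_{Y*}{\mathbf Z}/p^n(1)^{\prime}(-1)[1]
$$
is nullhomotopic. The paper does this by observing that $\alpha\circ c_0$ is compatible with the global \'etale class $pc_0^{\eet}:{\mathbf Z}_Y\to \R j_{Y*}{\mathbf Z}/p^n(1)^{\prime}(-1)$, so that it factors through the restriction $i_{Y*}i_Y^*$ of a class defined on all of $Y$ and hence dies under the connecting map of the triangle $j_{Y\eet!}\R j^{\prime}_{Y*}\to\R j_{Y*}\to i_{Y*}i_Y^*\R j_{Y*}$; uniqueness of the resulting lift (and its factorization through $F^1_n(0)_X(Y\cup D)$) is then a degree count. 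Without this step the right-hand vertical arrow of the $E$-diagrams is simply not defined.

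Two smaller points in the same vicinity. First, $\sss^1_n(0)$ is the cone of $p-p\phi$ on $\so^{\crr}_n$ shifted by $[-1]$, so $\sh^0(\sss^1_n(0))=\ker(p(1-\phi))$, which contains but is in general larger than $(\so^{\crr}_n)^{\phi=1}$ (the latter is $\sss_n(0)$); this does not obstruct the natural map ${\mathbf Z}_Y\to\sh^0(S^1_n(0)_Y(D_Y))$, but the identification you lean on is not the one doing the work. Second, for the $S_n(1)$ row it is cleaner to rerun the local computation with the $S_n(1)$-symbol map directly rather than transport the $S^{\prime}_n(1)$ statement through $\omega$, since, as you note yourself, the latter only recovers the diagram up to a factor of $p$.
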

\begin{proof}
The statements for $S^{\prime}_n$ and $ S_n$ follow immediately from the above formulas. Since $\sh^0(E^1_n(0)_Y(D_Y))\hookrightarrow \sh^0(S^1_n(0)_Y(D_Y))$ the statements for $E^{\prime}_n$ and $ E_n$ follow as soon as we define the map $c_0:{\mathbf Z}_Y\to \sh^0(E^1_n(0)_Y(D_Y)))$ that is compatible with the map ${\mathbf Z}_Y\to \sh^0(S^{1}(0)_Y(D_Y))$. For that notice that the composition
$${\mathbf Z}_Y\stackrel{c_0}{\to} S^{1}(0)_Y(D_Y)\stackrel{\alpha}{\to}i_{Y*}i^*_Y\R j_{Y*}{\mathbf Z}/p^n(1)^{\prime}(-1)
$$
is compatible with the natural map $pc_0^{\eet}:{\mathbf Z}_Y\to \R j_{Y*}{\mathbf Z}/p^n(1)^{\prime}(-1)$. Hence the composition
$${\mathbf Z}_Y\stackrel{c_0}{\to} S^{1}(0)_Y(D_Y)\stackrel{\alpha}{\to}i_{Y*}i^*_Y\R j_{Y*}{\mathbf Z}/p^n(1)^{\prime}(-1)\to j_{Y\eet !}\R j^{\prime}_{Y*}{\mathbf Z}/p^n(1)^{\prime}(-1)[1]
$$
is homotopic to zero and we obtain its unique factorization $c_0:{\mathbf Z}_Y\to E^1_n(0)_Y(D_Y)$.  It is clear that this map factors through $F^1_n(0)_X(Y\cup D)$ hence the last statement of the lemma.
\end{proof} 
\subsubsection{Projective space theorem and homotopy invariance}
   We will now discuss certain versions of projective space theorem for syntomic cohomology. 
For large primes $p$, we have the following projective space theorem.
\begin{lemma}
\label{projective}Let $X$ and $D:=D^{\prime}$ be as above. 
Let $\se$ be a locally free sheave of rank $d+1$, $d\geq 0$. Consider the associated projective bundle $\pi:{\mathbf P}(\se)\to X$. Then, we have the following quasi-isomorphism
\begin{align*}
\bigoplus_{i=0}^d{c}_1^{\synt}(\so(1))^i\cup\pi^*: \quad 
\bigoplus_{i=0}^d S_n(r-i)_X(D)[-2i] \stackrel{\sim}{\to} \R \pi_*
S_n(r)_{{\mathbf P}(\se)}(\pi^{-1}(D)), \quad 0\leq d \leq r\leq p-1.
\end{align*}
Here, the class ${c}_1^{\synt}(\so(1))\in H^2({\mathbf P}(\se), S_n(1)(\pi^{-1}(D)))$ refers to the class of the tautological bundle on ${\mathbf P}(\se)$.

 We have an analogous quasi-isomorphism for complexes $E_n(r)$.
\end{lemma}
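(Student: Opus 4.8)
\textbf{Proof proposal for the projective space theorem (Lemma \ref{projective}).}

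The plan is to reduce the statement to the analogous (and essentially well-known) projective space theorem for filtered log-crystalline/de Rham cohomology, exactly as indicated in the introduction's footnote. First I would recall that $S_n(r)_X(D)$ is defined as a cone: on the \'etale site (or Nisnevich site for the versions with the divisor),
$$
S_n(r)_X(D)=\Cone\bl J^{<r-\jcdot>}_{D_n}\otimes\Omega\kr_{Z_n}(D)\lomapr{1-\phi_r}\so_{D_n}\otimes\Omega\kr_{Z_n}(D)\br[-1]
$$
locally in the lifted situation, and globally by gluing. Since $\R\pi_*$ commutes with the formation of cones, it suffices to prove the projective space theorem separately for the two filtered de Rham complexes $J^{<r-\jcdot>}_{D_n}\otimes\Omega\kr_{Z_n}(D)$ and $\so_{D_n}\otimes\Omega\kr_{Z_n}(D)$ — i.e., for the $r$-th piece of the Hodge filtration on log-de Rham cohomology of $X_n$ (relative to $W_n(k)$, with poles along $D$) and for the full log-de Rham complex — and then check that the two quasi-isomorphisms are compatible with $1-\phi_r$, so that they glue to a quasi-isomorphism of cones. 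The compatibility with $1-\phi_r$ is where the hypothesis $d\le r\le p-1$ enters: one needs $\phi_r$ to be defined on all the summands $S_n(r-i)$ appearing, i.e. $r-i\ge 0$ for $0\le i\le d$, which is exactly $d\le r$; and one needs $r\le p-1$ so that $J^{<s>}_{D_n}=J^{[s]}_{D_n}$ and the Frobenius-divided maps behave as usual for all relevant twists.

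The de Rham projective space theorem itself is standard: for $\pi:\mathbf P(\se)\to X$ with $\se$ locally free of rank $d+1$, the cup product with powers of $c_1^{\dr}(\so(1))$ induces
$$
\bigoplus_{i=0}^d c_1^{\dr}(\so(1))^i\cup\pi^*:\quad \bigoplus_{i=0}^d \Omega\kr_{X_n}(D)[-2i]\stackrel{\sim}{\to}\R\pi_*\Omega\kr_{\mathbf P(\se)_n}(\pi^{-1}D),
$$
and the same with the Hodge filtration $F^{r-i}$ on the $i$-th summand on the left matching $F^r$ on the right. This is proved by the usual d\'evissage to the case $\se$ trivial, then to $\mathbf P^d$, where one computes directly; the log-structure along $D$ contributes nothing new because $\pi^{-1}(D)$ is a pullback divisor and $\Omega^1_{\mathbf P(\se)/X}(\log)=\Omega^1_{\mathbf P(\se)/X}$ (the relative log differentials along a pullback divisor are just the relative differentials). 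The key input is that $c_1^{\synt}(\so(1))$ maps to $c_1^{\dr}(\so(1))$ under $S_n(1)\to$ (filtered de Rham), which is built into the construction of the syntomic first Chern class via \eqref{symbol}; this guarantees the syntomic cup-product map reduces to the de Rham one under the defining cone presentation.

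I expect the main obstacle to be bookkeeping rather than conceptual: carefully matching the twists so that the $i$-th summand $S_n(r-i)_X(D)[-2i]$ on the source corresponds, under the cone presentation, to $F^{r-i}\Omega\kr$ and $\Omega\kr$ with the correct Frobenius $\phi_{r-i}$, and verifying that cup product with $c_1^{\synt}(\so(1))^i$ really does shift the twist by $i$ in a way compatible with $1-\phi_\bullet$. Here one uses the product structure $\cup: S_n(r_1)\otimes^{\mathbb L}S_n(r_2)\to S_n(r_1+r_2)$ recalled earlier, its compatibility with the crystalline product, and the fact that $c_1^{\synt}(\so(1))\in H^2(\mathbf P(\se),S_n(1)(\pi^{-1}D))$. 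Once the map is shown to be a morphism in the derived category inducing the de Rham projective space isomorphism on the two terms of the cone, it is a quasi-isomorphism by the five lemma applied to the long exact sequences of the cones. For the complexes $E_n(r)$ one argues identically: $E_n(r)$ is obtained by gluing $S_n(r)$ on the special fiber with $\R j_*\Z/p^n(r)'$ on the generic fiber along the period map, and the classical ($\ell$-adic-style) projective space theorem holds for $\R j_*\Z/p^n(r)'$ (it holds for $\Z/p^n(r)'$ on the generic fiber by the usual computation and commutes with $\R j_*$), while we have just proved it for $S_n(r)$; the period map is compatible with first Chern classes and products, so the two projective space quasi-isomorphisms glue. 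The range $r\le p-2$ for the $E$-version is exactly what is needed for the period map and the relevant truncation identifications to be available.
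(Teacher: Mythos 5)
Your argument is correct in substance, and for the divisor-free part it coincides with the paper's: both reduce to the filtered crystalline/de Rham projective space theorem via the defining cone (equivalently the exact sequence (\ref{exact})) and the compatibility of the syntomic first Chern class with the de Rham one, checking that the two resulting quasi-isomorphisms glue across $1-\phi_{\jcdot}$; the constraints $d\le r\le p-1$ enter exactly where you say. Where you genuinely diverge is in the treatment of the divisor $D$: you absorb it directly into the filtered log-de Rham projective space theorem, using that $\pi^{-1}(D)$ is a pulled-back divisor so the relative (log) differentials of ${\mathbf P}(\se)/X$ are unchanged and the local computation goes through verbatim. The paper instead proves only the case $D=\emptyset$ this way and then inducts on the number of irreducible components of $D$, comparing the Gysin distinguished triangles of Lemma \ref{localization} for $(X,D)$, $(X,D^{\prime})$ and $(Y,D_Y)$ via the projection formula (Lemma \ref{projsynt}) and Tor-independence (Lemma \ref{torind1}), and concluding by the five lemma. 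Your route is shorter and more self-contained for this particular statement, at the cost of having to set up the filtered Poincar\'e lemma and projective space theorem for the PD-envelope log-de Rham complexes $J^{<r-\jcdot>}_{D_n}\otimes\Omega\kr_{Z_n}$ with the divisor present (standard, but it is bookkeeping the paper avoids); the paper's route costs nothing extra because the Gysin triangles, projection formula and Tor-independence are developed immediately beforehand and are needed anyway for Lemmas \ref{homotopy} and \ref{projective1} and for the Riemann--Roch argument, and it keeps the interaction between the Chern class action and the residue maps under explicit control. The gluing argument you give for $E_n(r)$ (syntomic part plus the \'etale projective space theorem on the generic fiber, matched via the period map's compatibility with $c_1$ and products) is the same as the paper's.
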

\begin{proof}
In the absence of the divisor $D$, for the complexes $S_n(r)$ this  follows easily from the (filtered) projective space theorem for crystalline cohomology  and the compatibility of the actions of syntomic Chern classes with the cohomology long exact sequence associated to  (\ref{exact}) \cite[I.4.3]{Gr}. For complexes $E_n(r)$, this follows from the above and the projective space theorem for $j_{\eet !}\R j^{\prime}_*{\mathbf Z}/p^n(r)$.

 In general, write $D=\cup_{i=1}^{i=m}D_i$, $Y=D_1$, $D^{\prime}=\cup_{i=2}^{i=m}D_i$, $D_Y=D^{\prime}\cap Y$, where $D_i$ is an  irreducible component of $D$. We will argue by induction on $m$; the case of $m=0$
being known.  
Using Lemma \ref{localization}, we get the following map of distinguished triangles
$$
\xymatrix@C=40pt{\bigoplus_{i=0}^d i_*S_n(r-1-i)_Y(D_Y)[-2i-2]\ar[d]^{i_!}\ar[rr]^{\bigoplus_{i=0}^d\xi_Y(D_Y)^i\cup\pi^*}_{\sim} & &i_*\R \pi_*
S_n(r-1)_{{\mathbf P}(\se)_Y}(\pi^{-1}(D_Y))[-2]\ar[d]^{i_!}\\
\bigoplus_{i=0}^d S_n(r-i)_X(D)[-2i] \ar[d]\ar[rr]^{\bigoplus_{i=0}^d\xi_X(D)^i\cup\pi^*} && \R \pi_*
S_n(r)_{{\mathbf P}(\se)}(\pi^{-1}(D))\ar[d]\\
\bigoplus_{i=0}^d S_n(r-i)_X(D^{\prime})[-2i] \ar[rr]^{\bigoplus_{i=0}^d\xi_X(D^{\prime})^i\cup\pi^*}_{\sim} && \R \pi_*
S_n(r)_{{\mathbf P}(\se)}(\pi^{-1}(D^{\prime}))
}
$$
where $i:Y\hookrightarrow X$ is the natural immersion and we wrote $\xi_X(D),\xi_Y(D_Y),\xi_X(D^{\prime})$ for 
the Chern classes of the tautological bundle in $H^2({\mathbf P}(\se)_Y, S_n(1)(\pi^{-1}(D_Y)))$,
$H^2({\mathbf P}(\se), S_n(1)(\pi^{-1}(D)))$, and $H^2({\mathbf P}(\se), S_n(1)(\pi^{-1}(D^{\prime})))$, respectively. The upper square commutes by the projection formula from Lemma \ref{projsynt} and Tor-independence from Lemma \ref{torind1}. 

  By the inductive assumption the top and bottom horizontal arrows are quasi-isomorphisms. So is then the middle one, as wanted.

  The argument for $E_n(r)$ is analogous.
\end{proof}

  As a corollary we get the following syntomic cohomology version of homotopy invariance.
\begin{lemma}
\label{homotopy}Let $X$, $D$ be as in Lemma \ref{projective}. 
 Let $\se$ be a locally free sheaf of rank $d+1$, $d\geq 0$. Consider its projectivization $\pi:{\mathbf P}(\se\oplus \so_X)\to X$ and its hyperplane at infinity $H_{\infty}$. Then, for $a\geq 0$, we have the following quasi-isomorphisms.
\begin{align*}
\pi^*: & \quad S_n^{a}(r)_X( D) \stackrel{\sim}{\to} \R \pi_*
S^{a}_n(r)_{{\mathbf P}(\se\oplus \so_X)}(\pi^{-1}(D)\cup H_{\infty}),\quad 0\leq r,\\
 \pi^*: & \quad S_n(r)_X( D) \stackrel{\sim}{\to} \R \pi_*
S_n(r)_{{\mathbf P}(\se\oplus \so_X)}( \pi^{-1}(D)\cup H_{\infty}), \quad 0\leq d \leq r\leq p-1.
\end{align*}
  We have analogous quasi-isomorphisms for the complexes $E_n(r)$, $E^{\prime}_n(r)$, and $E^1_n(r)$.

\end{lemma}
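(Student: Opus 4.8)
\textbf{Proof proposal for Lemma \ref{homotopy}.}
The plan is to deduce the homotopy invariance statements from the projective space theorem (Lemma \ref{projective}) in the now-standard way, keeping track of the extra hyperplane $H_\infty$ via the Gysin sequences of Lemma \ref{localization}. First I would treat the complexes $S_n(r)_X(D)$, $0\le d\le r\le p-1$. Apply Lemma \ref{projective} to the rank-$(d+1)$ bundle $\se\oplus\so_X$: the projective bundle is $\pi:{\mathbf P}(\se\oplus\so_X)\to X$ and we get a quasi-isomorphism
$\bigoplus_{i=0}^d c_1^{\synt}(\so(1))^i\cup\pi^*:\bigoplus_{i=0}^d S_n(r-i)_X(D)[-2i]\xrightarrow{\sim}\R\pi_* S_n(r)_{{\mathbf P}(\se\oplus\so_X)}(\pi^{-1}(D))$.
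The hyperplane at infinity $H_\infty\cong{\mathbf P}(\se)$ is a relative divisor on ${\mathbf P}(\se\oplus\so_X)$ meeting $\pi^{-1}(D)$ transversally, and $c_1^{\synt}(\so(1))$ restricted to $H_\infty$ is again the tautological class. Now feed the projective bundle formula on ${\mathbf P}(\se\oplus\so_X)$ and the one on $H_\infty\cong{\mathbf P}(\se)$ (rank $d$, i.e.\ $i$ running $0,\dots,d-1$) into the Gysin distinguished triangle of Lemma \ref{localization} for $H_\infty\hookrightarrow{\mathbf P}(\se\oplus\so_X)$ relative to $\pi^{-1}(D)$; the projection formula of Lemma \ref{projsynt} and the Tor-independence of Lemma \ref{torind1} identify the Gysin map between the two decompositions with a shift. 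Cancelling the top $d$ summands leaves exactly $\pi^*:S_n(r)_X(D)\xrightarrow{\sim}\R\pi_* S_n(r)_{{\mathbf P}(\se\oplus\so_X)}(\pi^{-1}(D)\cup H_\infty)$, which is the claim.

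For the twisted complexes $S^a_n(r)_X(D)$ (all $r\ge 0$, no bound) the same argument works once one has the projective space theorem for them; but that is immediate since $S^a_n(r)$ differs from the crystalline setup only by a twist on the Frobenius, and the filtered projective space theorem for crystalline cohomology together with compatibility of Chern class actions with the long exact sequence of (\ref{exact}) gives the decomposition for $S^a_n(r)$ without any restriction on $r$ (the bound $r\le p-1$ in Lemma \ref{projective} is only needed for the $\sj^{<r>}$-version). Concretely I would first record the projective space theorem for $S^a_n(r)$, then run the Gysin-cancellation exactly as above. For $E_n(r)$, $r\le p-2$, I would combine the $S$-version just proved with the analogous homotopy invariance for $j_{\eet!}\R j'_*{\mathbf Z}/p^n(r)$ (which is classical, being the projective bundle/hyperplane-at-infinity statement for \'etale cohomology), glued along the distinguished triangles (\ref{Niis}); Lemma \ref{projective} already asserts the projective space theorem for $E_n(r)$, so the same cancellation applies verbatim. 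For $E'_n(r)$ one uses instead the distinguished triangle (\ref{Niis}) together with Theorem \ref{keylemma11}(2) / Corollary \ref{reduction-kolo}, or more simply argues that $E'_n(r)$ also satisfies a projective space theorem up to the universal constant $p^{Nr}$ and then notes that since the target of a quasi-isomorphism claim is being compared degreewise, the constants on both sides of the Gysin cancellation match and drop out; the cleanest route is to deduce the $E'$-statement from the $E$-statement via the maps $\omega,\tau$ whose kernels and cokernels are killed by a fixed power of $p$, but since we are claiming an honest quasi-isomorphism here one should instead reprove the projective bundle decomposition for $E'_n(r)$ directly from that of $S'_n(r)$ and of $\R j_*{\mathbf Z}/p^n(r)'$ using (\ref{Niis}). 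Finally $E^1_n(r)$ is handled from $E'_n(r)$ through the defining cone $E^1_n(r)_X=\Cone(\wt S^1_n(r)_X\xrightarrow{\tilde\omega_0}\wt S'_n(r)_X\xrightarrow{\tilde\alpha'_r}j_{\eet!}j'_*G{\mathbf Z}/p^n(r+1)'(-1)[1])[-1]$: each constituent satisfies homotopy invariance (the syntomic pieces by the $S$-argument, the \'etale piece classically), and $\pi^*$ commutes with all the structure maps, so $\pi^*$ is a quasi-isomorphism on the cone.

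The main obstacle I anticipate is the bookkeeping in the Gysin-cancellation step: one must check that under the projective space decompositions the Gysin map $i_!$ for $H_\infty\hookrightarrow{\mathbf P}(\se\oplus\so_X)$ is, up to the expected shift and sign, the inclusion of the summands $\bigoplus_{i=0}^{d-1}S_n(r-1-i)_X(D)[-2i-2]\hookrightarrow\bigoplus_{i=0}^{d}S_n(r-i)_X(D)[-2i]$ indexed by $i=1,\dots,d$, so that the long exact sequence splits into short exact sequences and the quotient is the $i=0$ summand $S_n(r)_X(D)$. This is exactly the compatibility packaged by Lemmas \ref{projsynt} and \ref{torind1}, combined with the fact (implicit in the construction of $c_1^{\synt}$ and proved in the crystalline projective space theorem) that $c_1^{\synt}(\so(1))|_{H_\infty}$ equals the tautological class on $H_\infty\cong{\mathbf P}(\se)$; granting these, the splitting is forced by counting ranks in each cohomological degree, just as in \cite[I.4.3]{Gr}. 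A secondary subtlety is the absence of any bound on $r$ for the twisted and $E$-type complexes: one must be careful to invoke only the versions of the projective space theorem that hold in that generality, i.e.\ work throughout with $\sj^{[r]}$/truncated or $E'$-type complexes where the comparison with crystalline (filtered) cohomology or with $\R j_*{\mathbf Z}/p^n(r)'$ imposes no $p$-restriction.
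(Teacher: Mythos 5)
Your treatment of the second quasi-isomorphism is essentially the paper's proof: the Gysin triangle for $H_\infty\hookrightarrow{\mathbf P}(\se\oplus\so_X)$, the projection formula (Lemma \ref{projsynt}) and Tor-independence (Lemma \ref{torind1}) identifying $i_!i^*\pi^*$ with $c_1^{\synt}(\so(H_\infty))\cup\pi^*$, and the two projective space theorems cancelling against each other so that the cokernel is the $i=0$ summand. That part is fine.

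The gap is in your treatment of $S^a_n(r)$ (and, downstream, of $E'_n(r)$ and $E^1_n(r)$). You propose to first establish a projective bundle decomposition $\bigoplus_i c_1^i\cup\pi^*:\bigoplus_i S^a_n(r-i)[-2i]\xrightarrow{\sim}\R\pi_* S^a_n(r)$ and then run the same cancellation. Two problems. First, such a decomposition requires $r$ to be at least the relative dimension in order to even write down the summands, whereas the first quasi-isomorphism is asserted for all $r\ge 0$ with no relation to $d$ (and this generality is used later, e.g.\ in Lemma \ref{projective1}). Second, and more seriously, the decomposition is false as an honest quasi-isomorphism for the undivided/twisted complexes: by the explicit product formula for $S^i_n(r)_{X,Z}$, cupping with $c_1^{\synt}$ acts on the $\so^{\crr}_n$-component of the defining cone through $\phi(c_1^{\dr})=p\,c_1^{\dr}$, so the $i$-th summand enters with an extra factor $p^i$ on that component and the map is only a $p^{(*)}$-quasi-isomorphism. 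This is precisely why Lemma \ref{projective} is stated only for $S_n(r)$ (where the divided Frobenius $\phi_1$ fixes $c_1$) with $r\le p-1$, and why Lemma \ref{projective1} gives only a partial cell decomposition for $S^a_n(r)$. The correct route — the one the paper takes — is to bypass any projective space theorem for $S^a_n(r)$ itself and instead run the hyperplane-at-infinity cancellation on the two constituents $\sj^{[r]}_n$ and $\so^{\crr}_n$ of the cone $S^a_n(r)=\Cone(\sj^{[r]}_n\to\so^{\crr}_n)[-1]$: for filtered crystalline cohomology the projective space theorem holds for every $r\ge 0$ (since $\sj^{[s]}_n=\so^{\crr}_n$ for $s\le 0$) and no Frobenius twist intervenes, so $\pi^*$ is a quasi-isomorphism on each constituent and hence on the cone. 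For $E'_n(r)$ and $E^1_n(r)$ one adds the \'etale constituent $j_{\eet!}\R j'_*{\mathbf Z}/p^n(\cdot)'$, handled by proper base change and homotopy invariance of \'etale cohomology. You do state this constituent-by-constituent strategy at the very end for $E^1_n(r)$; it is the argument you should already be using for $S^a_n(r)$ and $E'_n(r)$ in place of the nonexistent projective space theorem for those complexes.
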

\begin{proof}
 The second quasi-isomorphism  follows  from the projective space theorem from Lemma \ref{projective}. Indeed, consider the following commutative diagram of distinguished triangles.
$$\xymatrix{
\R \pi_*i_*S_n(r-1)_{H_{\infty}}(\pi^{-1}(D)\cap H_{\infty})[-2]\ar[r]^-{i_!} & \R \pi_*S_n(r)_{{\mathbf P}}( \pi^{-1}(D))\ar[r] \ar@{=}[d]
&
\R \pi_*
S_n(r)_{{\mathbf P}}( \pi^{-1}(D)\cup H_{\infty})\ar@{=}[d]\\
\R q_*S_n(r-1)_{{\mathbf P}(\se)}(q^{-1}(D))[-2]\ar[r]^-{\xi\cup \wt{\pi}^*}\ar[u]^{\wr}_{i^*\pi^*} & \R {\pi}_*S_n(r)_{{\mathbf P}}( \pi^{-1}(D))\ar[r] &
\R {\pi}_*
S_n(r)_{{\mathbf P}}( \pi^{-1}(D)\cup H_{\infty})\\
\oplus_{i=0}^{i=d-1}S_n(r-1-i)_{X}(D)[-2-2i]\ar[r]^-{\xi^{i+1}\cup \pi^*}\ar[u]^{\oplus_i \xi_1^i\cup q^*}_{\wr} & \R \pi_*S_n(r)_{{\mathbf P}}( \pi^{-1}(D))\ar[r]^{p_1}\ar@{=}[u] &
S_n(r)_{X}(D)\ar[u]^{\pi^*},}
$$
where we put $\xi=c_1^{\synt}(\so(H_{\infty}))$,  $\xi_1=c_1^{\synt}(\so(1))\in H^2({\mathbf P}(\se),S_n(1)(q^{-1}(D))$, ${\mathbf P}={\mathbf P}(\se\oplus \so_X)$. We have $\xi=\wt{\pi}^*(\xi)$.  We named the maps as in the following diagram
$$
\xymatrix{
H_{\infty}\ar[r]^-{i}\ar[rd]^{\sim} & {\mathbf P}(\se\oplus \so_X)\ar[d]^{\wt{\pi}}\\
& {\mathbf P}(\se)\ar[r]^{q} & X
}
$$

The top row in the big diagram is the Gysin sequence.  The upper left square commutes
because by the projection formula from Lemma \ref{projsynt} and by Lemma \ref{Chern} we have 
$$i_!i^*\pi^*=i_!(1)\cup \pi^* = c_1^{\synt}(\so(H_{\infty}))\cup \pi^*.$$
In the bottom row, the left vertical map is a quasi-isomorphism by the projective space theorem for $q: {\mathbf P}(\se)\to X$ (Lemma \ref{projective}) and the map $p_1$ comes from the same projective space theorem. The pullback map $\pi^*$ is its right inverse. 
The second quasi-isomorphism of the lemma follows now easily.

  For the first quasi-isomorphism consider the following commutative diagram of distinguished triangles.
$$\xymatrix@C=42pt{
\R \pi_*S^{a}_n(r)_{{\mathbf P}}( \pi^{-1}(D)\cup H_{\infty})\ar[r] &
\R \pi_*\sj^{[r]}_{{\mathbf P},n}( \pi^{-1}(D)\cup H_{\infty})\ar[r]^{p^a(p^{r}-1)\phi} & \R \pi_*\so^{\crr}_{{\mathbf P},n}( \pi^{-1}(D)\cup H_{\infty})\\
S^{a}_n(r)_{X}(D) \ar[r] \ar[u]^{\pi^*} & \sj^{[r]}_{X,n}(D) \ar[u]^{\pi^*}_{\wr} \ar[r]^{p^a(p^{r}-1)\phi}& \so^{\crr}_{X,n}(D) \ar[u]^{\pi^*}_{\wr}.
}
$$
Here we put $\sj_n^{[r]}=\R \varepsilon_*\sj_n^{[r]}$ and $\so_n^{\crr}=\R \varepsilon_*\so_n^{\crr}$, 
where $\varepsilon$ is the natural projection from the syntomic to the Zariski site. The two marked maps are quasi-isomorphisms because we have the (filtered) projective space theorem for crystalline cohomology and we can use  the same argument as we did above to prove the second quasi-isomorphism of the lemma. The first quasi-isomorphism of the lemma follows.

  The arguments for the complexes $E_n(r)$, $E^{\prime}_n(r)$, and $E^1_n(r)$ are analogous (but use also the proper base change theorem and homotopy property of \'etale cohomology).
\end{proof}
For a general prime $p$ we still have a cell decomposition of the cohomology of the projective bundle.
We will prove a first step of it.
\begin{lemma}
\label{projective1}Let $X$, $D$ be as in Lemma \ref{projective}. Let $\se$ be a locally free sheaf of rank $d+1$, $d\geq 0$. Consider its projectivization $\pi:{\mathbf P}(\se\oplus \so_X)\to X$ and its hyperplane at infinity $H_{\infty}$.  Then, for $a\geq 0$, we have the following quasi-isomorphism
$$
i_!\oplus \pi^*:\quad 
\R \pi_*i_*S^{a+1}_n(r-1)_{H_{\infty}}(\pi^{-1}(D)\cap H_{\infty})[-2]\oplus  S^{a}_n(r)_X(D)\stackrel{\sim}{\to} 
\R \pi_*S^{a}_n(r)_{{\mathbf P}(\se\oplus \so_X)}(\pi^{-1}(D)), \quad 1\leq r.
$$
Here $i: H_{\infty }\hookrightarrow {\mathbb P}(\se\oplus \so_X)$ is the natural closed immersion.

 In particular,  we have the following quasi-isomorphism
$$
i_!(\pi i)^*\oplus \pi^*:\quad 
S^{a+1}_n(r-1)_X(D)[-2]\oplus  S^{a}_n(r)_X(D)\stackrel{\sim}{\to} 
\R \pi_*S^{a}_n(r)_{{\mathbf P}(\se\oplus \so_X)}(\pi^{-1}(D)), \quad 1\leq r.
$$
Similarly for the complexes $E^{\prime}_n(r)$ and $E^1_n(r)$.
\end{lemma}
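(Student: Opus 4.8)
The plan is to deduce Lemma \ref{projective1} from the already-established tools by the same two-step pattern used throughout this section: first handle the case of trivial divisor by a direct argument, then run an induction on the number of irreducible components of $D$ using the Gysin triangle of Lemma \ref{localization}, the projection formula of Lemma \ref{projsynt}, and the Tor-independence of Lemma \ref{torind1}. The second, ``in particular'' statement is immediate from the first: apply homotopy invariance (Lemma \ref{homotopy}) to the restriction $\pi i:H_\infty\to X$, which identifies $\R(\pi i)_*S^{a+1}_n(r-1)_{H_\infty}(\pi^{-1}(D)\cap H_\infty)$ with $S^{a+1}_n(r-1)_X(D)$, so only the first quasi-isomorphism needs proof.

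First I would treat $D=\varnothing$. Write ${\mathbf P}={\mathbf P}(\se\oplus\so_X)$, let $i:H_\infty\hookrightarrow{\mathbf P}$ be the hyperplane at infinity and $\pi:{\mathbf P}\to X$ the projection. By Lemma \ref{localization} (the undivided, twisted Gysin triangle for $S^a_n(r)$, valid for all $r\geq 1$) we have a distinguished triangle
$$
i_*S^{a+1}_n(r-1)_{H_\infty}[-2]\stackrel{i_!}{\longrightarrow} S^a_n(r)_{{\mathbf P}}\longrightarrow S^a_n(r)_{{\mathbf P}\setminus H_\infty},
$$
and ${\mathbf P}\setminus H_\infty$ is the total space of the vector bundle $\se^\vee$ (or rather ${\mathbf P}(\se\oplus\so_X)\setminus H_\infty={\mathbf V}(\se)$), so by homotopy invariance (Lemma \ref{homotopy}, applied with empty divisor, or more precisely its vector-bundle consequence) the pullback $\pi^*:S^a_n(r)_X\to \R\pi_*S^a_n(r)_{{\mathbf V}(\se)}$ is a quasi-isomorphism. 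Pushing the triangle forward by $\R\pi_*$ and using that $\R\pi_*i_*=\R(\pi i)_*$ shows that $\R\pi_*S^a_n(r)_{{\mathbf P}}$ sits in a distinguished triangle with $\R(\pi i)_*S^{a+1}_n(r-1)_{H_\infty}[-2]$ and $S^a_n(r)_X$, and the composite $S^a_n(r)_X\xrightarrow{\pi^*}\R\pi_*S^a_n(r)_{{\mathbf P}}\to\R\pi_*S^a_n(r)_{{\mathbf V}(\se)}$ being an isomorphism provides a splitting; hence $i_!\oplus\pi^*$ is a quasi-isomorphism. The case $D=\varnothing$ for $E^{\prime}_n(r)$ and $E^1_n(r)$ follows the same way from the corresponding Gysin triangles and homotopy invariance in Lemma \ref{homotopy}, using in addition the proper base change and homotopy property of \'etale cohomology exactly as in the proof of that lemma.

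Next I would run the induction on $m$, the number of components of $D=\cup_{i=1}^m D_i$. Set $Y=D_1$, $D'=\cup_{i=2}^m D_i$, $D_Y=D'\cap Y$, and write $\pi:{\mathbf P}\to X$, $i:Y\hookrightarrow X$. Applying Lemma \ref{localization} to the pair $(X,D')$ with the extra divisor $Y$, and likewise to ${\mathbf P}$ with $\pi^{-1}(Y)$, and using Lemma \ref{torind1} to see that the two Gysin triangles are compatible under $\pi$, one gets a commutative diagram of distinguished triangles relating the statement for $(X,D)$, for $(X,D')$, and for the divisor $(Y,D_Y)$ (the latter a semistable scheme of the same type with one fewer component at infinity, to which the inductive hypothesis applies for $S^{a+1}_n(r-1)_Y(\cdot)$). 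The commutativity of the square containing $i_!$ is exactly the projection formula of Lemma \ref{projsynt} together with Lemma \ref{Chern} (and, in the Nisnevich $E^{\prime}$-case, one absorbs the universal constant $p^{N(\cdot)}$ as in \eqref{projZar}); the commutativity of the square containing $\pi^*$ is functoriality. Two of the three horizontal arrows being quasi-isomorphisms (the $(X,D')$ case by induction, the $(Y,D_Y)$ case by induction) forces the third, which is the $(X,D)$ case, by the five lemma. The main obstacle is bookkeeping in the Nisnevich-$E'$ case: there the Gysin ``triangle'' involves the auxiliary complex $F^1_n(r-1)_X(Y\cup D)$ which is only a $p^{N(r-1)}$-approximation of $E^1_n(r-1)_{\Nis,Y}(D_Y)$, so one does not get literal quasi-isomorphisms but $p^{N\cdot}$-quasi-isomorphisms; keeping the powers of $p$ consistent across the inductive step (the degrees are homogeneous of the right weight, so the constants compound in a controlled way) is the delicate part, but it proceeds exactly as in the proofs of Lemmas \ref{localization} and \ref{projsynt}.
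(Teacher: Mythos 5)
Your skeleton — Gysin triangle for $H_\infty$, homotopy invariance to identify the cofiber with $S^a_n(r)_X(D)$, then split off the first term — is the paper's argument. But there is a genuine error in how you set it up. The cofiber of $i_!$ in Lemma \ref{localization} is the \emph{log}-syntomic complex $S^a_n(r)_{\mathbf P}(\pi^{-1}(D)\cup H_\infty)$ (the compact scheme ${\mathbf P}$ with log poles along $\pi^{-1}(D)\cup H_\infty$), not $S^a_n(r)_{{\mathbf P}\setminus H_\infty}$; syntomic cohomology of a log scheme is emphatically not the syntomic cohomology of the open locus where the log structure is trivial — this failure of localization is the raison d'\^etre of the whole syntomic-\'etale machinery in this paper. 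Consequently the ``vector-bundle consequence'' of homotopy invariance you invoke for ${\mathbf V}(\se)={\mathbf P}\setminus H_\infty$ does not exist here: the paper proves no homotopy invariance for syntomic cohomology of open affine bundles, and such a statement would be a separate (and hard) problem. What Lemma \ref{homotopy} actually provides — and what the paper's proof uses — is precisely the quasi-isomorphism $\pi^*\colon S^a_n(r)_X(D)\stackrel{\sim}{\to}\R\pi_*S^a_n(r)_{{\mathbf P}}(\pi^{-1}(D)\cup H_\infty)$ for the log-compactified bundle, valid for all $a\geq 0$, $r\geq 0$ and a general divisor $D$.

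Once you use Lemma \ref{homotopy} in that form, your entire second paragraph (the induction on the number of components of $D$) becomes unnecessary: the Gysin triangle of Lemma \ref{localization} for the pair $({\mathbf P},\pi^{-1}(D))$ with extra divisor $H_\infty$, pushed down by $\R\pi_*$ and compared with the split triangle on $\R\pi_*i_*S^{a+1}_n(r-1)_{H_\infty}(\pi^{-1}(D)\cap H_\infty)[-2]\oplus S^a_n(r)_X(D)$, gives the first quasi-isomorphism in one step, uniformly in $D$. The induction would also create an edge case at $r=1$, where the inductive hypothesis for $(Y,D_Y)$ would be needed for the twist $r-1=0$, outside the range $r\geq 1$ of the statement. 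Your reduction of the second quasi-isomorphism to the first via $\pi i\colon H_\infty\stackrel{\sim}{\to}X$ is correct and is what the paper does.
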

\begin{proof}
We have the following commutative diagram of distinguished triangles.
$$\xymatrix{
\R \pi_*i_*S^{a+1}_n(r-1)_{H_{\infty}}(D_{\infty})[-2]\ar[r]^{i_!} & \R \pi_*S^{a}_n(r)_{{\mathbf P}}(\pi^{-1}(D))\ar[r] & \R \pi_*S^{a}_n(r)_{{\mathbf P}}(\pi^{-1}(D)\cup H_{\infty})\\
\R \pi_*i_*S^{a+1}_n(r-1)_{H_{\infty}}(D_{\infty})[-2]\ar@{=}[u]\ar[r] & \R \pi_*i_*S^{a+1}_n(r-1)_{H_{\infty}}(D_{\infty})[-2]\oplus  S^{a}_n(r)_X(D)\ar[u]^{i_!\oplus \pi^*}\ar[r] &   S^{a}_n(r)_X(D)\ar[u]^{\pi^*}_{\wr}
}
$$
where we put ${\mathbf P}={\mathbf P}(\se\oplus \so_X)$, $D_{\infty}=\pi^{-1}(D)\cap H_{\infty}$. The top triangle is induced by the Gysin sequence.
The right vertical arrow is a quasi-isomorphism by Lemma \ref{homotopy}. The first statement of the lemma follows. 
 
  For the second, notice that  the natural projection 
$\pi i: H_{\infty}\to X$ is an isomorphism and we have a quasi-isomorphism
$$
(\pi i)^*:\quad S^{a+1}_n(r-1)_X(D)\stackrel{\sim}{\to}
\R \pi_*i_*S^{a+1}_n(r-1)_{H_{\infty}}(D_{\infty}).
$$
The arguments for the complexes $E^{\prime}_n(r)$ and $E^1_n(r)$ are analogous.
\end{proof}

\subsection{Compatibility with Gysin sequences}
 We will show in this section that the  syntomic universal Chern class maps are compatible with Gysin sequences.
We will present the arguments for  $E^{\prime}$-syntomic-\'etale cohomology. In the case of $E$-syntomic-\'etale cohomology the arguments are very similar. 

    Let $X$ be a semistable scheme  over $\so_K^{\times}$ or a semistable scheme $\so_K$ with smooth special fiber. Let $D$ be the horizontal divisor. Assume that  $D=\cup_{i=1}^{m}D_i$, $m\geq 1$,  is a union of $m$  irreducible components $D_i$.
 Note that each scheme $D_i$ with the induced log-structure is of the same type as $X$  (with  at most $m-1$  components in the divisor at infinity). 
 Set  
$D^{\prime}=\cup_{i=2}^{m}D_i$, $i:D_1\hookrightarrow X$, $D^{\prime}_1=D_1\cap D^{\prime}$. 
The pairs $(X,D^{\prime})$ and $(D_1,D_1^{\prime})$ are of the same type as the pair $(X,D)$ we started with but  with at most $m-1$ irreducible divisors at infinity.  Fix $N$ as in Lemma \ref{kolo1}. 
Consider the following Gysin-type diagram 
with columns being homotopy cofiber sequences
\begin{equation}
\label{induction}
\begin{CD}
i_*G_{D_1}( D^{\prime}_1) @>[p^{Ni}][p^{N(i+1)}]\gamma^{\prime}[p^{2N}]P(D_1/X;{\wt{C}}^{\synt}_{D_1\setminus D^{\prime}_1})>>
\{0\}\times \prod_{i\geq 1}\sk(2i-2,i_*\wt{F}_n^{1}(i-1)_{X}(D^{\prime}))\\
@VVi_* V @VVi_! V\\
G_X( D^{\prime})@>[p^{Ni}][p^{2Ni}]\wt{C}^{\synt}_{X\setminus D^{\prime}}>>{\mathbf Z}\times \prod_{i\geq 1}\sk(2i,\wt{E}_n^{\prime}(i)_X( D^{\prime}))\\
@VVV @VVV\\
G_X( D)@>[p^{Ni}][p^{2Ni}]\wt{C}^{\synt}_{X\setminus D}>>{\mathbf Z}\times \prod_{i\geq 1}\sk(2i,\wt{E}_n^{\prime}(i)_X(D)).
\end{CD}
\end{equation}
Here the  Chern classes are associated to $N$. We skip this index from the notation. 
The polynomials $P(D_1/X;\wt{C}^{\synt}_{D_1\setminus D^{\prime}_1})$ are given in degree $k$, $k\geq 0$, by certain  polynomials  
$$
P_k(D_1/X;\wt{C}^{\synt}_{D_1\setminus D^{\prime}_1}):\quad 
G_{D_1}(D^{\prime}_1)\to \sk(2k,\wt{E}_n^{\prime}(k)_{D_1}(D^{\prime}_1))
$$
that are equal to $0$ for $k=0$ and, for  $k\geq 1$, are obtained by applying the universal polynomials 
$P_k(U_1;T_0,T_1,\ldots, T_{k-1})$ with integer coefficients \cite[1.3]{Jou}, \cite[II.4]{FLL} to the augmented universal Chern classes
$$\wt{C}^{\synt}_{D_1\setminus D^{\prime}_1,i}:\quad G_{D_1}(  D^{\prime}_1)\to
\sk(2i,\wt{E}_n^{\prime}(i)_{D_1}( D^{\prime}_1)), \quad 0\leq i\leq k-1,
$$
and to the first Chern class of the conormal sheaf
$c_1^{\synt}(\sn^{\vee}_{D_1/X})\in H^2(D_1,E^{\prime}_n(1)_{D_1})$. We set $\gamma^{\prime}=\beta\omega^{\prime}$, where $\beta$ is the map defined in the proof of Lemma \ref{projsynt}. 
\begin{theorem}
\label{logChern}
Diagram \ref{induction} commutes in the homotopy category.
\end{theorem}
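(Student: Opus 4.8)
The plan is to prove commutativity of Diagram~\ref{induction} by a reduction to the classical (divisor-free) Grothendieck--Riemann--Roch statement, following the standard deformation-to-the-normal-cone argument, while keeping track of the universal twisting constants introduced by Lemma~\ref{kolo1}. The diagram has three rows and two columns, each column a homotopy cofiber sequence; the bottom square is the one whose commutativity carries the essential content, since the middle and bottom horizontal maps are just the logarithmic Chern class maps $\wt{C}^{\synt,m}_{X\setminus D^{\prime}}$ and $\wt{C}^{\synt,m}_{X\setminus D}$ from Theorem~\ref{augmented}, which by construction are functorial for the open immersion $X\setminus D\hookrightarrow X\setminus D^{\prime}$. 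So the real work is the top square, i.e.\ the claim that the boundary map $i_*$ (resp.\ $i_!$) intertwines the Chern class map on $G_{D_1}(D_1^{\prime})$ with the Chern class map on $G_X(D^{\prime})$ up to the polynomial correction $P(D_1/X;\cdot)$ and the explicit powers of $p$.

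First I would unwind the identification of the right-hand column. The map $i_!$ in cohomology is the Gysin map of Lemma~\ref{localization}, part (2b): the residue triangle for $E^{\prime}_n(r)_{\Nis}$ exhibits $i_*F^1_n(r-1)_X(D_1\cup D^{\prime})[-2]$ as the fiber of $E^{\prime}_n(r)_{\Nis,X}(D^{\prime})\to E^{\prime}_n(r)_{\Nis,X}(D_1\cup D^{\prime})$, and the constants $[p^{Ni}][p^{N(i+1)}][p^{2N}]$ and the map $\gamma^{\prime}=\beta\omega^{\prime}$ are exactly the composite of: the $p^{N(i-1)}$-inverse $\beta$ of $F^1_n(i-1)_X(D_1\cup D^{\prime})\to E^1_n(i-1)_{\Nis,D_1}(D_1^{\prime})$ from Lemma~\ref{localization}, the twisting map $\omega^{\prime}$ relating $E^1$ to $E^{\prime}$ on $D_1$, and the two universal constants from Lemma~\ref{kolo1} needed so that the Chern classes on $D_1$ land in the $F^1$-version of syntomic-\'etale cohomology of $X$ rather than that of $D_1$. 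The key algebraic input is the projection formula for $E^{\prime}_n(r)_{\Nis}$, diagram~(\ref{projZar}) of Lemma~\ref{projsynt}, together with the Gysin/localization compatibility of the first Chern class, Lemma~\ref{Chern}. These let me rewrite $i_!(\text{class on }D_1)$ as a cup product on $X$ with $c_1^{\synt}(\so(D_1))$, which is the shape needed to apply the universal polynomial identity.

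Next I would carry out the deformation to the normal cone, exactly as in the proof of the Adams--Riemann--Roch without denominators (Lemma~\ref{RRK}) and the homotopy-invariance lemmas. The idea is: on the blow-up $W$ of $X\times{\mathbf A}^1$ along $D_1\times\{0\}$ one has closed immersions specializing $i:D_1\hookrightarrow X$ at $t=1$ to the zero-section $i_\infty:D_1\hookrightarrow {\mathbf P}(\sn^{\vee}_{D_1/X}\oplus\so_{D_1})$ at $t=0$; all relevant log-structures (horizontal divisors $D^{\prime}$ and their pullbacks) deform compatibly by Lemma~\ref{torind1} and Lemma~\ref{torindep}. Using functoriality of the syntomic-\'etale Chern class maps (Theorem~\ref{augmented}(3)) and of the Gysin triangles (Lemma~\ref{torind1}), together with injectivity of the relevant pushforward $j_{0*}$ on the appropriate $K^{D_1}$-groups (projective space theorem in $K$-theory), I reduce to the case where $i$ is the zero-section of a projective bundle. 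In that model case the projective-bundle/homotopy lemmas (Lemma~\ref{projective}, Lemma~\ref{homotopy}, Lemma~\ref{projective1}) give an explicit splitting of $\R\pi_*E^{\prime}_n(r)_{{\mathbf P}}$, and the identity $i_!(y)=\pi^*(\lambda_{-1}([\sn])y)$ in $K$-theory combined with the universal Whitney/splitting-principle formulas for syntomic-\'etale Chern classes (the analogue of Lemma~\ref{universal} established in Section~\ref{truncated-kolo}) reduces the whole thing to the formal identity defining the polynomials $P_k(U_1;T_0,\dots,T_{k-1})$ from \cite[1.3]{Jou}, \cite[II.4]{FLL}. Here the homogeneity of these polynomials in the $T_j$ is what guarantees that the powers of $p$ that appear on the two sides of the square match: this is the bookkeeping alluded to in the proof of Theorem~\ref{augmented}(4).

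The main obstacle, I expect, is not any single step but the interaction of two separate sources of correction factors: the universal constants $p^{Ni}$ coming from Lemma~\ref{kolo1} (the gap between syntomic-\'etale cohomology of $X(D)$ and of $X\setminus D$), and the constant $p^{N(i-1)}$ coming from Lemma~\ref{localization}(2b) (the gap between $F^1_n$ and $E^1_n$ in the residue triangle). One must check that, after applying the projection formula~(\ref{projZar})---which itself contributes a factor $p^{N(r_1+r_2-1)}$---all these powers assemble into exactly the constants written along the top horizontal arrow of Diagram~\ref{induction}, for every degree $k$ simultaneously and compatibly with the polynomial $P_k$. A secondary subtlety is that the residue map and the Chern class $c_1^{\synt}$ are only defined as zig-zags of genuine maps of complexes (the ``straightening'' constructions of Section~2.2.4 and the proof of Lemma~\ref{localization}), so commutativity must be verified in the derived category using the explicit comparison diagrams~(\ref{first-diagram}), (\ref{second-diagram}), (\ref{terror}) of Lemma~\ref{localization} rather than naively; but since those diagrams were built to be compatible with products (as used in Lemma~\ref{projsynt}), this is routine once the constant bookkeeping is in place. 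I would therefore organize the write-up as: (i) reduce to the top square; (ii) reduce by deformation to the normal cone to the projective-bundle model; (iii) in the model, invoke the $K$-theoretic identity $i_*=\pi^*\circ(\lambda_{-1}\cdot)$, the projective space theorem for $E^{\prime}_n(r)$, and the universal polynomial identity; (iv) trace the constants.
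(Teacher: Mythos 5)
Your overall strategy is the paper's: the content is concentrated in the top square, which is a Grothendieck--Riemann--Roch statement proved by reducing, via deformation to the normal cone, to the case of the zero section of a projective bundle, where the $K$-theoretic identity $i_*(y)=\pi^*(\lambda_{-1}([\sn])y)$, the projection formula of Lemma \ref{projsynt}, Lemma \ref{Chern}, and the universal polynomial identity of Jouanolou in the $\lambda$-ring $\wt{H}^*(Y,G_Y(D_Y),E^{\prime}_n(*)_Y(D_Y))$ finish the computation. Two steps of your sketch, however, would not go through as written.

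First, the cancellation after deformation. The deformation argument produces an identity only after composing with the Gysin pushforward $j_{0!}$ into the cohomology of the deformation space $W$ (the blow-up of ${\mathbb P}^1_X$ along $D_1\times\{\infty\}$): one shows $j_{0!}\gamma^{\prime}i_!\gamma^{\prime}[p^{2N}]P(\cdot)\simeq j_{0!}\gamma^{\prime}[p^{N(i-1)}]\wt{C}_{X\setminus D_X}i_*$. You propose to cancel this using ``injectivity of $j_{0*}$ on the appropriate $K$-groups''; that argument is available in the $K$-theoretic Adams--Riemann--Roch (Lemma \ref{RRK}), where the target is a $K$-group, but here the target is a mapping space into $\sk(2i,\wt{E}^{\prime}_n(\cdot)_W)$, and injectivity of a $K$-theory pushforward says nothing about it. What is actually needed is a retraction on the cohomology side: a map $q_*$ with $q_*j_{0!}\beta\simeq[p^{Ni}]$, whose existence rests on an explicit splitting of $\R\pi_*E^{\prime}_n(r)_W$ (Lemma \ref{finish}), proved by combining the Gysin triangles for the components of $W_0$ and $W_\infty$ with the invariance of syntomic and \'etale cohomology under log-blow-ups. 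This splitting is a genuine computation, not a formal consequence of the projective space theorem, and it is also the source of one of the factors $p^{Ni}$ in the top row.

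Second, the bottom square and the outer factor $[p^{Ni}]$. You dispose of the bottom square ``by functoriality,'' but that only shows that $[p^{2Ni}]\wt{C}^{\synt}_{X\setminus D}$ makes the lower square commute; it does not identify it with the map on the cofiber $G_X(D)$ induced by the commuting top square. The paper compares the two: their difference vanishes on $G_X(D^{\prime})$, hence factors through $i_*G_{D_1}(D^{\prime}_1)[1]$, and by the purity statement of Lemma \ref{keylemma} (ultimately the Beilinson--Lichtenbaum input $\R i^!E^{\prime}_n(i)_X(D)\simeq\R i^!\R j^{\prime}_*{\mathbf Z}/p^n(i)_{\M}$ being $p^{Ni}$-acyclic) every such map is killed by $p^{Ni}$. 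This is exactly where the outermost $[p^{Ni}]$ in Diagram \ref{induction} comes from; without this step your constant bookkeeping cannot reproduce the diagram as stated.
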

\begin{proof}
  We start with the upper square.
\begin{lemma}(Grothendieck-Riemann-Roch)
\label{deformation}
There exists a  homotopy  $$[p^{2Ni}]\wt{C}^{\synt}_{X\setminus D^{\prime}}i_*\simeq  
[p^{N(i+1)}]i_!\gamma^{\prime}[p^{2N}]P(D_1/X;\wt{C}^{\synt}_{D_1\setminus D^{\prime}_1}).
$$
\end{lemma}
\begin{proof}
The construction of the homotopy is implicit in Gillet's proof of Theorem 3.1 in \cite{Gi}. Just like in the classical situation 
one argues by deforming
the given  closed immersion to a regular zero section of a vector bundle of the right rank.\\
{\em  (1) The case of zero section}\\
 So we start with a special case of a closed immersion $i:Y\hookrightarrow X$, $Y=D_1$, which is the zero section of a projective bundle $X={\mathbb P}(\sn\oplus \so_Y)$, where $\sn$ is an invertible sheaf on $Y$. The sheaf $\sn$ is the conormal sheaf of the immersion. Let $\pi: X\to Y$ be the projection. Write $D_Y=D^{\prime}\cap Y$ and $D_X=D^{\prime}$. We assume that
$\pi^{-1}(D_Y)=D_X$.
Functoriality of log-$K$-theory and the projection formula in log-$K$-theory (Lemma \ref{Kproj}) yield the following homotopies
$$ \wt{C}^{\synt}_{X\setminus D_X}i_*\simeq \wt{C}^{\synt}_{X\setminus D_X}i_*i^*\pi^*\simeq
\wt{C}^{\synt}_{X\setminus D_X}([i_*\so_Y]\wedge \pi^*),$$
where $[i_*\so_Y]\wedge $ refers to the action of the class of $i_*\so_Y$ in $K_0(X)$ on $G_X( D_X)$.
The exact sequence
\begin{equation}
 \label{Koszul}
0\to \pi^*\sn\to \so_X\to i_*\so_Y\to 0
\end{equation}
yields a homotopy $\id\simeq ([\pi^*\sn]+ [i_*\so_Y])$. Hence by functoriality and  by compatibility of Chern classes with  the action of $K$-theory, as well as  by Lemma \ref{Zariskiprop} 
we have 
\begin{align*}
\wt{C}^{\synt}_{X\setminus D_X}i_* & \simeq \wt{C}^{\synt}_{X\setminus D_X}((\id-[\pi^*\sn])\wedge \pi^*)\simeq
\wt{C}^{\synt}_{X\setminus D_X}(\pi^*\lambda_{-1}(\sn)\wedge\pi^*)
  \simeq \pi^*\wt{C}^{\synt}_{Y\setminus D_Y}(\lambda_{-1}(\sn)\wedge)\\
& \simeq\pi^*(\wt{C}^{\synt}_{Y\setminus D_Y}(\lambda_{-1}(\sn))\star \wt{C}^{\synt}_{Y\setminus D_Y})
\simeq\pi^*(\lambda_{-1}(\wt{C}^{\synt}_{Y\setminus D_Y}(\sn))\star \wt{C}^{\synt}_{Y\setminus D_Y}),
\end{align*}
where for a locally free sheaf $\sll$ on a scheme $Z$ we wrote 
$\lambda_{-1}(\sll)=[\so_Z]-[\sll]\in K_0(Z)$. The equlity
$$\wt{C}^{\synt}_{Y\setminus D_Y}(\lambda_{-1}(\sn))\simeq \lambda_{-1}(\wt{C}^{\synt}_{Y\setminus D_Y}(\sn))
$$ holds since the classes $\wt{C}^{\synt}_{Y\setminus D_Y}$ and $[p^{2Ni}]\wt{C}^{\synt}_{Y}$ are compatible and we have the Whitney sum formula for the (classical) syntomic-\'etale classes $C^{\synt}_{Y}$.

  We claim that
\begin{equation}
\label{Jouanolou}
\lambda_{-1}(\wt{C}^{\synt}_{Y\setminus D_Y}(\sn))\star \wt{C}^{\synt}_{Y\setminus D_Y}\simeq P(Y/X;\wt{C}^{\synt}_{Y\setminus D_Y})[p^{2N}]c^{\synt}_1(\sn^{\vee}).
\end{equation}
This will reduce to a classical result as soon as we put this equation in the right context.
Consider the graded ring
$$
\prod _{i\geq 1}[G_Y( D_Y), \sk(2i,\tilde{E}^{\prime}_n(i){}_{Y}( D_Y))]
$$
with addition and multiplication induced from $\sk(2i,\tilde{E}^{\prime}_n(i){}_{Y}( D_Y))$.
Consider the following abelian groups
\begin{align}
\label{ring}
H^{2i}(Y,G_Y( D_Y),E^{\prime}_n(i)_Y(D_Y)): & =
[G_Y(D_Y), \sk(2i,\tilde{E}^{\prime}_n(i){}_{Y}( D_Y))],\\
\wt{H}^*(Y,G_Y(D_Y),E^{\prime}_n(*)_Y( D_Y)): & =
{\mathbf Z}\times
\{1\}\times  \prod _{i\geq 1}[G_Y( D_Y), \sk(2i,\tilde{E}^{\prime}_n(i){}_{Y}( D_Y))].\notag
\end{align}
Recall that \cite[Exp.0]{S6} $\wt{H}^*(Y,G_Y( D_Y),E^{\prime}_n(*)_Y(D_Y))$ has a structure of a  $\lambda$-ring with involution, defined as follows. 
 Write an element of 
$ \wt{H}^*(Y,G_Y(D_Y),E^{\prime}_n(*)_Y(D_Y))$ as $[n,x]$ for 
$x=1+x_1+\ldots +x_k+\ldots.$
Then
\begin{align*}
[n,x] & + [m,y]=[n+m,xy],\\
[n,x]&\star [m,y]
=[nm, x^my^nx\ast y],
\end{align*}
where
$$(1+\sum_{i\geq 1}x_i)\ast (1+\sum_{i\geq 1}y_i)=1+\sum_{k\geq 0}Q_k(x_1,\ldots,x_k;y_1,\ldots,y_k)
$$
for certain universal polynomials $Q_k$ with integral coefficients. 

  The $\lambda$-structure on $\wt{H}^*(Y,G_Y( D_Y),E^{\prime}_n(*)_Y( D_Y))$ is given by the following operations $\lambda^k$ 
\begin{equation}
\label{formulas}
\begin{cases}
& \lambda^k[0,x] =[0,\lambda^kx], \quad (\lambda^kx)_n=Q_{k,n}(x_1,\ldots,x_n),\\
& \lambda^k[n,1] =[\lambda^kn,1],
\end{cases}
\end{equation}
where $Q_{k,n}$ are certain universal polynomials with integral coefficients and $\lambda^kn$ on ${\mathbf Z}$ is the canonical one.
The involution is $\psi^{-1}[n,1+\sum_{i>0}x_i]=[n,1+\sum_{i>0}(-1)^ix_i]$.

 Now we can rephrase the equality (\ref{Jouanolou}) as the following equality in the $\lambda$-ring $\wt{H}^*(Y,G_Y( D_Y),E^{\prime}_n(*)_Y( D_Y))$ (where we skipped the $Y\setminus D_Y$ subscript).
$$
(\lambda_{-1}([1,1+C^{\synt}_1(\sn^{\vee})])\star [\wt{C}^{\synt}_0,1+C^{\synt}_1+C^{\synt}_2\ldots ])_k=
-P_k((C^{\synt}_1(\sn^{\vee})),(\wt{C}^{\synt}_0,C^{\synt}_1,\ldots, C^{\synt}_k))C^{\synt}_1(\sn^{\vee}).
$$
But this is classical \cite[1.3]{Jou}.

   By Lemma \ref{Chern} we have $c_1^{\synt}(\pi^*\sn^{\vee})= c_1^{\synt}(\so(Y))=i_!(1)$
for $1\in H^0(Y,F^1_n(0)_X(D_X))$. Note that the projection formula in syntomic-\'etale cohomology from Lemma \ref{projsynt} 
implies that 
$$
[p^{N(i-1)}]i_!(1)=i_!\beta \omega^{\prime}i^*:\quad E^{\prime}_n(i)_X(D_X)\to E^{\prime}_n(i)_X(D_X)[2].
$$
Using this and functoriality we get
\begin{align*}
[p^{N(i-1)}]\wt{C}^{\synt}_{X\setminus D_X}i_* 
& \simeq 
\pi^*(P(Y/X;\wt{C}^{\synt}_{Y\setminus D_Y}))[p^{2N}][p^{N(i-1)}]i_!(1)
\simeq
i_!\gamma^{\prime}i^*\pi^*[p^{2N}]P(Y/X;\wt{C}^{\synt}_{Y\setminus D_Y})\\
  & \simeq i_!\gamma^{\prime}[p^{2N}]P(Y/X;\wt{C}^{\synt}_{Y\setminus D_Y}),
\end{align*}
as wanted.\\
{\em (2) General case}

  We pass to the general case by "deformation to the normal cone". We will now recall some aspects of this construction \cite[IV.5]{FLL}. Consider the blow-up $\pi: W\to  {\mathbb P}^1_X$ along $Y\times\{\infty\}$, $Y=D_1$. Then we have the following deformation  diagram.
$$
\xymatrix{
 Y_0=Y \ar[rr]^{i_0=i}\ar[d]^{j_0} && X=W_0\ar[d]^{j_0} \ar[rr]^{\id_X} && X\ar[d]^{j_0}\\
{\mathbb P}^1_Y \ar[rr]^{\overline{i}} && W \ar[rr]^{\pi} && {\mathbb P}^1_X\\
Y_{\infty}=Y \ar[u]^{j_{\infty}}\ar[r]^-{i_{\infty}} & {\mathbb P}(\sn_{Y/X}^{\vee}\oplus\so_Y)\ar[ur]^t &+& \wt{X}\ar[ul]^{l} \ar[r]^{\id_X} & X\ar[u]^{j_{\infty}}.
}
$$
This diagram is commutative and all the squares are cartesian.
Here $\wt{X}$ is the blow-up of $X$ at $Y$, hence $\wt{X}=X$. The sheaf $\sn_{Y/X}$ is the normal sheaf of $Y$ in $X$. We write $N_{Y/X}$ for the associated vector bundle which is canonically isomorphic to the normal cone of $Y$ in $X$. Note that ${\mathbb P}(\sn_{Y/X}^{\vee}\oplus\so_Y)$ is the projectivization of $N_{Y/X}$. 
The embedding 
$i_{\infty}: Y \hookrightarrow  W_{\infty}=\wt{X}+{\mathbb P}(\sn_{Y/X}^{\vee}\oplus\so_Y)$ is the zero-section embedding of $Y$ in $N_{Y/X}$, followed by the canonical open embedding of $N_{Y/X}$ in ${\mathbb P}(\sn_{Y/X}^{\vee}\oplus\so_Y)$. In general, the divisors ${\mathbb P}(\sn_{Y/X}^{\vee}\oplus\so_Y)$ and $\wt{X}$ intersect in the scheme ${\mathbb P}(\sn_{Y/X}^{\vee})$, which is embedded as the hyperplane at infinity in ${\mathbb P}(\sn_{Y/X}^{\vee}\oplus\so_Y)$, and as the exceptional divisor in $\wt{X}$. Since our embedding $Y\hookrightarrow X$ is of codimension $1$, the scheme ${\mathbb P}(\sn_{Y/X}^{\vee})$ is just $\Spec(W(k))$. Let $\rho:{\mathbb P}^1_Y\to Y$ be the projection and $q:W\to X$ the canonical map. In the above diagram we put log-structures on schemes via the following divisors
\begin{align*}
 D_X=D^{\prime},\quad D_Y=Y\cap D^{\prime},\quad
 D_{{\mathbb P}^1_Y}=\rho^{-1}(D_Y)=D_Y\times {\mathbb P}^1,\quad D_W=q^{-1}(D_X),
\quad D_{{\mathbb P}(\sn_{Y/X}^{\vee}\oplus\so_Y)}=t^{-1}(D_W).
\end{align*}
They make all the maps in the squares in the above diagram strict, i.e., the log-structures pullback to log-structures.

   First, we claim that there is a homotopy 
\begin{equation}
 \label{deform}
j_{0!}\gamma^{\prime}i_!\gamma^{\prime}[p^{2N}]P(Y_0/W_0;\wt{C}^{\synt}_{Y_0\setminus D_Y})\simeq j_{0!}\gamma^{\prime}
[p^{N(i-1)}]\wt{C}_{X\setminus D_X}i_*.
\end{equation} 
We will show this by deforming both sides to infinity  and using the special case treated above.
By functoriality of syntomic-\'etale cohomology and Lemma \ref{torind1} we have
\begin{align*}
j_{0!}\gamma^{\prime}i_!\gamma^{\prime}P(Y_0/W_0;\wt{C}^{\synt}_{Y_0}) & \simeq j_{0!}\gamma^{\prime}i_!j^*_0\rho^*\gamma^{\prime}P(Y_0/W_0;\wt{C}^{\synt}_{Y_0}) 
\simeq j_{0!}\gamma^{\prime}j_0^*\overline{i}_!\rho^*\gamma^{\prime}P(Y_0/W_0;\wt{C}^{\synt}_{Y_0}).
\end{align*}
Here and below we skip the divisors from the notation of Chern classes if understood.
The projection formula in syntomic-\'etale cohomology (Lemma \ref{projsynt}) and functoriality imply that
\begin{align*}
j_{0!}\gamma^{\prime}i_!\gamma^{\prime}P(Y_0/W_0;\wt{C}^{\synt}_{Y_0}) &\simeq  [p^{N(i-1)}]\overline{i}_!\rho^*\gamma^{\prime}P(Y_0/W_0;\wt{C}^{\synt}_{Y_0})c^{\synt}_1(j_{0*}\so_{W_0})\simeq  
[p^{N(i-1)}]\overline{i}_!\gamma^{\prime}P({\mathbb P}^1_Y/W;\wt{C}^{\synt}_{{\mathbb P}^1_Y})c^{\synt}_1(j_{0*}\so_{W_0}).
\end{align*}
Since the divisors $W_0$ and $W_{\infty}$ on $W$ are linearly equivalent we have $[j_{0*}\so_{W_0}]= [j_{\infty*}\so_{W_{\infty}}]=[l_*\so_X]+[t_*\so_{{\mathbb P}(\sn^{\vee}_{Y/X}\oplus\so_Y)}]$ in $K_0(W)$. This implies
\begin{align*}
j_{0!}\gamma^{\prime}i_!\gamma^{\prime}P(Y_0/W_0;\wt{C}^{\synt}_{Y_0}) &\simeq 
[p^{N(i-1)}]\overline{i}_!\gamma^{\prime}P({\mathbb P}^1_Y/W;\wt{C}^{\synt}_{{\mathbb P}^1_Y})c^{\synt}_1(j_{\infty*}\so_{W_{\infty}})\\
& \simeq
[p^{N(i-1)}]\overline{i}_!\gamma^{\prime}P({\mathbb P}^1_Y/W;\wt{C}^{\synt}_{{\mathbb P}^1_Y})(c^{\synt}_1(l_*\so_{X})+c^{\synt}_1(t_*\so_{{\mathbb P}(\sn^{\vee}_{Y/X}\oplus\so_Y)}))
\end{align*}
The projection formula in syntomic-\'etale cohomology (Lemma \ref{projsynt}), the fact that $l^*\overline{i}_!=0$, Lemma \ref{torind1}, and functoriality of Chern classes yield
\begin{align*}
j_{0!}\gamma^{\prime}i_!\gamma^{\prime}P(Y_0/W_0;\wt{C}^{\synt}_{Y_0})& \simeq l_!
\gamma^{\prime}l^*\overline{i}_!\gamma^{\prime}P({\mathbb P}^1_Y/W;\wt{C}^{\synt}_{{\mathbb P}^1_Y})
\vee t_!\gamma^{\prime}t^*\overline{i}_*\gamma^{\prime}P({\mathbb P}^1_Y/W;\wt{C}^{\synt}_{{\mathbb P}^1_Y})\\ &\simeq 
t_!\gamma^{\prime}i_{\infty!}j^*_{\infty}\gamma^{\prime}P({\mathbb P}^1_Y/W;\wt{C}^{\synt}_{{\mathbb P}^1_Y})
\simeq
 t_!\gamma^{\prime}i_{\infty!}\gamma^{\prime}P(Y/{\mathbb P}(\sn^{\vee}_{Y/X}\oplus\so_Y);\wt{C}^{\synt}_{Y})
\end{align*}
We can now apply the computations we did in the special case to the embedding 
$i_{\infty}: Y\to {\mathbb P}(\sn^{\vee}_{Y/X}\oplus\so_Y)$
to conclude that
\begin{align*}
j_{0!}\gamma^{\prime}i_!\gamma^{\prime}[p^{2N}]P(Y_0/W_0;\wt{C}^{\synt}_{Y_0})& \simeq t_!\gamma^{\prime}
[p^{N(i-1)}]\wt{C}^{\synt}_{{\mathbb P}(\sn^{\vee}_{Y/X}\oplus\so_Y)}i_{\infty,*}
\end{align*}

  Similarly, by functoriality in log-$K$-theory and Lemma \ref{torindep} we get
\begin{align*}
j_{0!}\gamma^{\prime}[p^{N(i-1)}]\wt{C}^{\synt}_Xi_* & \simeq 
j_{0!}\gamma^{\prime}[p^{N(i-1)}]\wt{C}^{\synt}_{W_0}i_*j^*_0\rho^*\simeq j_{0!}\gamma^{\prime}[p^{N(i-1)}]\wt{C}^{\synt}_{W_0}j_0^*\overline{i}_*\rho^* 
\end{align*}
Next, by functoriality of Chern classes and the projection formula in syntomic-\'etale cohomology from
 Lemma \ref{projsynt} we get 
\begin{align*}
j_{0!}\gamma^{\prime}[p^{N(i-1)}]\wt{C}^{\synt}_Xi_* & \simeq j_{0!}\gamma^{\prime}j^*_{0}[p^{N(i-1)}]\wt{C}^{\synt}_{W}\overline{i}_*\rho^*
\simeq [p^{2N(i-1)}]\wt{C}_{W}^{\synt}\overline{i}_*\rho^*c^{\synt}_1(j_{0*}\so_{W_0})\\
 & \simeq [p^{2N(i-1)}]\wt{C}_{W}^{\synt}\overline{i}_*\rho^*c^{\synt}_1(j_{\infty*}\so_{W_{\infty}})
 \simeq 
[p^{2N(i-1)}]\wt{C}_{W}^{\synt}\overline{i}_*\rho^*(c^{\synt}_1(l_*\so_{X})\vee c^{\synt}_1(t_*\so_{{\mathbb P}(\sn^{\vee}_{Y/X}\oplus\so_Y)}))\\
 & \simeq t_!\gamma^{\prime}t^*[p^{N(i-1)}]\wt{C}^{\synt}_{W}\overline{i}_*\rho^*\vee l_!\gamma^{\prime}l^*[p^{N(i-1)}]\wt{C}^{\synt}_{W}\overline{i}_!\rho^*
\end{align*}
Since $l^*\overline{i}_*=0$, functoriality of Chern classes and of log-$K$-theory together with Lemma \ref{torindep} yield
\begin{align*}
j_{0!}\gamma^{\prime}[p^{N(i-1)}]\wt{C}^{\synt}_Xi_* &\simeq t_!\gamma^{\prime}[p^{N(i-1)}]\wt{C}^{\synt}_{{\mathbb P}(\sn^{\vee}_{Y/X}\oplus\so_Y)}t^*\overline{i}_*\rho^*\vee l_!\gamma^{\prime}[p^{N(i-1)}]\wt{C}^{\synt}_{X}l^*\overline{i}_*\rho^*\\
& \simeq t_!\gamma^{\prime}[p^{N(i-1)}]\wt{C}^{\synt}_{{\mathbb P}(\sn^{\vee}_{Y/X}\oplus\so_Y)}i_{\infty*}j^*_{\infty}\rho^*
\simeq t_!\gamma^{\prime}[p^{N(i-1)}]\wt{C}^{\synt}_{{\mathbb P}(\sn^{\vee}_{Y/X}\oplus\so_Y)}i_{\infty*}
\end{align*}
So we got that $j_{0!}\gamma^{\prime}i_!\gamma^{\prime}[p^{2N}]P(Y_0/W_0;\wt{C}^{\synt}_{Y_0})\simeq j_{0!}\gamma^{\prime}[p^{N(i-1)}]\wt{C}^{\synt}_Xi_*$, as wanted.

    Lemma \ref{finish} below and the projective space theorem (Lemma (\ref{projective1})) imply that there is a map
$$q_*:\quad\prod_{i\geq 1}\sk(2i+2,\R q_*\wt{E}^{\prime}_{n}(i+1)_W(D_W))\to 
\prod_{i\geq 1}\sk(2i,\wt{E}^{1}_{n}(i)_X( D_X))
$$
such that $q_*j_{0!}\beta\simeq [p^{Ni}]$. 
Hence, applying first $q_*$ then $\omega_0$ to both sides of the equation (\ref{deform}) we get 
$$[p^{N(i+1)}]i_!\omega^{\prime}[p^{2N}]P(Y_0/W_0;\wt{C}^{\synt}_{Y_0})\simeq [p^{2Ni}]\wt{C}^{\synt}_Xi_*, 
$$
as wanted.
\end{proof}
\begin{lemma}
\label{finish}
Set $h_{\infty }:Y_{\infty }=Y\times\{\infty\}\hookrightarrow {\mathbb P}^1_X$ and 
$h_{0}:Y_{0}=Y\times \{0\}\hookrightarrow {\mathbb P}^1_X$.
Then there exists a  quasi-isomorphism
$$
E^{\prime}_{n}(r)_{{\mathbb P}^1_X}( D_{{\mathbb P}^1_X})\oplus 
h_{\infty *}E^{1}_{n}(r-1)_{Y_{\infty }}( D_{Y_{\infty }})[-2]
\simeq \R \pi_*E^{\prime}_{n}(r)_W( D_W),\quad r\geq 2,
$$
where the map $\pi^*: E^{\prime}_{n}(r)_{{\mathbb P}^1_X}( D_{{\mathbb P}^1_X})\to
\R \pi_*E^{\prime}_{n}(r)_W( D_W)$ is the map induced by $\pi:W\to
{\mathbb P}^1_X$ and the complexes $E^{\prime}_n$ and $E^1_n$ are not truncated.
\end{lemma}
\begin{proof}Just to simplify the notation we will assume that the divisor  $D^{\prime}$ is trivial. 
We claim that we have the following map of distinguished triangles
$$
\xymatrix{(h_{0*}E^{1}_{n}(r-1)_{Y_{0}}\oplus j_{\infty *}E^{1}_{n}(r-1)_{X_{\infty }})[-2]\ar[d]\ar[r] & (h_{0*}E^{1}_{n}(r-1)_{Y_{0}}\oplus j_{\infty *}E^{1}_{n}(r-1)_{X_{\infty }}\oplus h_{\infty *}E^{1}_{n}(r-1)_{Y_{\infty }})[-2]\ar[d]\\
E^{\prime}_{n}(r)_{{\mathbb P}^1_X}\ar[d]\ar[r]^{\pi^*} & \R \pi_*E^{\prime}_{n}(r)_W\ar[d]\\
E^{\prime}_{n}(r)_{{\mathbb P}^1_X}({\mathbb P}^1_Y\cup X_{\infty})\ar[r]^{\pi^*}_{\sim} & 
\R \pi_*E^{\prime}_{n}(r)_W({\mathbb P}^1_Y\cup X_{\infty }\cup {\mathbb P}(\sn^{\vee}_{Y/X}\oplus\so_Y)).
}
$$
where the bottom row is a quasi-isomorphism because log-blow-ups do not change syntomic cohomology
\cite[Theorem 5.10]{N7} and \'etale cohomology. 

  Indeed, by gluing the Gysin distinguished triangles we get the following two compatible (via the map $\pi_*$) weight ``exact'' sequences.
\begin{align*}
h_{\infty *}E^{2}_{n}(r-2)_{Y_{\infty } }[-4]& \stackrel{f}{\to }i_*E^{1}_{n}(r-1)_{{\mathbb P}^1_Y}[-2]\oplus j_{\infty *}E^{1}_{n}(r-1)_{X_{\infty }}[-2]\to E^{\prime}_{n}(r)_{{\mathbb P}^1_X}\stackrel{g}{\to} E^{\prime}_{n}(r)_{{\mathbb P}^1_X}({\mathbb P}^1_Y\cup X_{\infty })\\
\overline{i}_*j_{\infty *}E^{2}_{n}(r-2)_{Y_{\infty } }[-4] & \oplus t_*s_*E^{2}_{n}(r-2)_{H_{\infty } }[-4]\stackrel{f}{\to }
\overline{i}_*E^{1}_{n}(r-1)_{{\mathbb P}^1_Y}[-2]\oplus l_*E^{1}_{n}(r-1)_{\wt{X}}[-2]\oplus 
t_*E^{1}_{n}(r-1)_{{\mathbb P}(\sn^{\vee}_{Y/X}\oplus\so_Y)}[-2]\to \\
 & \to E^{\prime}_{n}(r)_W \stackrel{g}{\to }E^{\prime}_{n}(r)_W({\mathbb P}^1_Y\cup X_{\infty }\cup {\mathbb P}(\sn^{\vee}_{Y/X}\oplus\so_Y)),
\end{align*}
where $s: H_{\infty }\hookrightarrow {\mathbb P}(\sn^{\vee}_{Y/X}\oplus\so_Y)$ is the hyperplane at infinity. That is $\cofiber(f)\simeq \fiber(g)$. 

  To compute the cofiber of the top map $f$
we use the Gysin distinguished triangle
$$j_{\infty *}E^{2}_{n}(r-2)_{Y_{\infty } }[-2]\to E^{1}_{n}(r-1)_{{\mathbb P}^1_Y}\to E^{1}_{n}(r-1)_{{\mathbb P}^1_Y}(Y_{\infty })
$$
and the projective space theorem  from Lemma (\ref{projective1})
$$E^{1}_{n}(r-1)_{Y_{0} }\oplus E^{2}_{n}(r-2)_{Y_{\infty} }[-2]\simeq
\R p_*E^{1}_{n}(r-1)_{{\mathbb P}^1_Y},
$$
where $p:{\mathbb P}^1_Y\to Y$, 
to get that
$$\cofiber(f)\simeq h_{0*}E^{1}_{n}(r-1)_{Y_{0} }[-2]\oplus h_{\infty *}E^{1}_{n}(r-1)_{X_{\infty }}[-2].
$$

 To compute the cofiber of the bottom map $f$ we use the above to get
$$\cofiber(f)\simeq \overline{i}_*j_{0*}E^{1}_{n}(r-1)_{Y_0}[-2]\oplus l_*E^{1}_{n}(r-1)_{\wt{X}}[-2]\oplus t_*\cofiber (s_* E^{2}_{n}(r-2)_{H_{\infty } }[-2]\to 
E^{1}_{n}(r-1)_{{\mathbb P}(\sn^{\vee}_{Y/X}\oplus\so_Y)}
)[-2].
$$
Next, we use the Gysin distinguished triangle
$$
 s_*E^{2}_{n}(r-2)_{H_{\infty } }[-2]\to 
E^{1}_{n}(r-1)_{{\mathbb P}(\sn^{\vee}_{Y/X}\oplus\so_Y)}\to
E^{1}_{n}(r-1)_{{\mathbb P}(\sn^{\vee}_{Y/X}\oplus\so_Y)}(H_{\infty })
$$
and the homotopy quasi-isomorphism
$$E^{1}_{n}(r-1)_{Y_{\infty } }\stackrel{\sim}{\to }\R p^{\prime}_*E^{1}_{n}(r-1)_{{\mathbb P}(\sn^{\vee}_{Y/X}\oplus\so_Y)}(H_{\infty }),
$$
where $p^{\prime}:{\mathbb P}(\sn^{\vee}_{Y/X}\oplus\so_Y)\to Y$,
to conclude that
$$\cofiber(f)\simeq \overline{i}_*j_{0*}E^{1}_{n}(r-1)_{Y_{0}}[-2]\oplus l_*E^{1}_{n}(r-1)_{\wt{X}}[-2]\oplus t_*i_{\infty *}E^{1}_{n}(r-1)_{Y_{\infty }}[-2],
$$
as wanted.
\end{proof}

   Having  Lemma \ref{deformation}, we can define a total Chern class map
$$C^{\synt}_{X(D)}:\quad G_X( D)\to \prod_{i\geq 0}\sk(2i,\wt{E}_n^{\prime}(i)_X( D))
$$
to get a map of homotopy cofiber sequences in the diagram (\ref{induction}) but without the $[p^{Ni}]$ factor. We set $C^{\synt}_{X(D),0}$ 
equal to the composition of the rank map with the map $ {\mathbf Z}\to E^{\prime}_n(0)$.
It suffices now to show that
 the  Chern class map $C^{\synt}_{X(D)}$, multiplied by $[p^{Ni}]$, is homotopic to $[p^{Ni}][p^{2Ni}]C^{\synt}_{X\setminus D}$. Note that, by construction, the  map 
 $C^{\synt}_{X(D)} $ makes the lower square in the diagram (\ref{induction}) (without the $[p^{Ni}]$ factor) commute and, by functoriality, the same is true of the map $[p^{2Ni}]C^{\synt}_{X\setminus D}$. 
  Since, 
  by Lemma \ref{keylemma}, the group 
$$\Hom_{\spp}(i_*G_{D_1}( D^{\prime}_1)[1],\sk(2i,\wt{E}^{\prime}_{n}(i)_X(D)))
$$
is annihilated by $[p^{Ni}]$,  the maps $[p^{Ni}]C^{\synt}_{X(D)}, [p^{Ni}][p^{2Ni}]C^{\synt}_{X\setminus D}$ are  homotopic. This finishes the proof of our theorem.

\end{proof}

\begin{remark} 
  Theorem \ref{logChern} is also true  for log-\'etale cohomology $H^*(X(D)_{\eet},{\mathbf Z}/p^n(*))$. Same proof works. Here $X(D)$ is a Zariski log-scheme over $K$ or $\ovk$. Since $H^*(X(D)_{\eet},{\mathbf Z}/p^n(*))\simeq H^*(U_{\eet},{\mathbf Z}/p^n(*))$, for  $U=X\setminus D$, no additional constants are needed. The key Lemma \ref{keylemma}
  follows easily from purity in \'etale cohomology. Indeed, 
we have 
\begin{align*}
\Hom_{\spp} & (i_*\sll,\sk(2i,\R \ve_*{\mathbf Z}/p^n(i)_{X(D)})) =\Hom_{\spp}(i_*\sll, \sk(2i,\R j_*\R \ve_*{\mathbf Z}/p^n(i)_U))\\
& =
  \Hom_{\spp}(\sll, \sk(2i,\R i^!\R j_*\R \ve_*{\mathbf Z}/p^n(i)_{U})) =\Hom_{\spp}(\sll, \sk(2i,\R (i^!j_*)\R \ve_*{\mathbf Z}/p^n(i)_{U}))=0,
\end{align*}
$\varepsilon$ is the projection from the log-\'etale site to the Nisnevich site,  $j: U\hookrightarrow X$, and $i: D_1\hookrightarrow X$.
\end{remark}
\subsubsection{Chern classes and Gysin sequences}The above  computations imply that  Chern classes from motivic cohomology are compatible with  Gysin sequences. To state this compatibility we need first to evaluate the twisted Chern classes from the diagram (\ref{induction}) on motivic cohomology. 

  Let $i:Y\hookrightarrow X$ be a closed immersion of regular syntomic schemes over $W(k)$ of codimention one and let $D$ be a divisor on $X$ such that the divisors $D_X=Y\cup D$ and $D_Y=D\cap Y$ have  relative simple normal crossings over $W(k)$ and all the irreducible components of $D_X$, $D_Y$ are regular. Fix $m$ as in Section \ref{6.1}. Consider the map
$$\overline{p}^{\synt}_{i,j}:\quad  K_j(Y\setminus D_Y ,{\mathbf Z}/p^n) 
\to  H^{2i-j-2}(Y,S^{\prime}_n(i-1)_Y(D_Y)),\quad
j\geq 2, 
$$
equal to the composition 
\begin{align*}
[P^j(Y\setminus D_Y),K_{Y\setminus D_Y}] & =[P^jY,j_*K_{Y\setminus D_Y}]
 \veryverylomapr{P_i(Y/X;\wt{C}^{\synt}_{Y\setminus D_Y})} [P^jY,\sk(2i-2,\tilde{S}^{\prime}_n(i-1){}_{Y}(D_Y))]\\
& =
H^{-j}(Y,\sk(2i-2,\tilde{S}^{\prime}_n(i-1){}_{Y}(D_Y);{\mathbf Z}/p^n)
\stackrel{f}{\rightarrow}  H^{2i-j-2}(Y,S^{\prime}_n(i-1)_Y(D_Y)),
\end{align*}
where $j:Y\setminus D_Y\hookrightarrow Y$ is the natural immersion.  We skipped the index $m$ in the notation for Chern classes. Similarly, we get the map
$$\overline{p}^{\synt}_{i,j}:\quad  K_j(Y\setminus D_Y ,{\mathbf Z}/p^n) 
\to  H^{2i-j-2}(Y,S_n(i-1)_Y(D_Y)),\quad
j\geq 2, i\leq p.
$$
\begin{lemma} 
\label{comput}Let $j\geq 2$ for $p$ odd and let $j\geq 3$ for $p=2$.
\begin{enumerate}
\item The map $\overline{p}^{\synt}_{i,j}$ restricts to zero on 
$F^{i}_{\gamma}K_j(Y\setminus D_Y ,{\mathbf Z}/p^n)$, $i\geq 2$.
\item We have the equality
$$
\overline{p}^{\synt}_{i,j}=-(i-1)\overline{c}^{\synt}_{i-1,j}:\quad 
 F^{i-1}_{\gamma}K_j(Y\setminus D_Y ,{\mathbf Z}/p^n)\to H^{2i-j-2}(Y,S^{\prime}_n(i-1)_Y(D_Y)).
$$
\end{enumerate}
\end{lemma}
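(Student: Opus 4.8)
The statement concerns the "twisted" Chern class $\overline{p}^{\synt}_{i,j}$ built from the universal polynomial $P_i(Y/X;\wt{C}^{\synt}_{Y\setminus D_Y})$, and the claim is that on $F^{i-1}_\gamma$ it agrees with $-(i-1)\overline{c}^{\synt}_{i-1,j}$, while on $F^i_\gamma$ it vanishes. The strategy is to reduce everything to an identity of universal polynomials, exactly in the spirit of the classical computation of $P_i$ in terms of elementary Chern classes. First I would recall the defining property of the polynomials $P_k(U;T_0,\dots,T_{k-1})$ from \cite[1.3]{Jou}: they are characterized by the expansion
$$
\lambda_{-1}([1,1+U])\star[T_0,1+T_1+\cdots] = 1 - \sum_{k\geq 1}P_k(U;T_0,\dots,T_{k-1})\,U
$$
(this is precisely the identity quoted in the proof of Lemma \ref{deformation}), together with the fact — also classical — that $P_k(U;T_0,\dots,T_{k-1})$ is a polynomial whose terms involving only $T_0,\dots,T_{k-1}$ (no $U$) contribute $(k-1)T_{k-1}$ to the coefficient, modulo terms of higher $\gamma$-weight and modulo the ideal generated by $U$. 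Concretely, the first part of the lemma and the sign in the second part come from the congruence $P_k(U;T_0,\dots,T_{k-1})\equiv -(k-1)T_{k-1} \pmod{(U,\ \text{decomposables of weight}>k-1)}$.

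The second step is to apply these polynomial identities after substituting the augmented universal syntomic-\'etale Chern classes $\wt{C}^{\synt}_{Y\setminus D_Y,i}$ for the $T_i$, and the first Chern class $c_1^{\synt}(\sn^\vee_{Y/X})$ of the conormal sheaf for $U$. Here the key inputs are: (i) the Whitney sum / product formulas for the syntomic-\'etale total Chern class (Lemma \ref{Zariskiprop}, hence the analog of Lemma \ref{universal}), which make the $\star$-product computation legitimate; and (ii) the fact — part (7) of Lemma \ref{prop} / Lemma \ref{Zariskiprop}, transported to the logarithmic setting via the corollary after Theorem \ref{augmented} — that the Chern class maps $\overline{c}^{\synt}_{i-1,j}$ and $c_1^{\synt}$ restrict to zero on the appropriate pieces of the $\gamma$-filtration, and that cup-product with $F^{\geq 1}_\gamma$-classes raises $\gamma$-weight. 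Thus, when I evaluate $P_i(Y/X;\wt{C}^{\synt}_{Y\setminus D_Y})$ on a class $\alpha\in F^{i-1}_\gamma K_j(Y\setminus D_Y,\mathbf{Z}/p^n)$, every monomial of $P_i$ that contains a factor $c_1^{\synt}(\sn^\vee_{Y/X})$ or a decomposable product of Chern classes lands in a strictly higher $\gamma$-filtration step of the target, hence in a group on which the relevant characteristic class already vanishes — leaving only the linear term $-(i-1)\,\overline{c}^{\synt}_{i-1,j}(\alpha)$. On $F^i_\gamma$ even this linear term is forced into $F^{i+1}_\gamma$ and hence dies, giving part (1). For the case $p=2$, $j=2$ the failure of additivity of $\overline{c}^{\synt}_{ij}$ is the reason the hypothesis is $j\geq 3$, matching the restrictions in Lemma \ref{prop}.

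I would organize the writing so that part (1) is deduced as a formal consequence of part (2) together with Lemma \ref{length2}(1) (the $\gamma$-filtration on $K_j$ of a regular scheme is finite) and the vanishing statements of Lemma \ref{prop}(9)–(10): once $\overline{p}^{\synt}_{i,j}$ is known to equal $-(i-1)\overline{c}^{\synt}_{i-1,j}$ on $F^{i-1}_\gamma$, its restriction to $F^i_\gamma$ is $-(i-1)\overline{c}^{\synt}_{i-1,j}|_{F^i_\gamma}$, which is zero because $F^i_\gamma \subset F^{(i-1)+1}_\gamma$. The main obstacle, I expect, is bookkeeping of constants and of the $\gamma$-weight estimates: one must check carefully that each non-linear monomial of the universal polynomial $P_i$, after substitution, genuinely lands in $F^{\geq i}_\gamma$ of the target cohomology (using that cup product is compatible with $\gamma$-filtrations up to the universal constants of Lemma \ref{length1}/Lemma \ref{exactseq1}), and that the compatibility of the logarithmic Chern classes with the classical ones (Theorem \ref{augmented}(5)) is strong enough to import Lemma \ref{prop}(7),(9),(10) verbatim. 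The polynomial identity $P_k(U;T_0,\dots,T_{k-1})\equiv -(k-1)T_{k-1}$ modulo higher-weight terms is classical and I would simply cite \cite[1.3]{Jou} and \cite[II.4]{FLL}; no new computation is needed there.
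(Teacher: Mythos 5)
Your outline follows the paper's proof in its essential structure -- reduce to the universal polynomial $P_i$, kill the lower Chern classes using the $\gamma$-filtration on $K$-theory, and identify the surviving linear term -- but the justification you give for the central step does not work as stated. You discard the monomials of $P_i$ that involve $c_1^{\synt}(\sn^{\vee}_{Y/X})$ or decomposable products of Chern classes on the grounds that they ``land in a strictly higher $\gamma$-filtration step of the target''. The target $H^{2i-j-2}(Y,S^{\prime}_n(i-1)_Y(D_Y))$ carries no $\gamma$-filtration, and $c_1^{\synt}(\sn^{\vee}_{Y/X})$ is a genuinely nonzero class, so a product involving it does not die for any filtration reason. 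The correct mechanism -- and the one the paper uses -- concerns only the arguments of $P_i$, not its target: for $\alpha\in F^{i-1}_{\gamma}K_j(Y\setminus D_Y,{\mathbf Z}/p^n)$ the log/truncated analog of Lemma \ref{prop}(9) gives $\overline{c}^{\synt}_{l,j}(\alpha)=0$ for every $l\leq i-2$, and a weight count ($T_l$ has weight $l$, $U$ has weight $1$, $P_i$ is isobaric of weight $i-1$) shows that every monomial of $P_i$ other than the $T_{i-1}$-linear one contains such a factor. After substitution one is therefore left with $P_i(U;0,\ldots,0,T_{i-1})$ exactly, with no error term to estimate.

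For that specialization you should not settle for a congruence modulo higher-weight terms: the defining property of $P_i$ together with the product formula gives the exact identity $P_i(U;0,\ldots,0,T)\cdot U=-[(1+T)\star\lambda_{-1}(U)]^{(i)}=-(i-1)TU$, and cancelling $U$ in the polynomial ring yields $P_i(U;0,\ldots,0,T)=-(i-1)T$ on the nose; this is the computation in the paper. Your deduction of part (1) from part (2) is fine (the paper proves (1) directly, since on $F^{i}_{\gamma}$ all of $\overline{c}^{\synt}_{0,j},\ldots,\overline{c}^{\synt}_{i-1,j}$ vanish and $P_i$ evaluates to $0$), and your remark on the $p=2$, $j=2$ exclusion is correct. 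But as written, the step eliminating the $c_1^{\synt}(\sn^{\vee}_{Y/X})$-terms is a gap, and the $\gamma$-weight bookkeeping you flag as ``the main obstacle'' is unnecessary once the argument is set up as above.
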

\begin{proof}
 We have 
$$\overline{p}^{\synt}_{i,j}=P_i(c_1^{\synt,m}(\sn^{\vee}_{Y/X});\overline{c}^{\synt}_{0,j},
\overline{c}^{\synt}_{1,j},\ldots,\overline{c}^{\synt}_{i-1,j}),
$$
where $P_i(U_1;T_0,T_1,\ldots,T_{i-1})$ is a certain universal polynomial 
with integral coefficients \cite[1.3]{Jou}. By Lemma
\ref{prop}, $\overline{c}^{\synt}_{k,j}(x)=0$, $k\leq i-1$, for $x\in F^{i}_{\gamma}K_j(Y\setminus D_Y ,{\mathbf Z}/p^n)$. Hence $$\overline{p}^{\synt}_{i,j}(x)=P_i(c_1^{\synt,m}(\sn^{\vee}_{Y/X});0,\ldots,0)=0,\quad x\in F^{i}_{\gamma}K_j(Y\setminus D_Y ,{\mathbf Z}/p^n).
$$
This proves the first statement of the lemma. 

  For the second, by the same argument, we have 
$$\overline{p}^{\synt}_{i,j}(x)=P_i(c_1^{\synt,m}(\sn^{\vee}_{Y/X});0,\ldots,0,
\overline{c}^{\synt}_{i-1,j}(x)),\quad x\in F^{i-1}_{\gamma}K_j(Y\setminus D_Y ,{\mathbf Z}/p^n).
$$
Now the computation is quite formal. By the defining property of the polynomials $P_i$ \cite[1.1.7]{Jou}
and the product formula \cite[Exp.V, 6.6.1]{S6}, we have
\begin{align*}
P_i(c_1^{\synt,m}(\sn^{\vee}_{Y/X}) & ;0,\ldots,0,
\overline{c}^{\synt}_{i-1,j}(x))c^{\synt,m}_1(\sn^{\vee}_{Y/X})=
-[(1+\overline{c}^{\synt}_{i-1,j}(x))\star \lambda_{-1}(c_1^{\synt,m}(\sn^{\vee}_{Y/X}))]^{(i)}\\
& =-[(1+\overline{c}^{\synt}_{i-1,j}(x))\star (1-c_1^{\synt,m}(\sn^{\vee}_{Y/X})+\ldots )]^{(i)}\\
& =-(1+(i-1)!/(i-2)!\overline{c}^{\synt}_{i-1,j}(x)c_1^{\synt,m}(\sn^{\vee}_{Y/X})+\ldots )^{(i)}
\\ &
=
-(i-1)\overline{c}^{\synt}_{i-1,j}(x)c_1^{\synt,m}(\sn^{\vee}_{Y/X})
\end{align*}
Since we can see this as an equality in the ring of polynomials with integral coefficients, we get that
$$P_i(c_1^{\synt,m}(\sn^{\vee}_{Y/X});0,\ldots,0,
\overline{c}^{\synt}_{i-1,j}(x))=-(i-1)\overline{c}^{\synt}_{i-1,j}(x),
$$
as wanted.
\end{proof}

  And here is the compatibility of Chern classes (from motivic cohomology) with Gysin sequences we wanted.
\begin{corollary}Let $2b-a\geq 3$, $p^n >2$. There exists a constant $A=A(d,a,b)$ such that we 
 have  the following
maps of partial Gysin sequences
$$
\xymatrix{
K^{b-1}_{2b-a}(Y(D_Y)) \ar[r]^-{Ai_*}\ar[d]^{\gamma^{\prime}p^{m(2b+3)}(1-b)\overline{c}^{\synt,m}_{b-1,2b-a}} &
K^b_{2b-a}(X(D))\ar[r]^-{Aj^*}\ar[d]^{p^{3bm}\overline{c}^{\synt,m}_{b,2b-a}} & K^b_{2b-a}(X(D_X))\ar[r]^-{A\partial} \ar[d]^{p^{3bm}\overline{c}^{\synt,m}_{b,2b-a}} &
 K^{b-1}_{2b-a-1}(Y(D_Y))\ar[d]^{\gamma^{\prime}p^{m(2b+3)}(1-b)\overline{c}^{\synt,m}_{b-1,2b-a-1}}\ar[r]^-{Ai_*} & K^b_{2b-a-1}(X(D_X))\ar[d]^{p^{3bm}\overline{c}^{\synt,m}_{b,2b-a-1}}\\
H^{a-2}_1(Y(D_Y),b-1)\ar[r]^-{Ai_!} & H^a(X(D),b)
\ar[r]^-{Aj^*} & H^a(X(D),b)\ar[r]^-{A\partial} & H^{a-1}_1(Y(D_Y),b-1)\ar[r]^-{Ai_!} & H^{a+1}(X(D^{\prime}),b)}
$$
Here we set 
\begin{align*}
& K^i_{j}(T(D_T)) =\gr^i_{\gamma}K_{j}(T(D_{T}),\mathbf{Z}/p^n),\quad
  H^*(T(D_T), *) =H^*(T,S^{\prime}_n(*)_T(D_T)),\\
& H^*_{1}(T(D_T), *) =H^*(T,S^{1}_n(*)_T(D_T)).
\end{align*}
\end{corollary}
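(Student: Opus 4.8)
The plan is to deduce this from the homotopy-commutative diagram (\ref{induction}) of Theorem \ref{logChern}, which already encodes the compatibility of the universal total Chern class maps with the localization (Gysin) homotopy cofiber sequence, by passing to homotopy groups, projecting the syntomic-\'etale complexes onto the syntomic ones, and restricting to the $\gamma$-filtration and its graded pieces. Concretely, I would first read off from (\ref{induction}) the diagram whose columns are the $K$-theory localization cofiber sequence $i_*G_Y(D_Y)\to G_X(D)\to G_X(D_X)$ (in the corollary's notation $D_X=Y\cup D$), and compose the right-hand vertical maps with the natural morphisms $E^{\prime}_n(i)\to S^{\prime}_n(i)$ and with the composite $F^1_n(i-1)_X(D_X)\to E^1_n(i-1)_Y(D_Y)\to S^1_n(i-1)_Y(D_Y)$ of Lemma \ref{localization} (the first arrow having a $p^{N(i-1)}$-inverse). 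This produces, for each $i$, a homotopy-commutative diagram whose bottom row is the shifted syntomic Gysin distinguished triangle $i_*S^1_n(i-1)_Y(D_Y)[-2]\to S^{\prime}_n(i)_X(D)\to S^{\prime}_n(i)_X(D_X)$.

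Taking mod $p^n$ homotopy groups in degree $2b-a$ and applying the characteristic-class construction of Section \ref{truncated-kolo} (the Hurewicz map, etc.) turns this into a diagram of abelian groups with commuting squares: the middle two vertical maps are $\overline{c}^{\synt,m}_{i,2b-a}$ on $G_X(D)$ and $G_X(D_X)$, the left vertical map is $\gamma^{\prime}$ composed with the polynomial Chern class $P_i(Y/X;\wt{C}^{\synt}_{Y\setminus D_Y})$, the bottom horizontal maps are the syntomic $i_!$, $j^*$, $\partial$, and the top horizontal maps are the $K$-theory $i_*$, $j^*$, $\partial$. The bottom row, being the long exact cohomology sequence of the distinguished triangle above, is honestly exact.

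Next I would restrict to the $\gamma$-filtration. By the logarithmic analogue of Lemma \ref{prop}(10) (applicable since $2b-a\geq 3$, so the exceptional case $j=2$, $p=2$ is excluded), the class $\overline{c}^{\synt,m}_{b,2b-a}$ vanishes on $F^{b+1}_{\gamma}$ and hence factors through $\gr^b_{\gamma}$, giving the middle vertical maps of the statement. For the left vertical map, Lemma \ref{comput}(1) shows that $\gamma^{\prime}P_b(Y/X;\wt{C}^{\synt}_{Y\setminus D_Y})$ kills $F^b_{\gamma}K_{2b-a}(Y\setminus D_Y,\Z/p^n)$, while Lemma \ref{comput}(2) identifies it on $F^{b-1}_{\gamma}$ with $(1-b)$ times $\gamma^{\prime}\overline{c}^{\synt,m}_{b-1,2b-a}$ up to a fixed power of $p$; since $\overline{c}^{\synt,m}_{b-1,2b-a}$ also kills $F^b_{\gamma}$, this descends to $\gr^{b-1}_{\gamma}$ and is the left vertical map $\gamma^{\prime}p^{m(2b+3)}(1-b)\overline{c}^{\synt,m}_{b-1,2b-a}$. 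To replace the honest localization sequence by the displayed sequence of graded pieces one multiplies $i_*$, $j^*$, $\partial$ by the universal constants $N(d,i,j)$ of Lemma \ref{exactseq1}, exactness up to universal constants being Lemma \ref{homolog1}; matching the powers of $p$ --- the factors $[p^{Ni}]$, $[p^{N(i+1)}]$, $[p^{2N}]$ in (\ref{induction}), the bookkeeping of Theorem \ref{augmented}, and the binomial coefficients produced by Lemma \ref{comput} --- forces the displayed exponents $p^{3bm}$ and $p^{m(2b+3)}(1-b)$.

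Finally, take $A=A(d,a,b)$ to be the product of all universal constants occurring above --- those of Lemmas \ref{exactseq1}, \ref{homolog1}, \ref{length2}, together with the factorial-type constants of Lemma \ref{homolog1} --- so that the squares commute and both rows are exact up to universal constants; the claimed prime-divisibility of $A$ then follows from that of the $N(d,i,j)$ and the $M(d,i,j)$. The main obstacle is exactly this constant bookkeeping: because the $\gamma$-filtration is not strictly compatible with $i_*$, $j^*$ and $\partial$, one must carry the universal constants of Lemmas \ref{exactseq1}--\ref{homolog1} through the whole diagram and verify that, together with the powers of $p$ coming from (\ref{induction}) and the polynomial identity of Lemma \ref{comput}, they collapse into the single factor $A$; no individual step introduces a non-universal ambiguity, since all the underlying diagrams of spaces commute strictly in the homotopy category.
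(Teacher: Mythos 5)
Your proof is correct and follows essentially the same route as the paper's: commutativity comes from the diagram (\ref{induction}) of Theorem \ref{logChern}, the left vertical map is identified via Lemma \ref{comput}, and the passage to graded pieces of the $\gamma$-filtration with the constant $A$ is handled by Lemma \ref{exactseq1} (the paper takes $A=N(d,b,2b-a)N(d,b+1,2b-a)$ exactly, while you fold in a few extra harmless universal factors). The only cosmetic slip is your citation of ``Lemma \ref{prop}(10)'' for the vanishing on $F^{b+1}_{\gamma}$, which is item (9).
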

\begin{proof}
The top sequence exists because of Lemma \ref{exactseq1}. The constant $A=N(d,b,2b-a)N(d,b+1,2b-a)$. The diagram itself is obtained from Theorem \ref{logChern} and 
Lemma \ref{comput} (which gives us that
$\overline{p}^{\synt,m}_{b,2b-a}=(1-b)\overline{c}^{\synt,m}_{b-1,2b-a}$ and $\overline{p}^{\synt,m}_{b,2b-a-1}=(1-b)\overline{c}^{\synt,m}_{b-1,2b-a-1}$). 
\end{proof}

  The above lemma and corollary hold for the $S$, $E$, and $E^{\prime}$  cohomologies as well.

\end{document}